\DeclareMathAlphabet{\mathpzc}{OT1}{pzc}{m}{it}
\newcommand{\ind}{\mathbf{1}}
\newcommand{\sbm}{\mathsf{SBM}}
\newtheorem{thm}{Theorem}[section]
\newtheorem{cor}[thm]{Corollary}
\newtheorem{prop}[thm]{Proposition}
\newtheorem{lem}[thm]{Lemma}
\theoremstyle{definition}
\newtheorem{defn}[thm]{Definition}
\newtheorem{rk}[thm]{Remark}
\numberwithin{equation}{section}
\newcommand{\mc}{\mathscr}
\newcommand{\R}{\mathbb{R}}
\newcommand{\V}{\mathcal{L}}
\renewcommand{\hat}{\widehat}
\newcommand{\til}{\widetilde}
\renewcommand{\bar}{\overline}
\renewcommand{\Pr}{\mathbb{P}}
\newcommand{\Ex}{\mathbb{E}}
\newcommand{\Con}{\mathrm{C}}
\newcommand{\mE}{\mathbf{E}}
\newcommand{\e}{\varepsilon}
\newcommand{\sig}{\tau}
\title[KPZ equation limit of sticky Brownian motion]{KPZ equation limit of sticky Brownian motion}
\author[S.\ Das]{Sayan Das}
\address{S.\ Das,
	Department of Mathematics, Columbia University,
	\newline\hphantom{\quad \ \ S. Das}
	2990 Broadway, New York, NY 10027 USA
}
\email{sayan.das@columbia.edu}
\author[H.\ Drillick]{Hindy Drillick}
\address{H.\ Drillick,
	Department of Mathematics, Columbia University,
	\newline\hphantom{\quad \ \ H. Drillick}
	2990 Broadway, New York, NY 10027 USA
}
\email{hindy.drillick@columbia.edu}
\author[S.\ Parekh]{Shalin Parekh}
\address{S.\ Parekh,
	Department of Mathematics, University of Maryland,
	\newline\hphantom{\quad \ \ S. Parekh}
	4176 Campus Dr, College Park, MD 20742 USA
}
\email{parekh@umd.edu}
\subjclass[2020]{
	Primary 60K37, 
        82B21,	
        82C22,	
	Secondary 60G70. 
}
\keywords{
	Kardar--Parisi--Zhang equation, stochastic heat equation, Howitt-Warren flow, sticky Brownian motion, local time, extreme value theory.
}
\begin{document}
	
	\begin{abstract} We consider the motion of a particle under a continuum random environment whose distribution is given by the Howitt-Warren flow. In the moderate deviation regime, we establish that the quenched density of the motion of the particle (after appropriate centering and scaling) converges weakly to the $(1+1)$ dimensional stochastic heat equation driven by multiplicative space-time white noise. Our result confirms physics predictions and computations in \cite{ldt,bld} and is the first rigorous instance of such weak convergence in the moderate deviation regime. Our proof relies on a certain Girsanov transform and works for all Howitt-Warren flows with finite and nonzero characteristic measures. Our results capture universality in the sense that the limiting distribution depends on the flow only via the total mass of the characteristic measure. As a corollary of our results, we prove that the fluctuations of the maximum of an $N$-point sticky Brownian motion are given by the KPZ equation plus an independent Gumbel on timescales of order $(\log N)^2.$
		
	\end{abstract}
	
	\maketitle
	{
		\hypersetup{linkcolor=black}
		\setcounter{tocdepth}{1}
		\tableofcontents
	}
	
	\section{Introduction}

\subsection{Preface} 
Diffusion in time-dependent random environments has been a subject of intense investigation recently due to its connection with the KPZ universality class \cite{bc}. It is well known that the quenched density of the position of a particle in the diffusive regime (when its location $= O(\sqrt{\mbox{time}})$) converges to the Gaussian distribution with a second-order correction given by the fixed point of the Edwards-Wilkinson (EW) universality class {\cite[Theorem 3.2]{balazs2006random}, \cite[Theorem 1.6]{yu}, see also \cite{ew1,ew2,ew4,ew3,ew5,joseph2019independent,ew6}}. Meanwhile in the large deviation (LD) regime (when location $= O({\mbox{time}})$), the quenched density {is expected to admit} a large deviation principle with linear speed, and the second-order correction is given by the Tracy-Widom (TW) distribution, the one-point marginal of the fixed point in the Kardar-Parisi-Zhang (KPZ) universality class ({proven rigorously only for a few special models in \cite{bc,mark}}). The goal of this paper is to show that for a large class of such diffusions in the moderate deviation (MD) regime (when location $= O({\mbox{time}}^{3/4})$), the crossover distribution {between these two classes}, namely the \textit{KPZ equation}, arises as a second-order correction.

The (1+1) dimensional KPZ equation is a stochastic partial differential equation (SPDE) given by 
\begin{align}
    \label{def:kpz} \tag{KPZ}
\partial_t\mathcal{H}=\tfrac12\partial_{xx}\mathcal{H}+\tfrac12(\partial_x\mathcal{H})^2+\sigma^{1/2}\cdot \xi, \qquad \mathcal{H}=\mathcal{H}_t(x),
\end{align}
where $\sigma>0$ and $\xi=\xi(t,x)$ is a space-time white noise. The KPZ equation was first introduced in \cite{kpz} as a prototypical model for interfaces of random growth. Since then, the model has been studied intensively in both the mathematics and physics literature.
We refer to \cite{FS10, Qua11, Cor12, QS15, CW17, CS20} for some surveys of the mathematical studies of the KPZ equation. 

As an SPDE, \eqref{def:kpz} is ill-posed due to the presence of the non-linear term $(\partial_x\mathcal{H})^2$. One way to make sense of the equation is to consider $\mathcal{Z}:= e^{\mathcal{H}}$ which formally solves the stochastic heat equation (SHE) with multiplicative noise:
\begin{equation}\label{she} \tag{SHE}\partial_t \mathcal{Z} = \tfrac12 \partial_{xx} \mathcal{Z} + \sigma^{1/2} \cdot \mathcal{Z} \xi, \qquad \mathcal{Z}=\mathcal{Z}_t(x).\end{equation}
The SHE is known to be well-posed and has a well-developed solution theory based on the It\^o integral and chaos expansions \cite{Wal86, BC95, Qua11, Cor12}. 
In this paper, we will consider the solution of the \eqref{she} started with Dirac delta initial data $\mathcal{Z}_0(x) = \delta_0(x)$. For this initial data, \cite{flo} established that $\mathcal{Z}_t(x) > 0$
for all $t > 0$ and $x \in \R$ almost surely (see also \cite{mue91}). Thus $\mathcal{H}=\log \mathcal{Z}$ is well-defined and is called the Cole-Hopf solution of the KPZ equation. This is the notion of solution that we will work with in this paper, and it coincides with other existing notions of solutions \cite{Hai13, Hai14, GIP15, GP17, GJ14, GP18}, under suitable assumptions.

 {The work of \cite{ACQ,cldr,dot,ss} demonstrates} that the one-point distribution of the rescaled KPZ equation as $t\to \infty$ goes to the TW distribution, whereas as $t\to 0$ then under a different rescaling, the KPZ equation converges to the Gaussian distribution (the one-point distribution of the EW fixed point). Thus, the KPZ equation serves as a mechanism for crossing over between the EW  and the KPZ universality classes. Going back to the diffusion models in time-dependent random environments, in \cite{ldt} it was argued that these diffusion models are rich enough to admit the KPZ equation as limiting statistics.  By physical arguments {(explained briefly in Section \ref{sec1.4.2})}, \cite{ldt} derived that in the moderate deviation (MD) regime (when location $= O({\mbox{time}}^{3/4})$), \eqref{def:kpz} arises as a second-order correction for the quenched density (see Figure \ref{scheme}). Their heuristic arguments were later supported by \cite{bld} via rigorous moment-level computations for certain integrable discrete and continuous diffusion models.  {More recently, using high-precision numerical simulations, \cite{hass23} provided strong numerical evidence for this KPZ equation limiting behavior.}

\begin{figure}[h!]
    \centering
    \captionsetup{width=.9\linewidth}
    \begin{tikzpicture}[line cap=round,line join=round,>=triangle 45,x=5.5cm,y=2.5cm]
			\draw[line width=1.2pt, decorate, decoration={random steps,segment length=2.5pt,amplitude=1.5pt}] plot[domain=-0.6:2.2] (\x, {exp(-2*(\x)^2)});
            \draw[line width=1pt,-{Latex[length=1mm]}] (0,-0.05)--(0,1.1);
            \draw[line width=1pt,dashed,gray] (0.3,-0.1)--(0.3,1.1);
            \draw[line width=1pt,dashed,gray] (-0.3,-0.1)--(-0.3,1.1);
            \draw[line width=1pt,dashed,gray] (1,-0.1)--(1,0.6);
            \draw[line width=1pt,dashed,gray] (1.3,-0.1)--(1.3,0.6);
            \draw[line width=1pt,dashed,gray] (1.8,-0.1)--(1.8,0.6);
            \draw[line width=1pt,dashed,gray] (2.1,-0.1)--(2.1,0.6);
            \draw[line width=1pt] (-0.6,-0.05)--(2.2,-0.05);
            \draw[line width=1pt,dashed,{Latex[length=2mm]}-{Latex[length=2mm]}] (-0.3,-0.15)--(0.3,-0.15);
            \draw[line width=1pt,dashed,{Latex[length=2mm]}-{Latex[length=2mm]}] (1,-0.15)--(1.3,-0.15);
            \draw[line width=1pt,dashed,{Latex[length=2mm]}-{Latex[length=2mm]}] (1.8,-0.15)--(2.1,-0.15);
            \node at (0,-0.3) {Diffusive regime};
            \node at (1.15,-0.3) {MD regime};
            \node at (1.95,-0.3) {LD regime};
            \node at (1.95,0.85) {$e^{-I(c)t +\operatorname{TW}}$};
            \node at (1.15,0.85) {$e^{-\frac{c^2\sqrt{t}}{2} +\operatorname{KPZ}}$};
            \node at (0,1.2) {$e^{-\frac{x^2}{2t}}\cdot \operatorname{EW}$};
            \node at (0,-0.55) {$x\propto \sqrt{t}$};
            \node at (1.15,-0.55) {$x=c t^{3/4}$};
            \node at (1.95,-0.55) {$x=c t$};
   \end{tikzpicture}
    \caption{Schematic diagram of the quenched density $p(x)$ of the position of a particle at time $t$. The results in the diffusive regime need to be interpreted appropriately and may be found in \cite{yu}. The fluctuations in the Tracy-Widom regime are known only in some exactly solvable cases \cite{mark}, but are conjectured to hold generally. {We expect EW fluctuations whenever $x\ll t^{3/4}$ and TW fluctuations whenever $x\gg t^{3/4}$. The exponent $3/4$ is expected to be the unique exponent where the KPZ equation fluctuations appear.}}
    \label{scheme}
\end{figure}
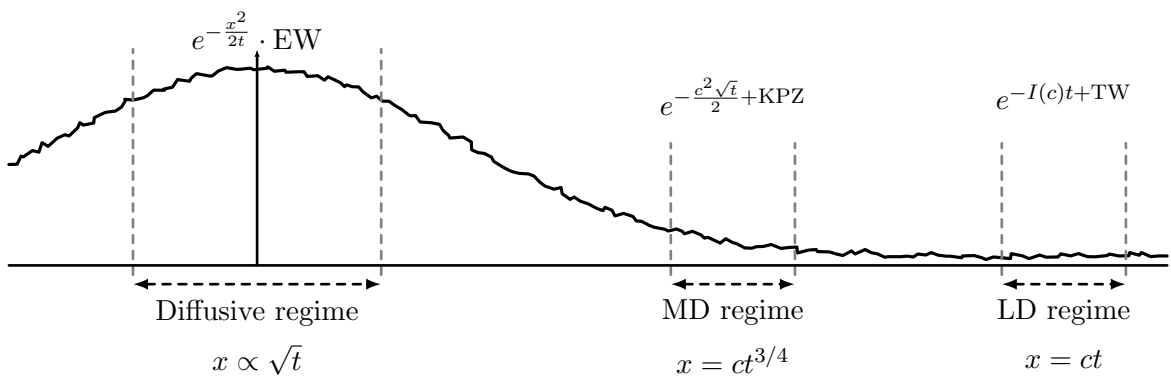

In this paper, we work with diffusion in continuum random environments. We consider the stochastic flow of kernels whose $k$-point motions solve the Howitt-Warren martingale problem \cite{HW09}. {Such stochastic flows of kernels are called Howitt-Warren flows and give rise to diffusions in continuum random environments (see Section \ref{sec1.2} for more details).} We show that the logarithm of the quenched density of the motion of a particle under the Howitt-Warren flow, upon appropriate centering and scaling, converges weakly to the KPZ equation.

    Our work is the first rigorous instance of weak convergence to the KPZ equation for diffusion in time-dependent random media under the moderate deviation regime.  We mention that such weak convergence to the KPZ equation has been shown in the large deviation regime under weak random environment settings \cite{gu,dom}. Our proof techniques rely on a certain Girsanov transform related to sticky Brownian motions (see Section \ref{sec:pfidea} for details). In particular, we do not rely on tools from integrable probability, and our results hold for all Howitt-Warren flows with finite and positive characteristic measures.

\subsection{The model: Sticky Brownian motion} \label{sec1.2} In order to define random
motions in a continuum random environment, we need to introduce the notion of \textit{stochastic flow of kernels}. For $s\le t$, a random probability kernel, denoted $K_{s,t}(x, A)$, is a measurable function defined on some underlying probability space $\Omega$, such that it defines a probability measure on $\R$ for each $(x, \omega) \in \R \times \Omega$. $K_{s,t}(x, A)$ is interpreted as the 
random probability to arrive in $A$ at time $t$ starting at $x$ at time $s$.
 
	\begin{defn}\label{sfok}
		A family of random probability kernels $(K_{s,t})_{s\le t}$ on $\R$ is called a stochastic flow of kernels if
		\begin{enumerate}[label=(\alph*),leftmargin=18pt]
			\item \label{d1} For any $s\le t\le u$ and $x\in \R$, almost surely $K_{s,s}(x,A)=\delta_x(A)$, and
			\begin{align*}
				\int_{\R} K_{t,u}(y,A)K_{s,t}(x,dy)=K_{s,u}(x,A)
			\end{align*}
			for all $A$ in the Borel $\sigma$-algebra of $\R$.
			\item For any $t_1\le t_2 \le \cdots \le t_k$, the $(K_{t_i,t_{i+1}})_{i=1}^{k-1}$ are independent.
			\item For any $s\le u$ and $t\in \R$, $K_{s,u}$ and $K_{s+t,u+t}$ have the same finite dimensional distributions.
		\end{enumerate}
	\end{defn}
The general theory of stochastic flow was developed by Le Jan and Raimond in \cite{lejan}; see also Tsirelson \cite{tsir}. For the stochastic flow of kernels that we consider in this text, one can ensure that the random set of probability $1$ on which \ref{d1} holds is independent of $x\in \R$ and $s\le t\le u$.  This allows us to interpret $(K_{s,t})_{s\le t}$ as bona fide transition
kernels of a random motion in a continuum random environment. The averaged law of such a motion is called the $1$-point motion associated to $(K_{s,t})_{s\le t}$. More generally, the $k$-point motion of a stochastic flow of kernels is defined as the $\R^k$ valued stochastic process $\mathbf{X}=(X^1,\ldots,X^k)$ with transition probabilities given by
\begin{align*}
    P_t(\vec{x},d\vec{y})=\Ex\left[\prod_{i=1}^k K_{0,t}(x_i,dy_i)\right].
\end{align*}
We will be interested in a particular random motion in a continuum random environment originating from the \textit{Howitt-Warren} flow of kernels. Its corresponding $k$-point motion solves a well-posed martingale problem that was first studied by Howitt and Warren in \cite{HW09}. Below, we introduce the $k$-point motion by stating the martingale problem formulated in \cite{sss}.

\begin{defn}\label{hwmp}  {Let $\nu$ be any finite measure on $[0,1]$.} We say an $\R^k$-valued process $\mathbf{X}_t=(X_t^1,\ldots,X_t^k)$ solves the Howitt-Warren martingale problem with characteristic measure $\nu$ if $\mathbf{X}$ is a continuous, square-integrable martingale with the covariance process between $X^i$ and $X^j$ given by
    \begin{align}\label{covd}
        \langle X^i,X^j\rangle_t=\int_0^t \ind_{\{X_s^i=X_s^j\}}ds,
    \end{align}  and furthermore it satisfies the following condition:
    
 Consider any nonempty $\Delta \subset \{1,2,\ldots,k\}$. For $\mathbf{x}\in \R^k$, let
    \begin{align*}
        f_{\Delta}(\mathbf{x}):=\max \{x_i:i\in \Delta\}, \qquad \mbox{and} \qquad g_{\Delta}(\mathbf{x}):=\big|\{i\in \Delta \mid x_i=f_{\Delta}(\mathbf{x})\}\big|.
    \end{align*}
    Then the process
        $f_{\Delta}(\mathbf{X}_t)-\int_0^t \beta_+\big( g_{\Delta}(\mathbf{X}_s)\big)ds$ is a martingale with respect to the filtration generated by $\mathbf{X}$, where $\beta_+(1):=0$ and
    \begin{align*}
        \beta_+(m):={2}\int \sum_{k=0}^{m-2} (1-y)^k \nu(dy), \quad m\ge 2.
    \end{align*}
\end{defn}
Note that from the covariance process formula above, we see that each $X^i$ is marginally a Brownian motion. Focusing on the $k=2$ case, one can check that the last condition in Definition \ref{hwmp} is equivalent to 
\begin{equation} \label{eq:two-point-sbm}
    |X_t^1-X_t^2|-{4}{\nu([0,1])}\int_0^t \ind_{\{X_s^1=X_s^2\}}ds
\end{equation}
being a martingale. 

{We define the local time at $a$ of a continuous semimartingale $X$ as 
\begin{equation}\label{eq:localTimeDef}
    L^X_a(t):= \lim_{\varepsilon \to 0^+}\frac{1}{2 \varepsilon}\int_0^t \mathbbm{1}_{\{a- \varepsilon < X_s < a + \varepsilon\}}d \langle X, X \rangle_s.
\end{equation} Then, using Tanaka's formula we see that \eqref{eq:two-point-sbm} implies that
\begin{align*}
    L_0^{X^1-X^2}(t)={4}{\nu([0,1])}\int_0^t \ind_{\{X_s^1=X_s^2\}}ds.
\end{align*}} Thus, from the above formula, we see that $X^1$ and $X^2$ can be interpreted as Brownian motions evolving independently of each other when apart, but when they meet there is some stickiness. Due to this stickiness, the two motions momentarily move together {in the sense that they are equal on a nowhere-dense set of positive measure}. The $k$-point motion defined above is a generalization of this stickiness phenomenon and is thus referred to as \textit{sticky Brownian motion} ($\sbm$) in the literature (Figure \ref{sticky}).

\begin{figure}[h!]
    \centering
    \captionsetup{width=.9\linewidth}
    \includegraphics[height=5.65cm]{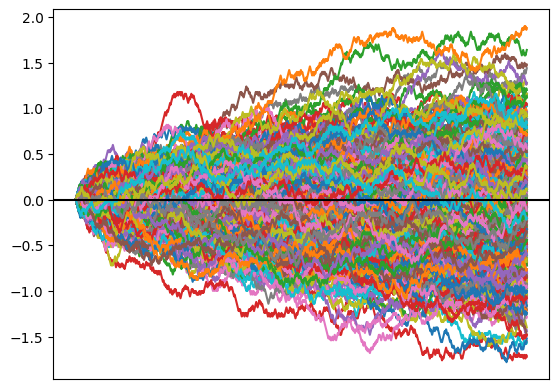}
    \includegraphics[height=5.65cm]{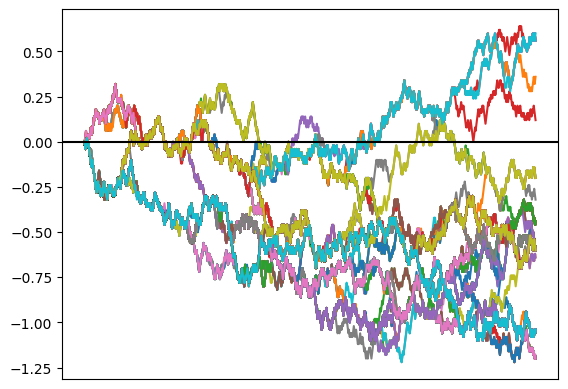}
    \caption{500-point $\sbm$ with weak and strong stickiness simulated from discrete random walks in random environments.}
    \label{sticky}
\end{figure}

The study of Brownian motions with stickiness goes back to the work of Feller \cite{feller}, where he studied general boundary conditions for diffusions on the half line. Since then, sticky Brownian motion has been observed to arise as a diffusive scaling limit of various models: storage processes \cite{hl81}, discrete random
walks in random environments \cite{ami91,hw09b}, and certain families of exclusion processes with a tunable interaction \cite{rs15}. An $\sbm$ with a uniform characteristic measure inherits integrability from the beta random walk in random environment model studied in \cite{bc}. This was exploited in \cite{mark,bld,dom2} to extract various exact formulas and asymptotics. $\sbm$ models also bear connections to the Kraichnan model of turbulent flow \cite{kar68}. Indeed, in the works \cite{gaw,war}, the sticky behavior of particles was observed under certain fine-tuning in the Kraichnan model.  We refer to the series of the physics works \cite{CFKL95, GK95, BGK98, GK96, GV00} and expository notes \cite{ss00,kup10} on the Kraichnan model for more details in this direction.

\smallskip

Howitt and Warren \cite[Proposition 8.1]{HW09} proved that the martingale problem in Definition \ref{hwmp} is well-posed, and its solutions form a consistent family of Feller processes. By a {remarkable} result of Le Jan and Raimond \cite[Theorem 2.1]{lejan}, any consistent family of Feller processes can be viewed as a $k$-point motion of some stochastic flow of kernels, unique in finite-dimensional distributions. Thus, in particular, the solution of the Howitt-Warren martingale problem can be viewed as the $k$-point motion of some stochastic flow of kernels. {{This stochastic flow of kernels is called the Howitt-Warren flow.} When referring to the $k$-point motions, we will continue to use $\sbm$.}
We refer to \cite{sss0,sss,sss2} for more background on how $\sbm$ can be viewed as random motions in continuum random environments, and how to give a concrete construction of such a flow in a space of measure-valued flows using a coupling with the Brownian web and net.
 
\subsection{Main results} Let $K_{0,t}(dx)$ denote the Howitt-Warren flow started from a Dirac mass at $0$ whose characteristic measure $\nu$ is non-degenerate in the sense that $\nu([0,1])>0$. As mentioned before, $K_{0,t}(x)$ can be interpreted as the random probability of a particle hitting $x$ at time $t$. The goal of this paper is to study the density of this quenched probability in the moderate deviation regime where we take $(t,x)\mapsto (Nt,tN^{3/4}+xN^{1/2})$. Formally speaking, we show that the log of the quenched density after appropriate centering:
\begin{align*}
     {\log K_{0,Nt}\big(tN^{3/4}+xN^{1/2}\big)}+{\tfrac12\log N+\tfrac{t}2N^{1/2}+xN^{1/4}}
 \end{align*}
converges in law to the solution $\mathcal{H}_t(x)$ of the KPZ equation  defined in \eqref{def:kpz}.

 \smallskip
 
It is actually the case that $K_{0,t}$ is \textit{singular} with respect to the Lebesgue measure. In fact, by \cite[Theorem 2.8]{sss} it is almost surely purely atomic at deterministic times, and may thus be viewed (formally) as a large system of interacting Brownian particles of different masses that dynamically split and recombine according to a time-homogeneous rule determined by the characteristic measure $\nu$. Since $K$ only exists as a measure and not as a function in general, some care must be taken in order to make sense of the convergence statement written above. To do this, we introduce the fields $\{\mathscr{X}^N\}_{N\ge 1}$ below.

 For $t\ge 0$ and a bounded test function $\phi:\R\to \R$ we first define \begin{equation}\label{a}\mathscr X^N_t(\phi):= \int_{\mathbb R} e^{-\frac{t}{2}N^{1/2} + uN^{-1/4}}\phi(N^{-1/2} (u-N^{3/4}t)) K_{0,Nt}(du)
	\end{equation}
	so that on a purely formal level, one has (via the substitution $x = N^{-1/2}(u-N^{3/4}t)$) \begin{align}
	    \label{scale0}
     \mathscr X^N_t(\phi) = \int_{\mathbb R} \phi(x) N^{1/2} e^{\frac{t}{2}N^{1/2} + xN^{1/4}}K_{0,Nt} (t N^{3/4} + x N^{1/2})dx.
	\end{align}
	The above formally defines a \textit{spatial} pairing of $\mathscr X_t^N$ with $\phi$ in $L^2(\mathbb R)$, and we can also define \textit{space-time} pairings of smooth compactly supported test functions $\varphi:\mathbb R^2\to\mathbb R$ by the formula \begin{equation}\label{b}(\mathscr X^N,\varphi)_{L^2(\mathbb R^2)}:= \int_0^\infty \mathscr X_t^N(\varphi(t,\cdot))dt,
 \end{equation}
{though again we emphasize that $\mathscr X^N$ is not an element of $L^2(\mathbb{R}^2)$ and the subscript here is merely suggestive. Our first result shows that for a fixed $t>0$ and a spatial test function $\phi$, the moments of $\mathscr X_t^N(\phi)$ converge to the moments of the stochastic heat equation paired with $\phi$.} 

	\begin{prop}[Convergence of moments] \label{p:mcov} Fix $t> 0$, and $k\in \mathbb N$. For all $\phi \in \mathcal S(\mathbb R)$ (the Schwartz space on $\R$),  one has the following limit:
		\begin{align}
			\label{e:mcov}
			\lim_{N\to\infty}\mathbf \Ex[\mathscr X_t^N(\phi)^k] = \mathbf E_{B^{\otimes k}}\bigg[\prod_{j=1}^k \phi(B^j_t) e^{{\frac{\sigma}{2}} \sum_{i<j} L^{B^i-B^j}_0(t)} \bigg]= \mathbf E[\mathcal{Z}_t(\phi)^k].
		\end{align} Here $\mathbb E$ denotes the annealed expectation over the environment $\omega$ and $\mathcal{Z}_t(\phi):= \int_\mathbb R \mathcal{Z}_t(x) \phi(x)dx$, where $(t,x)\mapsto \mathcal{Z}_t(x)$ solves \eqref{she} with $\sigma= \frac1{{2}\nu([0,1])}$ under Dirac delta initial condition. The expectation of the middle term is with respect to $k$ independent Brownian motions, and $L_0^Y(t)$ denotes the local time accrued by $Y$ at zero by time $t$. 
	\end{prop}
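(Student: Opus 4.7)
The plan is to compute $\Ex[\mathscr X_t^N(\phi)^k]$ directly using the $k$-point motion of the Howitt--Warren flow, and then apply a Girsanov transform that simultaneously absorbs the exponential tilt and generates the shift by $N^{3/4}t$. Using the defining relation for the $k$-point motion, one expands
\[
\Ex[\mathscr X_t^N(\phi)^k] = \Ex\bigg[\prod_{i=1}^k \phi\big(N^{-1/2}(X^i_{Nt}-N^{3/4}t)\big)\exp\Big(N^{-1/4}X^i_{Nt}-\tfrac{t}{2}N^{1/2}\Big)\bigg],
\]
where $\mathbf X=(X^1,\ldots,X^k)$ is the $k$-point $\sbm$ of Definition \ref{hwmp} started at the origin.

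Setting $\lambda := N^{-1/4}(1,\ldots,1)$ and invoking \eqref{covd}, one has $\tfrac{1}{2}\langle\lambda\cdot\mathbf X\rangle_{Nt} = \tfrac{kt}{2}N^{1/2} + N^{-1/2}\sum_{i<j}\int_0^{Nt}\ind_{\{X^i_s=X^j_s\}}ds$, so the product of exponentials factors as $\mathcal E_{Nt}\cdot \exp(R_N)$, where $\mathcal E_s := \exp(\lambda\cdot\mathbf X_s-\tfrac{1}{2}\langle\lambda\cdot\mathbf X\rangle_s)$ is an exponential martingale and $R_N := N^{-1/2}\sum_{i<j}\int_0^{Nt}\ind_{\{X^i_s=X^j_s\}}ds$. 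By the identity $L_0^{X^i-X^j}(Nt) = 4\nu([0,1])\int_0^{Nt}\ind_{\{X^i_s=X^j_s\}}ds$ derived from \eqref{eq:two-point-sbm} together with Brownian scaling of local time, $R_N = \tfrac{\sigma}{2}\sum_{i<j}L_0^{B^i-B^j}(t)$, where $B^i_s := N^{-1/2}X^i_{Ns}$ and $\sigma = 1/(2\nu([0,1]))$. Applying Girsanov's theorem with density $\mathcal E_{Nt}$ produces a new measure $\tilde\P$ under which $\tilde X^i_s := X^i_s - \sum_j \lambda_j\langle X^i,X^j\rangle_s$ is a continuous martingale. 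Since $\sum_j \lambda_j\langle X^i,X^j\rangle_{Nt} = N^{3/4}t + O(N^{1/4})$, the rescaled process $\tilde B^i_s := N^{-1/2}\tilde X^i_{Ns}$ satisfies $N^{-1/2}(X^i_{Nt}-N^{3/4}t) = \tilde B^i_t + O(N^{-1/4})$.

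The crux is then to show, under $\tilde\P$, that (i) $(\tilde B^1,\ldots,\tilde B^k)$ converges in law to a family of $k$ independent standard Brownian motions $(B^1,\ldots,B^k)$; (ii) jointly, $L_0^{\tilde B^i-\tilde B^j}(t)\to L_0^{B^i-B^j}(t)$; and (iii) $\exp\big(\tfrac{\sigma}{2}\sum_{i<j}L_0^{\tilde B^i-\tilde B^j}(t)\big)$ is uniformly integrable. For (i), the covariation $\langle \tilde X^i,\tilde X^j\rangle_{Ns} = \int_0^{Ns}\ind_{\{X^i_r = X^j_r\}}dr$ is unchanged under Girsanov and scales as $O(N^{-1/2})\to 0$, so sticky interactions wash out at the diffusive scale. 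For (ii), a Tanaka-based representation deduces local-time convergence from path convergence and martingale estimates. For (iii), a Khasminskii-type exponential moment bound on Brownian local times suffices. Combining these,
\[
\lim_{N\to\infty}\Ex[\mathscr X_t^N(\phi)^k] = \mathbf E_{B^{\otimes k}}\bigg[\prod_i\phi(B^i_t)\exp\Big(\tfrac{\sigma}{2}\sum_{i<j}L_0^{B^i-B^j}(t)\Big)\bigg],
\]
and the second equality of \eqref{e:mcov} is the classical Feynman--Kac / delta-Bose-gas moment formula for the SHE with Dirac initial data.

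The main obstacle is the joint convergence (i)--(ii): functional convergence of sticky Brownian local time at zero of differences to the independent Brownian local time requires a topology stronger than uniform convergence of paths and a delicate treatment of the collision statistics under the shifted measure $\tilde\P$, and arranging this jointly with convergence of the terminal positions so that one can pass to the limit inside the exponential.
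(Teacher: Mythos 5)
Your strategy is the same as the paper's: expand the $k$-th moment over the $k$-point sticky Brownian motion, recognize the exponential prefactor as a stochastic exponential times $\exp(\tfrac{\sigma}{2}\sum_{i<j}L_0)$ of the rescaled intersection times, remove the tilt and the $N^{3/4}t$ drift by Girsanov, and then pass to the limit where stickiness washes out. Your algebra up to that point is correct (including the factor $\sigma/2$ and the $O(N^{1/4})$ drift correction). However, the step you flag as ``the main obstacle'' is precisely where all the content of the paper's proof lives, and as written it is a genuine gap rather than a routine verification: you need (i) convergence of the tilted $k$-point motion to independent Brownian motions, (ii) \emph{joint} convergence of the rescaled intersection times to the limiting local times, and (iii) the fact that the Girsanov density itself does not contribute in the limit. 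Point (iii) is not automatic: under $\tilde\P$ the law of $(\tilde B^i)$ is not the sticky law but a tilted version of it, and one must show that the tilt asymptotically decouples from the Brownian limit.

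The paper resolves these as follows, and you would need some version of each. For (ii), rather than proving local-time convergence in a strong path topology, one proves tightness of $4N^{1/2}\nu([0,1])V^{ij}$ in $C[0,T]$ from moment bounds on increments (Lemma \ref{lte}) and then identifies \emph{any} limit point $K^{ij}$ by passing the martingale property of $|X^i-X^j|-4N^{1/2}\nu([0,1])V^{ij}$ to the limit and invoking Tanaka's formula, which forces $K^{ij}=L_0^{U^i-U^j}$; this sidesteps the topology issue you worry about. For (iii), the paper writes the drift-removed tilted measure explicitly as $\mathcal M(t)\cdot\mathbf P_{SB}$ with $\mathcal M$ a stochastic exponential of a martingale $G$ whose quadratic variation is controlled by intersection times (Lemma \ref{rnd}), and then shows $\langle X^i,G\rangle\to0$ via Kunita--Watanabe plus the intersection-time bounds, so that the limiting stochastic exponential has conditional expectation $1$ given the Brownian limit (equation \eqref{e:fil2}); this is what lets you drop the density in the limit. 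Uniform integrability (your (iii) on the exponential of local times) is supplied by the uniform exponential moment bound of Lemma \ref{traps}. Without arguments of this type, the passage to the limit inside the expectation is not justified.
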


        Using different methods, a {similar} multipoint moment convergence result is established in \cite{bld} for the case where the characteristic measure $\nu$ is a uniform measure. However, in contrast to the field \eqref{a} that we use in this paper, \cite{bld} used a slight variant which we refer to as the ``quenched tail field." {We refer the reader to Section \ref{sec:qtfresults} where we define the quenched tail field and discuss our results related to it.}

        {We will now describe our weak convergence result for the above field under the appropriate topology. Note that $\mathcal{Z}_t(x)$ is not uniquely characterized by its moments, since they grow too fast. Therefore, the convergence of moments alone will not be enough to establish weak convergence of $\mathscr X^N$. However, Proposition \ref{p:mcov} will still be relevant and will help us to identify the limit points of $\mathscr X^N$ once we show tightness in an appropriate Banach space.}

 We next introduce these suitable topologies for $\mathscr{X}^N$. Fix any $T>0$ and set $\Lambda_T:=[0,T]\times \R$. We denote by $C_c^\infty(\Lambda_T)$ the set of functions $\Lambda_T\to \mathbb R$ that are restrictions to $\Lambda_T$ of some function in $C_c^\infty(\mathbb R^2)$. For $\alpha<0$ we let $r=-\lfloor \alpha\rfloor$ and define the weighted parabolic H\"older space $C^{\alpha,\sig}_\mathfrak s(\Lambda_T)$ to be the closure of $C_c^\infty(\Lambda_T)$ with respect to the norm given by $$\|f\|_{C^{\alpha,\sig}_\mathfrak s(\Lambda_T)}:= \sup_{(t,x)\in\Lambda_T} \sup_{\lambda\in (0,1]} \sup_{\varphi \in B_r} \frac{(f,S^\lambda_{(t,x)}\varphi)_{L^2(\Lambda_T)}}{(1+x^2)^{\sig}\lambda^\alpha}$$ where the scaling operators are defined by $S^\lambda_{(t,x)}\varphi (s,y) = \lambda^{-3}\varphi(\lambda^{-2}(t-s),\lambda^{-1}(x-y))$, and where $B_r$ is the set of all smooth functions of $C^r$ norm (see \eqref{crnorm}) less than $1$ with support contained in the unit ball of $\mathbb R^2$. These spaces are separable and embed naturally into $\mathcal S'(\mathbb R^2)$ (the space of tempered distributions); {see Sections 2 and 3 of \cite{hairer2018multiplicative}}. 

 Similarly, for $\alpha < 0, r=-\lfloor \alpha\rfloor$, we define the weighted elliptic H\"older space $C^{\alpha,\sig}(\mathbb R)$ to be the closure of $C_c^\infty(\mathbb R)$ with respect to the norm given by $$\|f\|_{C^{\alpha,\sig}(\mathbb R)}:= \sup_{x\in\mathbb R} \sup_{\lambda\in (0,1]} \sup_{\phi \in B_r} \frac{(f,S^\lambda_x\phi)_{L^2(\mathbb R)}}{(1+x^2)^{\sig}\lambda^\alpha}$$ where the scaling operators are defined by $S^\lambda_{x}\phi (s,y) = \lambda^{-1}\phi(\lambda^{-1}(x-y))$, and where $B_r$ is now the set of all smooth functions of $C^r$-norm (see \eqref{crnorm}) less than $1$ with support contained in the unit ball of $\mathbb R$. As before, these spaces are separable and embed naturally into $\mathcal S'(\mathbb R)$; {see Sections 2 and 3 of \cite{hairer2018multiplicative}}.
Finally, for a Banach space $\Xi$ we define the function space $C([0,T],\Xi)$ containing all continuous paths $[0,T]\to \Xi$, equipped with a norm given by $\|v\|:= \sup_{t\in [0,T]} \|v(t)\|_{\Xi}.$ In particular, we will consider the spaces $C([0,T],C^{\gamma,\sig}(\mathbb R))$.

Our main result, stated below, shows that the collection $\{\mathscr X^N\}_{N \ge 1}$ converges weakly to the stochastic heat equation when viewed as elements of certain weighted parabolic H\"older spaces or certain function spaces. 
 
  \begin{thm}[Weak Convergence]  Fix any $T>0$, and $\sig>1$.\label{t.weakConv}
  \begin{enumerate}[label=(\alph*), leftmargin=18pt]
      \item Fix any $\alpha<-1$. For each $N\ge 1$, $\mathscr{X}^N$ as defined by \eqref{a} and \eqref{b} can be viewed as an element of {$C_{\mathfrak{s}}^{\alpha,\tau}(\Lambda_T)$}. Furthermore, the collection $\{\mathscr X^N\}_{N \ge 1}$ is tight with respect to the above topology of {$C_{\mathfrak{s}}^{\alpha,\tau}(\Lambda_T)$}. Additionally, any limit point as $N\to \infty$ is concentrated on $C((0,T],C(\mathbb R))$ and coincides with the law of \eqref{she} with $\sigma=\frac1{{2}\nu([0,1])}$,  started from Dirac delta initial condition. \label{t.main} 
      \item Fix any $\gamma<-2$. For each $N\ge 1$, $(\mathscr{X}_t^N)_{t\in [0,T]}$ as defined by \eqref{a} can be viewed as an element of the space {$C([0,T],C^{\gamma,\sig}(\mathbb R))$}. Furthermore, the collection $\{\mathscr X^N\}_{N \ge 1}$ is tight with respect to that topology. Any limit point as $N\to \infty$ is concentrated on $C((0,T],C(\mathbb R))$ and coincides with the law of \eqref{she} with $\sigma=\frac1{{2}\nu([0,1])}$,  started from Dirac delta initial condition. \label{t.main2}
  \end{enumerate}

\end{thm}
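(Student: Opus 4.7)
The plan is to decompose the proof into three stages: (i) establish uniform-in-$N$ moment bounds on pairings $(\mathscr{X}^N, S^\lambda_{(t,x)}\varphi)$ and $\mathscr{X}^N_t(S^\lambda_x \phi)$ against scaled test functions; (ii) use these bounds together with a Kolmogorov-type criterion to deduce tightness in the weighted H\"older spaces $C^{\alpha,\sig}_{\mathfrak s}(\Lambda_T)$ and $C([0,T],C^{\gamma,\sig}(\mathbb R))$; and (iii) identify any subsequential limit with the solution of \eqref{she} by extending Proposition \ref{p:mcov} to multi-point moments and invoking a uniqueness argument.

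The first stage is where Proposition \ref{p:mcov} does the real work. Inspecting the Girsanov-based representation of $\Ex[\mathscr{X}_t^N(\phi)^k]$ in terms of $k$-point sticky Brownian motion moments, one should obtain not only the limiting formula but also quantitative bounds
\[
\Ex\bigl[|(\mathscr{X}^N, S^\lambda_{(t,x)}\varphi)|^{2p}\bigr] \;\lesssim\; \lambda^{2p\alpha}(1+x^2)^{p\sig}
\]
uniformly in $N$, $\lambda \in (0,1]$, $(t,x)\in \Lambda_T$, and $\varphi \in B_r$. The spatial factor $(1+x^2)^{p\sig}$ comes from Gaussian tails of the underlying Brownian motions in the Feynman--Kac/local-time representation appearing on the right-hand side of \eqref{e:mcov}, while the $\lambda^{2p\alpha}$ factor comes from parabolic scaling of $S^\lambda_{(t,x)}$ combined with the integrability of local-time exponentials of pairs of Brownian motions over small parabolic boxes. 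I would derive the analogous purely spatial bound for $\mathscr{X}^N_t(S^\lambda_x \phi)$ the same way, and extract time-regularity estimates of the form $\Ex[\|\mathscr{X}_t^N - \mathscr{X}_s^N\|_{C^{\gamma,\sig}(\mathbb R)}^{2p}] \lesssim |t-s|^{p\theta}$ from multi-time versions of \eqref{e:mcov}, at least for $0 < s < t \le T$ away from the singular initial time.

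For the second stage, I would combine these moment bounds with the countable-test-function tightness criterion for weighted H\"older spaces (as in Sections 2--3 of \cite{hairer2018multiplicative}) and compact embeddings $C^{\alpha',\sig'}_{\mathfrak s} \hookrightarrow C^{\alpha,\sig}_{\mathfrak s}$ for $\alpha' > \alpha$ and $\sig' < \sig$; taking $p$ sufficiently large produces tightness in $C^{\alpha,\sig}_{\mathfrak s}(\Lambda_T)$ for any $\alpha < -1$ and likewise in $C([0,T],C^{\gamma,\sig}(\mathbb R))$ for any $\gamma < -2$. The fact that limit points lie in $C((0,T],C(\mathbb R))$ follows because the same moment estimates, when applied at fixed $t>0$, show that the limit is a random continuous function of $x$ for every $t>0$ (e.g.\ via a Kolmogorov-continuity argument on the spatial variable).

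The hard part, and the main obstacle, will be stage (iii): identifying the limit. Since the SHE with Dirac delta initial data has moments growing as fast as $\exp(c k^3 t)$, the Carleman moment-determinacy criterion fails, so Proposition \ref{p:mcov} alone does not pin down the law. My plan is to extend Proposition \ref{p:mcov} to joint moments $\Ex[\prod_{\ell=1}^m \mathscr{X}^N_{t_\ell}(\phi_\ell)^{k_\ell}]$ for arbitrary finite collections of times and test functions, again via the Girsanov/many-point sticky Brownian motion representation, to conclude that these converge to the corresponding joint moments of $\mathcal{Z}$. These joint moments are precisely the coefficients of the Wiener chaos expansion of $(t,\vec x)\mapsto \mathcal Z_{t_1}(x_1)\cdots \mathcal Z_{t_m}(x_m)$, and by positivity of $\mathcal Z$ together with the Gaussian concentration of $\log \mathcal Z$, the law of $\mathcal Z$ viewed inside the spaces above is uniquely characterized by this family. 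Combined with tightness, this forces any subsequential limit of $\mathscr{X}^N$ to agree in law with the solution of \eqref{she} with $\sigma = 1/(2\nu([0,1]))$. If this uniqueness-from-moments argument turns out to be delicate, the backup is to show directly that any limit $\mathscr Z$ satisfies the martingale problem for \eqref{she} by passing an approximate microscopic evolution equation for $\mathscr{X}^N$ (of discrete-SHE type, obtained from the generator of the Howitt--Warren flow) to the limit.
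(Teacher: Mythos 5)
The decisive gap is in your stage (iii). Your primary identification argument — that the joint moments of $\mathcal Z$, together with ``positivity and Gaussian concentration of $\log \mathcal Z$'', uniquely determine the law — is not valid. The moments of \eqref{she} with Dirac data grow like $e^{ck^3}$, Carleman fails, and no amount of positivity rescues moment-determinacy: knowing all moments of $\mathcal Z=e^{\mathcal H}$ does not give you control of the law of $\mathcal H$ (whose upper tail is only stretched-exponential, not sub-Gaussian), and the joint moments are in any case not ``the coefficients of the Wiener chaos expansion'' — those coefficients are deterministic kernels determined by the second moment alone. The paper explicitly flags that moment convergence cannot identify the limit. Your one-sentence backup (the martingale problem) is the correct route and is what the paper actually does, but as stated it omits the entire technical content. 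To close the martingale problem one must verify \eqref{me2}, i.e.\ that the quadratic variation of the limiting martingale is $\sigma\int_0^t (X_s^\infty)^2\phi^2\,ds$. In the prelimit the quadratic variation is $\int_0^t((\mathscr X_s^N)^{Sq},(\phi'+N^{1/4}\phi)^2)\,ds$ (Lemma \ref{qv}), involving the \emph{square of the atomic measure} $K^{Sq}$, and the heart of the proof is a replacement/self-averaging statement (Proposition \ref{4.1}) showing that $Q_t^N(\xi_\e^a)-\sigma\int_0^t \mathscr X_s^N(\xi_{\e\sqrt2}^a)^2\,ds\to 0$ in $L^2$ — i.e.\ that mollifying the squared field agrees with squaring the mollified field in the limit. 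This requires four-point sticky Brownian motion moment formulas and Brownian-bridge local-time estimates, and nothing in your proposal engages with it.

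A secondary issue concerns part (b). You propose to obtain time regularity of $t\mapsto \mathscr X^N_t$ directly from multi-time moment formulas for increments $\mathscr X^N_t-\mathscr X^N_s$. Expanding $\Ex[(\mathscr X_t^N(\phi)-\mathscr X_s^N(\phi))^{2k}]$ via the Girsanov representation produces $2^{2k}$ multi-time sticky Brownian motion terms whose near-cancellation must be exhibited to extract a positive power of $|t-s|$; this is not automatic and is precisely what the paper's architecture avoids. The paper instead gets time regularity from the martingale decomposition: BDG applied to $M^N$ reduces everything to increments of the quadratic martingale field $Q^N$, which are controlled by local-time increments of two-point motions (Proposition \ref{tight1}), and tightness of $\mathscr X^N$ in $C([0,T],C^{\gamma,\sig}(\mathbb R))$ is then transferred from $M^N$ via the bounded operator $\overline{K\partial_s}$. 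Your stages (i)--(ii) for the fixed-time and parabolic-space tightness are essentially the paper's Proposition \ref{xtight} and are fine; it is the identification step and the temporal regularity for part (b) that need the martingale machinery you relegated to a fallback.
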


\begin{rk}\label{rk.main} A few remarks related to the above theorem are now in order. 
\begin{enumerate}[label=(\alph*),leftmargin=18pt]
    \item\label{finiteconv} {We explain how to interpret the above results in terms of test functions.} For any $\varphi_1,\ldots,\varphi_m \in C_c^\infty(\mathbb R^2)$, either part of Theorem \ref{t.weakConv} implies 
    the joint convergence of $\big((\mathscr X^N,\varphi_j)_{L^2(\mathbb R^2)}\big)_{1\le j\leq m}$ as defined in \eqref{b} to $\big(\int_{\mathbb (0,\infty)\times\mathbb R} \mathcal{Z}_t(x)\varphi_j(t,x)dtdx\big)_{1\le j\leq m}$.
    \item\label{finiteconv2} {Theorem \ref{t.weakConv}\ref{t.main2} implies convergence of $(\mathscr X_t^N(\phi))_{t\in [0,T]}$ to $\big( \int_\mathbb R \mathcal Z_t(x)\phi(x)dx\big)_{t\in [0,T]}$, viewed as random variables in $C[0,T]$, for any $\phi\in C_c^\infty(\mathbb R)$. In particular, one may evaluate the field at some finite collection of fixed times to obtain multi-time convergence.}
    \item {There is a tradeoff between the two parts of the theorem. Theorem \ref{t.weakConv}\ref{t.main} is a statement about the convergence of the field $\mathscr X^N$ when tested against smooth functions in both space and time, and it does not imply convergence in law of $\mathscr X_t^N(\phi)$ for fixed $t>0$. However, $\alpha<-1$ is indeed the optimal H\"older exponent that one could hope to obtain for convergence of the fields $\mathscr X^N$ in the parabolic spaces (the heat kernel itself does not have better regularity). On the other hand, Theorem \ref{t.weakConv}\ref{t.main2} implies convergence of the spatial field for fixed $t>0$, but we believe $\gamma < -2$ is no longer the optimal H\"older exponent for the function space.} 
    \item {The weights $\tau>1$ are not optimal. It should be possible to get rid of the weights altogether, since \eqref{she} started from Dirac initial condition is known to have nice decay properties in both space and time, but some technical aspects of the paper are simplified by using weights.}
\end{enumerate}
\end{rk}

\smallskip

Theorem \ref{t.weakConv} is part of a series of efforts that have sought to show the weak KPZ universality conjecture, which postulates that a large class of weakly asymmetric models rescale to the KPZ equation (see the introduction of \cite{hq18} for a brief background). For instance, convergence to the KPZ equation has been established in a variety of models:  directed polymers in the intermediate disorder regime \cite{akq}, exclusion processes \cite{BG97,ACQ,dembo,ct17,gj17,cst18,cgst20,yang22,yang23}, and a large class of stochastic growth models \cite{hq18,ac22,cha22,yang23b}. In the context of diffusion in time-dependent random environments, \cite{gu} studied nearest
neighbor random walk on $\mathbb{Z}$ in random environments. They showed that under the weak scaling of the environment, the rescaled quenched transition probability evaluated
in the large deviation regime converges to the solution of the stochastic
heat equation. {The work \cite{mark} considered the analogous setting for $\sbm$ with a uniform characteristic measure. They showed that by weakly scaling the environment, which in this case corresponds to rescaling the characteristic measure, the moments of the Howitt-Warren flow evaluated
in the large deviation regime converge to the moments of the stochastic
heat equation.} In a similar spirit, \cite{dom} considered a continuous SPDE model that models the trajectory of a particle in a turbulent fluid. They showed that under a weak environment setting, the limiting fluctuations of the quenched law of the underlying process are given by the KPZ equation.

\smallskip

We emphasize that we do not deliberately introduce any weak asymmetry into our model, i.e., the environment is independent of $N$ and there are no parameters of the model that are being tuned.  Rather, the KPZ fluctuations suggest that the weak asymmetry is somehow introduced naturally as a consequence of the moderate deviation scaling. In fact, by the scaling property of $\sbm$ \cite[Proposition 2.4]{sss}, our result can be converted to a large deviation regime result under weak stickiness, {which would confirm the moment-based predictions for the case of a uniform characteristic measure in \cite{mark}.}
        
\subsubsection{Quenched tail field and connection to extreme value theory} \label{sec:qtfresults}    In this subsection, we describe how KPZ equation convergence can be established for the quenched tail probability from our results on the density field stated in the previous section.
   
\begin{defn}\label{qtf}
    We define the quenched tail field by $$F_N(t,x):= N^{1/4} e^{\frac{t}2 \sqrt{N}+N^{1/4}x} K_{0,Nt}[N^{3/4}t+N^{1/2}x,\infty).$$
\end{defn}
  {The prefactor $N^{1/4}$, as opposed to the $N^{1/2}$ observed in \eqref{scale0}, essentially comes from integration by parts  which absorbs an $N^{1/4}$ factor from the $N^{1/2}$ (see the proof of Proposition \ref{hi})}. We remark that although the $F_N$ are function-valued, they are \textit{discontinuous} functions due to the atomic nature of $K_{0,t}$ \cite[Theorem 2.8]{sss}. Our next theorem states that the family $\{\log F_N\}_{N\ge 1}$ of space-time processes converges to the KPZ equation, in the sense of finite-dimensional distributions of pointwise values $(t,x)$.
        
   \begin{thm}\label{t:qtfconv}
   	For any finite collection of space-time points $\{(t_i,x_i)\}_{i=1}^m\in ((0,\infty)\times \mathbb{R})^m$ one has the joint convergence $$\big( \log F_N(t_i,x_i) \big)_{i=1}^m \stackrel{d}{\to} \big( \mathcal H_{t_i}(x_i)\big)_{i=1}^m,$$
   	where $\mathcal{H}_t(x)$ solves \eqref{def:kpz} with $\sigma=\frac1{{2}\nu([0,1])}$.
   \end{thm}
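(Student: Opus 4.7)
The plan is to rewrite $F_N$ as the density field $\mathscr X^N$ tested against an approximate delta function, and then extract pointwise values from Theorem \ref{t.weakConv}\ref{t.main2} via a mollification argument. The starting point is the exact identity, obtained by direct substitution in \eqref{a}:
\begin{equation*}
F_N(t,x) \; = \; \int_x^\infty N^{1/4} e^{-N^{1/4}(y-x)}\, \mathscr X_t^N(dy) \; =: \; \mathscr X_t^N(\phi_N^x),
\end{equation*}
where the kernel $\phi_N^x(y) := N^{1/4} e^{-N^{1/4}(y-x)}\mathbf{1}_{\{y \geq x\}}$ is a probability density concentrating weakly to $\delta_x$ as $N\to\infty$. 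To verify this, plug $\phi = \phi_N^x$ into \eqref{a}: the prefactor $e^{-\frac{t}{2}N^{1/2}+uN^{-1/4}}$ combines with $\phi_N^x(N^{-1/2}(u-N^{3/4}t))$ so that all $u$-dependence in the exponent cancels, leaving the $u$-independent factor $N^{1/4} e^{\frac{t}{2}N^{1/2}+N^{1/4}x}$ together with the indicator $\mathbf{1}_{\{u \geq N^{3/4}t+N^{1/2}x\}}$; integrating against $K_{0,Nt}(du)$ produces exactly $F_N(t,x)$. This identity is the integration by parts alluded to just after Definition \ref{qtf}: the $N^{1/4}$ prefactor in $F_N$ (as opposed to $N^{1/2}$ in $\mathscr X^N$) is absorbed into $\phi_N^x$.

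Next, fix a smooth, symmetric, compactly supported mollifier $\rho \in C_c^\infty(\mathbb{R})$ of unit mass and set $\rho_\varepsilon^x(y) := \varepsilon^{-1}\rho((y-x)/\varepsilon)$. Decompose
\begin{equation*}
F_N(t_i,x_i) = \mathscr X_{t_i}^N(\rho_\varepsilon^{x_i}) + \mathscr X_{t_i}^N(\phi_N^{x_i} - \rho_\varepsilon^{x_i}),
\end{equation*}
and send $N \to \infty$ followed by $\varepsilon \to 0$. For the first term, Theorem \ref{t.weakConv}\ref{t.main2} combined with the continuity of evaluating the path in $C([0,T],C^{\gamma,\tau}(\mathbb{R}))$ at the times $t_i$ and pairing against $\rho_\varepsilon^{x_i}$ yields the joint convergence $(\mathscr X_{t_i}^N(\rho_\varepsilon^{x_i}))_i \to ((\mathcal Z_{t_i} * \rho_\varepsilon)(x_i))_i$ in distribution for each fixed $\varepsilon$. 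As $\varepsilon \to 0$, the spatial continuity of $\mathcal Z_t$ for $t > 0$ (a consequence of $\mathcal Z_t = e^{\mathcal H_t}$ together with the H\"older continuity of $\mathcal H_t$) gives $(\mathcal Z_{t_i} * \rho_\varepsilon)(x_i) \to \mathcal Z_{t_i}(x_i)$ almost surely.

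The main obstacle is controlling the error term $\mathscr X_t^N(\phi_N^x - \rho_\varepsilon^x)$: since $\phi_N^x$ is not Schwartz and its $L^\infty$ and H\"older norms blow up with $N$, neither Theorem \ref{t.weakConv} nor Proposition \ref{p:mcov} applies verbatim. My plan is to control it in $L^2$ by extending the two-point moment analysis underlying Proposition \ref{p:mcov} to these $N$-dependent kernels. Specifically, for any measurable $\psi$ with mild decay, $\mathbb{E}[\mathscr X_t^N(\psi)^2]$ admits an explicit expression in terms of the two-point sticky Brownian motion which, as $N \to \infty$, converges to $\mathbb{E}_{B^1,B^2}[\psi(B_t^1)\psi(B_t^2)e^{\frac{\sigma}{2}L_0^{B^1-B^2}(t)}]$. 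Expanding $\mathbb{E}[(\mathscr X_t^N(\phi_N^x-\rho_\varepsilon^x))^2]$ into three two-point expectations (against $\phi_N^x \otimes \phi_N^x$, $\rho_\varepsilon^x \otimes \rho_\varepsilon^x$, and the cross term), each converges as $N \to \infty$ to the corresponding continuum quantity, and these continuum quantities all converge to the common value $\mathbb{E}[\mathcal Z_t(x)^2]$ as $\varepsilon \to 0$ by continuity of the two-point kernel at the diagonal $(x,x)$. This gives $L^2$, and hence in-probability, vanishing of the error, so $(F_N(t_i,x_i))_i \to (\mathcal Z_{t_i}(x_i))_i$ jointly in distribution. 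Finally, since $\mathcal Z_{t_i}(x_i) > 0$ almost surely by \cite{flo} and $\log$ is continuous on $(0,\infty)$, the continuous mapping theorem delivers $(\log F_N(t_i,x_i))_i \to (\mathcal H_{t_i}(x_i))_i$ in distribution, as required.
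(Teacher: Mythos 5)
Your overall architecture mirrors the paper's: identify $F_N(t,x)$ with the density field tested against a kernel concentrating at $x$, handle the mollified part via Theorem \ref{t.weakConv}\ref{t.main2}, control the difference in $L^2$ through two-point correlations, and finish with positivity of $\mathcal Z$ and the continuous mapping theorem. The identity $F_N(t,x)=\mathscr X_t^N(\phi_N^x)$ with $\phi_N^x(y)=N^{1/4}e^{-N^{1/4}(y-x)}\ind_{\{y\ge x\}}$ is correct, and the outer structure of the iterated limit $\limsup_{\e\to0}\limsup_{N\to\infty}$ is sound.

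The genuine gap is the step you describe as ``extending the two-point moment analysis underlying Proposition \ref{p:mcov} to these $N$-dependent kernels.'' This is not an extension of the existing machinery; it is the hard part of the theorem. Theorem \ref{converge} and Lemma \ref{dn} give convergence of expectations of \emph{fixed} continuous observables of the tilted $2$-point motion. Your error term requires evaluating $\mathbf E_{D_{N^{1/4}}SB^{(2)}_{N^{1/2}\nu}}\big[\phi_N^x(X_t)\phi_N^x(Y_t)e^{N^{1/2}V_t^{12}}\big]$, where $\phi_N^x$ concentrates at scale $N^{-1/4}$ and has sup-norm $N^{1/4}$; weak convergence of the pair $(X_t,Y_t)$ (together with the local-time weight) gives no control here. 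What is needed is a local-limit/density-level statement: locally uniform convergence of the joint law of $(X_t,Y_t)$ weighted by $e^{N^{1/2}V_t^{12}}$, i.e.\ precisely the uniform convergence $\Ex[F_N(t,x)F_N(t,y)]\to \mathbf E[\mathcal Z_t(x)\mathcal Z_t(y)]$ on compacts. The paper does \emph{not} obtain this by soft arguments: it observes that the two-point correlation depends on $\nu$ only through $\nu([0,1])$, reduces to the integrable uniform characteristic measure, and invokes the exact contour-integral moment formulas of \cite{mark} (Lemma \ref{7.5}), passing to the limit by dominated convergence in the contour integrals. Without this exact-solvability input (or a genuinely new local limit theorem for the tilted two-point sticky motion), the assertion that each of your three two-point expectations ``converges as $N\to\infty$ to the corresponding continuum quantity'' is unsupported, and the proof does not close. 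The same issue, in milder form, affects your cross term $\Ex[\mathscr X_t^N(\phi_N^x)\mathscr X_t^N(\rho_\e^x)]$, where one factor still concentrates.
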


The above theorem was conjectured in \cite{bld} where the authors established a multipoint moment convergence of the field $F_N(t,x)$ to that of the stochastic heat equation for the case where the characteristic measure $\nu$ is a uniform measure. Again, since the moments do not determine the distribution of the stochastic heat equation, the results in \cite{bld} do not yield Theorem \ref{t:qtfconv} even for the uniform case. 

\smallskip

To prove Theorem \ref{t:qtfconv}, we rely on Theorem \ref{t.weakConv}\ref{t.main2} and an integration by parts argument to first obtain the joint convergence 
\begin{align}\label{e.qtffin1}
	\bigg( \int_\mathbb R \phi_i(x) F_N(t_i,x) dx\bigg)_{i=1}^m \stackrel{d}{\to} \bigg( \int_\mathbb R \phi_i(x) \mathcal Z_{t_i}(x) dx\bigg)_{i=1}^m.
\end{align}
for $\phi_1,\ldots,\phi_m \in C_c^{\infty}(\R)$ and $t_1,\ldots,t_m>0$.
We then establish regularity bounds for the two-point spatial difference of the quenched tail field. This essentially follows from the work of \cite{sss}, \cite{yu}, and \cite{mark}. Given this regularity bound, the finite-dimensional convergence in \eqref{e.qtffin1} can be upgraded to finite-dimensional convergence of pointwise space-time values by an application of Fatou's lemma. The full details of the proof of Theorem \ref{t:qtfconv} are presented in Section \ref{sec:qtailfield}.

\smallskip

  One could go even further and ask about the convergence of $F_N$ in a stronger topology such as the Skorohod topology (recall the $F_N$ are discontinuous), which implies the multipoint result. We do not pursue this in the present paper and leave this as a future work.

\smallskip

{As a consequence of Theorem \ref{t:qtfconv}, we obtain the limiting distribution for the maximum particle of a $k$-point sticky Brownian motion $(X_t^1,\ldots,X_t^k)$ in the regime $t=O((\log k)^2)$.}
  
{\begin{thm}\label{t:max}
Fix $c,t>0$ and $d\in \mathbb R$. Let $(X_t^1,\ldots,X_t^k)$ be a $k$-point sticky Brownian motion with characteristic measure $\nu$.  Set the number of particles $k=k(N):= \lfloor \exp(\frac12cN^{1/2} +dN^{1/4}+r_N)\rfloor$ where $r_N$ can be any sequence satisfying $r_N=o(N^{1/4})$. Then
\begin{align*}
    \max_{1\le i\le k(N)} \big\{N^{-\frac14} X^i(Nt)\big\} -a_N(c,d,t) \stackrel{d}{\to} {\sqrt{\tfrac{t}{c}}\big( G+\mathcal{H}_{c}(d)\big)},
\end{align*}
where
\begin{align*}
   a_N(c,d,t):= \sqrt{ctN} - dN^{\frac14}\sqrt{\tfrac{t}{c}} -\sqrt{\tfrac{c}{t}} \big(r_N-\tfrac14 \log N\big).
\end{align*}
Here $G$ is a Gumbel random variable (i.e., $P(G\le a)=e^{-e^{-a}}$)  which is independent of $\mathcal H$, the solution to \eqref{def:kpz} with $\sigma=\frac{\sqrt{c}}{{2}\nu([0,1])\sqrt{t}}$.
 \end{thm}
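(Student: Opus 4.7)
The plan is to combine the conditional i.i.d.\ property of the $k$-point sticky Brownian motion (the particles are independent draws from the one-point kernel given the Howitt-Warren environment) with the quenched tail field convergence of Theorem \ref{t:qtfconv}, through a Poisson/Gumbel extremal argument. Because the limiting KPZ parameter in Theorem \ref{t:max} is $\sqrt{c/t}/(2\nu([0,1]))$ rather than the $1/(2\nu([0,1]))$ appearing in Theorem \ref{t:qtfconv}, I first perform a scaling reduction: by the Brownian scaling of the Howitt-Warren flow (\cite[Proposition 2.4]{sss}), the process $Y^i_s := \sqrt{c/t}\, X^i_{st/c}$ solves the Howitt-Warren martingale problem with characteristic measure $\nu\sqrt{t/c}$, and its quenched tail field converges to the KPZ equation with exactly the stated $\sigma$. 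Since $\max_i X^i_{Nt}=\sqrt{t/c}\max_i Y^i_{Nc}$, setting $b_N:=\sqrt{c/t}\,a_N$ reduces the claim to showing $N^{-1/4}\max_i Y^i_{Nc} - b_N \xrightarrow{d} G + \mathcal H_c(d)$.

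Next, conditioning on the environment, the $k$ particles $(Y^i_{Nc})$ become i.i.d.\ draws from $K^Y_{0,Nc}(0,\cdot)$, so that for $z':=\sqrt{c/t}\,z$, $y_N:=N^{1/4}(z'+b_N)$, and $p_N:=K^Y_{0,Nc}(y_N,\infty)$, one has
\begin{equation*}
    P\bigl(N^{-1/4}\max_i Y^i_{Nc} - b_N\le z'\bigr)= \mathbb E[(1-p_N)^k].
\end{equation*}
Writing $y_N = N^{3/4}c + N^{1/2}x_N$ to define $x_N$, and using the defining identity $F_N^Y(c,x) = N^{1/4}e^{\frac{c}{2}\sqrt N + N^{1/4}x}K^Y_{0,Nc}[N^{3/4}c+N^{1/2}x,\infty)$ together with $\log k = \tfrac{c}{2}\sqrt N + dN^{1/4} + r_N + o(1)$, a direct calculation using the explicit form of $a_N(c,d,t)$ shows $x_N\to \pm d$; invoking the reflection symmetry $\mathcal H_c(x)\stackrel{d}{=}\mathcal H_c(-x)$ of the narrow-wedge KPZ solution to fix the sign, this reduces to the clean identity
\begin{equation*}
    \log(kp_N) = \log F^Y_N(c,x_N) - z' + o(1).
\end{equation*}

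Applying Theorem \ref{t:qtfconv} to $Y$, combined with the spatial regularity of $F^Y_N$ established in its proof (to replace the fixed point $d$ by the moving point $x_N\to d$), gives $\log F^Y_N(c,x_N)\xrightarrow{d}\mathcal H_c(d)$, and hence $kp_N \xrightarrow{d} e^{-z'+\mathcal H_c(d)}$. Tightness of $kp_N$ forces $p_N\to 0$ and $kp_N^2\to 0$ in probability, so the expansion $(1-p_N)^k = \exp(-kp_N + O(kp_N^2))$ yields $(1-p_N)^k \xrightarrow{d}\exp(-e^{-z'+\mathcal H_c(d)})$. Since $(1-p_N)^k\in[0,1]$, bounded convergence gives
\begin{equation*}
    \mathbb E[(1-p_N)^k]\to \mathbb E\bigl[\exp(-e^{-z'+\mathcal H_c(d)})\bigr] = P\bigl(G+\mathcal H_c(d)\le z'\bigr),
\end{equation*}
with $G$ a standard Gumbel, independent of $\mathcal H$ since it arises from the quenched i.i.d.\ sampling, which is independent of the environment. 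Undoing the scaling by a factor $\sqrt{t/c}$ delivers Theorem \ref{t:max}.

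I expect the main obstacle to be upgrading the fixed-spatial-point convergence of Theorem \ref{t:qtfconv} to convergence at the moving point $x_N\to d$; this requires the quantitative two-point spatial regularity bound on the quenched tail field established along the way to Theorem \ref{t:qtfconv} (cf.\ the discussion after \eqref{e.qtffin1}). A secondary technical point is strengthening the distributional approximation $(1-p_N)^k\approx e^{-kp_N}$ enough to justify the bounded convergence step: since $p_N$ is random, this needs an environment-uniform tail estimate on $K^Y_{0,Nc}(y_N,\infty)$, obtainable via moment bounds, to preclude atypically large atoms of the Howitt-Warren flow near the threshold $y_N$.
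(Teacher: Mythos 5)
Your proposal is correct and follows essentially the same route as the paper: rescale via \cite[Proposition 2.4]{sss} so the characteristic measure and KPZ coupling constant match, use the quenched i.i.d.\ structure to write the annealed CDF as $\mathbb E[(1-p_N)^{k}]$, identify $kp_N$ with the quenched tail field at a spatial point converging to $d$, and pass to the limit by bounded convergence. The two obstacles you flag are already resolved by the paper's machinery: Proposition \ref{unifconv} is stated precisely for moving spatial points $x_i^N\to x_i$, and tightness of $kp_N$ alone gives $p_N\to 0$ and $kp_N^2\to 0$ in probability, so no environment-uniform atom estimate is needed (the paper instead uses uniform convergence of $u\mapsto(1-u/k)^k$ to $e^{-u}$ on compacts, which is equivalent).
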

We remark that $k$-point sticky Brownian motion refers to the \textit{annealed} law of the $X^i$: we are not making a pathwise statement about the maximum for each realization of the kernels $K_{s,t}$, which is consistent with the fact that Theorem \ref{t.weakConv} is a weak convergence statement, not an almost sure convergence statement. We also remark that rather than allowing $\sigma$ to depend on $t$ and $c$, one may instead take $\sigma = \frac{1}{{2}\nu([0,1])}$ but then $\mathcal{H}_{c}(d)$ must be changed accordingly to $\mathcal{H}_{\frac{t^2}{c}}\big(\frac{td}{c}\big) + \log \big(\frac{t}{c}\big).$}

Taking $d=r_N=0$ and $t=1$, we see that the above statement is a result of the maximum of $e^{\frac12c\sqrt{N}}$ many sticky Brownian motion particles at time $N$.
This is the same as understanding the maximum of $N$ particles when time is of the order $(\log N)^2.$ 
This leads to the question of what happens when (for a fixed characteristic measure $\nu$) one looks at the maximum of $N$ particles at timescales different from $(\log N)^2$. 
At timescales of order $1$, we do not expect universality, as the answer may depend on the characteristic measure. If the characteristic measure satisfies $ \int_{[0,1]} q^{-1}\nu(dq)<\infty$, then the support result of \cite[Theorem 2.5a]{sss} implies that the maximum of $N$ particles at time $t=1$ converges in law (without any centering or scaling) to a Gaussian of mean ${2}\int_{[0,1]} q^{-1}\nu(dq)$.  
If $\int_{[0,1]} q^{-1}\nu(dq)=\infty$, we have no conjecture what happens.

At timescales of order $\log N$, the maximum of $N$ particles fluctuates like $(\log N)^{1/3}$ times a Tracy-Widom distribution. This is conjectured to be universal, but it is currently only provable in certain exactly solvable cases (see \cite[Corollary 5.8]{bc} or \cite{mark}). Finally, on timescales greater than $(\log N)^2$ we believe that the Gumbel term will dominate, since in this regime the motions closely resemble i.i.d.~Brownian motions. We conjecture that $(\log N)^2$ is the unique timescale at which one sees a mix of Gumbel with the KPZ equation. It remains to explore what happens on timescales between 1 and $\log N$, or between $\log N$ and $(\log N)^2$.

The physics paper \cite{hass23} contains numerical simulations which explore these regimes and seem to support some of our conjectures, although they consider the random walk in random environment which is a discrete analogue of our model. The physics paper \cite{alex} also contains interesting conjectures related to timescales slightly larger than $(\log N)^2$.

\subsection{Issues with the chaos expansion technique} \label{1.6} Before explaining the core ideas and novel techniques of the proof, it is important to highlight the constraints of traditional methods used in showing convergence to the \eqref{she}. Among the existing methods, the polynomial chaos method is a widely used approach in establishing weak convergence to the \eqref{she}.
In this method, the prelimiting object is first identified as a sequence of multi-linear polynomials of independent random variables (called polynomial chaos expansions). Then each term of the chaos series is shown to converge in $L^2$ to that of the Wiener chaos series of \eqref{she}. This idea was first implemented by Alberts, Khanin, and Quastel \cite{akq} for directed polymers. Later, \cite{poly} set up a general framework, formulating general conditions under which a polynomial chaos series converges in law to a Wiener chaos expansion. This framework has since been utilized extensively to show that \eqref{she} arises as a limit from several models of interest. In particular, Corwin and Gu \cite{gu} used the framework of \cite{poly} to obtain KPZ equation convergence for the random walk in a random environment (RWRE) model in the large deviation regime under a {weak environment} scaling. Although sticky Brownian motion bears a strong resemblance to the RWRE model and can be realized as its diffusive limit \cite{sss2}, there are two serious obstacles in carrying out the polynomial chaos approach in our setting.

\begin{enumerate}[leftmargin=18pt]
    \item Firstly, it is not clear how to set up the polynomial chaos for the quenched density in the context of a continuum random environment. Indeed, as shown in \cite{jan2004sticky} the noise generated by the Howitt-Warren flow is a \textit{black noise} in the sense of Tsirelson \cite{tsir} (see also \cite{jan2004sticky}). Black noises arise as a scaling limit in various discrete models, such as systems of coalescing random walks \cite{tsirelson2004boris,ellis2016brownian} and 2D critical percolation \cite{scsm}. These non-classical noises are a much more subtle and less understood subject than white noise. A stochastic calculus for black noise is not known, and in particular, there is no notion of iterated stochastic integrals with respect to black noise.

    \item Secondly, even for the discrete RWRE model, it is not straightforward to replicate the ideas of Corwin and Gu \cite{gu} to prove KPZ equation convergence under the moderate deviation regime. Although a polynomial chaos expansion for the quenched density is available in this regime, taking a naive limit of this discrete chaos expansion interestingly gives a noise coefficient in the limiting stochastic heat equation which is \textit{strictly smaller} than the physics prediction from \cite{bld}. {We refer to Section 1.1 of our sequel paper on discrete random walks in dynamical random environments \cite{DDP24} for more details on this}. This suggests that this polynomial chaos does not satisfy the conditions in \cite[Theorem 2.3]{poly} needed to apply their framework. {In this particular scenario, a nonzero proportion of the $L^2$-mass of the polynomial chaos series escapes into the tails of the series in the $N\to\infty$ limit, suggesting that additional independent noise is generated in the limit}.
\end{enumerate}

As far as we know the latter phenomenon has not been observed previously. We study this phenomenon in our upcoming work \cite{DDP24} where we prove a similar KPZ equation universality result for the quenched transition kernel of the RWRE using a similar strategy. Like this paper and in contrast with \cite{gu}, the environment law will be \textit{fixed} under the scalings, and we will focus on the {moderate} deviation setting.

\subsection{Proof Idea}	\label{sec:pfidea} In this section, we describe the broad ideas of the proof of our main theorem: Theorem \ref{t.weakConv}. {We focus on the proof of Theorem \ref{t.weakConv}\ref{t.main}. The proof of Theorem \ref{t.weakConv}\ref{t.main2} will follow readily from Theorem \ref{t.weakConv}\ref{t.main}, together with a short embedding lemma about these H\"older spaces under the action of the heat kernel.} 

\subsubsection{Girsanov's formula} \label{pfidea1} The main technique in our analysis will be a Girsanov-type formula for sticky Brownian motion. For simplicity, we illustrate here the $1$-point case. Using the definition of $\mc{X}_t^{N}(\phi)$ from \eqref{a} we may write 
		\begin{align}
  \label{ee2}
			\mathscr X_t^N(\phi) = \Ex_\omega^{(1)} \left[ e^{-\frac12 t \sqrt{N} +N^{-1/4}X_{Nt}} \cdot \phi\left(N^{-1/2} (X_{Nt}-N^{3/4}t)\right)\right],
		\end{align}
		where $\Ex_\omega^{(1)}$ denotes the quenched expectation with respect to a single motion $X$ sampled from the environment $\omega = \{K_{s,t}: - \infty\leq s\leq t<\infty\}.$  Note that in the annealed sense $K_{0,Nt}$ is simply the law of $B_{Nt}$ for a standard Brownian motion $B$. Thus, taking the annealed expectation on both sides of the above equation, and by the tower property for conditional expectation, we obtain
  \begin{align*}
		\Ex[\mathscr X_t^N(\phi)] & = \mE_{BM} \left[ e^{-\frac12 t \sqrt{N} +N^{-1/4}X_{Nt}} \cdot \phi\left(N^{-1/2} (X_{Nt}-N^{3/4}t)\right)\right] \\ & = \mE_{BM} \left[ e^{-\frac12 t \sqrt{N} +N^{1/4}X_{t}} \cdot \phi\left(X_{t}-N^{1/4}t\right)\right].
\end{align*}
Here the expectations are taken with respect to a standard Brownian motion $B$. In the last line, we used the scale invariance of Brownian motion to say that $X\stackrel{d}{=} (N^{-1/2}X_{Nt})_{t\ge 0}$. Note that $Z_t:=e^{-\frac{t}2\sqrt{N}+N^{1/4}X_t}$ is the stochastic exponential of $N^{1/4}X_t$. By Girsanov's theorem for Brownian motion, under the changed measure $Q(A):=\Ex_{BM}(Z_T\ind_{A})$, the process $(X_t-N^{1/4}t)_{t\in [0,T]}$ is again a Brownian motion. Thus, the last expression in the above equation is precisely equal to $\mathbf E_{BM}[\phi(B_t)]$ which no longer depends on $N$. This matches the first moment of $\int_\mathbb R \mathcal{Z}_t(x)\phi(x)dx$ where $(t,x)\mapsto \mathcal{Z}_t(x)$ is defined in Proposition \ref{p:mcov}.

\smallskip

In the case of higher moments, $\mathscr{X}_t^N(\phi)^k$ can be viewed as the quenched expectation with respect to $k$-point motion sample from the environment $\omega$. Then taking the annealed expectation over the quenched expectation will lead to expressions in terms of a $k$-point sticky Brownian motion. Then the key idea is to use a Girsanov-type formula for sticky Brownian motion (see Lemma \ref{rnd}) to get rid of divergent terms appearing in the annealed expectation expression. The resulting higher moment formulas appear in Lemma \ref{dn}. Unlike the first-moment computation, the resulting expressions for higher moments still depend on $N$. However, the expressions are amenable to taking the large $N$ limit. The expressions obtained in Lemma \ref{dn} are essentially annealed expectations with respect to a ~$k$-point sticky Brownian motion with characteristic measure $N^{1/2}\nu$. As $N\to \infty$, the stickiness disappears, and we are left with expectations with respect to a standard Brownian motion on $\R^{k}$. In Theorem \ref{converge} we compute these limits and show that they indeed match with the moments of \eqref{she} defined in Proposition \ref{p:mcov}.

\smallskip

{Through our method, the $N^{3/4}$ term appearing in the scaling is seen to be the unique and natural choice of exponent that universally gives KPZ fluctuations. Indeed, following \eqref{ee2}, one could potentially consider a model with more general exponents: 
\begin{align*}
	\mathscr X_t^{N,c_N}(\phi) := \Ex_\omega^{(1)} \left[ e^{-\frac12c_N^2 Nt +c_N X_{Nt}} \cdot \phi\left(N^{-1/2} (X_{Nt}-c_NNt)\right)\right],
\end{align*}
 and certain aspects of the paper would still go through. Indeed, following the arguments in the proof of Lemma \ref{dn}, one can check that the second moment is given by 
 \begin{align*}
	\Ex\big[\mathscr X_t^{N,c_N}(\phi)^2\big] = \mE\left[ \phi(X_t)\phi(Y_t)\exp\bigg(c_N^2N \int_0^t\ind_{\{X_s=Y_s\}}ds\bigg)\right],
\end{align*}
 where $(X,Y)$ is a certain tilted version of $2$-point $\sbm$ with characteristic measure $N^{1/2}\nu$. The key observation here is that $c_N=N^{-1/4}$ is the unique choice for which local times appear in the limiting expressions of the intersection times (see Theorem \ref{converge}). When $c_N\ll N^{-1/4}$, the contribution of the intersection times would vanish in the limit, whereas for $c_N\gg N^{-1/4}$ the expressions blow up. }

\smallskip

\subsubsection{Tightness} \label{sec1.4.2}
We now explain the main idea used in proving the tightness of the field $\mathscr X^N$. {Roughly speaking, the original conjecture made in \cite{bld} interpreted the Howitt-Warren flows $K_{0,t}$ as a Kolmogorov forward equation associated to an SDE with drift coefficient formally given by space-time white noise. They then apply a shear transform of space-time given by $(t,x) \mapsto (Nt,N^{3/4}t+N^{1/2}x)$ and note that (at least formally) this transforms the Kolmogorov forward equation into an SPDE which is essentially \eqref{she} plus some term that should vanish in the limit. Their derivation is non-rigorous because such a Kolmogorov SPDE is ill-posed due to the roughness of the noise. The main idea in our proof is to use a rigorous variant of this idea.}

\smallskip

More precisely, in Lemma \ref{qv} we will show that the fields $\mathscr X^N$ satisfy a forced heat equation of the form 
\begin{align}
    \label{feq}
    (\partial_t-\tfrac12\partial_x^2)\mathscr X^N = dM^N
\end{align}
in the sense of space-time Schwartz distributions, where $M^N$ is a \textit{martingale} forcing that is constructed in Section \ref{sec:mp} below. We do not aim to explicitly describe $M^N$ but simply work with it as though \eqref{feq} is the definition. Despite this non-explicit description of $M^N$, we can nonetheless show that the quadratic variations of $M^N$ admit the following nice decomposition:
\begin{equation}\label{edec}
		\langle M^N(\phi)\rangle_t = Q_t^N(\phi^2) +\mathcal E_t^N(\phi),
  \end{equation}
where $Q^N$ is the quadratic variation field introduced in \eqref{QVfield} and $\mathcal{E}_t^N(\phi)$ is an error term defined in \eqref{etn} that goes to zero in $L^2$ norm. Our tightness proof then proceeds in two steps:

\begin{itemize}[leftmargin=20pt]
    \item In the first step, we obtain various moment estimates for $Q^N$.  This is done by the same method outlined in Section \ref{pfidea1}. Indeed, the Girsanov approach allows us to get precise expressions for other relevant observables related to the field $\mathscr X^N$, not just the moments. In particular, it gives us access to moment estimates for $Q^N$ as well (Proposition \ref{tight1}).

    \item The next step is to use the moment estimates for $Q^N$ to obtain tightness estimates on the fields $\mathscr{X}^N$. Indeed, since the fields $M^N$ have a martingale structure, the Burkholder-Davis-Gundy inequality yields moment estimates for $M^N$ from moment estimates for $Q^N$. By Schauder estimates for the heat equation, we may, in turn, translate moment estimates for $M^N$ into tightness estimates for the fields $\mathscr X^N$ using \eqref{feq}.
\end{itemize}

From the above steps, we obtain tightness for the fields $\mathscr{X}^N$, $Q^N$, and $M^N$ in an appropriate topology (see Propositions \ref{xtight} and \ref{mcts}). This roundabout method turns out to be much more tractable than trying to obtain tightness for the fields $\mathscr X^N$ directly, see Proposition \ref{tight1} below. This type of method is somewhat similar to that used in \cite{BG97} where the authors proved KPZ fluctuations for WASEP. 

\subsubsection{Identification of the limit points} After tightness is obtained, it remains to identify the limit points. To do this, we will use the martingale characterization of the solution of the multiplicative noise stochastic heat equation. Specifically, consider a measure $\mu$  on $C([0,T],C(\mathbb R))$, and let $(X_t)_{t\in [0,T]}$ denote the canonical process on that space. The canonical filtration $\mathcal F_t$ on $C([0,T],C(\mathbb R))$ is the one generated by $\{X_s:s\leq t\}$. A result of \cite[Proposition 4.11]{BG97} inspired by the work of \cite{konno} says that if for all $\phi \in C_c^\infty(\mathbb R)$ the processes 
\begin{align}
\label{me1}
    M_t(\phi):= (X_t,\phi)_{L^2(\mathbb R)} - \frac12\int_0^t (X_s,\phi'')_{L^2(\mathbb R)}ds
\end{align}
 are $(\mathcal F_t,\mu)$-martingales with quadratic variation given by 
 \begin{align}\label{me2}
     \langle M(\phi)\rangle_t = \sigma\int_0^t (X_t^2, \phi^2)_{L^2(\mathbb R)}ds,
 \end{align}
 then (under reasonable assumptions on the spatial growth of $X_t$ at infinity) the measure $\mu$ necessarily coincides with the law of \eqref{she} started from an initial condition that is distributed as $X_0$ under $\mu$. 

 Let $(X^\infty, Q^\infty, M^\infty)$ be a limit point of $(\mathscr{X}^N,Q^{N}, M^N)$. Since in the prelimit the observables satisfy \eqref{feq}, from that equation it is not hard to deduce that $(X^{\infty},M^{\infty})$ satisfies \eqref{me1} with $(X,M)=(X^{\infty},M^{\infty})$. To show \eqref{me2}, we again rely on the Girsanov approach to extract moment formulas for certain observables in the prelimit. Using these formulas, loosely speaking, we shall show in Proposition \ref{4.1} that as $N\to \infty$
	\begin{align*}
		Q_t^N(a)-\sigma\int_0^t\left(\mc{X}_s^N(a)\right)^2 ds \stackrel{L^2}{\to} 0
	\end{align*}
 for each $a\in \R\setminus \{0\}$. The precise formulation of the above equation requires more care, as  $Q_t^N$ and $\mc{X}_t^N$ exist only as distributions. Assuming this, thanks to the decomposition in \eqref{edec} and the fact that $\mathcal{E}_t^N(\phi)$ vanishes in the limit, we get \eqref{me2} with $(X,M)=(X^{\infty},M^{\infty})$.

\smallskip

The proof strategy outlined above has the potential to generalize to the random walk in random environment (RWRE) setting. In an upcoming work, we plan to prove a similar KPZ equation universality result for the quenched transition kernel of the RWRE in the moderate deviation regime using the same strategy.

\begin{rk}[Universality]
{From the proof outlined above, we see that only the 1-point and 2-point motions associated with the kernels $K_{s,t}$ are consequential in the limit. The 1-point motion is simply Brownian motion which is why $M_t(\phi)$ as defined above is a martingale, while the 2-point motions appear in the expressions for the quadratic variations of those martingales. 
Indeed the kernels $K_{s,t}$ and their ``squares" $K_{s,t}^{Sq}$ (see \eqref{def:sq}) which appear in the expressions for the martingale $M^N$ and the quadratic martingale field $Q^N$ are purely in terms of the quenched expectations of at most two-point motions of $\sbm$ (see for instance \eqref{qrep}). Since the 1-point and 2-point motions of $\sbm$ are completely determined in law by just the total mass $\nu([0,1])$ of the characteristic measure (this can be seen for the 2-point motion by looking at \eqref{eq:two-point-sbm}), the limiting law in Theorem \ref{t.weakConv} only depends on $\sbm$ via $\nu([0,1])$. In other words, our result is universal in the sense that it yields the same limit for all characteristic measures $\nu$ with the same total mass.}
\end{rk}

 \subsection*{Organization} The rest of the article is organized as follows. In Section \ref{sec:girs} we describe the Girsanov transform and collect estimates related to sticky Brownian motion. In Section \ref{sec:momconv} we prove the moment convergence (Proposition \ref{p:mcov}). In Section \ref{sec:mp} we identify the martingale $M^N$ and show that the field $\mc{X}^N$ satisfies a forced heat equation with forcing $M^N$. Section \ref{sec:qmf} is devoted to analyzing the quadratic variation of $M^N$. Finally, in Section \ref{sec:smp} we prove Theorem \ref{t.weakConv} by utilizing the estimates from the previous sections to obtain tightness estimates and to identify the limit points for the fields $\mc{X}^N$. In Section \ref{sec:qtailfield}, we prove results related to the quenched tail field discussed in Section \ref{sec:qtfresults}. In Appendix \ref{app} we prove a few technical estimates related to Brownian bridges which are used in proving our main theorems.

 \subsection*{Notation and Conventions} Throughout this paper we use $\Con = \Con(a, b, c, \ldots) > 0$ to denote a generic deterministic positive finite
constant depending on $a, b, c, \ldots$ that may change from line to line. We write $\mathcal{S}(\mathbb{R}^d)$ to denote the space of all Schwartz functions on $\R^d$ and use $\mathcal{S}'(\mathbb{R}^d)$ to denote it's dual: the space of all tempered distributions. We write $\Ex$ and $\Ex_{\omega}$ for annealed and quenched expectations in the context of random motions in random environments. We use $\mathbf{E}$ to denote expectation under path measures such as Brownian motion, sticky Brownian motion, etc.

 \subsection*{Acknowledgements} We thank Ivan Corwin, Yu Gu, and Jon Warren for useful discussions related to this project and for comments on an earlier draft of the paper. We thank Rongfeng Sun for helpful discussions related to chaos expansion issues for sticky Brownian motion. We also thank Guillaume Barraquand for several illuminating discussions that eventually led us to derive results related to the quenched tail field.  {We are grateful to the two anonymous referees for their many valuable suggestions.} The project was initiated during the authors' participation in the ``Universality and Integrability in Random Matrix Theory and Interacting Particle Systems'' semester program at MSRI in the fall of 2021. The authors thank the program organizers for their hospitality and acknowledge the support from NSF DMS-1928930. SD and HD's research was partially supported by Ivan Corwin's NSF grant DMS-1811143, the Fernholz Foundation's ``Summer Minerva Fellows'' program, and also the W.M. Keck Foundation Science and Engineering Grant on ``Extreme diffusion''. HD was also supported by the NSF Graduate Research Fellowship under Grant No. DGE-2036197.

	\section{Girsanov transform and sticky Brownian motion estimates}\label{sec:girs}
	
In this section, we develop the basic framework of our proof. As mentioned in the introduction, our proof relies on a certain Girsanov-type transform for sticky Brownian motion. In Section \ref{sec:girt} we describe this transform that we will use repeatedly in our later analysis. In Section \ref{sec:sbmest}, we collect several estimates related to sticky Brownian motion.

	\subsection{Girsanov transform} \label{sec:girt} We begin with some necessary notation and definitions. Throughout this paper we assume $\nu$ is a fixed finite measure on $[0,1]$ with $\nu([0,1])>0$. Fix $T>0$. For each $k\in \mathbb N$, we denote 
	
	\begin{itemize}[leftmargin=18pt]
		\setlength\itemsep{0.5em}
		\item $\mathbf P_{B^{\otimes k}}$ \ : the law on the canonical space $C([0,T],\mathbb R^k)$ of $k$ independent Brownian motions. 
		
		\item $\mathbf P_{SB_\nu^{(k)}}$ \ : the law on the canonical space $C([0,T],\mathbb R^k)$ of the $k$-point motion of a sticky Brownian motion with characteristic measure $\nu$. 
	\end{itemize}

	\medskip

	Given $(X^1,\ldots,X^k)$ distributed as $\mathbf P_{SB_\nu^{(k)}}$, using \eqref{covd} we have
	\begin{align}\label{e:sumv}
		\bigg\langle \sum_{j=1}^k X^j\bigg\rangle(t)=kt+\sum_{i\neq j} \int_0^t \ind_{\{X_s^i=X_s^j\}}ds \le k^2t.
	\end{align}
	Thus by Novikov's criterion for each $\lambda\in \R$
	\begin{align}\label{mart2}
		Z(t):=\exp\bigg[ \lambda \sum_{j=1}^k X^j(s) - \frac{k\lambda^2 t}{2} - \lambda^2\sum_{i<j} \int_0^t \ind_{\{X^i_s = X^j_s\}} ds\bigg].
	\end{align}
	is a martingale.  With this information in hand, we now introduce two more measures in the following definition.
	
	\smallskip
	
	\begin{defn}\label{tmeasure} We denote by $\mathbf P_{T_\lambda SB_\nu^{(k)}}$ the measure which is absolutely continuous with respect to ~$\mathbf P_{SB_\nu^{(k)}}$ 
		with density $\mathbf P_{T_\lambda SB_\nu^{(k)}}=Z(t) \cdot \mathbf P_{SB_\nu^{(k)}}$ on $\mathcal{F}_t$ where $(\mathcal{F}_t)_{t\in [0,T]}$ is the filtration generated by $\mathbf X$. We define the measure $\mathbf P_{D_\lambda SB^{(k)}_\nu}$ to be the law on the canonical space $C([0,T],\mathbb R^k)$ of the process $t\mapsto (X^i_t-\lambda t)_{i=1}^k$ where $(X^i)_{i=1}^k$ is distributed as $\mathbf P_{T_\lambda SB^{(k)}_{\nu}}.$ We shall often refer to them as the $T_\lambda SB_\nu^{(k)}$ and $D_\lambda SB_\nu^{(k)}$ measures.
	\end{defn}

	The following lemma shows that $\mathbf P_{D_\lambda SB^{(k)}_\nu}$ is also absolutely continuous with respect to ~$\mathbf P_{SB^{(k)}_\nu}$ on the interval $[0,T]$ with an explicit Radon-Nikodym derivative. To state the lemma, we introduce a few more pieces of notation. For $\mathbf x\in \mathbb R^k$, define $m_i(\mathbf x)$ to be the cardinality of the set $\{j\leq k: x_i=x_j\}.$ Define
	\begin{align}
		\label{def:gm}
		G(t) := \sum_{j=1}^k \int_0^t \bigg[ 1-\frac{1}{m_j(\mathbf X(s))}\bigg]dX^j(s), \quad \mathcal M(t): = \exp\bigg(\lambda G(t) - \frac{\lambda^2}2\langle G\rangle(t)\bigg),
	\end{align} where $\mathbf X=(X^1,\ldots,X^k)$ is distributed as $\mathbf P_{SB_{\nu}^{(k)}}$.
	
	\begin{lem}\label{rnd} The measure $\mathbf P_{D_\lambda SB_\nu^{(k)}}$ defined in Definition \ref{tmeasure} is absolutely continuous with respect to ~$\mathbf P_{SB_\nu^{(k)}}$ 
		with density $\mathbf P_{D_\lambda SB_\nu^{(k)}}=\mathcal{M}(t) \cdot \mathbf P_{SB_\nu^{(k)}}$ on $\mathcal{F}_t$ where $(\mathcal{F}_t)_{t\in [0,T]}$ is the filtration generated by $\mathbf X$.
	\end{lem}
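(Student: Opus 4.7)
The plan is to realize both sides of the claimed identity as solutions of the same drifted Howitt--Warren martingale problem on $C([0,T],\mathbb{R}^k)$ and then conclude by a reverse Girsanov argument invoking the uniqueness theorem of \cite{HW09}. First I will verify that $\mathcal{M}$ is a true martingale under $\mathbf{P}_{SB_\nu^{(k)}}$, not merely a local one. Since each integrand $(1-1/m_j)$ in \eqref{def:gm} lies in $[0,1)$, a calculation analogous to \eqref{e:sumv} combined with the cluster identity $m_i(\mathbf{X}(s)) = m_j(\mathbf{X}(s))$ whenever $X^i_s = X^j_s$ yields $\langle G\rangle_T \le kT$ deterministically, so Novikov's criterion applies immediately.

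The heart of the proof is a pair of parallel Girsanov computations identifying the semimartingale decompositions of the coordinate processes under the two candidate measures. On the one hand, under $\tilde{\mathbf{P}} := \mathcal{M}(T) \cdot \mathbf{P}_{SB_\nu^{(k)}}$, Girsanov's theorem gives that $X^j(t) - \lambda \langle X^j, G\rangle_t$ is a local martingale with the covariance $\langle X^i, X^j\rangle_t = \int_0^t \ind_{\{X^i_s=X^j_s\}}ds$ preserved. Using the cluster identity and $\sum_{i:X^i_s=X^j_s} 1 = m_j(\mathbf{X}(s))$ one computes
\[
\langle X^j, G\rangle_t = \sum_i \int_0^t \bigl(1-\tfrac{1}{m_i(\mathbf{X}(s))}\bigr)\ind_{\{X^i_s = X^j_s\}}\,ds = \int_0^t \bigl(m_j(\mathbf{X}(s)) - 1\bigr)\,ds,
\]
so $X^j$ picks up the state-dependent drift $\lambda(m_j(\mathbf{X})-1)$. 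On the other hand, Girsanov applied directly to $Z(t)$ shows that $X^j(t) - \lambda \int_0^t m_j(\mathbf{X}(s))\,ds$ is a local martingale under $\mathbf{P}_{T_\lambda SB_\nu^{(k)}}$. Setting $Y^j := X^j - \lambda t$ and exploiting the translation invariance $m_j(\mathbf{X}(s)) = m_j(\mathbf{Y}(s))$, the coordinate process under $\mathbf{P}_{D_\lambda SB_\nu^{(k)}}$ admits precisely the same semimartingale decomposition as $X^j$ under $\tilde{\mathbf{P}}$: drift $\lambda(m_j(\mathbf{Y})-1)$ and covariance $\int_0^t \ind_{\{Y^i_s=Y^j_s\}}\,ds$. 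A short Tanaka-type computation further gives $\langle f_\Delta(\mathbf{X}), G\rangle_t = \int_0^t (g_\Delta(\mathbf{X}(s))-1)\,ds$ and $\langle f_\Delta(\mathbf{X}), \sum_i X^i\rangle_t = \int_0^t g_\Delta(\mathbf{X}(s))\,ds$; combined with $f_\Delta(\mathbf{Y}) = f_\Delta(\mathbf{X}) - \lambda t$ this ensures both laws satisfy the same drifted $\beta_+$-condition, namely $f_\Delta(\cdot_t) - \int_0^t \bigl[\beta_+(g_\Delta(\cdot_s)) + \lambda(g_\Delta(\cdot_s)-1)\bigr]ds$ is a local martingale.

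The main obstacle is uniqueness of this drifted Howitt--Warren martingale problem, which I will handle by running Girsanov in reverse. If $\mathbf{Q}$ is any solution, decompose each coordinate under $\mathbf{Q}$ as $X^j = M^j + \lambda \int_0^t (m_j(\mathbf{X}(s))-1)\,ds$ with $M^j$ a $\mathbf{Q}$-local martingale; a direct computation then shows that $\mathcal{M}^{-1}(t) = \exp\bigl[-\lambda G^{\mathrm{mart}}(t) - \tfrac{\lambda^2}{2}\langle G^{\mathrm{mart}}\rangle_t\bigr]$, where $G^{\mathrm{mart}}$ is the $\mathbf{Q}$-martingale part of $G$ and still has quadratic variation bounded by $kT$, so Novikov again makes $\mathcal{M}^{-1}$ a true $\mathbf{Q}$-martingale. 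Tilting by $\mathcal{M}^{-1}(T)$ simultaneously removes the drift $\lambda(m_j-1)$ on every coordinate and, via the same covariation identities applied in reverse, cancels the extra $\lambda(g_\Delta-1)$ in the modified $\beta_+$-condition, reducing $\mathbf{Q}$ to a solution of the standard (undrifted) Howitt--Warren martingale problem. The latter is uniquely solvable by \cite[Proposition 8.1]{HW09}, forcing $\tilde{\mathbf{P}} = \mathbf{P}_{D_\lambda SB_\nu^{(k)}}$ on $\mathcal{F}_T$ and hence on each $\mathcal{F}_t \subseteq \mathcal{F}_T$, which is the claim.
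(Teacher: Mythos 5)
Your strategy is genuinely different from the paper's. The paper does not set up a drifted martingale problem at all: it quotes \cite[Lemma 5.4]{HW09}, which already supplies the Radon--Nikodym derivative of the law of $t\mapsto \mathbf X_t-\pmb{\lambda} t$ (for $\mathbf X$ distributed as $\mathbf P_{SB_\nu^{(k)}}$) with respect to $\mathbf P_{SB_\nu^{(k)}}$, namely $\exp(-\lambda G_1-\tfrac{\lambda^2}{2}\langle G_1\rangle)$ with $G_1=\sum_j\int m_j^{-1}dX^j$, and then obtains $\mathcal M$ by composing this with the elementary exponential tilt $Z$ and the algebraic identity $G=\sum_jX^j-G_1$. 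Your route in effect re-derives the content of that lemma from scratch, via well-posedness of a drifted Howitt--Warren martingale problem plus a reverse Girsanov reduction to \cite[Proposition 8.1]{HW09}. The architecture is viable --- in particular the identity $\mathcal M^{-1}=\exp(-\lambda G^{\mathrm{mart}}-\tfrac{\lambda^2}{2}\langle G^{\mathrm{mart}}\rangle)$ under a solution $\mathbf Q$ does check out, since $G=G^{\mathrm{mart}}+\lambda\langle G\rangle$ under the assumed drift --- but it is considerably heavier than what the paper needs, and it glosses over the true-versus-local-martingale and square-integrability checks required before \cite[Proposition 8.1]{HW09} can be invoked for the de-tilted measure, as well as a rigorous identification of the martingale part of $f_\Delta(\mathbf X)$ for $|\Delta|>2$.

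There is also one concrete error. The covariation identities $\langle f_\Delta(\mathbf X),G\rangle_t=\int_0^t(g_\Delta(\mathbf X_s)-1)\,ds$ and $\langle f_\Delta(\mathbf X),\sum_iX^i\rangle_t=\int_0^tg_\Delta(\mathbf X_s)\,ds$ are false: $g_\Delta$ counts only indices in $\Delta$, whereas both brackets see every coordinate $j\in\{1,\dots,k\}$ that coincides with the leader of $\Delta$. (Take $k=2$ and $\Delta=\{1\}$: then $g_\Delta\equiv1$, but $\langle X^1,G\rangle_t=\int_0^t\ind_{\{X^1_s=X^2_s\}}ds$, which is nonzero with positive probability for sticky Brownian motion.) The correct common drift is $\lambda(M_\Delta(\mathbf X_s)-1)$ with $M_\Delta(\mathbf x):=|\{j\le k:\ x_j=f_\Delta(\mathbf x)\}|\ge g_\Delta(\mathbf x)$. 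Because the same quantity $M_\Delta$ appears in both of your parallel computations, their difference is still $\lambda t$ and the matching of the two candidate laws survives; but the ``drifted $\beta_+$-condition'' you formulate with $g_\Delta-1$ is satisfied by neither measure, so you must restate the auxiliary martingale problem with $M_\Delta-1$ (and carry that correction through the reverse Girsanov step) before the uniqueness argument makes sense.
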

	
	\begin{proof}  Define the measure $\mathbf P_{Q_\lambda SB^{(k)}_\nu}$ to be the law on the canonical space $C([0,T],\mathbb R^k)$ of the process $t\mapsto (X^i_t-\lambda t)_{i=1}^k$ where $(X^i)_{i=1}^k$ is distributed as $\mathbf P_{SB^{(k)}_{\nu}}.$ By \cite[Lemma 5.4]{HW09}, we know that $\mathbf P_{Q_\lambda SB^{(k)}_\nu}$ is absolutely continuous with respect to ~$\mathbf P_{SB^{(k)}_{\nu}}$ on the interval $[0,T]$ with Radon-Nikodym derivative given by 
		\begin{align}\label{rnd1}
			\left.\frac{d\mathbf P_{Q_\lambda SB^{(k)}_\nu}}{d\mathbf P_{SB^{(k)}_\nu}}\right|_{\mathcal{F}_t}(\mathbf X) = \exp\bigg(-\lambda G_1(t) - \frac{\lambda^2}2\langle G_1\rangle(t)\bigg),
		\end{align}
		where $$G_1(t):= \sum_{j=1}^k \int_0^t \frac1{m_i(\mathbf X(s))}dX^i(s).$$ Take any bounded $\mathcal F_t$-measurable functional $F:C([0,T],\mathbb R^k) \to \mathbb R$. We have 
		\begin{align*}\mathbf E_{D_\lambda SB^{(k)}_\nu}[ F(\mathbf X)] &= \mathbf E_{T_\lambda SB^{(k)}_\nu}\big[F\big( (\mathbf{X}_u - \pmb{\lambda} u)_{0\le u\le t} \big)\big]\\ &=  \mathbf E_{ SB^{(k)}_\nu}\bigg[\exp \bigg( \lambda \sum_{j=1}^k X_t^j - \frac{\lambda^2}2 \bigg\langle \sum_{j=1}^k X^j \bigg\rangle(t) \bigg) F\big( (\mathbf{X}_u - \pmb{\lambda} u)_{0\le u\le t} \big)\bigg] \\ &= \mathbf E_{Q_\lambda SB^{(k)}_\nu}\bigg[\exp \bigg( \lambda \sum_{j=1}^k \big(X_t^j +\lambda t\big)- \frac{\lambda^2}2 \bigg\langle \sum_{j=1}^k X^j \bigg\rangle(t) \bigg) F(\mathbf X)\bigg] \\ &= \mathbf E_{ SB^{(k)}_\nu}\bigg[\exp \bigg( \lambda \sum_{j=1}^k X_t^j-\lambda G_1(t) +\lambda^2 kt- \frac{\lambda^2}2 \bigg\langle \sum_{j=1}^k X^j \bigg\rangle(t) -\frac{\lambda^2}{2}\langle G_1\rangle(t)\bigg) F(\mathbf X)\bigg].
		\end{align*}
		Here $\pmb{\lambda}u$ denotes the vector $(\lambda u, \ldots, \lambda u)\in \R^k$ so that $(\mathbf{X}_u-\pmb{\lambda} u)=(X_u^i-\lambda u)_{i=1}^k \in \R^k$. The first three equalities in the above equation follow directly from Definition \ref{tmeasure} and the definition of $\mathbf P_{Q_{\lambda} SB_{\nu}^{(k)}}$. The last equality uses \eqref{rnd1}.
		Note that $G(t)$, defined in \eqref{def:gm}, equals to $\sum_{j=1}^k X^j(t)-G_1(t)$. Using \eqref{e:sumv} and the fact that $\langle G_1,X^j\rangle(t)=t$ for each $j$, we observe that the last expression in the above equation is precisely equal to $\mathbf E_{ SB^{(k)}_\nu}[\mathcal{M}(t) \cdot F(\mathbf X)]$ where $\mathcal{M}(t)$ is defined in \eqref{def:gm}. This completes the proof.
	\end{proof}
	
	\begin{rk} In view of Lemma \ref{rnd}, a bit of definition chasing shows that for all bounded $\mathcal{F}_t$-measurable $F$
		\begin{align*}
\mE_{SB_{\nu}^{(k)}}\left[Z(t)F\big(\left(\mathbf{X}_u-\pmb{\lambda} u\right)_{0\le u\le t}\big)\right]=  \mE_{SB_{\nu}^{(k)}}\left[\mathcal{M}(t)F\big((\mathbf{X}_u)_{0\le u\le t}\big)\right],
		\end{align*}
		where $Z$ and $\mathcal{M}$ are defined in \eqref{mart2} and \eqref{def:gm} respectively.  Using the same argument, one also has
		\begin{align*}
			\mE_{SB_{\nu}^{(k)}}\left[\frac{Z(t)}{Z(s)}F\big(\left(\mathbf{X}_u-\pmb{\lambda} (u-s)\right)_{s\le u\le t}\big)\mid \mathcal{F}_s\right]=  \mE_{SB_{\nu}^{(k)}}\left[\frac{\mathcal{M}(t)}{\mathcal{M}(s)}F\big(\left(\mathbf{X}_u\right)_{s\le u\le t}\big)\mid \mathcal{F}_s\right]
		\end{align*}
  		almost surely. 	This implies that for all bounded $\mathcal{F}_t$-measurable $H$
		\begin{equation}
			\label{e:condeq2}
			\begin{aligned}
				& \mE_{SB_{\nu}^{(k)}}\left[\exp\bigg(\lambda\sum_{j=1}^k (X_t^j-X_s^j)-\tfrac{k}{2}\lambda^2(t-s)\bigg)H\big(\left(\mathbf{X}_u-\pmb{\lambda} (u-s)\right)_{s\le u\le t}\big)\mid \mathcal{F}_s\right] \\ & =  \mE_{SB_{\nu}^{(k)}}\left[e^{\lambda(G(t)-G(s))-\tfrac{\lambda^2}{2}[\langle G \rangle (t)-\langle G \rangle (s)]} H\big(\left(\mathbf{X}_u\right)_{s\le u\le t}\big)\exp\bigg(\lambda^2\sum_{i<j}\int_{s}^t \ind_{\{X_u^i=X_u^j\}}du\bigg)\mid \mathcal{F}_s\right]\!.\!
			\end{aligned}
		\end{equation}
	\end{rk}
	
\subsection{Sticky Brownian motion estimates}\label{sec:sbmest} In this section, we gather various estimates related to sticky Brownian motion that are necessary for our later proofs.
	In our analysis, we will often encounter intersection times of a $k$-point sticky Brownian motion. For ease of notation, for each $k \in \mathbb N$ and $1\leq i\neq j \leq k$ we define the functional $V^{ij}:C([0,T],\mathbb R^k)\to C([0,T],[0,\infty)) $ by 
  \begin{align}
      \label{vijai}
      V^{ij}_t(\mathbf X):= \int_0^t \ind_{\{X^i_s=X^j_s\}}ds.
  \end{align}
We remark that the processes $V^{ij}$ are Borel-measurable and adapted to the canonical filtration on $C([0,T],\mathbb R^k)$. The following two lemmas record moment estimates related to these functionals. 
	\begin{lem}\label{traps}
		There exists an absolute constant $\Con>0$ such that for all $k >1$, $\theta,t>0$ and for all characteristic measures $\nu$ we have $$ \mathbf E_{SB_\nu^{(k)}}\bigg[\exp{\bigg(\theta \sum_{i=1}^k \bigg[\big(\sup_{s\leq t}|X^i_s|\big) +{4}\nu([0,1])\sum_{j\neq i}V^{ij}_t(\mathbf{X}) }\bigg]\bigg)\bigg] \le \Con \cdot \exp(\Con \cdot k^4\theta^2 t).$$
	\end{lem}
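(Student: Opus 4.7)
The strategy is to convert the intersection-time integrals $V^{ij}_t$ into quantities that can be separated by a single Cauchy--Schwarz step: a bounded functional of Brownian-motion suprema on one side, and the exponential of a martingale with well-controlled quadratic variation on the other side.

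The central observation is the identity $4\nu([0,1])\,V^{ij}_t(\mathbf X) = L^{X^i-X^j}_0(t)$, which is exactly the identity recorded in the paper immediately after \eqref{eq:localTimeDef} (extended from the $(1,2)$-pair to arbitrary pairs by the consistency of sticky Brownian motion). Combining this with Tanaka's formula for $|X^i-X^j|$, one may write
\begin{align*}
4\nu([0,1])\,V^{ij}_t(\mathbf X) = |X^i_t-X^j_t| - M^{ij}_t,\qquad M^{ij}_t := \int_0^t \operatorname{sgn}(X^i_s-X^j_s)\,d(X^i-X^j)_s,
\end{align*}
where $M^{ij}$ is a continuous martingale with $\langle M^{ij}\rangle_t \le \langle X^i-X^j\rangle_t \le 2t$ by the covariance formula \eqref{covd}. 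Using $|X^i_t-X^j_t| \le \sup_{s\le t}|X^i_s|+\sup_{s\le t}|X^j_s|$ and summing over $i\ne j$, the exponent in the statement is bounded above by
\begin{align*}
(2k-1)\theta\sum_{i=1}^k \sup_{s\le t}|X^i_s| \;-\; \theta N_t,\qquad N_t := \sum_{i\ne j} M^{ij}_t.
\end{align*}

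A single Cauchy--Schwarz step then reduces the problem to estimating $\mathbf E[\exp(2(2k-1)\theta\sum_i \sup_{s\le t}|X^i_s|)]$ and $\mathbf E[\exp(-2\theta N_t)]$ separately. For the first, each $X^i$ is marginally a standard Brownian motion, so by H\"older's inequality (with exponent $k$ to distribute the product) together with the reflection-principle estimate $\mathbf E[\exp(\mu\sup_{s\le t}|B_s|)] \le \Con \, e^{\Con\mu^2 t}$, the bound $\Con \exp(\Con k^4\theta^2 t)$ follows. For the second, Kunita--Watanabe gives $|\langle M^{ij},M^{i'j'}\rangle_t| \le 2t$, and since there are at most $(k(k-1))^2 \le k^4$ summands we get the deterministic estimate $\langle N\rangle_t \le 2k^4 t$. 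Novikov's criterion then makes $\exp(-2\theta N_t - 2\theta^2\langle N\rangle_t)$ a true martingale of mean one, and factoring out $\exp(2\theta^2\langle N\rangle_t) \le \exp(4\theta^2 k^4 t)$ yields the required bound. Multiplying the two halves together proves the lemma.

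The only mildly delicate step is verifying that the quadratic variation bound on $N$ is uniform in the characteristic measure $\nu$; this is precisely what Tanaka's formula buys us, since the martingale corrections $M^{ij}$ are controlled through $\langle X^i-X^j\rangle_t \le 2t$ regardless of how much time the pair spends at the diagonal. Everything else is routine.
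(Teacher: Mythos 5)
Your proof is correct, and while the outer skeleton (isolate the suprema, handle them via the marginal Brownian tail bound and H\"older) matches the paper, the way you control the intersection-time exponential moments is genuinely different. The paper reduces each pairwise term to the $2$-point motion by H\"older, represents $X^i-X^j$ as a time-changed Brownian motion via \eqref{timeT}, deduces that $L_0^{X^i-X^j}(t)$ is stochastically dominated by $\sqrt{2}L_0^{B}(t)$, and finishes with L\'evy's identity $L_0^B(t)\stackrel{d}{=}\max_{s\le t}B(s)$. You instead invoke Tanaka's formula to trade $4\nu([0,1])V^{ij}_t=L_0^{X^i-X^j}(t)$ for $|X_t^i-X_t^j|$ minus a martingale whose bracket is deterministically at most $2t$ by \eqref{covd}, and then close with a Kunita--Watanabe bound and a Novikov/exponential-martingale argument (Novikov is immediate here since $\langle N\rangle_t\le 2k^4t$ is deterministic). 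Your route is more self-contained --- it never needs the It\^o--McKean time-change representation of the difference of a $2$-point sticky pair --- and it makes the uniformity in $\nu$ completely transparent, since $\nu$ disappears the moment Tanaka is applied; the paper's route has the side benefit of producing the stochastic domination of local-time \emph{increments}, which is reused in Lemma \ref{lte}. Two small bookkeeping points: Tanaka's formula carries the term $-|X_0^i-X_0^j|$, which you may simply drop since it only improves the upper bound on $4\nu([0,1])V^{ij}_t$; and the identity $4\nu([0,1])V^{ij}_t=L_0^{X^i-X^j}(t)$ for a general pair does follow from \eqref{eq:two-point-sbm} by consistency of the $k$-point motion, exactly as the paper itself uses. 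The constants all land at $\Con\exp(\Con k^4\theta^2 t)$ as required.
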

	
	\begin{proof}
		Applying H\"older's inequality we get 
		\begin{equation}
			\label{e:twoterm}
			\begin{aligned}
				& \mathbf E_{SB_\nu^{(k)}}
				\bigg[\exp{\bigg(\theta \sum_{i=1}^k \big(\sup_{s\leq t}|X^i_s|\big) +
					{4}\nu([0,1])\sum_{j\neq i}V^{ij}_t(\mathbf{X})] }\bigg)\bigg]
				\\ & \hspace{0.5cm}\leq \prod_{i=1}^k\mathbf E_{SB_\nu^{(k)}}
				\bigg[\exp\bigg(2k\theta \sup_{s\le t}|X^i_s|\bigg)\bigg]^{\frac1{2k}} \cdot 
				\prod_{i=1}^k\mathbf  E_{SB_\nu^{(k)}} \big[ e^{{8}k\theta \nu([0,1]) \sum_{j\neq i}V^{ij}_t(\mathbf{X})}\big]^{\frac1{2k}}.
			\end{aligned}
		\end{equation}
		We now proceed to bound the above two product terms individually. Note that marginally each $X^i$ is a Brownian motion under $\mathbf P_{SB_\nu^{(k)}}$. For a Brownian motion, one has exponential tail estimates of the form 
		\begin{align}
			\label{e:btail}
			\mathbf P_{B}\bigg[\sup_{s\le t}|X_s|>a\bigg] \leq 2e^{-a^2/2t}.
		\end{align}
		We thus have
		\begin{align}
			\label{e:1term}
			\prod_{i=1}^k\mathbf E_{SB_\nu^{(k)}}\bigg[\exp\bigg(2k\theta \sup_{s\le t}|X^i_s|\bigg)\bigg]^{\frac1{2k}}= \mathbf E_{B} \bigg[ \exp\bigg(2k\theta \sup_{s\le t}|X_s|\bigg)\bigg]^{\frac12}\leq \Con e^{\Con k^2\theta^2 t},
		\end{align}
		for some absolute constant $\Con>0$.
		
		\medskip
		
		To deal with the second product in \eqref{e:twoterm}, another application of the H\"older inequality on each of the expectation terms in the product shows that for each $i=1,\ldots,k$ we have $$\mathbf E_{SB_\nu^{(k)}} \bigg[ \exp\bigg({8}k\theta \nu([0,1]) \sum_{j\neq i}V^{ij}_t(\mathbf{X}) \bigg)\bigg]\leq \prod_{j\neq i}\mathbf E_{SB_\nu^{(k)}} \bigg[ \exp\bigg({8}k(k-1)\theta \nu([0,1]) \int_0^t \ind_{\{X_s^i=X_s^j\}}ds \bigg)\bigg]^{\frac1{k-1}}.$$ 
		Notice that for all $i\neq j$ the pair $(X^i,X^j)$ is  a 2-point sticky Brownian motion under $\mathbf P_{SB_\nu^{(k)}}$ and therefore the latter product is the same as $$\mathbf E_{SB_\nu^{(2)}} \bigg[ \exp\bigg({8}k(k-1)\theta \nu([0,1]) \int_0^t \ind_{\{X_s^1=X_s^2\}}ds \bigg)\bigg]. $$ Summarizing, we find that 
		\begin{align*}
			\prod_{i=1}^k\mathbf  E_{SB_\nu^{(k)}} \big[ e^{{8}k\theta \nu([0,1]) \sum_{j\neq i}V^{ij}_t(\mathbf{X})}\big]^{\frac1{2k}}  \le \mathbf E_{SB_\nu^{(2)}} \bigg[ \exp\bigg({8}k(k-1)\theta \nu([0,1]) \int_0^t \ind_{\{X_s^1=X_s^2\}}ds \bigg)\bigg]^{\frac12}.
		\end{align*}
  
		Note that for a 2-point sticky Brownian motion $(X,Y)$, recall from \eqref{eq:two-point-sbm} that the process ${4}\nu([0,1])\int_0^t \ind_{\{X_s^1=Y_s\}}ds$ agrees with the local time $L_0^{X-Y}(t).$  By \cite{ito1963brownian}, $X-Y$ can be viewed as a time changed Brownian motion. Indeed, given a Brownian motion $B$, {if we define 
			\begin{align}\label{timeT}
				T(t)=t+\frac1{{4}\nu([0,1])}L_0^B(t),
			\end{align}
			the process $\sqrt{2} B_{T^{-1}(\cdot)}$ has the same distribution as $X-Y$. Note that for $0\le s\le t$ we have
   \begin{align*}
			T(T^{-1}(s)+t-s) & =T^{-1}(s)+t-s+\tfrac{1}{{4}\nu([0,1])}L_0^{X-Y}(T^{-1}(s)+t-s)) \\ & \ge T^{-1}(s)+t-s+\tfrac{1}{{4}\nu([0,1])}L_0^{X-Y}(T^{-1}(s))) \ge s+t-s=t.
		\end{align*}
   Writing $t=T(T^{-1}(t))$, we see that
   \begin{align}\label{tinc}
       T^{-1}(t)-T^{-1}(s) \le t-s.
   \end{align}
    This means that the time increments of the time-changed process are slower than that of the standard time. In particular, $T^{-1}(t)\le t$,  
    hence $L_0^{X-Y}(t)$ is stochastically dominated by $L_0^{\sqrt{2}B}(t)={\sqrt2}L_0^B(t)$. By Levy's identity for Brownian local time, we have $L_0^B(t)\stackrel{d}{=}\max_{s\le t} B(t)$ for each fixed $t>0$}.   Using \eqref{e:btail} we thus have
		\begin{align*}
			\prod_{i=1}^k\mathbf  E_{SB_\nu^{(k)}} \big[ e^{{8}k\theta \nu([0,1]) \sum_{j\neq i}V^{ij}_t(\mathbf{X})}\big]^{\frac1{2k}} & \le \mathbf E_{SB_\nu^{(2)}} \bigg[ \exp\bigg({8}k(k-1)\theta \nu([0,1]) \int_0^t \hspace{-0.1cm}\ind_{\{X_s^1=X_s^2\}}ds \bigg)\bigg]^{\frac12} \\ & \le \Con e^{\Con k^2(k-1)^2\theta^2t}
		\end{align*}
		for some absolute constant $\Con>0$. Inserting the above bound and the bound in \eqref{e:1term} back in \eqref{e:twoterm} we get the desired bound. This completes the proof. 
	\end{proof}
	
	\begin{lem}\label{lte}
		Fix $k\in\mathbb N$ and $p\ge 1$. There exists a constant $\Con=\Con(p,k)>0$ such that for all $t>s\ge 0$ 
		$$\sup_{\nu\in \mathcal M([0,1])}  \mathbf E_{SB_\nu^{(k)}}\bigg[\bigg({4}\nu([0,1])\sum_{i<j}(V^{ij}_t(\mathbf X)-V^{ij}_s(\mathbf X))\bigg)^p \bigg] \leq \Con|t-s|^{p/2},$$
  where $\mathcal M([0,1])$ is the space of finite and non-negative Borel measures on $[0,1].$
	\end{lem}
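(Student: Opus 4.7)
\medskip

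\noindent\textbf{Proof proposal.} My plan is to reduce the claim to a single pair of components and then use Tanaka's formula. By Jensen's inequality,
\[
\Big(\sum_{i<j}\big(L_0^{X^i-X^j}(t)-L_0^{X^i-X^j}(s)\big)\Big)^{p}
\le \binom{k}{2}^{p-1}\sum_{i<j}\big(L_0^{X^i-X^j}(t)-L_0^{X^i-X^j}(s)\big)^{p},
\]
where I used the identity $4\nu([0,1])(V^{ij}_{t}-V^{ij}_{s})=L_{0}^{X^i-X^j}(t)-L_{0}^{X^i-X^j}(s)$ derived in the discussion preceding \eqref{eq:localTimeDef} (compare with \eqref{eq:two-point-sbm}). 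Since every pair $(X^i,X^j)$ is marginally distributed as a $2$-point $\sbm$ under $\mathbf P_{SB_\nu^{(k)}}$, it suffices to produce a bound of the form $\mathbf E[(L_0^{X^1-X^2}(t)-L_0^{X^1-X^2}(s))^{p}]\le \Con(p)(t-s)^{p/2}$ that does not depend on $\nu$; summing over the $\binom{k}{2}$ pairs and folding the combinatorial constant into $\Con(p,k)$ will then yield the lemma.

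For the two-point bound I would apply Tanaka's formula to the continuous semimartingale $W:=X^1-X^2$, which gives
\[
L_0^{W}(t)-L_0^{W}(s)=|W_t|-|W_s|-\int_s^t \operatorname{sgn}(W_u)\,dW_u.
\]
Since $X^1$ and $X^2$ are each (marginally) standard Brownian motions that are martingales in the filtration of $\mathbf X$, $W$ is a continuous martingale, and from the covariance formula \eqref{covd} its quadratic variation satisfies $\langle W\rangle_t-\langle W\rangle_s= 2\int_s^t \ind_{\{X^1_u\ne X^2_u\}}du\le 2(t-s)$. Applying the Burkholder--Davis--Gundy inequality to $W$ itself gives $\mathbf E[|W_t-W_s|^{p}]\le \Con(p)(t-s)^{p/2}$, hence (since $||a|-|b||\le|a-b|$) the same bound for $||W_t|-|W_s||^{p}$. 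Applying BDG to the stochastic integral $M_t:=\int_0^t \operatorname{sgn}(W_u)\,dW_u$, whose bracket is also bounded by $2(t-s)$, gives $\mathbf E[|M_t-M_s|^{p}]\le \Con(p)(t-s)^{p/2}$. Combining these two estimates via Minkowski's inequality yields exactly the desired two-point bound, uniformly in $\nu$.

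I do not anticipate a serious obstacle here: both quadratic variation bounds are deterministic (and independent of the characteristic measure $\nu$), so the $\nu$-uniformity built into the supremum is automatic. The only minor point to be careful about is that the quadratic variation bound $d\langle W\rangle_u\le 2\,du$ holds pathwise from the covariance formula \eqref{covd}, regardless of how much stickiness the measure $\nu$ induces on the diagonal, which is what makes the BDG constants uniform in $\nu$.
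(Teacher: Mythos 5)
Your proposal is correct, but the key two-point estimate is obtained by a genuinely different route than the paper's. The paper first reduces to pairs via Minkowski's inequality (your Jensen step is an equivalent reduction), then invokes the It\^o--McKean time-change representation $X-Y\stackrel{d}{=}\sqrt2\,B_{T^{-1}(\cdot)}$ with $T(t)=t+\tfrac{1}{4\nu([0,1])}L_0^B(t)$, uses the $1$-Lipschitz property of $T^{-1}$ to stochastically dominate $L_0^{X-Y}(t)-L_0^{X-Y}(s)$ by $\sqrt2\,(L_0^B(t)-L_0^B(s))$, and finishes with L\'evy's identity equating local-time increments with running-maximum increments of a standard Brownian motion. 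You instead apply Tanaka's formula to the continuous martingale $W=X^1-X^2$ and control both $|W_t|-|W_s|$ and $\int_s^t\operatorname{sgn}(W_u)\,dW_u$ by BDG, using the deterministic bracket bound $\langle W\rangle_t-\langle W\rangle_s=2\int_s^t\ind_{\{X^1_u\ne X^2_u\}}du\le 2(t-s)$ from \eqref{covd}. Your argument is more self-contained and robust: it needs only the martingale property and the pathwise bracket bound (which, as you note, is manifestly $\nu$-independent), and it does not even require passing to the $2$-point marginal law. The paper's route, by contrast, pins down the exact distribution of the dominating local time via L\'evy's identity, a representation it reuses elsewhere (e.g.\ in the exponential-moment bound of Lemma \ref{traps}). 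One minor point worth recording in a polished write-up: Tanaka's formula with the symmetric local time of \eqref{eq:localTimeDef} is legitimate here because $W$ is a continuous local martingale (no finite-variation part), so the symmetric and one-sided local times at $0$ coincide.
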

	
	\begin{proof} Use Minkowski's inequality and the fact that the 2-point motions of $\mathbf P_{SB_\nu^{(k)}}$ are distributed as $\mathbf P_{SB_\nu^{(2)}}$ to write 
		\begin{align}
			\nonumber
			\mathbf E_{SB_\nu^{(k)}}\bigg[\bigg({4}\nu([0,1])\sum_{i<j}(V^{ij}_t(\mathbf X)-V^{ij}_s(\mathbf X))\bigg)^p \bigg]^{1/p} & \leq \sum_{i<j}\mathbf E_{SB_\nu^{(k)}}\bigg[\bigg({4}\nu([0,1])(V^{ij}_t(\mathbf X)-V^{ij}_s(\mathbf X))\bigg)^p \bigg]^{1/p} \\ \nonumber &= \sum_{i<j}\mathbf E_{SB_\nu^{(2)}}\bigg[\bigg({4}\nu([0,1])\int_s^t\ind_{\{X_u=Y_u\}}du\bigg)^p \bigg]^{1/p} \\ &= \frac{k(k-1)}2 \mathbf E_{SB_\nu^{(2)}}\bigg[\big(L_0^{X-Y}(t)-L_0^{X-Y}(s)\big)^p \bigg]^{1/p}. \label{tr}
		\end{align}
		The last identity follows from the fact that for a 2-point sticky Brownian motion $(X,Y)$ the process ${4}\nu([0,1])\int_0^t \ind_{\{X_s^1=Y_s\}}ds$ agrees with the local time $L_0^{X-Y}(t)$. Recall that $X-Y \stackrel{d}{=} \sqrt{2}B_{T^{-1}(\cdot)}$ where $B$ is a standard Brownian motion and $T$ is defined in \eqref{timeT}. {It follows from \eqref{tinc} that $L_0^{X-Y}(t)-L_0^{X-Y}(s)$ is stochastically dominated by ${\sqrt{2}}(L_0^{B}(t)-L_0^{B}(s))$. By Levy's identity for Brownian local time, we have  $L_0^{B}(t)-L_0^{B}(s)\stackrel{d}{=}M(t)-M(s)$ where $M(u):=\max_{v\le u} B(v)$. Since $\Ex[|M(t)-M(s)|^p] \le \Ex [\sup_{s\leq u \leq t} |B(u)-B(s)|^p] = \Con |t-s|^{p/2}$ for some constant $\Con>0$ depending on $p$}, we have $\mbox{\eqref{tr}} \le \Con |t-s|^{1/2}$ for some constant $\Con>0$ depending on $p,k$. This completes the proof.
	\end{proof}
	We end this section by recording a uniform exponential moment estimate for a certain class of $\mathbf P_{SB^{(k)}_{N^{1/2}\nu}}$-martingales that will be very important.
	\begin{prop}\label{exp}Fix $k\in\mathbb N$. Let $\mathcal G_N=\mathcal G_N(\mathbf X)$ be any family of continuous $\mathbf P_{SB^{(k)}_{N^{1/2}\nu}}$-martingales satisfying
		\begin{align}\label{e:gcond}
			\langle \mathcal G_N\rangle (t) \leq \Con N^{1/2} \sum_{i<j}V_t^{ij}(\mathbf X)
		\end{align}
		for some deterministic constant $\Con>0$ independent of $N$. For all $p,t>0$ we have 
		\begin{align}
			\label{e:exp}
			\sup_{N\ge 1}\mathbf E_{SB^{(k)}_{N^{1/2}\nu}} \big[e^{p\mathcal G_N(t) - \frac{p}2 \langle \mathcal G_N\rangle(t)}\big]<\infty.
		\end{align}
	\end{prop}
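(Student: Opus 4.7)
The plan is to use the classical trick of separating an exponential martingale moment into a stochastic exponential (which is a positive supermartingale) and a pure quadratic-variation term, and then to control the latter using Lemma \ref{traps} via the crucial observation that the bound in Lemma \ref{traps} is \emph{uniform in the characteristic measure}.

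First, fix $q>1$ (for concreteness take $q=2$) and let $q'=q/(q-1)$ be its H\"older conjugate. Writing $\mathcal{E}(Y)(t):=\exp(Y(t)-\tfrac12\langle Y\rangle(t))$ for the stochastic exponential of a continuous local martingale $Y$, a direct computation gives the algebraic identity
\begin{align*}
e^{p\mathcal G_N(t)-\tfrac{p}{2}\langle \mathcal G_N\rangle(t)} \;=\; \mathcal{E}(pq\mathcal G_N)(t)^{1/q}\cdot \exp\!\Big(\tfrac{p^2q-p}{2}\,\langle \mathcal G_N\rangle(t)\Big).
\end{align*}
I would then apply H\"older's inequality with exponents $q$ and $q'$ to obtain
\begin{align*}
\mathbf E_{SB^{(k)}_{N^{1/2}\nu}}\!\big[e^{p\mathcal G_N(t)-\tfrac{p}{2}\langle \mathcal G_N\rangle(t)}\big] \;\le\; \mathbf E\big[\mathcal{E}(pq\mathcal G_N)(t)\big]^{1/q}\cdot \mathbf E\Big[\exp\!\Big(\tfrac{q'(p^2q-p)}{2}\langle \mathcal G_N\rangle(t)\Big)\Big]^{1/q'}.
\end{align*}

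For the first factor, since $\mathcal G_N$ is a continuous martingale, Itô's formula shows that $\mathcal{E}(pq\mathcal G_N)$ is a positive local martingale starting at $1$. Every positive local martingale is a supermartingale by Fatou's lemma, so $\mathbf E[\mathcal{E}(pq\mathcal G_N)(t)]\le 1$. This step is standard and does not depend on $N$.

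The main point, and the only non-routine step, is to control the second factor uniformly in $N$. Using the hypothesis $\langle\mathcal G_N\rangle(t)\le \Con N^{1/2}\sum_{i<j}V^{ij}_t(\mathbf X)$, together with $\sum_{i<j}V^{ij}_t = \tfrac12\sum_{i\ne j}V^{ij}_t$, the exponent is bounded above by a constant (depending only on $p,q,k,\nu([0,1])$ but not on $N$) times $4N^{1/2}\nu([0,1])\sum_{i\ne j}V^{ij}_t(\mathbf X)$. Now I would invoke Lemma \ref{traps} with the characteristic measure \emph{replaced by} $N^{1/2}\nu$: since the constant $\Con$ in that lemma is absolute and, crucially, independent of the choice of characteristic measure, the resulting bound $\Con\exp(\Con k^4\theta^2 t)$ is uniform in $N$. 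This is the key structural input of the argument: although the naive quadratic variation estimate carries a divergent prefactor $N^{1/2}$, that factor is exactly the amount of stickiness being put into the two-point motion, so that $4N^{1/2}\nu([0,1])\sum_{i\neq j} V^{ij}_t$ is morally the local time at zero of the (time-changed) two-point difference, whose exponential moments remain uniformly controlled. Combining the two H\"older factors then yields the desired bound \eqref{e:exp}, completing the proof.
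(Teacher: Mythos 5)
Your proof is correct and follows essentially the same route as the paper: the paper likewise controls $\mathbf E_N[e^{\lambda\langle\mathcal G_N\rangle(t)}]$ uniformly in $N$ by combining the hypothesis \eqref{e:gcond} with Lemma \ref{traps} applied to the characteristic measure $N^{1/2}\nu$, and then separates off the stochastic exponential via the bound $\mathbf E[e^{M_t}]\le\mathbf E[e^{2\langle M\rangle_t}]^{1/2}$ (your H\"older splitting with $q=2$, after first discarding $e^{-\frac{p}{2}\langle\mathcal G_N\rangle(t)}\le 1$). The only cosmetic difference is that you invoke the supermartingale property of the positive local martingale $\mathcal E(pq\mathcal G_N)$ directly, whereas the paper verifies Novikov's condition first; both are valid.
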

	
	\begin{rk}\label{rk:mn} Suppose $\mathbf{X}=(X^1,\ldots,X^k)$ is distributed as $\mathbf P_{SB_{N^{1/2}\nu}^{(k)}}$. Consider $G$ defined in \eqref{def:gm}. Clearly $\mathcal{G}_N:=N^{1/4}G$ satisfies \eqref{e:gcond}. Let us write
		\begin{align}
  \label{mnt}
			\mathcal{M}_N(t):=\exp\left(N^{1/4}G(t)-\tfrac12N^{1/2}\langle G\rangle (t)\right).
		\end{align}
		Note that $\mathcal{M}_N$ is the same as $\mathcal{M}$ defined in \eqref{def:gm} with $\lambda\mapsto N^{1/4}$ and $\nu\mapsto N^{1/2}\nu$. The above proposition shows that the $L^p$ norms of $\mathcal{M}_N(t)$ are uniformly bounded.
	\end{rk}
	
	\begin{proof}[Proof of Proposition \ref{exp}] For convenience, we write $\mathbf E_N$ for $\mathbf E_{SB^{(k)}_{N^{1/2}\nu}}$. Due to the hypothesis \eqref{e:gcond}, for every $\lambda>0$ we have
		\begin{align}\label{rt}
			\sup_{N\ge 1}\mathbf E_N\left[e^{\lambda \langle \mathcal{G}_N\rangle (t)}\right] \le \sup_{N\ge 1}\mathbf E_N\bigg[\exp\bigg(\lambda \Con N^{1/2}\sum_{i<j} V_t^{ij}(\mathbf X)\bigg)\bigg]<\infty.
		\end{align}
		where the last inequality is due to Lemma \ref{traps} (applied with $\nu \mapsto N^{1/2}\nu$). Thus,
		\begin{equation}
			\label{e:ineq}
			\begin{aligned}
				\mathbf E_N\big[e^{p\mathcal G_N(t) - \frac{p}2 \langle \mathcal G_N\rangle(t)}\big] & \le \mathbf E_N\big[e^{p\mathcal G_N(t)}\big] \le \mathbf E_N\big[e^{2p^2\langle \mathcal{G}_N\rangle (t)}\big]^{1/2}.
			\end{aligned}
		\end{equation}
		The last inequality will be explained shortly. Assuming it is true, taking supremum over $N\ge 1$ on both sides of \eqref{e:ineq}, in view of \eqref{rt}, we get \eqref{e:exp}. 

  To prove the last inequality in \eqref{e:ineq}, we note more generally that uniformly over all continuous real-valued martingales $M$ defined on a probability space $(\Omega, \mathcal F,\mathbb P)$ with $M_0=0$, one has the bound $$\mathbb E[e^{M_t}]\leq \mathbb E[e^{2\langle M \rangle_t}]^{1/2}.$$
Indeed, if the right side is infinite then the statement is trivial. If the right-hand side is finite, then by Novikov's condition $e^{2M_t-2\langle M\rangle_t}$ is a martingale. Consequently we have the bounds 
\begin{align*}
\Bbb E[e^{M_t}] = \Bbb E[e^{M_t-\langle M\rangle_t} e^{\langle M\rangle_t}] \leq \Bbb E[ e^{2M_t - 2\langle M\rangle_t}]^{1/2} \Bbb E[e^{2\langle M\rangle_t}]^{1/2} = 1\cdot \Bbb E[e^{2\langle M\rangle_t}]^{1/2}.
	\end{align*}
 \end{proof}
	
	\section{Proof of Proposition \ref{p:mcov}} 
 
 \label{sec:momconv} The goal of this section is to prove Proposition \ref{p:mcov}, the moment convergence of the field $\mc{X}_t^N$ (defined in \eqref{a}) to that of the stochastic heat equation. The key idea is to observe that the moments of $\mc{X}_t^N$ can be written as expectations of certain functionals of a tilted version of a sticky Brownian motion (see Lemma \ref{dn}). We then carefully analyze sticky Brownian motions and the associated tilted measures to obtain a weak convergence result for the tilted measure. This eventually leads to the moment convergence for the field $\mc{X}_t^N$.  
 
 \subsection{Moment computations and moment convergence}
 
 We first identify moments of $\mc{X}_t^N(\phi)$ as expectations of certain functions under the $D_{N^{1/4}}SB_{N^{1/2}\nu}^{(k)}$ measure introduced in Definition \ref{tmeasure}.

	\begin{lem}\label{dn}For all bounded functions $\phi$ on $\R$, $t>s\ge 0$, and $k\in \mathbb N$ one has that 
		\begin{align}\label{e:dn}
			\Ex[\mathscr X_t^N(\phi)^k] &= \mathbf E_{D_{N^{1/4}}SB_{N^{1/2}\nu}^{(k)}}\bigg[\prod_{j=1}^k \phi(X^j_t) \exp\bigg(N^{1/2}\sum_{1\le i<j\le k}V^{ij}_t(\mathbf{X})\bigg)\bigg], 
		\end{align} 
		where the functionals $V^{ij}$ are defined in \eqref{vijai}.
	\end{lem}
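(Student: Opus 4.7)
The plan is to unwind the definition of $\mathscr X_t^N(\phi)$ as a quenched expectation, take the $k$-th power using conditional independence of particles in a common environment, pass to the annealed sticky Brownian motion expectation, and then apply Brownian scaling followed by the Girsanov transform developed in Section~\ref{sec:girt}.

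First I would write
\[
\mathscr X_t^N(\phi) \;=\; \mathbf E_\omega^{(1)}\!\left[e^{-\tfrac{t}{2}N^{1/2}+N^{-1/4}X_{Nt}}\,\phi\bigl(N^{-1/2}(X_{Nt}-N^{3/4}t)\bigr)\right],
\]
where $X$ is a single motion in the environment $\omega=\{K_{s,t}\}$. Because the quenched law of $k$ particles started at $0$ factorizes (they are conditionally independent given $\omega$), raising to the $k$-th power and taking the annealed expectation turns the $k$-fold product of quenched expectations into an expectation over the annealed $k$-point motion, which by definition is $\mathbf P_{SB_\nu^{(k)}}$. This yields
\[
\Ex[\mathscr X_t^N(\phi)^k] \;=\; \mathbf E_{SB_\nu^{(k)}}\!\left[\prod_{j=1}^{k} e^{-\tfrac{t}{2}N^{1/2}+N^{-1/4}X^j_{Nt}}\,\phi\bigl(N^{-1/2}(X^j_{Nt}-N^{3/4}t)\bigr)\right].
\]

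Next I would apply the scaling invariance of $\sbm$: setting $Y^i_s:=N^{-1/2}X^i_{Ns}$, one checks directly from Definition~\ref{hwmp} that $\langle Y^i,Y^j\rangle_s=\int_0^s\ind_{\{Y^i_u=Y^j_u\}}du$ and that the drift compensator in the max condition rescales by $N^{1/2}$. Hence $(Y^1,\dots,Y^k)$ is distributed as $\mathbf P_{SB_{N^{1/2}\nu}^{(k)}}$. Substituting $X^j_{Nt}=N^{1/2}Y^j_t$ into the formula above collapses the prefactor into $\exp\bigl(-\tfrac{kt}{2}N^{1/2}+N^{1/4}\sum_j Y^j_t\bigr)$ and the argument of $\phi$ into $Y^j_t-N^{1/4}t$.

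The final step is to recognize the prefactor as a Girsanov density. With $\lambda=N^{1/4}$ (so $\lambda^2=N^{1/2}$) the martingale $Z$ of \eqref{mart2} is precisely
\[
Z(t) \;=\; \exp\!\left(N^{1/4}\sum_{j=1}^k Y^j_t-\tfrac{kt}{2}N^{1/2}-N^{1/2}\sum_{i<j}V^{ij}_t(\mathbf Y)\right),
\]
so the prefactor equals $Z(t)\cdot\exp\bigl(N^{1/2}\sum_{i<j}V^{ij}_t(\mathbf Y)\bigr)$. By Definition~\ref{tmeasure} the factor $Z(t)$ converts $\mathbf P_{SB_{N^{1/2}\nu}^{(k)}}$ into $\mathbf P_{T_{N^{1/4}}SB_{N^{1/2}\nu}^{(k)}}$, and the deterministic shift $\tilde X^i_s:=Y^i_s-N^{1/4}s$ pushes this forward to $\mathbf P_{D_{N^{1/4}}SB_{N^{1/2}\nu}^{(k)}}$. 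Since $V^{ij}_t$ is invariant under a common translation $Y^i_s\mapsto Y^i_s-N^{1/4}s$, assembling these substitutions produces exactly \eqref{e:dn}.

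There is no real obstacle here beyond careful bookkeeping; the only point requiring caution is the scaling calculation showing $(N^{-1/2}X^i_{N\cdot})\sim \mathbf P_{SB_{N^{1/2}\nu}^{(k)}}$, and matching the constants so that the superfluous exponential factors line up precisely with the Girsanov martingale $Z(t)$ for the value $\lambda=N^{1/4}$.
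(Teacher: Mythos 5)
Your proposal is correct and follows essentially the same route as the paper's proof: quenched representation, $k$-th power via conditional independence in the environment, annealed expectation giving $\mathbf P_{SB_\nu^{(k)}}$, diffusive scaling to $\mathbf P_{SB_{N^{1/2}\nu}^{(k)}}$, and then absorption of the exponential prefactor into the Girsanov tilt $Z(t)$ from \eqref{mart2} with $\lambda=N^{1/4}$ together with the translation invariance of $V^{ij}_t$. The paper simply cites \cite[Proposition 2.4]{sss} for the scaling step and leaves the final Girsanov bookkeeping as a definition-chase, which you have spelled out correctly.
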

	
	\begin{proof} The proof essentially follows by keeping track of all the definitions. Indeed, using the definition of $\mc{X}_t^{N}(\phi)$ from \eqref{a} we may write 
		\begin{align}
			\label{xrep}
			\mathscr X_t^N(\phi) = \Ex_\omega^{(1)} \left[ e^{-\frac12 t \sqrt{N} +N^{-1/4}X_{Nt}} \cdot \phi\left(N^{-1/2} (X_{Nt}-N^{3/4}t)\right)\right],
		\end{align}
		where $\Ex_\omega^{(1)}$ denotes the quenched expectation with respect to a single motion $X$ sampled from the environment $\omega = \{K_{s,t}: - \infty\leq s\leq t<\infty\}.$ Taking the $k^{th}$ power we find that $$\mathscr X_t^N(\phi)^k = \Ex_\omega^{(k)} \left[ e^{-\frac{k}2 t \sqrt{N} +N^{-1/4}(X^1_{Nt} +\cdots+X^k_{Nt})}\prod_{j=1}^k\phi\big(N^{-1/2} (X^j_{Nt}-N^{3/4}t)\big)\right],$$
		where $\Ex_\omega^{(k)}$ denotes the quenched expectation with respect to a $k$-point motion $(X^1,\ldots,X^k)$ sampled from the environment $\omega = \{K_{s,t}: -\infty \leq s\leq t<\infty\}.$
		Taking an {annealed} expectation over the above expression and then using the fact that the averaged law of $(X^1,\ldots,X^k)$ is a $k$-point sticky Brownian motion with characteristic measure $\nu$ we obtain that $$\Ex\big[\mathscr X_t^N(\phi)^k\big] = \mathbf E_{SB^{(k)}_{\nu}} \left[ e^{-\frac{k}2 t \sqrt{N} +N^{-1/4}(X^1_{Nt} +\cdots+X^k_{Nt})}\prod_{j=1}^k\phi\big(N^{-1/2} (X^j_{Nt}-N^{3/4}t)\big)\right].$$ A standard fact about sticky Brownian motion is that $N^{-1/2}(X^1_{Ns},\ldots,X^k_{Ns})$ is distributed as $SB^{(k)}_{N^{1/2}\nu}$ whenever $(X_s^1,\ldots,X_s^k)$ is distributed as $SB^{(k)}_\nu$ (see \cite[Proposition 2.4]{sss}). Therefore the expectation in the last math display can be rewritten as $$\mathbf E_{SB^{(k)}_{N^{1/2}\nu}} \left[ e^{-\frac{k}2 t \sqrt{N} +N^{1/4}(X^1_{t} +\cdots+X^k_{t})}\prod_{j=1}^k\phi(X^j_{t}-N^{1/4}t)\right].$$ Using the definition of $D_{N^{1/4}}SB_{N^{1/2}\nu}^{(k)}$ measure from Definition \ref{tmeasure} and the definition of $V^{ij}$ from  \eqref{vijai}, it is not hard to check that the above expression is precisely equal to the right-hand side of \eqref{e:dn}. This establishes the lemma.
	\end{proof}
	
	Thus, in view of the above lemma, it suffices to study weak convergence of $D_{N^{1/4}}SB_{N^{1/2}\nu}^{(k)}$ measures to extract moment convergence for $\mc{X}_t^N(\phi)$. 
	Our next theorem is the main technical result of this section. It shows that if we consider the path $\mathbf X$ to be distributed according to the $D_{N^{1/4}}SB_{N^{1/2}\nu}^{(k)}$ measure, as $N\to \infty$, 
	\begin{itemize}[leftmargin=18pt]
		\item $\mathbf X$ converges to  $k$ independent Brownian motions.
		\item On the $N^{1/2}$ scale, pairwise intersection times converge to pairwise local times of the corresponding Brownian motions. 
	\end{itemize}

    {Throughout the theorem and its proof, we will diverge from our usual notational conventions by describing the random variables instead of referring to the path measures on the canonical space explicitly. In particular, we will write the associated expectations generically as $\mathbf E$, rather than $\mathbf E_{SB_{\nu}^{(k)}}$ and $\mathbf E_{B^{\otimes k}}$. This will avoid heavy notation, and hopefully does not cause confusion.}
	\begin{thm}\label{converge}
		Fix any $k\in\mathbb N$ and constants $A, b, C, T>0$. Fix any continuous function $f: C([0,T],\mathbb R^k\times \mathbb R^{k(k-1)/2})\to \mathbb R$ such that $|f(p)| \leq Ae^{b\|p\|_\infty}$ for all paths $p: [0,T] \to \mathbb R^k\times \mathbb R^{k(k-1)/2}$. Suppose $\mathbf{X}^N$ is distributed according to the $SB_{N^{1/2}\nu}^{(k)}$ measure from Definition \ref{tmeasure}. Set $V^{ij}=V^{ij}(\mathbf{X}^N)$ as defined in \eqref{vijai}. The following holds. 
  \begin{enumerate}[label=(\alph*),leftmargin=18pt]
  \itemsep\setlength{0.8em}
      \item \label{conva} We have the following convergence
      \begin{equation*}
			\begin{aligned}
				& \lim_{N\to \infty} \mathbf E\big[ f\big(\mathbf X^N\;,\;{4}N^{1/2}\nu([0,1])\big(V^{ij}\big)_{1\leq i<j\leq k}\big)\big] = \mathbf E \big[ f\big(\mathbf{U}\;,\;\big(L_0^{ij}\big)_{1\leq i<j\leq k}\big)\big],
			\end{aligned}
		\end{equation*}
where $\mathbf{U}$ is a standard $k$-dimensional Brownian motion on $[0,T]$, and $L_0^{ij}(t):=L^{U^i-U^j}_0(t)$ is the local time accrued by $U_i-U_j$ at zero by time $t$.
  \item  \label{convb}  {Let $\mathcal G_N=\mathcal G_N(\mathbf X^N)$ be any family of continuous $\mathbf P_{SB^{(k)}_{N^{1/2}\nu}}$-martingales with $\mathcal G_N(0)=0$ satisfying \begin{equation}\label{assn1}\int_s^t d\langle \mathcal G_N\rangle (u) \leq CN^{1/2} \sum_{i<j} (V_t^{ij}(\mathbf X^N)-V_s^{ij}(\mathbf X^N)\big),
		\end{equation}
		for all $0\leq s<t\leq T$. Then $\{\mathcal G_N\}_N$ is a tight family of random variables in $C[0,T]$. Furthermore, any limit point of the triple $\big(\mathcal{G}_N,\; \mathbf{X}^N,\; {4}N^{1/2}\nu([0,1])\big(V^{ij}\big)_{1\leq i<j\leq k}\big)$ is of the form $(\mathcal{G}, \mathbf{U}, \big(L_0^{ij}\big)_{1\leq i<j\leq k}),$ where $\mathbf U, L_0^{ij}$ are as in part (a), and $\mathcal{G}$ is a $C[0,T]$-valued random variable satisfying $\mE[e^{p {\mathcal{G}}(T)-\frac{p}2\langle {\mathcal{G}}\rangle (T)}]<\infty$ for all $p>0$ as well as \begin{align}\label{e:fil2}
	\mE[\exp(\mathcal G(T)-\tfrac12\langle \mathcal G\rangle (T)) \mid \mathcal{F}_T(\mathbf{U})]=1,
		\end{align} where $\mathcal{F}_T(\mathbf{U})$ denotes the $\sigma$-algebra generated by $\mathbf U$. In particular, we have \begin{equation}
			\label{e:converge}
			\begin{aligned}
				& \lim_{N\to \infty} \mathbf E\bigg[e^{\mathcal G_N(T) - \frac12 \langle \mathcal G_N\rangle(T)} f\bigg(\mathbf X^N\;,\; {4}N^{1/2}\nu([0,1])\big(V^{ij}\big)_{1\leq i<j\leq k}\bigg)\bigg]= \mathbf E\big[ f\big(\mathbf{U},\big(L_0^{ij}\big)_{1\leq i<j\leq k}\big)\big].
			\end{aligned}
		\end{equation}}
  
  \item \label{convc} {Suppose $k=2m$ is even. Let us write
		\begin{align}\label{deltadef}
			\Delta_m(s,t):=\{(s_1,s_2,\ldots,s_m)\in [s,t]^m \mid s\le s_1\le s_2\le\cdots\le s_m\le t\}
		\end{align}
		for the set of all $m$ ordered points in $[s,t]$. We define $\mathcal M(\Delta_m(0,T))$ to be the space of finite and non-negative Borel measures on that simplex, equipped with the topology of weak convergence. Consider the following sequence of $\mathcal M(\Delta_m(0,T))$-valued random variables
\begin{align}
\label{gamman}
	\gamma_N(du_1,\ldots,du_m):= N^{m/2}\prod_{j=1}^m \ind_{\{X^{2j-1}_N(u_j)=X^{2j}_N(u_j)\}}\,du_1\ldots \,du_m,
\end{align}
where $\mathbf X^N = (X^1_N,...,X^k_N)$. The random variables $\{\gamma_N\}_{N\ge 1}$ are tight in $\mathcal M(\Delta_m(0,T))$. Moreover, any limit point of the triple $(\mathbf{X}^N,\; {4}N^{1/2}\nu([0,1])\big(V^{ij}\big)_{1\leq i<j\leq k},\; \gamma_N)$ is of the form
\begin{align*}
    \left(\mathbf{U},\; \big(L_0^{ij}\big)_{1\leq i<j\leq k}, \;\left(\frac{\sigma}{{2}}\right)^m\prod_{j=1}^m dL_0^{U^{2j-1}-U^{2j}}(u_j)\right) 
\end{align*}
where $\mathbf U, L_0^{ij}$ are as in part (a), {where $\sigma := \frac{1}{{2}\nu([0,1])}$}, and $dL(t)$ denotes the Lebesgue-Stiltjes measure induced by the increasing function $t\mapsto L(t)$.}
  \end{enumerate} 
	\end{thm}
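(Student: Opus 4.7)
For part (a), tightness of $\mathbf X^N$ in $C([0,T],\mathbb R^k)$ is immediate, since each marginal $X^i_N$ is a standard Brownian motion under $\mathbf P_{SB^{(k)}_{N^{1/2}\nu}}$. To identify the limit, I would use the time-change representation recalled in the proof of Lemma \ref{traps}: $X^i_N - X^j_N \stackrel{d}{=} \sqrt{2}\,B\circ T_N^{-1}$, where $T_N(t) = t + (4N^{1/2}\nu([0,1]))^{-1}L^B_0(t)\to t$ uniformly on compacts, so $X^i_N - X^j_N \to \sqrt{2}B$ in distribution. Combined with $\langle X^i, X^j\rangle(t) = V^{ij}_t \to 0$ and Levy's martingale characterization, the joint limit of $\mathbf X^N$ must be $k$ independent Brownian motions $\mathbf U$. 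The local-time convergence then follows from the exact prelimit identity $L^{X^i_N-X^j_N}_0(t) = 4N^{1/2}\nu([0,1])\,V^{ij}_t$, together with $L^{\sqrt 2 B\circ T_N^{-1}}_0(t) = \sqrt 2\, L^B_0(T_N^{-1}(t)) \to \sqrt 2\, L^B_0(t) \stackrel{d}{=} L_0^{U^i-U^j}(t)$; joint tightness of the local times is supplied by Lemma \ref{lte}.

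For part (b), tightness of $\mathcal G_N$ in $C[0,T]$ follows from Burkholder-Davis-Gundy combined with \eqref{assn1} and Lemma \ref{lte}, yielding joint tightness of the whole triple; Proposition \ref{exp} provides uniform $L^p$-bounds on the exponential density $e^{\mathcal G_N(T)-\frac12\langle \mathcal G_N\rangle(T)}$, and the corresponding exponential moment bound on any limit point follows by Fatou. The main obstacle is verifying \eqref{e:fil2}. The plan is to test against bounded continuous functionals $F$ of the path of $\mathbf U$ via the Girsanov identity $\mathbf E[e^{\mathcal G_N - \frac12\langle \mathcal G_N\rangle}F(\mathbf X^N)] = \mathbf E_{\mathbb Q_N}[F(\mathbf X^N)]$, where $\mathbb Q_N$ is the tilted probability. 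Under $\mathbb Q_N$ one has $X^i_N = \widetilde X^i_N + \langle X^i, \mathcal G_N\rangle$ with $\widetilde X^i_N$ a $\mathbb Q_N$-martingale of quadratic variation $t$, and a pointwise Kunita-Watanabe estimate combined with \eqref{assn1} gives
\[ |\langle X^i, \mathcal G_N\rangle|_{TV}(T) \le \Con\, N^{1/4}\sum_{j}V^{ij}_T, \]
which is $O(N^{-1/4})$ in $L^p$ under $\mathbf P$ by Lemma \ref{lte}, and hence also under $\mathbb Q_N$ by the uniform $L^p$-bound on the density. The off-diagonal covariations $\langle \widetilde X^i, \widetilde X^j\rangle = V^{ij}$ likewise vanish in $L^p$, so by Levy's characterization $\mathbf X^N \to \mathbf U$ weakly under $\mathbb Q_N$, giving $\mathbf E_{\mathbb Q_N}[F(\mathbf X^N)] \to \mathbf E[F(\mathbf U)]$. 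Combined with the uniform-integrability limit $\mathbf E[e^{\mathcal G_N - \frac12\langle \mathcal G_N\rangle}F(\mathbf X^N)] \to \mathbf E[e^{\mathcal G(T)-\frac12\langle\mathcal G\rangle(T)}F(\mathbf U)]$, this yields \eqref{e:fil2}; the formula \eqref{e:converge} then follows by the tower property.

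For part (c), the starting point is the pointwise Stieltjes identity
\[ \gamma_N(du_1,\ldots,du_m) = \frac{1}{(4\nu([0,1]))^m}\prod_{j=1}^m dL^{X^{2j-1}_N-X^{2j}_N}_0(u_j)\bigg|_{\Delta_m(0,T)}, \]
valid pointwise in $N$. Tightness in $\mathcal M(\Delta_m(0,T))$ reduces to tightness of the total mass, which is dominated by $(4\nu([0,1]))^{-m}\prod_j L^{X^{2j-1}_N-X^{2j}_N}_0(T)$ and uniformly $L^p$-bounded via H\"older and Lemma \ref{lte}. For identification, by part (a) the local-time processes converge jointly in $C[0,T]$ to $L^{U^{2j-1}-U^{2j}}_0$; since uniform convergence of continuous nondecreasing functions implies weak convergence of the associated Stieltjes measures, the product measure restricted to the compact simplex converges weakly (the boundary $\{u_i=u_{i+1}\}$ has zero mass under the continuous limit product). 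This identifies the limit as $(\sigma/2)^m\prod_j dL_0^{U^{2j-1}-U^{2j}}(u_j)$, matching the stated form via $\sigma/2 = 1/(4\nu([0,1]))$.
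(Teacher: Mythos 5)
Your part (c) and the tightness statements are essentially the paper's own argument (the Stieltjes-measure identity you write is exactly how the paper's total-mass bound and identification work, with $\sigma/2=1/(4\nu([0,1]))$). The two points worth flagging are in parts (a) and (b).

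In part (a) there is a genuine gap in the identification of the local-time limits. Your time-change argument shows that each $4N^{1/2}\nu([0,1])V^{ij}$ converges in distribution to a process with the \emph{law} of $L_0^{U^i-U^j}$, but the theorem requires identifying any joint limit point $(\mathbf U,(K^{ij}))$ as satisfying $K^{ij}=L_0^{U^i-U^j}$, i.e.\ the local time \emph{of the limiting difference process}. Local time is not a continuous functional of the path in the uniform topology, so the prelimit identity $L_0^{X^{N,i}-X^{N,j}}=4N^{1/2}\nu([0,1])V^{ij}$ cannot simply be passed to the limit, and a marginal distributional identity is not enough: the whole point of the theorem is that $f$ couples the paths and the local times. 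The fix is the paper's: the processes $|X^{N,i}-X^{N,j}|-4N^{1/2}\nu([0,1])V^{ij}$ are uniformly integrable martingales (by Lemma \ref{traps}), hence any limit point $|U^i-U^j|-K^{ij}$ is a martingale, and Tanaka's formula forces $K^{ij}=L_0^{U^i-U^j}$.

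In part (b) your route to \eqref{e:fil2} is genuinely different from the paper's and is valid. The paper proves $\mathbf E[\int_0^t d|\langle X^{N,i},\mathcal G_N\rangle|]\to 0$ by the same Kunita--Watanabe estimate you use, concludes $\langle U^i,\mathcal G\rangle=0$ for any limit point, and then verifies \eqref{e:fil2} via the martingale representation theorem for $\mathcal F_T(\mathbf U)$-measurable functionals. You instead move to the tilted measures $\mathbb Q_N$ and apply L\'evy's characterization there, equating $\mathbf E_{\mathbb Q_N}[F(\mathbf X^N)]\to\mathbf E[F(\mathbf U)]$ with the limit $\mathbf E[e^{\mathcal G(T)-\frac12\langle\mathcal G\rangle(T)}F(\mathbf U)]$ obtained from uniform integrability. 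This avoids martingale representation at the cost of checking that $e^{\mathcal G_N(T)-\frac12\langle\mathcal G_N\rangle(T)}$ is a true martingale (Novikov via Lemma \ref{traps}) and that moment bounds transfer to $\mathbb Q_N$ by H\"older against the uniformly $L^p$-bounded density (Proposition \ref{exp}); both are available. One caveat: in \eqref{e:converge} the functional $f$ also depends on the rescaled intersection times, so your L\'evy argument under $\mathbb Q_N$ does not by itself cover \eqref{e:converge}; you must either rerun the part (a) identification under $\mathbb Q_N$ or, as the paper does and as your closing appeal to the tower property suggests, deduce \eqref{e:converge} directly from \eqref{e:fil2} together with the joint convergence of the triple.
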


	By Remark \ref{rk:mn}, taking $\mathcal{G}_N:=N^{1/4}G$ with $G$ defined as in \eqref{def:gm}, we see that equation \eqref{e:converge} in part \ref{convb} of the above theorem deals with the weak convergence of  $D_{N^{1/4}}SB_{N^{1/2}\nu}^{(k)}$ measures. 
 Let us first show how it implies Proposition \ref{p:mcov}.
	
	\begin{proof}[Proof of Proposition \ref{p:mcov}]  Fix any  $\phi\in \mathcal{S}(\R)$. Given $\mathbf{X}$ distributed as $\mathbf P_{SB_{N^{1/2}\nu}^{(k)}}$,
		the martingales $$\mathcal G_N(t) = N^{1/4} \sum_{j=1}^k \int_0^t \bigg[ 1-\frac{1}{m_j(\mathbf X(s))}\bigg]dX^j(s)$$
		clearly satisfy \eqref{assn1}. Thus, Theorem \ref{converge} part \ref{convb} holds for this choice of $\mathcal{G}_N$. In view of Lemma \ref{rnd}, equation \eqref{e:converge} yields that
		$$\lim_{N\to \infty} \mathbf E_{D_{N^{1/4}}SB_{N^{1/2}\nu}^{(k)} }\bigg[ f\bigg(\mathbf X\;,\;{4}N^{1/2}\nu([0,1])\big(V^{ij}\big)_{1\leq i<j\leq k}\bigg)\bigg]= \mathbf E_{B^{\otimes k}} \bigg[ f\bigg(\mathbf{U}\;,\;\big(L_0^{ij}\big)_{1\leq i<j\leq k}\bigg)\bigg],$$
		for all continuous functions $f: C([0,T],\mathbb R^k\times \mathbb R^{k(k-1)/2})\to \mathbb R$ with $|f(p)| \leq Ae^{b\|p\|_\infty}$ for some $A,b>0$. Specializing $f$ to a particular choice given by a product of $\phi$'s multiplied by the exponential of the intersection times, we get  
		\begin{align*}
			\lim_{N\to\infty} \mathbf E_{D_{N^{1/4}}SB_{N^{1/2}\nu}^{(k)}}\bigg[\prod_{j=1}^k \phi(X^j_t) e^{N^{1/2}\sum_{i<j} V_t^{ij}(\mathbf{X})}\bigg] = \mathbf E_{B^{\otimes k}}\bigg[\prod_{j=1}^k \phi(B^j_t) e^{\frac{\sigma}{{2}}\sum_{i<j} L^{B^i-B^j}_0(t)} \bigg].
		\end{align*}
		where $\sigma=\frac1{{2}\nu([0,1])}$. Thanks to the moment formula from Lemma \ref{dn}, we thus have the first equality in \eqref{e:mcov} from the above equation. 
		
		By the Feynman-Kac formula, the stochastic heat equation \eqref{she} admits well-known moment formulas in terms of local times of Brownian bridges (see \cite{BC95} for example). In particular, from \cite{BC95} we have 
		\begin{align}\label{eq:FKMoments}
			\mathbf E\left[\prod_{i=1}^k \mathcal{Z}_t(x_i)\right] = p_t(x_1)\cdots p_t(x_k)\mathbf E_{\vec{x}}\bigg[e^{\frac{\sigma}{{2}} \sum_{i<j} L^{B^i-B^j}_0(t)} \bigg],
		\end{align}
		where $p$ is the standard heat kernel, and the $B^i$ are independent Brownian bridges on $[0,t]$ from $0$ to $x_i$ respectively. {Note that in \cite{BC95}, local time is defined as an integral against Lebesgue measure: $L^X_a(t):= \lim_{\varepsilon \to 0^+}\frac{1}{2 \varepsilon}\int_0^t \mathbbm{1}_{\{a- \varepsilon < X_s < a + \varepsilon\}}ds$, in contrast with our definition of local time  \eqref{eq:localTimeDef} which is as an integral against $d\langle X, X \rangle_s$. Hence, when taking the local time of $B^i - B^j$, our definition of local time differs from theirs by a factor of $2$. This explains why we have $\frac{\sigma}{2}$ in the exponent on the right-hand side of \eqref{eq:FKMoments} instead of just $\sigma$, while this extra factor of $1/2$ does not appear in \cite{BC95}.}
  
        From here one arrives at the same moment formula for  $\mathcal{Z}_t(\phi):=\int_{\R} \mathcal{Z}_t(x)\phi(x)dx$, yielding the second equality in \eqref{e:mcov}. This completes the proof.
	\end{proof}
	
\subsection{Proof of Theorem \ref{converge}} Throughout the proof, we will write $X^{N,i}$ and $U^i$ for the $i^{th}$ coordinate of the $\mathbb R^k$-valued processes $\mathbf X^N$ and $\mathbf U$ respectively. 
\\
\\
		\noindent\textbf{Proof of \ref{conva}.} 
  {We prove part \ref{conva} for any \textit{bounded} continuous function $f$.
		In view of Lemma \ref{traps}, a uniform integrability argument then extends the result to all continuous functions $f$ with exponential growth.  Recall that the stickiness of the sticky Brownian motion has an inverse relationship to the characteristic measure, so intuitively, as we take $N \to \infty$, the Brownian motions will no longer stick together, resulting in $k$ independent Brownian motions in the limit.}

\smallskip

  We now flesh out the technical details of the above-claimed Brownian motion convergence. We assume for convenience that all $(\mathbf{X}^N)_{N\ge 1}$ are coupled onto the same probability space $(\Omega,\mathcal F,\mathbf P)$. For each $N$, we partition $[0,T]$ into the two random sets $A_N, B_N$ as
		\begin{equation}
			\label{e:sets}
			\begin{aligned}
				A_N & := \left\{ s \in [0,T] \mid \#\{X_s^{N,1},X_s^{N,2},\ldots,X_s^{N,k}\}=k\right\}, \\
				B_N & := \left\{ s \in [0,T] \mid \#\{X_s^{N,1},X_s^{N,2},\ldots,X_s^{N,k}\}\le k-1\right\}. 
			\end{aligned}
		\end{equation}
		Only for this proof, we will use $|S|$ to denote the Lebesgue measure of a set $S$. Clearly by the definition of $V_T^{ij}$ (see \eqref{vijai}), we have $|B_N|\leq \sum_{i<j} V_{T}^{ij}(\mathbf X^N)$. Using $a \le e^a$ for $a>0$, we see that
		\begin{align} \label{bncn}
			\mE\left[|B_N| \right] \le  N^{-1/2}\cdot \mE \bigg[ \exp\bigg( N^{1/2} \sum_{i<j} V_T^{ij} (\mathbf X^N)\bigg)\bigg] \leq \Con \cdot N^{-1/2},
		\end{align}
		where $\Con=\Con(k,T)>0$ may be chosen independent of $N$. The last inequality above is a consequence of Lemma \ref{traps} with $\nu \mapsto N^{1/2}\nu$. Let $\mathbf W$ be a Brownian motion independent of $\mathbf X^N$ defined on the same probability space. Let us set  $$\mathbf Y^N_t:= \int_0^t \ind_{\{s\in A_N\}} d\mathbf X^N_s + \int_0^t \ind_{\{s\in B_N\}}d\mathbf W_s.$$ 
		As $X_s^{N,i}\neq X_s^{N,j}$ for all $s\in A_N$, we get that 
		\begin{align*}
			\langle Y^{N,i}, Y^{N,j} \rangle(t) & =\int_0^t\ind_{\{s\in A_N \mid X_s^{N,i}=X_s^{N,j}\}}ds+\ind_{\{i=j\}}\int_0^t \ind_{\{s\in B_N\}}ds  =\ind_{\{i=j\}}\cdot t.
		\end{align*}
		for all $t\in [0,T]$. Thus, by Levy's criterion, we find that the $\mathbf Y^N$ is a $k$-dimensional standard Brownian motion in the combined filtration of $(\mathbf X^N,\mathbf W)$. We claim that
		\begin{align}\label{e:doob1}
			\mathbf P\bigg[ \sup_{t\leq T} \big\Vert\mathbf Y_t^N-\mathbf X_t^N\big\Vert >N^{-1/6} \bigg]  \leq 2k \cdot \Con N^{-1/6}. 
		\end{align}
		where the $\Con$ is the same as in \eqref{bncn}. An application of Doob's submartingale inequality followed by It\^{o}'s isometry  yields 
		\begin{align*}
			\mathbf P\bigg[ \sup_{t\leq T} \big\Vert\mathbf Y_t^N-\mathbf X_t^N\big\Vert >N^{-1/6} \bigg] & \leq  N^{1/3} \cdot \mE \left[ \big\Vert\mathbf Y_T^N-\mathbf X_T^N\big\Vert^2\right] \\ &  =  N^{1/3} \cdot\mE\bigg[ \bigg\Vert \int_0^T \ind_{\{s\in B_N\}}d\big(\mathbf W_s-\mathbf X^N_s\big)\bigg\Vert^2 \bigg] \\  &= 2k N^{1/3} \cdot \mE \bigg[ \int_0^T \ind_{\{s\in B_N\}}ds\bigg]  = 2k N^{1/3} \cdot \mE[|B_N|]. 
		\end{align*}
		Inserting the bound from \eqref{bncn} leads to \eqref{e:doob1}. As $\mathbf{Y}^N$ is a standard $k$-dimensional Brownian motion, \eqref{e:doob1} implies $\mathbf X^N$ converges weakly to a standard $k$-dimensional Brownian motion. 
		
		\medskip
		
		Let us now settle the weak convergence of pairwise intersection times. 
  Observe that by Lemma \ref{lte}, ${4}N^{1/2}\nu([0,1])V^{ij}(\mathbf X^N)$ is a tight sequence in $C[0,T]$ for each $i<j$. 
  Let us consider any limit point $(\mathbf U, (K^{ij})_{1\le i<j\le k})$ of $$(\mathbf{X}^N \ ,\  ({4}N^{1/2}\nu([0,1])V^{ij}(\mathbf X^N))_{1\le i<j\le k}).$$ By the property \eqref{eq:two-point-sbm} of sticky Brownian motion, for each $i<j$ we have that
		$$|X^{N,i}(t)-X^{N,j}(t)|-{4}N^{1/2}\nu([0,1])V^{ij}(\mathbf X^N)$$
		is a martingale for each $N$. Thanks to the estimates in Lemma \ref{traps}, the above expression is uniformly integrable. Thus in the limit point, the process $|U^i-U^j|-K^{ij}$ is a martingale as well. However, we have already established that $\mathbf{U}$ is a standard $k$-dimensional Brownian motion. Thus, by Tanaka's formula, $K^{ij}(\cdot)$ has to be $L_0^{U^{i}-U^j}(\cdot)$.
		
		Thus, summarizing, we have identified the limit point as $(\mathbf{U}, (L_0^{ij})_{1\le i<j\le k})$ where $\mathbf{U}$ is a standard $k$-dimensional Brownian motion and $L_0^{ij}(t):=L_0^{U^i-U^j}(t)$. This verifies part \ref{conva}.

		\medskip
		
		\noindent\textbf{Proof of \ref{convb}.} Let us take any $\mathcal G_N$ satisfying the assumption of the theorem. We first prove that $\mathcal G_N$ is tight in $C[0,T]$. Indeed, by Burkholder-Davis-Gundy inequality, we have that $$\mathbf E \big[ |\mathcal G_N(t)-\mathcal G_N(s)|^p \big] \leq \mathbf E \bigg[ \bigg(\int_s^t d\langle \mathcal G_N\rangle (u)\bigg)^{\frac{p}2}\bigg] \leq C\cdot\mathbf E \bigg[ \bigg(N^{1/2}\sum_{i<j}  \big(V_t^{ij}(\mathbf X^N)-V_s^{ij}(\mathbf X^N)\big)\bigg)^{\frac{p}2}\bigg].$$ 
		The second inequality above is due to the assumption \eqref{assn1}. Lemma \ref{lte} implies that the term on the right-hand side of the above equation is bounded by $\Con|t-s|^{p/4}$ for some constant $\Con>0$ independent of $N$. This verifies the tightness of $\mathcal G_N$.
		
		\medskip
		
		Note that \eqref{e:converge} would be immediate from \eqref{e:fil2}, because the latter would imply that any subsequence of the left side of \eqref{e:converge} has a further subsequence which converges to the right side of \eqref{e:converge}. Let us therefore prove \eqref{e:fil2}. Consider any joint limit point $(\mathcal G, \mathbf U, (K^{ij})_{1\le i<j\le k})$  of $$(\mathcal{G}_N \ , \ \mathbf X^N \ ,\  ({4}N^{1/2}\nu([0,1])V^{ij}(\mathbf X^N))_{1\le i<j\le k}).$$ 
		By part \ref{conva}, $\mathbf U$ is a standard $k$-dimensional Brownian motion and $K^{ij}=L_0^{U^i-U^j}$. By our moment estimates, we have uniform integrability of both prelimiting martingales $\mathcal{G}_N$ and $\mathbf X^N$. Thus in the limit, $\mathcal G$ and $\mathbf U$ are martingales in their \textit{joint} filtrations. By Proposition \ref{exp} and the assumption \eqref{assn1}, we have boundedness of $p^{th}$ moments (hence uniform integrability) of $\{e^{\mathcal G_N(T) - \frac12 \langle \mathcal G_N\rangle_T}\}_{N\ge 1}.$ Thus, in the limit, $\exp(\mathcal G(t)-\frac12\langle \mathcal G \rangle(t))$ is a martingale as well.

		\medskip
		
		To complete the proof of \eqref{e:fil2}, the main idea is to use \eqref{assn1} to prove that the martingales $\mathcal G_N$ {are} asymptotically decoupled from the sticky Brownian motions. To be precise, we shall prove that for each $i$ and for all $0\le t\le T$
		\begin{align}\label{e:covv}
			\lim_{N\to \infty}\mathbf E \bigg[ \int_0^t d|\langle X^{N,i}, \mathcal G_N\rangle(s)|\bigg]=0.
		\end{align}
		Let us assume \eqref{e:covv} for the moment and complete the proof of \eqref{e:fil2}. Note that in the prelimit $X^{N,i}\cdot \mathcal G_N - \langle X^{N,i},\mathcal G_N\rangle$ are martingales and \eqref{e:covv} shows that the covariations vanish in probability. Therefore, in the limit, we can conclude that $\mathcal{G}\cdot U^i$ is a martingale so that $\langle U^i, \mathcal{G} \rangle = 0.$ 

  \medskip
  
		Take any bounded $\mathcal F_T(\mathbf{U})$-measurable functional $H:C([0,T],\mathbb R^k)\to \mathbb R$.  Note that $t\mapsto \mE[H(\mathbf{U})|\mathcal F_t(\mathbf U)]$ is a martingale in the filtration of $\mathbf{U}$. As $\mathbf{U}$ is a standard $k$-dimensional Brownian motion, by the martingale representation theorem we have
		$$H(\mathbf{U})=\mE [H(\mathbf{U})]+\sum_{i=1}^k\int_0^T h_s^i dU_s^i$$
		for some adapted $\mathbb R$-valued processes $h^1,\ldots,h^k$. For the stochastic exponential we have $$\exp\left(\mathcal{G}(T) - \frac12 \langle \mathcal{G}\rangle (T)\right) = 1+\int_0^T \exp\left(\mathcal{G}(s) - \frac12 \langle \mathcal{G}\rangle (s)\right) d\mathcal{G}(s).$$ Using this we find that 
		\begin{align*} \mE\left[\big(e^{\mathcal{G}(T) - \frac12 \langle \mathcal{G}\rangle (T)} -1\big)\big(H(\mathbf{U})-\mE[H(\mathbf{U})]\big)\right] &= \sum_{i=1}^k\mE\bigg[\bigg(\int_0^T e^{\mathcal{G}(s) - \frac12 \langle \mathcal{G}\rangle(s)}d\mathcal{G}(s)\bigg)\bigg(\int_0^T h_s^i  dU^i_s\bigg)\bigg] \\&= \sum_{i=1}^k \mE\bigg[\int_0^T h_s^i e^{\mathcal{G}(s) - \frac12 \langle \mathcal{G}\rangle(s)} d\langle \mathcal{G},U^i\rangle(s)\bigg] = 0,
		\end{align*}
		where the last equality is due to the fact $\langle U^i,\mathcal{G}\rangle \equiv 0$ almost surely. This proves that for all bounded measurable $H$ we have $\mE[e^{\mathcal{G}(T) - \frac12 \langle \mathcal{G}\rangle(T)}H(\mathbf{U})]=\mE[H(\mathbf{U})]$, establishing \eqref{e:fil2} modulo \eqref{e:covv}.
		
		\medskip

		Let us now explain why \eqref{e:covv} holds. Note that by assumption \eqref{assn1} 
		$$\mathcal G_N(t) = \int_0^t \ind_{\{ s\in B_N\}} d\mathcal G_N(s),$$ where $B_N$ is defined in \eqref{e:sets}. By the Kunita-Watanabe inequality {(see Proposition 2.14 in Chapter 3 of \cite{karatzas})}, we have that
		\begin{align}\notag\mathbf E \bigg[ \int_0^t d|\langle X^{N,i}, \mathcal G_N\rangle(s)|\bigg] &= \mathbf E \bigg[ \int_0^t \ind_{\{ s\in B_N\}} d|\langle X^{N,i},\mathcal G_N\rangle(s)|\bigg] \\ &\leq \mathbf E \bigg[ \bigg( \int_0^t \ind_{\{ s\in B_N\}} ds\bigg)^{\frac12} \sqrt{\langle \mathcal{G}_N\rangle(t)}\bigg]. \label{w1}
		\end{align}
		By definition, $\int_0^t \ind_{\{ s\in B_N\}} ds\le \sum_{i<j} V_t^{ij}$. Using assumption \eqref{assn1} we get that
		\begin{align*}
			\mbox{r.h.s.~of \eqref{w1}} \le N^{1/4}\cdot \mathbf E \bigg[ \sum_{i<j}V^{ij}_t\bigg] \leq N^{-1/4} \cdot \mathbf E \bigg[ \exp\bigg(N^{1/2}\sum_{i<j}V^{ij}_t \bigg)\bigg]\leq \Con \cdot N^{-1/4},
		\end{align*}
		where we used $a\leq e^a$ and then applied Lemma \ref{traps} in the last two bounds. This verifies \eqref{e:covv}, completing the proof of the theorem.

  \medskip

  \noindent\textbf{Proof of \ref{convc}.} Note that
\begin{align*}
	\gamma_N(\Delta_m(0,T)) \le \prod_{j=1}^m\left[N^{\frac12}\int_0^T \ind_{\{X^{2j-1}_{u}=X^{2j}_{u}\}} du\right].
\end{align*}
From Lemma \ref{traps} we know the exponential moments for each of the terms in the product are uniformly bounded under $SB_{N^{1/2}\nu}^{(k)}$ measure. Thus for all $p\ge 1$,
\begin{align}\label{gmass1}
	\sup_{N\ge 1} \mathbf E_{SB^{(k)}_{N^{1/2}\nu}} [\gamma_N(\Delta_m(0,T))^p]<\infty.
\end{align}
Hence the laws of $\{\gamma_N\}_{N\ge 1}$ are tight, because the total mass of $\gamma_N$ is a tight family of random variables and because $\Delta_m(0,T)$ as defined in \eqref{deltadef} is a compact space. Let $\left(\mathbf{U}, \big(K^{ij}\big)_{1\leq i<j\leq k}, \gamma\right)$ be any limit point of the sequence $(\mathbf{X}^N, {4}N^{1/2}\nu([0,1])\big(V^{ij}\big)_{1\leq i<j\leq k}, \gamma_N)$. By part \ref{conva}, we know $\mathbf{U}$ is a standard $k$-dimensional Brownian motion and $K^{ij}=L_0^{ij}$. Note that the joint cumulative distribution function of the measure $\gamma$ is necessarily given by $\left(\frac{\sigma}{{2}}\right)^m$ times a product of the $K^{ij}$, {where $\sigma = \frac{1}{{2}\nu([0,1])}$}. Since we know that $K^{ij}=L_0^{ij}$, this immediately implies that $\gamma=\left(\frac{\sigma}{{2}}\right)^m\prod_{j=1}^m dL_0^{U^{2j-1}-U^{2j}}(u_j)$. This establishes part \ref{convc}.

	\section{Martingales associated to the sticky kernels}\label{sec:mp}
	
	In this section, we begin our analysis of the field $\mathscr X_t^N$ by identifying some useful martingale observables which will later be used to identify the limit in Section \ref{sec:smp}. 
 
 \begin{defn}For a finite measure $\nu = \lambda + \sum_{k\in \mathbb N} p_k \delta_{x_k}$ on the real number line with $\lambda$ atomless, we define its ``square" 
 \begin{align}
     \label{def:sq}
     \nu^{Sq} := \sum_k p_k^2 \delta_{x_k}.
 \end{align}
 \end{defn}
 For a continuous function $\phi$ we also denote by $(\nu,\phi):=\int_\mathbb R \phi \;d\nu$ the natural (spatial) pairing. When $\phi$ is smooth, we will use $\phi',\phi''$ to denote the first and second (spatial) derivatives of $\phi$ respectively. We then have the following lemma.
	
	\begin{lem}\label{mart1}Let $K_{0,t}$ denote the Howitt-Warren flow. Then for all $\phi \in C_c^\infty(\mathbb R)$, the process $$S_t(\phi):= (K_{0,t}, \phi) - \frac12\int_0^t (K_{0,s},\phi'')ds$$ is a continuous martingale in the filtration $\mathcal F_{t} := \sigma(\{K_{a,b}(x,A): 0\leq a<b \leq t, x\in \mathbb R, A \mbox{ Borel}\}).$ Furthermore its quadratic variation is given by $$\langle S(\phi)\rangle_t = \int_0^t (K_{0,s}^{Sq} , (\phi')^2 )ds.$$
		
	\end{lem}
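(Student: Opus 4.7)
My plan is to exploit the flow composition property of $K_{s,t}$ together with the well-posedness of the annealed $k$-point motions: the $1$-point motion is Brownian motion, and the $2$-point motion is sticky Brownian motion (Definition \ref{hwmp}). These observations reduce the computation of the compensators of $(K_{0,t},\phi)$ and $(K_{0,t},\phi)^2$ to It\^{o}'s formula applied to the $1$- and $2$-point motions, respectively.

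For the martingale property, I start from the decomposition $(K_{0,t},\phi) = \int K_{0,s}(0,dy)(K_{s,t}(y,\cdot),\phi)$ (Definition \ref{sfok}\ref{d1}). Since $K_{s,t}$ is independent of $\mathcal F_s$ and the annealed $1$-point motion is Brownian motion, $\Ex[(K_{s,t}(y,\cdot),\phi)] = (P_{t-s}\phi)(y)$ with $P$ the heat semigroup. Using $P_{t-s}\phi(y)-\phi(y) = \int_s^t \tfrac12(P_{u-s}\phi'')(y)\,du$ I obtain
\[
\Ex[(K_{0,t},\phi) \mid \mathcal F_s] - (K_{0,s},\phi) = \int_s^t \tfrac12 \Ex[(K_{0,u},\phi'') \mid \mathcal F_s]\,du,
\]
which is the martingale property of $S_t(\phi)$. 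Continuity of $t\mapsto (K_{0,t},\phi)$ will follow from the Brownian-web/net construction of the flow \cite{sss}.

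For the quadratic variation, I apply It\^{o} to the $2$-point sticky Brownian motion $(X_u,Y_u)_{u\ge s}$ started from $(y_1,y_2)$ with the test function $\phi(x)\phi(y)$. Using the covariation $d\langle X,Y\rangle_u=\mathbf 1_{\{X_u=Y_u\}}du$ from Definition \ref{hwmp},
\[
\mE[\phi(X_t)\phi(Y_t)] - \phi(y_1)\phi(y_2) = \int_s^t \mE\!\left[\tfrac12\phi''(X_u)\phi(Y_u)+\tfrac12\phi(X_u)\phi''(Y_u)+\phi'(X_u)\phi'(Y_u)\mathbf 1_{\{X_u=Y_u\}}\right]du.
\]
Integrating against the $\mathcal F_s$-measurable product measure $K_{0,s}(0,dy_1)K_{0,s}(0,dy_2)$, the left side reproduces $\Ex[(K_{0,t},\phi)^2\mid\mathcal F_s] - (K_{0,s},\phi)^2$. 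On the right, the flow composition $\int K_{0,s}(0,dy)K_{s,u}(y,dx) = K_{0,u}(0,dx)$ together with independence of $K_{s,\cdot}$ from $\mathcal F_s$ reassembles the two $\tfrac12$-terms into $\Ex[(K_{0,u},\phi)(K_{0,u},\phi'')\mid\mathcal F_s]$ and identifies the sticky term with $\Ex[\iint K_{0,u}(0,dx)K_{0,u}(0,dy)\phi'(x)\phi'(y)\mathbf 1_{\{x=y\}} \mid \mathcal F_s]$. Using the almost-sure atomicity of $K_{0,u}$ at deterministic $u$ \cite[Theorem 2.8]{sss}, this diagonal double integral equals $(K_{0,u}^{Sq},(\phi')^2)$. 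I conclude that
\[
N_t := (K_{0,t},\phi)^2 - \int_0^t\!\big[(K_{0,u},\phi)(K_{0,u},\phi'') + (K_{0,u}^{Sq},(\phi')^2)\big]du
\]
is a martingale.

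Since $S_t(\phi)$ and $(K_{0,t},\phi)$ differ by a continuous finite-variation process, they share the same quadratic variation. Applying It\^{o}'s product formula to the semimartingale $(K_{0,t},\phi) = S_t(\phi) + \tfrac12\int_0^t(K_{0,s},\phi'')ds$ gives
\[
(K_{0,t},\phi)^2 = \phi(0)^2 + 2\int_0^t(K_{0,u},\phi)\,dS_u(\phi) + \int_0^t (K_{0,u},\phi)(K_{0,u},\phi'')\,du + \langle S(\phi)\rangle_t,
\]
in which the stochastic integral is a martingale since $|(K_{0,u},\phi)|\le\|\phi\|_\infty$. Subtracting from the martingale identity for $N_t$ forces $\langle S(\phi)\rangle_t - \int_0^t(K_{0,u}^{Sq},(\phi')^2)du$ to be a continuous local martingale of bounded variation starting at $0$, hence identically zero. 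The delicate step is the collapse of the diagonal double integral to $(K_{0,u}^{Sq},(\phi')^2)$: this requires the canonical atomic decomposition in the definition of $\nu^{Sq}$ and the atomicity of $K_{0,u}$ for Lebesgue-a.e.\ $u$, which is sufficient because it appears under an outer time integral.
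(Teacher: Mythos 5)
Your proof is correct, but it takes a genuinely different route from the paper. The paper constructs $S_t(\phi)$ as an almost-sure limit of the empirical martingales $\til S^r_t(\phi)=\frac1r\sum_{i=1}^r\int_0^t\phi'(B^i_s)dB^i_s$ built from paths sampled from the kernels (via \cite{lejan}), derives continuity from uniform $L^p$ increment bounds, and computes $\langle \til S^r(\phi)\rangle_t=\int_0^t((\mu_s^r)^{Sq},(\phi')^2)ds$ explicitly before passing to the limit in $L^1$. You instead work entirely at the level of the annealed $1$- and $2$-point motions: the flow composition plus independence of increments gives $\Ex[(K_{0,t},\phi)\mid\mathcal F_s]=(K_{0,s},P_{t-s}\phi)$ and the martingale property; It\^o applied to the $2$-point sticky motion, reassembled through the composition property, shows $(K_{0,t},\phi)^2-\int_0^t\big[(K_{0,u},\phi)(K_{0,u},\phi'')+(K_{0,u}^{Sq},(\phi')^2)\big]du$ is a martingale; and uniqueness of the Doob--Meyer decomposition (continuous local martingale of bounded variation $\equiv 0$) identifies the bracket. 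Your approach is cleaner in that it uses only the martingale-problem characterization of the $k$-point motions rather than the path-sampling construction, and all conditional expectations involve bounded quantities so no uniform integrability is needed; its cost is that continuity of $t\mapsto (K_{0,t},\phi)$ must be imported from \cite{sss} (the paper gets it for free from its $L^p$ bounds), and the bracket is identified only indirectly rather than computed. Two minor remarks: the identity $\iint\mu(dx)\mu(dy)f(x)\ind_{\{x=y\}}=(\mu^{Sq},f)$ holds for \emph{every} finite Borel measure $\mu$ on $\R$ (the atomless part contributes nothing since the atom set is countable), so the atomicity of $K_{0,u}$ is not actually needed for that step — it only ensures the bracket is nondegenerate; and one should note that $\mu\mapsto(\mu^{Sq},f)$ is Borel measurable so that the compensator is adapted, a point both arguments treat implicitly.
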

From \cite[Theorem 2.8]{sss}, we know for each fixed $t>0$, $K_{0,t}$ is atomic almost surely. Thus $K_{0,s}^{Sq}$ is nonzero for almost every $s>0$.	
	\begin{proof}

		{For the moment being, we consider a fixed realization of the kernels $\{K_{s,t}:0\leq s\leq t\}.$ By {\cite[Section 2.6]{lejan}}, we can sample paths $B^1, B^2,B^3,\ldots$ starting from 0 in such a way that: 
  \begin{itemize}
  \item given $\{K_{s,t}:0\leq s\leq t\},$ the paths $\{B^i\}_{i\ge 1}$ are all independent.
  \item given $\{K_{s,t}:0\leq s\leq t\},$ the law of $B^i_t$ given $(B_u^i)_{0\leq u\leq s}$ is $K_{s,t}(B_s,\cdot)$ for all $0\leq s\leq t.$
  \end{itemize}
 It is clear that the joint quenched law of these paths $\{B^i\}_{i\ge 1}$ is a deterministic function of the collection $\{K_{s,t}:0\leq s\leq t\}$, because the two bullet points automatically determine the multi-time marginal laws of any finite subcollection $\{B^i\}_{i=1}^r$ in terms of the kernels $K_{s,t}$. It is also clear that the \textit{annealed} law of $(B^1,\ldots,B^r)$ is just $r$-point sticky Brownian motion (see the discussion after Definition \ref{sfok}). We define $\mu_t^r := \frac1r \sum_{i=1}^r\delta_{B_t^i}.$ By the law of large numbers, as $r\to \infty$ we have $$\mu_t^r \to K_{0,t} \;\;\;\;\;\;\text{    and    } \;\;\;\;\;\; \int_0^t \mu_s^r ds\to \int_0^t K_{0,s}ds$$ almost surely in the topology of weak convergence of Borel measures, for each $t>0$. Notice that $$|(\mu^r_t,\phi)-(\mu^r_s,\phi)| \leq \frac1r \sum_{i=1}^r |\phi(B_t^i)- \phi(B^i_s)| \leq \frac{\|\phi'\|_{L^\infty(\mathbb R)}}{r} \sum_{i=1}^r |B_t^i-B_s^i|$$ for all $s,t\ge 0$, all $r\in\mathbb N$, and all $\phi\in C_c^\infty(\mathbb R)$. By Minkowski's inequality and the fact that the $B^i$ are individually Brownian motions in the annealed sense, we obtain \begin{equation}\label{lpmu}\mathbb E[|(\mu^r_t,\phi)-(\mu^r_s,\phi)|^p]^{1/p} \leq \frac{\|\phi'\|_{L^\infty(\mathbb R)}}{r}\sum_{i=1}^r \mathbb E[|B_t^i-B_s^i|^p]^{1/p} = C_p \|\phi'\|_{L^\infty} |t-s|^{1/2}, \end{equation} for any $p\ge 1$. 
 Define 
		\begin{align}\label{def:mtn}
			\til{S}_{t}^r(\phi):= (\mu_t^r, \phi) - \frac12\int_0^t (\mu_s^r,\phi'')ds= \frac1r \sum_{i=1}^r\int_0^t\phi'(B_s^i)dB_s^i.
		\end{align}
		where the second equality above is due to It\^{o}'s formula.  Hence, $\til{S}_{t}^r(\phi)$ is a martingale. Clearly $|(\mu^r_t,\phi)|\leq \|\phi\|_{L^\infty(\mathbb R)}$. Combining this with \eqref{lpmu} and the first equality in \eqref{def:mtn}, one has $$\mathbb E[|\til{S}_{t}^r(\phi)-\til{S}_{s}^r(\phi)|^p]^{1/p} \leq C_p \|\phi'\|_{L^\infty}|t-s|^{1/2}+\frac12 \|\phi''\|_{L^\infty}|t-s|,$$ uniformly over $s,t\ge 0$. On any compact interval $[0,T]$ the right side can be bounded above by $\Con|t-s|^{1/2}$ for some constant $\Con=\Con(p,T,\phi)$ independent of $s,t\in [0,T]$. Since $S_t(\phi) = \lim_{r \to \infty} \til{S}_{t}^r(\phi)$ almost surely, this $L^p$ bound implies that the limit $S_t(\phi)$ is a continuous $L^p$-martingale for each $\phi \in C_c^\infty(\mathbb R)$. 
  The quadratic variations of $\til{S}^r(\phi)$ can be computed explicitly as
		\begin{align*}
			\langle \til{S}^r(\phi)\rangle_t =\frac{1}{r^2}\sum_{i,j}\int_0^t \phi'(B_s^i)\phi'(B_s^j)d\langle B^i,B^j \rangle_s
   =\frac{1}{r^2}\sum_{i,j}\int_0^t \phi'(B_s^i)^2\mathbf{1}_{\{B_s^i = B_s^j\}}ds = \int_0^t ((\mu_s^r)^{Sq},(\phi')^2)ds.
		\end{align*}
		We consider what happens to the quadratic variation as we take $r\to \infty$. By \cite{sss}, we have that for each $t > 0$, $K_{0,t}$ is a.s.~atomic. 
        Using this, it is easy to show that for each $s>0$, the quantity $((\mu_s^r)^{Sq},(\phi')^2)$ will converge almost surely as $r\to\infty$ to $(K_{0,s}^{Sq} , (\phi')^2 ).$ Since $\mu^r_t$ is a probability measure, we have $((\mu_s^r)^{Sq},(\phi')^2)\leq (\mu_s^r,(\phi')^2)\leq \|\phi'\|_{L^\infty}^2$ deterministically, and likewise $(K_{0,s}^{Sq} , (\phi')^2 )\leq (K_{0,s} , (\phi')^2 )\leq \|\phi'\|_{L^\infty}^2$. Therefore the dominated convergence theorem applied on the product space $\mathbb P\otimes ds$ gives $$\lim_{r\to\infty} \mathbb E \bigg[\int_0^t \big| ((\mu_s^r)^{Sq},(\phi')^2)-(K_{0,s}^{Sq} , (\phi')^2 )\big|ds\bigg]=0.$$ Here as always, $\mathbb E$ is annealed expectation. In other words, the expression for $\langle \til{S}^r(\phi)\rangle_t$ will converge in $L^1(\mathbb P)$ as $r\to \infty$ to the quantity
		$\int_0^t (K_{0,s}^{Sq} , (\phi')^2 )ds.$
Putting all of this together, we obtain that $S_t(\phi) = \lim_{r \to \infty} \til{S}_{t}^r(\phi)$ is a martingale with quadratic variation
		$$ \langle S(\phi)\rangle_t = \lim_{r\to \infty}  \langle \til{S}^r(\phi)\rangle_t = \int_0^t (K_{0,s}^{Sq} , (\phi')^2 )ds,$$ where the limit is interpreted in $L^1(\mathbb P).$ This completes the proof.}
	\end{proof}
Given the above lemma, our next goal is to derive a (local) martingale that is related to the rescaled field $\mc{X}^N$ from \eqref{a} instead of $K_{0,t}$. To motivate the choice of our next martingale, we first perform some martingale calculations on a formal level. 
The process $S$ from Lemma \ref{mart1} resembles an \textit{orthomartingale} in the sense that the cross-variation vanishes whenever $\phi$ and $\psi$ have disjoint support, and the above lemma states that $K_{0,t}$ solves the SPDE $$d K = \partial_x^2 K dt + dS.$$
	Note that if $u$ solves the forced heat equation $\partial_t u = \frac12 \partial_x^2 u + F$ and if $a,b,c \in \mathbb R$ then $e^{ax+bt} u(t,x+ct)$ solves the equation $\partial_t v = Lv + \bar F$ where $$L = \frac12 \big(\partial_x - (a-c)I\big)^2 + \big(b-\frac12 c^2\big) I,$$ $$\bar F(t,x) = e^{ax+bt} F(t,x+ct).$$ This makes sense even when $F$ is a Schwartz distribution. Note that when $a=c$ and $b = \frac12 c^2$ then $L = \frac12 \partial_x^2$ is unchanged.

	We now formally go from $K_{0, t}$ to $\mathscr X^N_t$ defined in \eqref{a} in two steps. We first apply the above argument for $u=K$ and $F=dS$, with $a = c = N^{-1/4}$ and $b = \frac12{N^{-1/2}}$. This gives us that the measure-valued process
		\begin{align*}
			\til{K}^N(t,x)&:= e^{ax + bt}K_{0, t}(x + ct) =e^{\frac{t}{2}N^{-1/2} + xN^{-1/4}}K_{0,t} (x + N^{-1/4}t)
		\end{align*}
		solves the SPDE $$\partial_t \til{K}^N = \partial_x^2 \tilde{K}^N dt + d{S}^N$$ where ${S}^N$ is the orthomartingale measure defined by $$d{S}^N(t,x) =e^{\frac{t}{2}N^{-1/2} + xN^{-1/4}} \cdot dS(t, x + N^{-1/4}t),$$ to be interpreted by integration against test functions. Next, we apply a diffusive scaling to $\til{K}^N$. Let us set
		\begin{align*}
			\mathscr X^N_t(x)&:= N^{1/2}\til{K}_N(Nt, N^{1/2}x)= N^{1/2} e^{\frac{t}{2}N^{1/2} + xN^{1/4}}K_{0,Nt} (tN^{3/4} + xN^{1/2}).
		\end{align*}
  Note that {$\partial_t\mathscr X^N_t = N^{3/2} \partial_t \til{K}^N(Nt, N^{1/2}x)$ and $\partial_x^2 \mathscr{X}^N dt= N^{3/2}\partial_x^2 \til{K}^N(Nt, N^{1/2}x) dt$. Thus, $\mathscr{X}_t^N$ solves the SPDE} $$\partial_t \mathscr X^N = \partial_x^2 \mathscr X^N dt + dM^N$$ where $M^N$ is the orthomartingale measure defined by \begin{align*}
			dM^N(t,x) &=N^{3/2} d{S}^N(Nt, N^{1/2}x)=N^{3/2} e^{\frac{t}{2}N^{1/2} + xN^{1/4}} \cdot dS(Nt, tN^{3/4} +xN^{1/2}).
		\end{align*}

 The above formal calculations suggest the following lemma.

	\begin{lem}\label{qv}
		With $\mathscr X^N$ as defined in \eqref{a}, for all $\phi \in C_c^\infty(\mathbb R)$ the processes \begin{equation}\label{mart_ob}M_t^N(\phi):= \mathscr X_t^N(\phi)- \phi(0)- \frac12 \int_0^t \mathscr X_s^N(\phi'')ds
		\end{equation}
		are continuous martingales with \begin{align*}\langle M^N(\phi)\rangle_t = \int_0^t \big( (\mathscr X_s^N)^{Sq}, (\phi' + N^{1/4}\phi)^2 \big) ds
		\end{align*}
	where $(\mathscr X_s^N)^{Sq}$ is defined via \eqref{def:sq}.	
	\end{lem}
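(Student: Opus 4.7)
\textbf{Proof plan for Lemma \ref{qv}.} The strategy is to reduce this to Lemma \ref{mart1} via a time-dependent test function and exploit the specific scalings in \eqref{a}. First I would rewrite
$$\mathscr X_s^N(\phi) = (K_{0,Ns},\, g_{Ns}), \qquad g_\tau(u) := e^{-\frac{\tau}{2}N^{-1/2} + uN^{-1/4}}\,\phi\!\left(N^{-1/2}(u - N^{-1/4}\tau)\right),$$
so that the rescaled field at time $s$ is the unscaled Howitt--Warren flow at time $\tau = Ns$ paired against the explicit smooth, compactly-supported-in-$u$ test function $g_\tau$. Because $g_\tau$ is deterministic and jointly smooth in $(\tau,u)$, I would next upgrade Lemma \ref{mart1} to time-dependent test functions to conclude that, for each such $g$,
$$\mathcal N_t := (K_{0,t},g_t) - (K_{0,0},g_0) - \int_0^t\!\left(K_{0,\tau},\, \partial_\tau g_\tau + \tfrac12 \partial_u^2 g_\tau\right)d\tau$$
is a continuous martingale with $\langle \mathcal N\rangle_t = \int_0^t (K_{0,\tau}^{Sq},(\partial_u g_\tau)^2)\,d\tau$. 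This is the usual stochastic-Fubini/Itô product-rule extension, and I would justify it either by approximating $g_\tau$ by step functions in $\tau$ with fixed spatial argument (applying Lemma \ref{mart1} on each subinterval and taking a telescoping limit) or by expanding $g_\tau(u) = g_0(u) + \int_0^\tau \partial_s g_s(u)\,ds$ and integrating the stochastic/Fubini pieces separately.

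The heart of the proof is then the explicit computation. Writing $h_\tau(u) := e^{-\frac{\tau}{2}N^{-1/2} + uN^{-1/4}}$ and $x := N^{-1/2}(u - N^{-1/4}\tau)$, straightforward differentiation gives
$$\partial_\tau g_\tau = -\tfrac12 N^{-1/2} h_\tau\phi(x) - N^{-3/4}h_\tau\phi'(x), \quad \partial_u g_\tau = N^{-1/4}h_\tau\phi(x) + N^{-1/2}h_\tau\phi'(x),$$
$$\partial_u^2 g_\tau = N^{-1/2}h_\tau\phi(x) + 2N^{-3/4}h_\tau\phi'(x) + N^{-1}h_\tau\phi''(x).$$
The crucial cancellation (mirroring the formal shear-transform discussion before the lemma, with $a=c=N^{-1/4}$ and $b=-\tfrac12 N^{-1/2}$ satisfying $a=c$ and $b+\tfrac12 a^2 = 0$) gives
$$\partial_\tau g_\tau + \tfrac12\partial_u^2 g_\tau = \tfrac12 N^{-1}h_\tau(u)\phi''(x).$$

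Next I would identify the pairings in terms of $\mathscr X^N$. Since $(K_{0,\tau},h_\tau \phi''(x)) = \mathscr X_{\tau/N}^N(\phi'')$, the change of variables $r=\tau/N$ converts the drift to $\tfrac12 \int_0^s \mathscr X_r^N(\phi'')\,dr$, and since $g_0(0)=\phi(0)$ while $K_{0,0}=\delta_0$, the constant term is $\phi(0)$; this produces $M_t^N(\phi)$ of \eqref{mart_ob}. For the bracket, observe $(\partial_u g_\tau)^2 = N^{-1} h_\tau(u)^2\,(N^{1/4}\phi+\phi')^2(x)$, and from the definition \eqref{def:sq} together with the change of variables packaged into \eqref{a}, one checks the identity
$$\big((\mathscr X_{\tau/N}^N)^{Sq},\psi\big) = \int h_\tau(u)^2\,\psi(x)\,K_{0,\tau}^{Sq}(du)$$
for any $\psi$. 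Applying this with $\psi = (\phi'+N^{1/4}\phi)^2$ and integrating, the time change $r=\tau/N$ yields exactly $\langle M^N(\phi)\rangle_s = \int_0^s ((\mathscr X_r^N)^{Sq},(\phi'+N^{1/4}\phi)^2)\,dr$, as claimed.

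The main obstacle I expect is the first step: rigorously justifying the time-dependent extension of Lemma \ref{mart1} together with the unbounded weight factor $h_\tau$. Although $\phi \in C_c^\infty$ ensures $g_\tau$ is compactly supported in $u$, the $L^\infty$ norm of $g_\tau$ on that support grows like $e^{CN^{-1/4}|u|}$ with support size $O(N^{1/2})$, so some $L^p$ control on $K_{0,\tau}^{Sq}$ tested against polynomials in $u$ is needed to interchange integrals and limits in the approximation. This can be handled by truncation of $g_\tau$ together with Fatou/dominated convergence; the moment estimates implicit in Lemma \ref{mart1} (together with $\int K_{0,\tau}^{Sq}(du) \leq 1$ coming from $K_{0,\tau}$ being a probability measure) make the truncation argument routine. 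All remaining steps are purely algebraic and follow from the chain rule.
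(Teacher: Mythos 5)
Your computation is correct, and your route is a legitimate alternative to the one the paper intends. The paper's proof of Lemma \ref{qv} is essentially "rerun the proof of Lemma \ref{mart1}": that is, apply It\^o's formula at the path level to the observable $e^{-\frac{\tau}{2}N^{-1/2}+N^{-1/4}B^i_\tau}\phi(N^{-1/2}(B^i_\tau-N^{-1/4}\tau))$ along the sampled paths $B^i$, and pass to the limit in the empirical measure $\mu^r_\tau$ exactly as before; the only change is that the It\^o integrand $\phi'(B^i_s)$ becomes $\partial_u g_\tau(B^i_\tau)=h_\tau(B^i_\tau)\,N^{-1/2}(\phi'+N^{1/4}\phi)(x)$, which is where the replacement $\phi'\mapsto\phi'+N^{1/4}\phi$ in the quadratic variation comes from. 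You instead use Lemma \ref{mart1} as a black box and upgrade it to time-dependent test functions (essentially the conditioning-plus-Fubini argument the paper itself carries out in Lemma \ref{qvt}, together with polarization of the quadratic variation), and then the whole content of the lemma is the algebraic cancellation $\partial_\tau g_\tau+\tfrac12\partial_u^2 g_\tau=\tfrac12 N^{-1}h_\tau\phi''(x)$, which you verify explicitly; I checked the derivatives, the identification of the drift, the initial term $g_0(0)=\phi(0)$, the identity $\big((\mathscr X^N_{\tau/N})^{Sq},\psi\big)=\int h_\tau(u)^2\psi(x)\,K^{Sq}_{0,\tau}(du)$, and the time change $r=\tau/N$, and all are correct. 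Your approach buys a cleaner separation between the (already proved) martingale structure of $K_{0,t}$ and the purely deterministic shear computation; the paper's approach avoids having to state the time-dependent extension before Lemma \ref{qvt}. One small remark: the integrability worries in your last paragraph are unnecessary — $N$ is fixed throughout the lemma, $g_\tau$ is genuinely smooth and compactly supported in $u$ uniformly for $\tau\in[0,NT]$, and $K_{0,\tau}$ is a probability measure, so no truncation or dominated-convergence gymnastics beyond what is already in the proof of Lemma \ref{mart1} is needed.
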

	We remark that the $M^N(\phi)$ are actually tight in $C[0,T]$ as $N\to\infty$. This will follow from the Burkholder-Davis-Gundy inequality as well as moment bounds on the quadratic variation that are proved in Proposition \ref{tight1}. 
    Tightness will be proved in Proposition \ref{mcts}, using moment formulas and bounds that we will derive in Section \ref{sec:qmf}. This will be crucial in solving the martingale problem for \eqref{she}.
	
	\begin{proof}
		The proof follows in a similar manner to the proof of Lemma \ref{mart1}. {Note that $\phi'$ from Lemma \ref{mart1} has changed to $\phi'+N^{1/4}\phi$ due to the transformation of space-time $(t,x)\mapsto (Nt, N^{3/4}t+N^{1/2}x)$ which is used to obtain $\mathscr X_t^N$ from $K_{0,t}$}. We omit the details.
	\end{proof}

 Lemma \ref{qv} can be extended to include time-dependent test functions as well. 
 
	\begin{lem}\label{qvt} Let $\varphi \in \mathcal{S}(\mathbb R^2)$. Consider $M_t^N(\phi)$ as defined in \eqref{mart_ob}. The process
	     \begin{align}\label{tmart}
	         R_t^N(\varphi):= M^N_t\big(\varphi(t,\cdot)\big)-\int_0^t M^N_s\big(\partial_s\varphi(s,\cdot)\big)ds
	     \end{align} is a continuous martingale, and its quadratic variation up to time $t$ is given by 
      \begin{align}\label{tqmart}
          \langle R^N(\varphi)\rangle_t=\int_0^t \big( (\mathscr X_s^N)^{Sq},{N^{1/2}}(\varphi(s,\cdot)+N^{-1/4}\partial_x\varphi (s,\cdot))^2\big)ds.
      \end{align}
	\end{lem}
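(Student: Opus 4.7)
The strategy is to approximate $\varphi$ by test functions that are piecewise constant in time, and use a summation-by-parts identity to decompose the resulting process into a martingale part (controlled by Lemma \ref{qv} applied to each piece) plus a finite-variation drift which Riemann-approximates $\int_0^t M_s^N(\partial_s\varphi(s,\cdot))ds$. In essence this realizes the Itô product rule for the time-dependent linear functional $t\mapsto M_t^N(\varphi(t,\cdot))$ of the orthomartingale measure $M^N$, with the quadratic variation formula in \eqref{tqmart} being the natural space-time analogue of the one in Lemma \ref{qv}.

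Concretely, fix a partition $0=t_0<t_1<\cdots<t_n=T$ with mesh tending to zero and set
\begin{equation*}
\varphi_n(s,x):=\sum_{k=0}^{n-1}\varphi(t_k,x)\,\mathbf{1}_{[t_k,t_{k+1})}(s).
\end{equation*}
Using the telescoping identity for $M_t^N(\varphi(t,\cdot))$ and writing $M_{t_{k+1}}^N(\varphi(t_{k+1},\cdot))-M_{t_k}^N(\varphi(t_k,\cdot))=[M_{t_{k+1}}^N(\varphi(t_k,\cdot))-M_{t_k}^N(\varphi(t_k,\cdot))]+M_{t_{k+1}}^N(\varphi(t_{k+1},\cdot)-\varphi(t_k,\cdot))$, one obtains, for $t\in[t_K,t_{K+1})$,
\begin{equation*}
M_t^N(\varphi(t,\cdot))=R_t^{N,n}+I_t^{N,n},
\end{equation*}
where $R_t^{N,n}$ is the sum of martingale increments $\sum_{k=0}^{K-1}[M_{t_{k+1}}^N(\varphi(t_k,\cdot))-M_{t_k}^N(\varphi(t_k,\cdot))]+[M_t^N(\varphi(t_K,\cdot))-M_{t_K}^N(\varphi(t_K,\cdot))]$ and $I_t^{N,n}$ collects the remaining increments $M_{t_{k+1}\wedge t}^N(\varphi(t_{k+1},\cdot)-\varphi(t_k,\cdot))$. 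By Lemma \ref{qv} and orthogonality of increments of $M_\cdot^N(\varphi(t_k,\cdot))$ over disjoint time intervals, $R_t^{N,n}$ is a continuous martingale with
\begin{equation*}
\langle R^{N,n}\rangle_t=\int_0^t\bigl((\mathscr{X}_s^N)^{Sq},(\partial_x\varphi_n(s,\cdot)+N^{1/4}\varphi_n(s,\cdot))^2\bigr)ds.
\end{equation*}

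To conclude, I would pass to the limit $n\to\infty$. Since $\varphi\in\mathcal{S}(\mathbb{R}^2)$, both $\varphi_n\to\varphi$ and $\partial_x\varphi_n\to\partial_x\varphi$ uniformly on $[0,T]\times\mathbb{R}$ with $\mathrm{supp}(\varphi_n)$ contained in a fixed compact set. A Burkholder–Davis–Gundy bound using Lemma \ref{qv} together with a uniform $L^p$ bound on the integrand in the quadratic variation formula—obtained via the Girsanov moment formula of Lemma \ref{dn} applied to the pairings $((\mathscr{X}_s^N)^{Sq},\psi)$ for bounded $\psi$ supported near $\mathrm{supp}(\varphi)$—shows that $R_t^{N,n}$ converges in $L^2$, uniformly in $t\in[0,T]$, to a continuous martingale $\tilde R_t$, while $I_t^{N,n}$ converges in $L^2$ to $\int_0^t M_s^N(\partial_s\varphi(s,\cdot))ds$ as a Riemann sum. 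Rearranging identifies $\tilde R_t=R_t^N(\varphi)$ and gives the martingale property. The formula \eqref{tqmart} then follows by dominated convergence applied to $\langle R^{N,n}\rangle_T$, together with the algebraic identity $(\partial_x\varphi+N^{1/4}\varphi)^2=N^{1/2}(\varphi+N^{-1/4}\partial_x\varphi)^2$.

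The main obstacle is securing enough uniform $L^1$ control on $((\mathscr{X}_s^N)^{Sq},(\partial_x\varphi_n+N^{1/4}\varphi_n)^2)$ for the $n\to\infty$ passage. Since the integrand is pointwise bounded in $n$ by $C\,N^{1/2}((\mathscr{X}_s^N)^{Sq},\mathbf{1}_K)$ for a fixed compact $K\supset\mathrm{supp}(\varphi)$ and some $C$ depending on $\varphi$, it suffices to bound $\mathbf{E}[N^{1/2}((\mathscr{X}_s^N)^{Sq},\mathbf{1}_K)]$ uniformly in $s\in[0,T]$, which can be handled by the second-moment computation in Lemma \ref{dn} combined with Lemma \ref{traps}.
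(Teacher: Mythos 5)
Your argument is correct, but it takes a genuinely different route from the paper's. The paper proves the martingale property by a direct conditional-expectation computation: for $u\le t$ it evaluates $\Ex[R_t^N(\varphi)\mid\mathcal F_u]$ using Lemma \ref{qv} applied to each frozen-in-time test function, and then collapses the result to $R_u^N(\varphi)$ via the elementary identity $\int_u^t M_u^N(\partial_s\varphi(s,\cdot))\,ds = M_u^N(\varphi(t,\cdot))-M_u^N(\varphi(u,\cdot))$; the quadratic variation is then only said to be ``verified similarly.'' Your time-discretization/summation-by-parts argument is the classical It\^o-product-rule route: it is longer and forces you to do the uniform-integrability and Riemann-sum bookkeeping you describe, but it buys something the paper's two-line proof does not make explicit, namely that \eqref{tqmart} emerges directly as the limit of the discrete quadratic variations $\langle R^{N,n}\rangle$ (via the identity $(\partial_x\varphi+N^{1/4}\varphi)^2=N^{1/2}(\varphi+N^{-1/4}\partial_x\varphi)^2$, which you correctly verify). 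Two small repairs to your limiting step. First, $\Ex\big[N^{1/2}((\mathscr X_s^N)^{Sq},\ind_K)\big]$ is \emph{not} bounded uniformly in $s\in[0,T]$: up to the Girsanov tilt it is of order $N^{1/2}\,\mathbf P_{SB^{(2)}_{N^{1/2}\nu}}(X_s=Y_s)$, which behaves like $s^{-1/2}$ near $s=0$. What you actually need for the dominated convergence on $\langle R^{N,n}\rangle_T$ is integrability of the dominating function on the product space $\mathbb P\otimes ds$, i.e.\ $\Ex[Q_T^N(\ind_K)]<\infty$, and this does hold (Proposition \ref{tight1}, or directly from \eqref{qrep} and Lemma \ref{traps}). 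Second, you should say a word on why $L^2$-convergence of the martingales $R^{N,n}_t\to R^N_t(\varphi)$ transfers to their brackets; the standard observation $\Ex[\langle R^{N,n}-R^N(\varphi)\rangle_t]=\Ex[(R^{N,n}_t-R^N_t(\varphi))^2]\to 0$ closes this. With these additions the proof is complete.
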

 Intuitively, one should think of $R_t^N(\varphi)$ as being equal to $\int_0^t (\varphi(s,\cdot),dM^N_s)_{L^2(\mathbb R)},$ which yields \eqref{tmart} by a formal integration-by-parts.
	\begin{proof} Let $\mathcal{F}_t:=\sigma(\{K_{Na,Nb}(x,A): 0\leq a<b \leq t, x\in \mathbb R, A \mbox{ Borel}\})$. Fix $0\le u\le t$. By Lemma \ref{qv} we have
 \begin{align}\label{qvt1}
     \Ex[R_t^N(\varphi)\mid \mathcal{F}_u]=M_u^N\big(\varphi(t,\cdot)\big)-\int_0^u M^N_s\big(\partial_s\varphi(s,\cdot)\big)ds-\int_u^t M^N_u\big(\partial_s\varphi(s,\cdot)\big)ds.
 \end{align}
 Note that
 \begin{align*}
     \int_u^t M^N_u\big(\partial_s\varphi(s,\cdot)\big)ds= M_u^N\big(\varphi(t,\cdot)\big)-M_u^N\big(\varphi(u,\cdot)\big).
 \end{align*}
Inserting this identity back in \eqref{qvt1}, we get $\Ex[R_t^N(\varphi)\mid \mathcal{F}_u]=R_u^N(\varphi)$. This verifies that $R_t^N(\varphi)$ is a martingale with respect to ~$\mathcal{F}_t$.	 The claimed quadratic variation for $R^N_t(\varphi)$ in \eqref{tqmart} can be verified similarly. 
	\end{proof}

	\section{Analysis of the quadratic martingale field} \label{sec:qmf}

	Recall the martingale $M_t^N$ from \eqref{mart_ob}. As explained in the introduction, in order to identify limit points of the field $M_t^N(\phi)$, one must study the quadratic variation of this martingale. In this section, we identify the leading order contribution of $\langle M^N(\phi) \rangle$ as a functional which we call the \textit{quadratic martingale field}---introduced below.

	\begin{defn}  Recall the field $\mc{X}_t^N$ from \eqref{a}, as well as the squaring operation from Definition \ref{def:sq}. For a bounded test function $\phi$ on $\R$, we define the quadratic martingale field (QMF) as 
		\begin{align}\label{QVfield} 
			Q_t^N(\phi) & :=N^{1/2} \int_0^t \big( (\mathscr X_s^N)^{Sq}, \phi\big)ds \\ & = N^{1/2}\int_0^t \int_{\mathbb R} e^{-s\sqrt{N} +2uN^{-1/4}} \phi(N^{-1/2}(u-N^{3/4}s)) K_{0,Ns}^{Sq}(du)ds,\notag
		\end{align}
	\end{defn}
	From Lemma \ref{qv}, in view of the above definition, one has that 
	\begin{equation*}
		\langle M^N(\phi)\rangle_t = Q_t^N(\phi^2) +\mathcal E_t^N(\phi),
	\end{equation*}
	where $\mathcal E_t^N(\phi)$ is the ``error term" given by
	\begin{align}\label{etn}
		\mathcal E_t^N(\phi) & :=  \int_0^t \big( (\mathscr X_s^N)^{Sq}, 2N^{1/4} \phi\phi' +  (\phi')^2\big)ds = Q_t^N\big(N^{-1/2}(\phi')^2 + 2N^{-1/4} \phi\phi'\big).
	\end{align}
	
	\medskip
	
	The main goal of this section is to show that only $Q_t^N$ contributes to the limit (in a very specific way, see Proposition \ref{4.1}) and that $\mathcal E_t^N$ vanishes in the limit (Proposition \ref{4.2}). The starting point of our analysis is a probabilistic interpretation of $Q_t^N(\phi)$. Indeed, similar to the representation \eqref{xrep} for $\mc{X}_t^N(\phi)$, the field $Q_t^N(\phi)$  also admits a formula in terms of a quenched expectation of $2$-point motion sampled from the environment $\omega:=\{K_{s,t} : -\infty<s<t<\infty\}$. Given a $2$-point motion sampled from the environment $\omega:=\{K_{s,t} : -\infty<s<t<\infty\}$, we may view $Q_t^N(\phi)$ as
	\begin{align}
		\nonumber Q_t^N(\phi) & = N^{1/2}\int_0^t \Ex^{(2)}_\omega \bigg[ e^{-s\sqrt{N} + 2N^{-1/4}X_{Ns}} \phi(N^{-1/2}(X_{Ns}-N^{3/4}s))\ind_{\{X_{Ns}=Y_{Ns}\}}\bigg] ds \\ & = N^{1/2}\int_0^t \Ex^{(2)}_\omega \bigg[ e^{-s\sqrt{N} + N^{-1/4}(X_{Ns}+Y_{Ns})} \phi(N^{-1/2}(X_{Ns}-N^{3/4}s))\ind_{\{X_{Ns}=Y_{Ns}\}}\bigg] ds \label{qrep}
	\end{align}
    where the second equality follows by replacing $2N^{-1/4}X_{Ns}$ in the exponential with  $N^{-1/4}(X_{Ns}+Y_{Ns})$ due to the presence of the indicator $\ind_{\{X_{Ns} = Y_{Ns}\}}.$ The representation \eqref{qrep} will be used as a starting point in the next section, Section \ref{sec:feqmf}, in extracting formulas and estimates for the QMF.
	
	\subsection{Formulas and limits for the Quadratic Martingale Field} \label{sec:feqmf}
	
	Recall the field  $\mc{X}_t^N$ from \eqref{a}. The goal of this section is to show that the limit of $Q_t^N$ can be related to that of $\mc{X}_t^N$ in a precise way (see Proposition \ref{4.1}). To do this, we first extract moment formulas of certain observables of $(\mc{X}_t^N, Q_t^N)$ in terms of sticky Brownian motion in the same spirit as Lemma \ref{dn}. 
	
	\begin{lem}[Moment formulas]\label{l:Qmom} Fix any bounded functions $\psi,\phi$ on $\R$ and $N\ge 1$. Suppose that $(X^1,\ldots, X^{2k})$ denotes a $2k$-point sticky Brownian motion with characteristic measure $N^{1/2}\nu$. Recall $\Delta_m(s,t)$ from \eqref{deltadef} and $\sigma=\frac1{{2}\nu([0,1])}$. We have the following moment formulas.
		\begin{enumerate}[label=(\alph*),leftmargin=15pt]
			\item \label{l:QXmom} For all $t>0$, we have 
			\begin{equation}
				\label{e:QXmom}
				\begin{aligned}
					& \Ex\left[\bigg(Q_t^N(\psi)-\sigma \int_0^t \mathscr X_s^N(\phi)^2 ds\bigg)^{k}\right] \\ & \hspace{1cm}= k! \int_{\Delta_k(0,t)}  \mathbf E_{SB_{N^{1/2}\nu}^{(2k)}}\bigg[ \prod_{i=1}^k e^{-s_i\sqrt{N} +N^{1/4}(X_{s_i}^{2i-1}+X_{s_i}^{2i})} \Upsilon_{s_i}^N(X^{2i-1},X^{2i})\bigg]\prod_{i=1}^k ds_i,
				\end{aligned}
			\end{equation}
			where
			\begin{align*}
				\Upsilon_u^N(X,Y) := N^{1/2}\psi\big(X_u-N^{1/4}u\big)\ind_{\{X_u=Y_u\}}-\sigma \phi\big(X_u-N^{1/4}u\big)\phi\big(Y_u-N^{1/4}u\big).
			\end{align*}
			\item \label{l:Qincmom} For all $0\le s<t$ we have the following moment formula for the increment of the QMF
			\begin{equation}
				\label{e:Qincmom}
				\begin{aligned}
					& \Ex\left[\bigg(Q_t^N(\psi)-Q_s^N(\psi)\bigg)^{k}\right] \\ 
					&  = N^{\frac{k}2}k! \int_{\Delta_k(s,t)}  \mathbf E_{SB_{N^{1/2}\nu}^{(2k)}}\bigg[ \prod_{i=1}^k e^{-s_i\sqrt{N} +N^{\frac14}(X_{s_i}^{2i-1}+X_{s_i}^{2i})} \psi(X_s^{2i}-N^{1/4}s_i)\ind_{\{X_{s_i}^{2i-1}=X_{s_i}^{2i}\}}\bigg]\prod_{i=1}^k ds_i.
				\end{aligned}
			\end{equation}
		\end{enumerate}
	\end{lem}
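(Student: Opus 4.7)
The plan is to reduce both formulas to a product-of-expectations calculation on quenched motions and then pass through the annealed average using the defining property of the $k$-point motion of a stochastic flow of kernels. I will begin with part \ref{l:Qincmom} since it is slightly cleaner, and then adapt the argument to part \ref{l:QXmom}.

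For part \ref{l:Qincmom}, the starting point is the representation \eqref{qrep}, which shows that
\begin{align*}
    Q_t^N(\psi) - Q_s^N(\psi) = \int_s^t q_u^N\, du, \qquad q_u^N := N^{1/2}\mathbf E_\omega^{(2)}\big[e^{-u\sqrt{N}+N^{-1/4}(X_{Nu}+Y_{Nu})}\psi(N^{-1/2}(X_{Nu}-N^{3/4}u))\mathbf 1_{\{X_{Nu}=Y_{Nu}\}}\big].
\end{align*}
Raising to the $k^{th}$ power and symmetrising yields $(Q_t^N(\psi)-Q_s^N(\psi))^k = k!\int_{\Delta_k(s,t)} q^N_{s_1}\cdots q^N_{s_k}\, ds_1\cdots ds_k$. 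Since conditionally on the environment $\omega$ the motions sampled to compute the different factors $q^N_{s_i}$ are independent (this is immediate from the definition of the $k$-point motion following Definition \ref{sfok}), the product $q^N_{s_1}\cdots q^N_{s_k}$ is itself a single quenched $2k$-point-motion expectation, with the $(2i-1)^{th}$ and $(2i)^{th}$ coordinates playing the role of $(X,Y)$ at time $s_i$. Taking the annealed expectation converts this into $\mathbf E_{SB^{(2k)}_\nu}$, and finally the scaling property \cite[Proposition 2.4]{sss} (which sends $N^{-1/2}(X^j_{N\cdot})$ to $SB^{(2k)}_{N^{1/2}\nu}$) turns each $q^N_{s_i}$ into the integrand appearing in \eqref{e:Qincmom}; the prefactors $N^{1/2}$ from the $k$ copies of $q^N$ combine into the overall $N^{k/2}$, and the exponent $-u\sqrt N+N^{-1/4}(X_{Nu}+Y_{Nu})$ transforms into $-s_i\sqrt N+N^{1/4}(X^{2i-1}_{s_i}+X^{2i}_{s_i})$.

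For part \ref{l:QXmom}, the same strategy applies but with an extra subtraction. Using \eqref{xrep} one gets
\begin{align*}
    \mathscr X_s^N(\phi)^2 = \mathbf E_\omega^{(2)}\big[e^{-s\sqrt N+N^{-1/4}(X_{Ns}+Y_{Ns})}\phi(N^{-1/2}(X_{Ns}-N^{3/4}s))\phi(N^{-1/2}(Y_{Ns}-N^{3/4}s))\big],
\end{align*}
so the integrand $q_s^N-\sigma\mathscr X_s^N(\phi)^2$ of $Q_t^N(\psi)-\sigma\int_0^t\mathscr X_s^N(\phi)^2\, ds$ is again a single quenched $2$-point-motion expectation of $e^{-s\sqrt N+N^{-1/4}(X_{Ns}+Y_{Ns})}$ times a function which, after rescaling, becomes precisely $\Upsilon_s^N(X,Y)$. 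Symmetrising over the simplex $\Delta_k(0,t)$, using conditional independence to combine the $k$ quenched $2$-point-motion expectations into a single quenched $2k$-point-motion expectation, and applying the scaling property as above gives \eqref{e:QXmom}.

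The only real subtlety is the product-to-single-expectation step, which relies crucially on the fact that given the environment $\omega = \{K_{s,t}\}$, the $k$ pairs of motions used to compute the $k$ different factors are sampled independently from the same family of kernels; this is built into the definition of the $k$-point motion via $P_t(\vec x,d\vec y)=\mathbf E[\prod_j K_{0,t}(x_j,dy_j)]$. Everything else is bookkeeping of the Brownian-scaling exponents, which I would carry out in detail once the scheme above is fixed.
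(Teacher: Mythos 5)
Your proposal is correct and follows essentially the same route as the paper's proof: represent $Q^N$ and $\mathscr X^N$ as quenched two-point expectations via \eqref{qrep} and \eqref{xrep}, symmetrise the $k$-th power over the simplex $\Delta_k$, merge the product of $k$ quenched two-point expectations into a single quenched $2k$-point expectation, take the annealed expectation, and apply the scaling property of sticky Brownian motion. The one step you dismiss as bookkeeping that merits a word is the interchange of the annealed expectation with the integral over $\Delta_k(0,t)$, which the paper justifies by Fubini's theorem using the uniform exponential-moment bound of Lemma \ref{traps}.
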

	
	\begin{proof} Recall the probabilistic interpretation of $\mc{X}_t^N(\phi)$ and $Q_t^N(\psi)$ from \eqref{xrep} and \eqref{qrep} respectively. Combining them we get that 
		\begin{align}
			\label{e:QXdif}
			Q_t^N(\psi) - \sigma \int_0^t \mathscr X_s^N(\phi)^2 ds = \int_0^t \Ex^{(2)}_\omega \left[ e^{-s\sqrt{N} + N^{-1/4}(X_{Ns}^1+X_{Ns}^2)}\til{\Upsilon}_{s}^N(X^1,X^2)\right] ds
		\end{align}
		where $\Ex_{\omega}^{(2)}$ denotes the quenched expectation of a $2$-point motion sampled from the environment $\omega:=\{K_{s,t} : -\infty<s<t<\infty\}$ and
		\begin{align*}
	\til{\Upsilon}_{s}^N(X^1,X^2):=N^{1/2}\psi\left(N^{-1/2}(X_{Ns}^1-N^{3/4}s)\right)\ind_{\{X_{Ns}^1=X_{Ns}^2\}}-\sigma \prod_{i=1}^2\phi\left(N^{-1/2}(X_{Ns}^i-N^{3/4}s)\right).
		\end{align*}
		Taking the $k$-th power of \eqref{e:QXdif} we see that
  \begin{equation}
      \label{ee1}
      \begin{aligned}
			& \left(Q_t^N(\psi) - \sigma \int_{0}^t \mathscr X_s^N(\phi)^2 ds\right)^k \\ & \qquad \qquad = k!\int_{\Delta_k(0,t)} \Ex^{(2k)}_\omega \left[\prod_{i=1}^k e^{-s_i\sqrt{N} + N^{-1/4}(X_{Ns_i}^{2i-1}+X_{Ns_i}^{2i})}\til{\Upsilon}_{s_i}^N(X^{2i-1},X^{2i})\right]\prod_{i=1}^k ds_i.
		\end{aligned}
  \end{equation}
		In the above line, we have also used the fact that $\int_{[0,t]^k} \prod_{i=1}^k F(x_i)dx_i = k!\int_{\Delta_k(0,t)} \prod_{i=1}^k F(x_i)dx_i$ (recall $\Delta_k$ from \eqref{deltadef}). Here $\Ex_{\omega}^{(2k)}$ denotes the quenched expectation of a $2k$-point motion sampled from the environment $\omega:=\{K_{s,t} : -\infty<s<t<\infty\}$. Taking annealed expectation on both sides of the above equation, interchanging the order of integration and expectation, and then using the fact that the averaged law of $(X^1,\ldots, X^{2k})$ is a $2k$-point sticky Brownian motion with characteristic measure $\nu$, we get
		\begin{align*}
			& \Ex\left[\left(Q_t^N(\psi) - \sigma \int_0^t \mathscr X_s^N(\phi)^2 ds\right)^k\right] \\ & \qquad \qquad = k!\int_{\Delta_k(0,t)} \mE_{SB_{\nu}^{(2k)}}\left[\prod_{i=1}^k e^{-s_i\sqrt{N} + N^{-1/4}(X_{Ns_i}^{2i-1}+X_{Ns_i}^{2i})}\til{\Upsilon}_{s_i}^N(X^{2i-1},X^{2i})\right]\prod_{i=1}^k ds_i.
		\end{align*}
 The interchanging of the order of integration and expectation is permissible by Fubini's theorem. Indeed, Fubini's theorem is applicable as applying Lemma \ref{traps}, one can check that for each fixed $N$, the expectation of the absolute value of the integrand on the right-hand side of \eqref{ee1} is uniformly bounded as $\vec{s}$ varies in $\Delta_k(0,t)$.  Using the fact that $N^{-1/2}(X_{Ns}^1,\ldots, X_{Ns}^{2k})$ is distributed as $SB^{(2k)}_{N^{1/2}\nu}$ measure, \eqref{e:QXmom} follows from the above formula. Relying on \eqref{qrep} alone, one can derive the formula in \eqref{e:Qincmom} by the exact same argument. This completes the proof. 
	\end{proof}
	
	We now come to the main result of this section, which shows that the quadratic martingale field $Q_t^N$ is well approximated by the integrated version of the square of $\mc{X}_t^N$. 
	Loosely speaking, for each $a\in \R\setminus \{0\}$ we shall show as $N\to \infty$
	\begin{align*}
		Q_t^N(\delta_a)-\sigma\int_0^t\left(\mc{X}_s^N(\delta_a)\right)^2 ds \stackrel{L^2}{\to} 0.
	\end{align*}
	{In other words, mollifying the squared field is comparable to squaring the mollified field.} Since $Q_t^N$ and $\mc{X}_s^N$ exist as distributions, to make sense of the above display, we work with a sequence of Gaussian test functions converging to $\delta_a$.
	
	\begin{prop}[Limiting behavior of the QMF] \label{4.1}Let $a \in \mathbb R$ and let $\xi(x):= \frac1{\sqrt{\pi}}e^{-x^2}$ be the Gaussian test function and let $\xi_\e^a (x):= \e^{-1}\xi(\e^{-1} (x-a))$.  
  Then for all $t>0$ and $a\in\mathbb R\setminus\{0\}$,
		\begin{align}
			\label{Qllim}
			\limsup_{\e \to 0} \limsup_{N \to \infty}  \Ex\bigg[ \bigg( Q_t^N(\xi_\e^a)-\sigma \int_0^t\mathscr X_s^N\big(\xi_{\e\sqrt{2}}^a\big)^2ds\bigg)^2 \bigg] = 0,
		\end{align}
		where as usual, $\sigma = \frac{1}{{2}\nu([0,1])}$. Furthermore, we have the bound \begin{equation}\label{e:QXpolylog}\sup_{\substack{\e>0\\a\in\mathbb R\setminus\{0\}}} \limsup_{N \to \infty}  \left[1\wedge(|\log a|^{-2})\right]\cdot \Ex\bigg[ \bigg( Q_t^N(\xi_\e^a)-\sigma \int_0^t\mathscr X_s^N\big(\xi_{\e\sqrt{2}}^a\big)^2ds\bigg)^2 \bigg] <\infty.
		\end{equation}
	\end{prop}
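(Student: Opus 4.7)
\textbf{Proof proposal for Proposition \ref{4.1}.}

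The plan is to start from the second-moment formula in Lemma~\ref{l:Qmom}\ref{l:QXmom} with $k=2$, $\psi=\xi_\e^a$, and $\phi=\xi_{\e\sqrt{2}}^a$. This expresses the left-hand side of \eqref{Qllim} as
\begin{equation*}
2\int_{\Delta_2(0,t)}\mathbf E_{SB_{N^{1/2}\nu}^{(4)}}\Big[e^{-s_1\sqrt N+N^{1/4}(X^1_{s_1}+X^2_{s_1})}e^{-s_2\sqrt N+N^{1/4}(X^3_{s_2}+X^4_{s_2})}\Upsilon^N_{s_1}(X^1,X^2)\Upsilon^N_{s_2}(X^3,X^4)\Big]ds_1ds_2.
\end{equation*}
Each exponential factor is a Dol\'eans-Dade martingale for the sum of a pair of particles, multiplied by $e^{N^{1/2}V^{2i-1,2i}_{s_i}}$. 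Conditioning on $\mathcal F_{s_1}$ and invoking the consistency/Markov property of sticky Brownian motions (so that the marginal motion of $(X^3,X^4)$ on $[s_1,s_2]$ is a two-point $\sbm$ starting from $(X^3_{s_1},X^4_{s_1})$), we will apply \eqref{e:condeq2} twice to absorb both Girsanov martingales into a tilted measure, leaving the residual factors $e^{N^{1/2}(V^{12}_{s_1}+V^{34}_{s_2})}$ in the integrand.

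Then Theorem~\ref{converge}, parts \ref{convb} and \ref{convc} (with $k=4$, $m=2$), will be used to take the $N\to\infty$ limit. Under the tilted law, the four-point sticky Brownian motion converges to four independent Brownian motions $(B^1,\ldots,B^4)$; the random measure $N\,\mathbf 1_{\{X^1_{s_1}=X^2_{s_1}\}}\mathbf 1_{\{X^3_{s_2}=X^4_{s_2}\}}ds_1ds_2$ on $\Delta_2(0,t)$ converges weakly to $(\sigma/2)^2\,dL^{B^1-B^2}_0(s_1)\,dL^{B^3-B^4}_0(s_2)$; and the pairs $(B^1,B^2)$ and $(B^3,B^4)$ are independent in the limit. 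Uniform integrability required to exchange the limit with the Lebesgue integration over $\Delta_2(0,t)$ is provided by Proposition~\ref{exp} (for the Girsanov tilts), Lemma~\ref{traps} (for exponential moments of intersection times), and Lemma~\ref{lte} (for local-time increments), combined with the $L^\infty$-boundedness of $\xi_\e^a$ and $\xi_{\e\sqrt 2}^a$. Using independence to factorize the expectation and the heuristic $dL^{B^i-B^j}_0(s)=2\delta_0(B^i_s-B^j_s)ds$, the resulting limit simplifies to
\begin{equation*}
\sigma^2\bigg(\int_0^t\bigg[\int_{\mathbb R}\xi_\e^a(y)\,p_s(y)^2\,dy-\Big(\int_{\mathbb R}\xi_{\e\sqrt 2}^a(y)\,p_s(y)\,dy\Big)^{\!2}\bigg]ds\bigg)^{\!2},
\end{equation*}
where $p_s(y)=(2\pi s)^{-1/2}e^{-y^2/(2s)}$.

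The final step is the $\e\to 0$ limit. The Gaussian convolution identity $\int_{\mathbb R}\xi_{\e\sqrt 2}^a(y)p_s(y)\,dy=p_{s+\e^2}(a)$ rewrites the second inner integrand as $p_{s+\e^2}(a)^2$, while $\int_{\mathbb R}\xi_\e^a(y)p_s(y)^2\,dy\to p_s(a)^2$ pointwise for each $s>0$; for $a\neq 0$ both expressions are uniformly bounded in small $\e$ by an integrable multiple of $p_s(|a|/2)^2$ on $(0,t]$, so dominated convergence shows each converges to $\int_0^t p_s(a)^2\,ds$, proving \eqref{Qllim}. The polylogarithmic bound \eqref{e:QXpolylog} follows from the identity $\int_0^t p_s(a)^2\,ds=\frac{1}{2\pi}E_1(a^2/t)\sim\frac{1}{\pi}|\log a|$ as $|a|\to 0$, so the squared difference is of order $|\log a|^2$. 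The main obstacle will be the careful handling of two Girsanov tilts at distinct times $s_1<s_2$ via the conditioning-plus-Markov-projection argument, together with the uniform-in-$(s_1,s_2,\e,a,N)$ integrability estimates that justify the interchange of the $N\to\infty$ limit with integration over $\Delta_2(0,t)$.
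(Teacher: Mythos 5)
Your outline follows the paper up to the point of taking the $N\to\infty$ limit (moment formula from Lemma~\ref{l:Qmom}\ref{l:QXmom}, Girsanov tilts, Theorem~\ref{converge}\ref{convb}--\ref{convc}), but there is a genuine gap at the Girsanov step that propagates into an incorrect limiting formula. You assert that after absorbing the two stochastic exponentials the residual factor is only $e^{N^{1/2}(V^{12}_{s_1}+V^{34}_{s_2})}$. This is not what \eqref{e:condeq2} produces. The quadratic variation of $\sum_{j=1}^4 X^j$ under $\mathbf P_{SB^{(4)}_{N^{1/2}\nu}}$ is $4t+\sum_{i\neq j}V^{ij}_t$ (see \eqref{e:sumv}), so the product of the two pairwise exponentials $e^{-s_i\sqrt N+N^{1/4}(X^{2i-1}_{s_i}+X^{2i}_{s_i})}$ is \emph{not} the stochastic exponential of the four-particle sum: tilting on $[0,s_1]$ leaves behind $\exp\big(N^{1/2}\sum_{1\le i<j\le 4}V^{ij}_{s_1}\big)$, which contains all the cross-pair intersection times $V^{13},V^{14},V^{23},V^{24}$ (and $V^{34}$) on $[0,s_1]$. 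In the $N\to\infty$ limit these become the factor $e^{\frac{\sigma}{2}\V_2(s_1,s_2)}$ with $\V_2$ as in \eqref{def:v}, which couples the pair $(U^1,U^2)$ to the pair $(U^3,U^4)$ through their mutual local times.

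Because of this coupling, your factorization of the limiting expectation into a product over $i=1,2$ — and hence the deterministic formula $\sigma^2\big(\int_0^t[\int\xi_\e^a p_s^2-p_{s+\e^2}(a)^2]ds\big)^2$ — is invalid; the correct limit is a genuine four-particle Brownian expectation weighted by $e^{\frac{\sigma}{2}\V_2}$, as in \eqref{e:smom1}. Establishing \eqref{Qllim} then requires showing that the four terms obtained by expanding $\prod_{i=1}^2(\psi\,\tfrac12 dL - \phi\phi\,ds)$ all converge, as $\e\to 0$, to the \emph{same} quantity involving the conditional expectation $H_{s_1,s_2}(\vec x,\vec y)$ of $e^{\frac{\sigma}{2}\V_2}$ given the pinned endpoints (via the bridge-concatenation representation of Lemma~\ref{6.1}), together with the uniform boundedness of $H$ from Lemma~\ref{bcts}; the polylogarithmic bound \eqref{e:QXpolylog} comes from the resulting heat-kernel integrals, not from the asymptotics of $\int_0^t p_s(a)^2\,ds$ alone. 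Without controlling $e^{\frac{\sigma}{2}\V_2}$ your argument would make \eqref{Qllim} nearly tautological, which signals that the essential difficulty has been bypassed rather than resolved.
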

	
	We remark that the relevance of the second bound \eqref{e:QXpolylog} is that even though we may not have convergence to 0 in \eqref{Qllim} when $a=0$, we can still control the behavior near $a=0$ by a blow-up that is polylogarithmic at worst. {Note that the $|\log a|^{-2}$ factor appears above only when $a\le e$}. In the proof of \eqref{mp3} in Theorem \ref{solving_mp} below, we will see that such blow-ups are irrelevant in the limit. 
	
	\medskip
	
	The starting point of the proof of Proposition \ref{4.1} is the moment formula in Lemma \ref{l:Qmom} \ref{l:QXmom}. As noted in \eqref{e:QXmom}, the moments can be expressed as integrals of the expectation of certain observables under sticky Brownian motion. Recall that in Theorem \ref{converge} we saw that the expectation of a certain class of observables under sticky Brownian motion converges to the expectation of those observables under standard $k$ dimensional Brownian motion. That theorem will be used to prove a similar convergence result for the expectations of the type of observables that appear in \eqref{e:QXmom}.

\begin{prop}\label{add} Fix any $k\in \mathbb N$ and $0=t_0<t_1< t_2<\cdots<t_{k}<\infty$. Set $s_0=t_0$ and $s_{2i-1}=s_{2i}=t_i$ for $1\le i\le k$. Suppose $(X^1,\ldots,X^{2k})$ is distributed according $SB_{N^{1/2}\nu}^{(2k)}$. Let $(\phi_i,\psi_i)_{i=1}^{2k}$  be bounded continuous functions on $\mathbb R$. Define 
\begin{align}
			\label{def:v}
			\V_k(\vec{t}):=\sum_{r=1}^k\sum_{2r-1\le p<q\le 2k} \left[L_0^{U^p-U^q}(t_{r})-L_0^{U^p-U^q}(t_{r-1})\right].
\end{align}	
We have the following two limits:

\begin{enumerate}[label=(\alph*),leftmargin=15pt]
\item \label{item:expectationLimits1}
		\begin{equation}
			\label{e:add1}
			\begin{aligned}
				& \lim_{N\to\infty} \mathbf E_{SB^{(2k)}_{N^{1/2}\nu}}\bigg[\prod_{i=1}^{2k} e^{-\frac12s_{i}\sqrt{N} + N^{\frac14}X_{s_i}^{i}}\phi_i(X_{s_i}^{i} - N^{\frac14}s_i)\bigg] = \mathbf E_{B^{\otimes 2k}} \bigg[ e^{{\frac\sigma2} \V_k(\vec{t})} \prod_{i=1}^{2k}  \phi_i(U_{s_i}^{i}) \bigg].
			\end{aligned}
		\end{equation}

\item \label{item:expectationLimits2} {Let $A$ be any subset of $\{1,2,\ldots,k\}$. Set $B=A^c$ and
define
\begin{align}
    E_1(\vec{t}):=\prod_{i\in A} e^{-\frac12t_{i}\sqrt{N} + N^{\frac14}X_{t_i}^{2i-1}}\phi_i(X_{t_i}^{2i-1} - N^{\frac14}t_i)\prod_{i\in A} e^{-\frac12t_{i}\sqrt{N} + N^{\frac14}X_{t_i}^{2i}}\phi_i(X_{t_i}^{2i} - N^{\frac14}t_i),
\end{align}
\begin{align}
    E_2(\vec{t}):=\prod_{i\in B} e^{-t_{i}\sqrt{N} + N^{\frac14}(X^{2i-1}_{t_{i}}+X^{2i}_{t_{i}})}\psi_i(X_{t_{i}}^{2i} - N^{\frac14}t_i)  \ind_{\{X_{t_{i}}^{2i-1} =X_{t_{i}}^{2i}\}}.
\end{align}
Recall $\Delta_k(s,t)$ from \eqref{deltadef}. For each $0\le s< t\le T<\infty$ we have
		\begin{equation}
			\label{e:add2}
			\begin{aligned}
				& \lim_{N\to\infty} N^{\frac{|B|}2}\cdot\mathbf E_{SB^{(2k)}_{N^{1/2}\nu}}\bigg[ \int_{\Delta_k(s,t)}E_1(\vec{t})E_2(\vec{t})\prod_{i=1}^k dt_i \bigg] \\ & \hspace{0cm}= \left({\frac{\sigma}2}\right)^{|B|}\mathbf E_{B^{\otimes 2k}} \bigg[\int_{\Delta_k(s,t)} e^{{\frac{\sigma}2}\V_k(\vec{t})} \prod_{i\in A} \phi_i(U_{t_i}^{2i-1})\phi_i(U_{t_i}^{2i}) dt_i\prod_{i\in B} \psi_i(U_{t_{i}}^{2i}) dL_0^{U^{2i-1}-U^{2i}}(t_i) \bigg]. 
			\end{aligned}
		\end{equation}}
		Here  $\int_0^t f(s) dL_0^{U^i-U^j}(s)$ denotes the integration of the continuous function $f:[0,t]\to\mathbb R$ against the random Lebesgue-Stiltjes measure $dL_0^{U^i-U^j}$ induced from the increasing function $t\mapsto L_0^{U^i-U^j}(t).$
  \end{enumerate}
	\end{prop}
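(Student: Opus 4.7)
The plan is to reduce Proposition~\ref{add} to applications of Theorem~\ref{converge} via a Girsanov-type rewrite that accommodates the different stopping times $s_i$ in part~(a) and $t_i$ in part~(b).

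For part~(a), the core technical step is the following extension of the identity from the Remark after Lemma~\ref{rnd} to the case of time-dependent stopping: for any bounded measurable functional $H$ on paths up to $T = t_k$,
\begin{align*}
&\mathbf E_{SB^{(2k)}_{N^{1/2}\nu}}\Big[\prod_{i=1}^{2k} e^{-\tfrac12 s_i\sqrt N + N^{1/4}X^i_{s_i}}\cdot H\big((X^i_u - N^{1/4}(u\wedge s_i))_{i,u\leq T}\big)\Big] \\
&\hspace{0.7cm}= \mathbf E_{SB^{(2k)}_{N^{1/2}\nu}}\Big[\mathcal M^{\vec s}_N(T) \cdot H(X)\cdot e^{N^{1/2}\sum_{i<j}V^{ij}_{s_i\wedge s_j}}\Big],
\end{align*}
where $\mathcal M^{\vec s}_N(T) = \exp\bigl(N^{1/4}G^{\vec s}(T) - \tfrac{N^{1/2}}{2}\langle G^{\vec s}\rangle(T)\bigr)$ is built from $G^{\vec s}(u) := \sum_j \int_0^u \ind_{\{v\le s_j\}}(1 - 1/m_j^{\mathrm{alive}}(v))\,dX^j(v)$, with $m_j^{\mathrm{alive}}(v) = |\{i : v\le s_i,\ X^i_v = X^j_v\}|$. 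The quadratic variation of $N^{1/4}G^{\vec s}$ is controlled by $CN^{1/2}\sum_{i<j}V^{ij}$ since its integrand vanishes off coincidences of alive particles, so this martingale is admissible for Theorem~\ref{converge}(b). To prove the identity I would decompose $\prod_i e^{-\tfrac12 s_i\sqrt N + N^{1/4}X^i_{s_i}} = \prod_{r=1}^k \hat Z_r$, where each $\hat Z_r$ is the raw Girsanov density on $[t_{r-1},t_r]$ for the $m_r = 2(k-r+1)$ alive particles $\{2r-1,\ldots,2k\}$, and iteratively apply a subset-version of the Remark's identity on each subinterval using the Markov property of sticky BM.

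Once the identity is in place, applying it with $H(X) := \prod_i\phi_i(X^i_{s_i})$, and using that $(X^i - N^{1/4}(\cdot\wedge s_i))(s_i) = X^i_{s_i} - N^{1/4}s_i$, reduces the LHS of \eqref{e:add1} to $\mathbf E_{SB^{(2k)}_{N^{1/2}\nu}}[\mathcal M^{\vec s}_N(T)\prod_i\phi_i(X^i_{s_i}) e^{N^{1/2}\sum_{i<j}V^{ij}_{s_i\wedge s_j}}]$. Applying Theorem~\ref{converge}(b) with $f(\mathbf X, (v^{ij})) := \prod_i\phi_i(X^i_{s_i})\exp\bigl(\tfrac{1}{4\nu([0,1])}\sum_{i<j}v^{ij}_{s_i\wedge s_j}\bigr)$, whose exponential growth is covered by Lemma~\ref{traps}, yields the limit $\mathbf E_{B^{\otimes 2k}}[\prod_i\phi_i(U^i_{s_i}) e^{\frac{\sigma}{2}\sum_{i<j}L_0^{U^i-U^j}(s_i\wedge s_j)}]$ after recognizing $\tfrac{1}{4\nu([0,1])} = \tfrac{\sigma}{2}$. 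A short combinatorial rearrangement using $s_p\wedge s_q = t_{\min(\iota(p),\iota(q))}$ (with $\iota(i) := \lceil i/2\rceil$) identifies $\sum_{i<j} L_0^{U^i-U^j}(s_i\wedge s_j) = \V_k(\vec t)$, completing part~(a). For part~(b), I would apply the same Girsanov identity with paired stopping times $s_{2i-1} = s_{2i} = t_i$ to handle the exponential factors in both $E_1$ and $E_2$. The shift leaves each indicator $\ind_{\{X^{2i-1}_{t_i} = X^{2i}_{t_i}\}}$ invariant for $i\in B$ and transforms the arguments of $\phi_i,\psi_i$ correctly. After applying Fubini, the pieces for $i\in B$ combine to give exactly $N^{|B|/2}\prod_{i\in B}\ind_{\{X^{2i-1}_{t_i} = X^{2i}_{t_i}\}}\prod_{i\in B}dt_i$, which is the analog (for the subset $B$) of the pairing measure $\gamma_N$ from \eqref{gamman}. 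A joint application of Theorem~\ref{converge}(b) and~(c) under the tilted measure, together with the resulting joint convergence of $(\mathbf X^N, 4N^{1/2}\nu([0,1])(V^{ij}), \gamma_N^B)$ to $(\mathbf U, (L_0^{ij}), (\sigma/2)^{|B|}\prod_{i\in B}dL_0^{U^{2i-1}-U^{2i}}(t_i))$, then yields \eqref{e:add2}.

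The principal technical difficulty is the extended Girsanov identity in part~(a): one must construct $\mathcal M^{\vec s}_N$ so that its logarithm is an exponential martingale whose quadratic variation is controlled \emph{purely} by $\sum V^{ij}$. A naive Girsanov using the exponential martingale of the stopped sum $\sum_i X^i_{\cdot\wedge s_i}$ introduces a deterministic drift of order $N^{1/2}\sum_i s_i$ in the bracket, violating~\eqref{e:gcond}; the iterative construction using only the alive particles on each subinterval is what avoids this pitfall. A secondary challenge in part~(b) is justifying the interchange of the integration over $\Delta_k(s,t)$ with the $N\to\infty$ limit, which is handled via the uniform exponential moment bounds of Lemma~\ref{traps} and Proposition~\ref{exp}.
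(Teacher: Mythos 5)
Your proposal is correct and follows essentially the same route as the paper: the ``alive-particle'' martingale $G^{\vec s}$ you construct is exactly the paper's $\bar{\mathcal G}_N$ built from the $\mathcal G_N^r$ on the subintervals $[t_{r-1},t_r)$, the identity you state is the paper's \eqref{e.multifr} (proved there by the same iterative application of \eqref{e:condeq2} via the Markov property), and both parts then conclude by Theorem \ref{converge}\ref{convb} (and \ref{convc} for part (b)) together with the uniform integrability supplied by Proposition \ref{exp} and Lemma \ref{traps}. Your diagnosis of why a naive Girsanov tilt of the stopped sum fails \eqref{e:gcond} is also the correct motivation for the interval-by-interval construction.
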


	The proof of Proposition \ref{add} builds up on the estimates proved in Section \ref{sec:girs} and uses Theorem \ref{converge}. We postpone the proof of Proposition \ref{add} to Section \ref{sec:pftech} and complete the proof of Proposition \ref{4.1} assuming it.

	\begin{proof}[Proof of Proposition \ref{4.1}] For clarity, we split the proof into three steps. We first provide a brief outline of the steps below. Note that, given the moment formulas from  Lemma \ref{l:Qmom} \ref{l:QXmom} and the limit of those formulas from Proposition \ref{add}, one can compute the $N\to \infty$ limit of the expectation in \eqref{e:QXpolylog} in terms of expectation of certain observables under a $4$D Brownian motion measure $(X^1,X^2,Y^1,Y^2)$. We then proceed to carefully massage this expectation formula to obtain the desired result. 
 
 \begin{itemize}[leftmargin=18pt]
 \itemsep\setlength{0.5em}
     \item In \textbf{Step 1}, we use a simple transformation to express the limit in terms of an expectation under a different $4$D Brownian motion measure: 
     $$(X^1-Y^1,X^2-Y^2,X^1+Y^1,X^2+Y^2).$$
     \item In \textbf{Step 2}, we show how local time heuristics (which are rigorously shown in Appendix \ref{app}) can be used to rewrite the expectation formula in terms of a certain concatenation of Brownian bridge and Brownian motion measures.
     \item In \textbf{Step 3}, we use local time estimates for concatenated processes (Lemma \ref{bcts}) and heat kernel calculations to bound the formula obtained in the previous step. The bound obtained in this step is sharp enough to conclude the proof of Proposition \ref{4.1}.
 \end{itemize}  
		
		\medskip
		
		\noindent\textbf{Step 1.} Applying Lemma \ref{l:Qmom} \ref{l:QXmom} and Proposition \ref{add} \ref{item:expectationLimits2} with $k=2$ we get
		\begin{align}
			\nonumber    & \lim_{N\to \infty}  \Ex\left[\bigg(Q_t^N(\psi)-\sigma \int_0^t \mathscr X_s^N(\phi)^2 ds\bigg)^{2}\right] \\ \nonumber & = 2\sigma^2 \cdot \mE_{B^{\otimes 4}}\left[\int_{\Delta_2(0,t)} e^{{\frac\sigma2} \V_2(s_1,s_2)}\prod_{i=1}^2 \left(\psi(X_{s_i}^i) {\frac12}dL_0^{X^i-Y^i}(s_i)-\phi(X^i)\phi(Y^i)ds_i\right)\right]   \\ & = 2\sigma^2 \cdot \mE_{B^{\otimes 4}}\left[\int_{\Delta_2(0,t)} e^{{\frac\sigma2} \V_2(s_1,s_2)}\prod_{i=1}^2 \left(\psi\big(\tfrac12(X_{s_i}^i+Y_{s_i}^i)\big) {\frac12} dL_0^{X^i-Y^i}(s_i)-\phi(X^i)\phi(Y^i)ds_i\right)\right] \label{e:smom1}
		\end{align}
		for all $\psi,\phi \in \mathcal{S}(\R)$, where $\V_2$ is defined in \eqref{def:v}. The second equality in the above equation follows by observing that $X_u^i=Y_u^i$ for $u$ in the support of $L_0^{X^i-Y^i}(du)$. 
		
		\medskip
		
		We shall now write $\mE$ instead of $ \mE_{B^{\otimes 4}}$ for convenience. Let us now take  $$\psi(x) := \xi^a_\e(x)= \frac1{\sqrt{\pi \e^2 }}e^{-(x-a)^2/\e^2}, \quad \phi(x) := \xi^a_{\e\sqrt{2}}(x)= \frac1{\sqrt{2\pi \e^2 }}e^{-(x-a)^2/2\e^2},$$
		in \eqref{e:smom1}. Using the identity $\xi^a_{\e\sqrt{2}}(x)\xi^a_{\e\sqrt{2}}(y)= \xi^a_\e((x+y)/2)\xi^0_{2\e}(x-y)$, we may now write \eqref{e:smom1} as 
		\begin{align}
			\label{e:smom2}
			2\sigma^2 \cdot \mE\left[\int_{\Delta_2(0,t)} e^{{\frac\sigma2} \V_2(s_1,s_2)}\prod_{i=1}^2 \left(\xi_\e^a\big(\tfrac12(X_{s_i}^i+Y_{s_i}^i)\big)\cdot\left({\frac12}dL_0^{X^i-Y^i}(s_i)-\xi_{2\e}^{0}(X_{s_i}^i-Y_{s_i}^i)ds_i\right)\right)\right].
		\end{align}
		Let us write $U^{i,-}:=X^i-Y^i$ and $U^{i,+}:=X^i+Y^i$. Note that under $\mathbf P_{B^{\otimes 4}}$ the four processes $U^{1,-},U^{1,+},U^{2,-},U^{2,+}$ are independent Brownian motions with diffusion coefficient $2$. This enables us to view \eqref{e:smom2} as
		\begin{align}
			\nonumber & 2\sigma^2 \cdot \mE\left[\int_{\Delta_2(0,t)} e^{{\frac\sigma2} \V_2(s_1,s_2)}\prod_{i=1}^2 \left(\xi_\e^a\big(\tfrac12U_{s_i}^{i,+})\big)\cdot\left({\frac12}dL_0^{U^{i,-}}(s_i)-\xi_{2\e}^{0}(U_{s_i}^{i,-})ds_i\right)\right)\right]
			\\ & =: 2\sigma^2 [A_1(\e)-A_2(\e)-A_3(\e)+A_4(\e)], \label{e:smom3}
		\end{align}
		where
		\begin{align} \label{a1}
			A_1(\e) & := \mE\left[\int_{\Delta_2(0,t)}  e^{{\frac\sigma2} \V_2(s_1,s_2)}\xi_\e^a\big(\tfrac12U_{s_1}^{1,+}\big)\xi_\e^a\big(\tfrac12U_{s_2}^{2,+}\big)\,{\frac12}dL_0^{U^{1,-}}(s_1)\,{\frac12}dL_0^{U^{2,-}}(s_2)\right] \\ \label{a2}
			A_2(\e) & := \mE\left[\int_{\Delta_2(0,t)} e^{{\frac\sigma2} \V_2(s_1,s_2)}  \xi_\e^a\big(\tfrac12U_{s_1}^{1,+}\big)\xi_\e^a\big(\tfrac12U_{s_2}^{2,+}\big)\xi_{2\e}^{0}(U_{s_2}^{2,-})\, {\frac12}dL_0^{U^{1,-}}(s_1)\, ds_2\right]\\ \label{a3}
			A_3(\e) & := \mE\left[\int_{\Delta_2(0,t)} e^{{\frac\sigma2} \V_2(s_1,s_2)} \xi_\e^a\big(\tfrac12U_{s_1}^{1,+}\big)\xi_\e^a\big(\tfrac12U_{s_2}^{2,+}\big)\xi_{2\e}^{0}(U_{s_1}^{1,-})\, {\frac12} dL_0^{U^{2,-}}(s_2)\, ds_1\right] \\ \label{a4}
			A_4(\e) & := \mE\left[\int_{\Delta_2(0,t)} e^{{\frac\sigma2} \V_2(s_1,s_2)}\xi_\e^a\big(\tfrac12U_{s_1}^{1,+}\big)\xi_\e^a\big(\tfrac12U_{s_2}^{2,+}\big)\xi_{2\e}^{0}(U_{s_1}^{1,-})\xi_{2\e}^{0}(U_{s_2}^{2,-})\,ds_1\,ds_2\right].
		\end{align}
		
		\medskip
		
		\noindent\textbf{Step 2.} In this step we focus on each of the $A_i(\e)$ terms separately. We drop the $\e$ and write $A_i$ for simplicity. Note that informally ${\frac12}dL_0^{U^{i,-}}(s_i)$ may be written as $\delta_0(U^{i,-}) ds_i$ which suggests that each of the $A_i$ may be written in terms of Brownian bridge expectations. To this end, we introduce the function
		\begin{align}\label{defh}
			H_{s_1,s_2}(\vec{x},\vec{y}):=\mE\left[e^{{\frac\sigma2} \V_2(s_1,s_2)} \mid U_{s_i}^{i,-}=x_i, U_{s_i}^{i,+}=y_i, i=1,2\right]
		\end{align}
		where the above conditional expectation is interpreted as taking expectation under the measure where $U^{i,-}$ ($U^{i,+}$ resp.) is a concatenation of a Brownian bridge from $0$ to $x_i$ ($0$ to $y_i$ resp.) over the interval $[0,s_i]$ and an independent Brownian motion started from $x_i$ ($y_i$ resp.) on $[s_i,t]$. All Brownian objects considered are independent with diffusion coefficient $2$.

		\smallskip
		
		\noindent\underline{$A_1$ term:} Let us consider the $A_1$ term from \eqref{a1}. Let $\mathcal{F}_{t}^{-}$ be the $\sigma$-field generated by $\{U_s^{1,-},U_s^{2,-} \mid 0\le s\le t\}$. By the tower property of conditional expectation, we have
		\begin{align*}
			A_1 & := \mE\left[\int_{\Delta_2(0,t)}  \mE\left[e^{{\frac\sigma2} \V_2(s_1,s_2)}\xi_\e^a\big(\tfrac12U_{s_1}^{1,+}\big)\xi_\e^a\big(\tfrac12U_{s_2}^{2,+}\big)\mid \mathcal{F}_t^{-}\right] {\frac12}dL_0^{U^{1,-}}(s_1)\,{\frac12}dL_0^{U^{2,-}}(s_2)\right].
		\end{align*}
		By Lemma \ref{6.1}, the right-hand side of the above equation simplifies to
		\begin{align}\label{a11}
			A_1 = \int_{\Delta_2(0,t)} \mE\left[e^{{\frac\sigma2}\V_2(s_1,s_2)}\xi_\e^a\big(\tfrac12U_{s_1}^{1,+}\big)\xi_\e^a\big(\tfrac12U_{s_2}^{2,+}\big)\mid U_{s_i}^{i,-}=0, i=1,2\right]  p_{2s_1}(0)p_{2s_2}(0)\,ds_1\,ds_2
		\end{align}
		where again the conditional expectation is interpreted via concatenation of Brownian bridges and Brownian motions. Note that via Brownian motion decomposition, we have
		\begin{equation}
			\label{trick}
			\begin{aligned}
				& \mE\left[e^{{\frac\sigma2} \V_2(s_1,s_2)}\cdot \xi_\e^a(\tfrac12U_{s_1}^{1,+})\xi_\e^a(\tfrac12U_{s_2}^{2,+})\mid U_{s_i}^{i,-}=x_i.i=1,2\right] \\ & \hspace{2cm}= \int_{\R^2} H_{s_1,s_2}(\vec{x},\vec{y})\prod_{i=1}^2 \xi_\e^a(\tfrac12y_i)p_{2s_i}(y_i)dy_i
			\end{aligned}
		\end{equation}
		where $H$ is defined in \eqref{defh}. Inserting the above formula back in \eqref{a11} we get
		\begin{align}\label{a12}
			A_1 = \int_{\Delta_2(0,t)}\int_{\R^2} H_{s_1,s_2}((0,0),(y_1,y_2)) \prod_{i=1}^2 \xi_\e^a(\tfrac12y_i)p_{2s_i}(y_i) p_{2s_i}(0)\,dy_i\,ds_i.
		\end{align}
		
		\smallskip
		
		\noindent\underline{$A_2$ and $A_3$ terms:} Recall $A_2$ from \eqref{a2}. We condition only on $\mathcal{F}_t^{1,-}:=\sigma\{U_s^{1,-}\mid 0\le s\le t\}$ to get
		\begin{align*}
			A_2 & := \mE\left[\int_{0}^t F(U^{1,-}) {\frac12} dL_0^{U^{1,-}}(s_1) \right]
		\end{align*}
		where
		\begin{align*}
			F(U^{1,-}):=\int_0^{s_1} \mE\left[e^{{\frac\sigma2}\V_2(s_1,s_2)}  \xi_\e^a\big(\tfrac12U_{s_1}^{1,+}\big)\xi_\e^a\big(\tfrac12U_{s_2}^{2,+}\big)\xi_{2\e}^{0}(U_{s_2}^{2,-})\mid \mathcal{F}_t^{1,-}\right]\,ds_2
		\end{align*}
		Applying Lemma \ref{6.1} again, we see that 
		\begin{align*}
			A_2 = \int_0^t \Ex[F(U^{1,-}) \mid U_{s_1}^{1,-}=0] p_{2s_1}(0) ds_2.
		\end{align*}
		where again the conditional expectation is interpreted in a similar manner. Performing a similar trick as in \eqref{trick}, we see that
		\begin{align}\label{a22}
			A_2 = \int_{\Delta_2(0,t)} \int_{\R^3} H_{s_1,s_2}((0,x_2);\vec{y})\left[\prod_{i=1}^2 \xi_\e^a\big(\tfrac12y_i\big)p_{2s_i}(y_i)\right]\xi_{2\e}^{0}(x_2)p_{2s_2}(x_2)p_{2s_1}(0) d\vec{y}\,dx_2\, d\vec{s}.
		\end{align}
		Similarly for $A_3$ defined in \eqref{a3} one has
		\begin{align} \label{a32}
			A_3 = \int_{\Delta_2(0,t)} \int_{\R^3} H_{s_1,s_2}((x_1,0);\vec{y})\left[\prod_{i=1}^2 \xi_\e^a\big(\tfrac12y_i\big)p_{2s_i}(y_i)\right]\xi_{2\e}^{0}(x_1)p_{2s_1}(x_1)p_{2s_2}(0) d\vec{y}\,dx_1\, d\vec{s}.
		\end{align}
		
		\smallskip
		
		\noindent\underline{$A_4$ term:} Recall $A_4$ from \eqref{a4}. Note that $A_4$ does not involve any integration with respect to local times. Thus we may use $H$ from \eqref{defh} directly and tricks similar to \eqref{trick} to get
		\begin{align}\label{a42}
			A_4:=\int_{\Delta_2(0,t)} \int_{\R^4} H_{s_1,s_2}(\vec{x},\vec{y})\prod_{i=1}^2 \xi_\e^a\big(\tfrac12y_i\big)\xi_{2\e}^{0}(x_i)p_{2s_i}(x_i)p_{2s_i}(y_i)dx_i\,dy_i\, ds_i.
		\end{align}
		
		\medskip
		
		\noindent\textbf{Step 3.} In this step, we determine convergence and provide bounds for each of the $A_i$ terms defined at the end of \textbf{Step 1}. We claim that for each $i$,
		\begin{align}\label{alim}
			\lim_{\e\to 0} A_i(\e) = {4}\int_{\Delta_2(0,t)} H_{s_1,s_2}((0,0),(2a,2a))\prod_{i=1}^2 p_{2s_i}(0)p_{2s_i}(2a)ds_i,
		\end{align}
		where $H$ is defined in \eqref{defh}. Inserting this limit back in \eqref{e:smom3} verifies \eqref{Qllim}. Let us consider $A_1(\e)$ from \eqref{a1}. Let us consider the integrand on the right-hand side of \eqref{a12}:
		\begin{align*}
			p_{2s_i}(0)\int_{\R^2} H_{s_1,s_2}((0,0),(y_1,y_2) )\prod_{i=1}^2 \xi_\e^a(\tfrac12y_i)p_{2s_i}(y_i) \,dy_i.
		\end{align*}
		Clearly as $\e\to 0$, it converges to {$4 p_{2s_i}(0) H_{s_1,s_2}((0,0),(2a,2a)) p_{2s_1}(2a)p_{2s_2}(2a)$}. 
  Thus, to show \eqref{alim} holds for $A_1(\e)$, it suffices by the dominated convergence theorem to show that for all $a\neq 0$,
		\begin{align}\label{supbd}
			\sup_{\e\in (0,1]} p_{2s_i}(0)\int_{\R^2} H_{s_1,s_2}((0,0),(y_1,y_2) )\prod_{i=1}^2 \xi_\e^a(\tfrac12y_i)p_{2s_i}(y_i) \,dy_i
		\end{align}
		is dominated by some integrable function of $s_1,s_2$. Towards this end, note that $H$ is defined as the expectation of exponentials of local times of certain linear functions of concatenated processes introduced at the beginning of \textbf{Step 2}. In Lemma \ref{bcts}, we study these expectations and in particular show that for all $s_1,s_2\in [0,t]$ and for all $\vec{x},\vec{y}\in \R^2$, $H_{s_1,s_2}(\vec{x},\vec{y}) \le \Con$ for some constant $\Con>0$ that depends only on $t$. As $\xi_{\e}^a(\frac12y)=2p_{2\e^2}(y-2a)$, using $H_{s_1,s_2}(\vec{x},\vec{y})\le \Con$ and then using the semigroup property of the heat kernel (i.e., $\int_{\R} p_u(x-b)p_{t}(x)dx =p_{u+t}(b)$) we have from \eqref{a12} that
		\begin{align}\label{supbd2}
			\mbox{\eqref{supbd}} \le \Con  \sup_{\e\in (0,1]}\prod_{i=1}^2 p_{2s_i+2\e^2}(2a)p_{2s_i}(0). 
		\end{align}
		Note that the heat kernel globally satisfies the bound $p_t(x) \leq \Con(t+x^2)^{-1/2}\ind_{\{t>0\}}$ for some large enough constant $\Con>0.$ Consequently we have
		\begin{align*}
			\int_{\Delta_2(0,t)}\sup_{\e\in (0,1]}\prod_{i=1}^2 p_{2s_i+2\e^2}(2a)p_{2s_i}(0) ds_i &  \le \Con\int_{\Delta_2(0,t)} \prod_{i=1}^2 s_i^{-1/2}(s_i+a^2)^{-1/2}\,ds_i \\ & = \Con\int_{\Delta_2(0,a^{-2}t)} \prod_{i=1}^2 v_i^{-1/2}(v_i+1)^{-1/2}\,dv_i.
		\end{align*}
		where the equality above follows by making the substitution $s_i=a^2v_i$. Extending the range of integration we have
		\begin{align}
			\int_{\Delta_2(0,a^{-2}t)} \prod_{i=1}^2 v_i^{-1/2}(v_i+1)^{-1/2}\,dv_i &  \le \int_{[0,a^{-2}t]^2} \prod_{i=1}^2 v_i^{-1/2}(v_i+1)^{-1/2}\,dv_i  \le \Con \max\{1, |\log a|^2\}. \label{abd}
		\end{align}
		This verifies the integrability of the bound in \eqref{supbd2} and consequently proves \eqref{alim} for $A_1$. Starting from \eqref{a22}, \eqref{a32}, and \eqref{a42}, an analogous computation verifies \eqref{alim} for $A_2,A_3$ and $A_4$ respectively. This establishes \eqref{Qllim}. From the polylogarithmic bound in \eqref{abd} (and its analogous counterparts for $A_2$, $A_3$, $A_4$) one arrives at \eqref{e:QXpolylog}. This completes the proof.
	\end{proof}

	\begin{prop}[Error term limit]\label{4.2} For any $t\ge 0$ and $\phi\in \mathcal{S}(\R)$, we have that 
		\begin{align}\label{e42}
			\limsup_{N\to \infty}\Ex[\mathcal E_t^N(\phi)^2] = 0.
		\end{align}
	\end{prop}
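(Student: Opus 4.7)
The proof reduces to a boundedness estimate via the explicit negative powers of $N$ that sit in front of $\mathcal{E}_t^N(\phi)$. Indeed, recalling the definition \eqref{etn} and the linearity of $Q_t^N$ in its test function, we have
\begin{align*}
\mathcal{E}_t^N(\phi) = N^{-1/2} Q_t^N\big((\phi')^2\big) + 2N^{-1/4} Q_t^N(\phi \phi').
\end{align*}
Squaring, applying Cauchy--Schwarz to the cross term, and taking expectations yields
\begin{align*}
\Ex\big[\mathcal{E}_t^N(\phi)^2\big] \le 2N^{-1}\,\Ex\big[Q_t^N\big((\phi')^2\big)^2\big] + 8 N^{-1/2}\,\Ex\big[Q_t^N(\phi\phi')^2\big] + 4 N^{-3/4}\,\Ex\big[Q_t^N\big((\phi')^2\big)^2\big]^{1/2}\Ex\big[Q_t^N(\phi\phi')^2\big]^{1/2}.
\end{align*}
Thus it suffices to establish that for any $\psi\in\mathcal{S}(\R)$,
\begin{align*}
\sup_{N\ge 1}\, \Ex\big[Q_t^N(\psi)^2\big] < \infty.
\end{align*}

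To obtain this uniform bound, I would invoke Lemma \ref{l:Qmom}\ref{l:Qincmom} with $k = 2$ and $s = 0$, which writes
\begin{align*}
\Ex\big[Q_t^N(\psi)^2\big] = 2N \int_{\Delta_2(0,t)} \mE_{SB^{(4)}_{N^{1/2}\nu}}\bigg[\prod_{i=1}^{2} e^{-s_i\sqrt{N} + N^{1/4}(X^{2i-1}_{s_i}+X^{2i}_{s_i})}\psi\big(X^{2i}_{s_i}-N^{1/4}s_i\big)\ind_{\{X^{2i-1}_{s_i}=X^{2i}_{s_i}\}}\bigg]\prod_i ds_i.
\end{align*}
This fits precisely the framework of Proposition \ref{add}\ref{item:expectationLimits2} with $k = 2$, $A = \emptyset$, $B = \{1,2\}$, since then $|B| = 2$ and the prefactor $N^{|B|/2} = N$ matches exactly. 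The proposition therefore gives convergence of $\Ex[Q_t^N(\psi)^2]$ to a finite limit, namely
\begin{align*}
2\left(\tfrac{\sigma}{2}\right)^{2} \mE_{B^{\otimes 4}}\bigg[\int_{\Delta_2(0,t)} e^{\frac{\sigma}{2}\V_2(s_1,s_2)} \prod_{i=1}^{2}\psi\big(U^{2i}_{s_i}\big)\,dL_0^{U^{2i-1}-U^{2i}}(s_i)\bigg],
\end{align*}
which is finite since $\psi$ is bounded and since Brownian local times have all moments (the integral against $dL_0$ is controlled by $\|\psi\|_\infty\cdot L_0^{U^{2i-1}-U^{2i}}(t)$, and the exponential of $\V_2(s_1,s_2)$ has finite moments by a Girsanov/Novikov argument analogous to Proposition \ref{exp}).

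Combining the two displays gives $\Ex[\mathcal{E}_t^N(\phi)^2] = O(N^{-1/2})\to 0$, proving \eqref{e42}. The only nontrivial input is Proposition \ref{add}\ref{item:expectationLimits2}, which is assumed available; everything else is direct expansion and scaling bookkeeping, so no real obstacle arises once that proposition is in hand.
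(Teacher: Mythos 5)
Your proposal is correct and follows essentially the same route as the paper: both expand $\mathcal E_t^N(\phi)$ as $Q_t^N$ applied to Schwartz test functions with explicit negative powers of $N$ in front, and both reduce the claim to the uniform-in-$N$ boundedness of $\Ex[Q_t^N(\psi)^2]$, obtained from the moment formula of Lemma \ref{l:Qmom}\ref{l:Qincmom} together with the convergence in Proposition \ref{add} (the paper uses the pointwise bound $\phi\phi'\le\frac12(\phi^2+(\phi')^2)$ where you use Cauchy--Schwarz on the cross term, a purely cosmetic difference). Your identification of Proposition \ref{add}\ref{item:expectationLimits2} with $A=\emptyset$, $B=\{1,2\}$ as the relevant input is the right one.
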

	\begin{proof}
		Note that $\phi\phi' \leq \frac12( \phi^2 +(\phi')^2)$ thus 
		$$|\mathcal E_t^N(\phi)| \le 2N^{1/4} \left[\int_0^t ((\mathscr X_s^N)^{Sq}, \phi^2) ds+ \int_0^t((\mathscr X_s^N)^{Sq},(\phi')^2)ds\right] = 2N^{-1/4}\left[Q_t^N(\phi^2)+Q_t^N(\phi'^2)\right].$$ 
		Now by Lemma \ref{l:Qmom} \ref{l:Qincmom} and Proposition \ref{add} \ref{item:expectationLimits1}, we see that for every $\psi\in \mathcal{S}(\R)$, $L^2$ moments of $Q_t^N(\psi)$ are uniformly bounded in $N$. Since $\phi^2$ and $\phi'^2$ are also Schwartz functions, in view of the above inequality, we readily have \eqref{e42}.
	\end{proof}

	\subsection{Supporting estimates} \label{sec:pftech} In this section we prove Proposition \ref{add} and establish an estimate for the moments of the increments of the quadratic martingale field which will be useful in Section \ref{sec:tight}.

 \begin{proof}[Proof of Proposition \ref{add}] \noindent\textbf{Proof of \eqref{e:add1}.}  We continue with the same notation as in the statement of the Proposition \ref{add}. Let us set
 \begin{align*}
     \mathcal{G}_N^r(t) =\sum_{j= 2r-1}^{2k} \int_{0}^{t} \bigg[1-\frac1{m_j^r(\mathbf X(u))}\bigg]dX^j(u) 
	\end{align*} 
  where $m_j^r(\mathbf x):= \#\{ i\in \{2r-1,\ldots,2k\}: x_i=x_j\}$. Note that $\mathcal{G}_N^r$ are martingales of the form \eqref{def:gm} where only the last $2k-2r$ of the $2k$ particles are taken into account. Define the martingale
 \begin{align}
 \label{anbargn}
     d\bar{\mathcal G}_N(t):= N^{1/4}\sum_{r=1}^k  \ind_{\{t\in [t_{r-1},t_{r})\}} d \mathcal{G}_N^r(t).
 \end{align}
It turns out that the stochastic exponential of $\bar{\mathcal{G}}_N$ is precisely the tilt that gets rid of the divergent term appearing in the expectation of the left-hand side of \eqref{e:add1}. To be precise, we have that 
	\begin{equation}
		\label{e.multifr}
		\begin{aligned}
			& \mathbf E_{SB^{(2k)}_{N^{1/2}\nu}}\bigg[\prod_{i=1}^{2k} e^{-\frac12s_{i}\sqrt{N} + N^{\frac14}X_{s_i}^{i}}\phi_i(X_{s_i}^{i} - N^{\frac14}s_i)\bigg] \\ &= \mathbf E_{SB^{(2k)}_{N^{1/2}\nu}} \bigg[ e^{\bar{\mathcal G}_N(T) -\tfrac12\langle\bar{\mathcal G}_N \rangle(T)} \exp\bigg(N^{\frac12} \sum_{r=1}^k\sum_{2r-1\le i<j\le 2k}\hspace{-0.1cm} \int_{t_{r-1}}^{t_r} 
			\hspace{-0.1cm}\ind_{\{X_u^i=X_u^j\}}du\bigg)\prod_{r=1}^{2k} \phi_r(X_{s_r}^{r}) \bigg].
		\end{aligned}
	\end{equation}
	Note that  $N^{1/4}\mathcal G_N^r$ satisfies the assumption \eqref{assn1}. Consequently, $\bar{\mathcal G}_N$ satisfies the assumption \eqref{assn1} as well. From Theorem \ref{converge} \ref{convb}, it is immediate that, as $N\to\infty$, the right-hand side of \eqref{e.multifr} converges to the right-hand side of \eqref{e:add1}. This proves \eqref{e:add1} modulo \eqref{e.multifr}.

We now turn towards the proof of \eqref{e.multifr} which is done by iteratively applying \eqref{e:condeq2}. We illustrate the proof for the $k=2$ case only; the general case follows in the exact same manner. Let us set $k=2$ and write $\mE$ for the expectation with respect to ~${SB^{(2k)}_{N^{1/2}\nu}}$ measure. Recall that $s_{2i-1}=s_{2i}=t_i$ for $i=1,2$. Let $(\mathcal{F}_t)_{t\le T}$ denote the filtration 
of the $2k$-point sticky Brownian motion. Note that the expression inside the first expectation in \eqref{e.multifr} can be written as $\mathbf{A} \cdot \mathbf{B}$ where
\begin{align*}
	\mathbf{A} & := e^{-2t_1\sqrt{N} + N^{1/4}\sum_{j=1}^4 X_{t_1}^j} \prod_{i=1}^2 \phi_i(X_{s_i}^{i} - N^{\frac14}s_i) \\ \mathbf{B} & := e^{-(t_2-t_1)\sqrt{N} + N^{1/4}(X_{t_2}^3+X_{t_2}^4-X_{t_1}^3-X_{t_1}^4)} \cdot \prod_{i=3}^4 \phi_i(X_{s_i}^{i} - N^{\frac14}s_i).
\end{align*}
$\mathbf{A}$ is measurable with respect to ~$\mathcal{F}_{t_1}$. By the Markov property of sticky Brownian motion $\mE[\mathbf{B}\mid \mathcal{F}_{t_1}]$ is measurable with respect to ~$\sigma\big(X_{t_1}^i)_{i=1}^4\big)$. Let us write
\begin{align*}
	H\big(\mathbf X_{t_1}\big) : =\mE[\mathbf{B}\mid \mathcal{F}_{t_1}].
\end{align*}
By the tower property of conditional expectation followed by an application of \eqref{e:condeq2} with $\lambda=N^{\frac14}$, $s=0$, $t=t_1$, $\nu\mapsto N^{\frac12}\nu$, and $k=4$ we get
\begin{equation}
	\label{e.r2}
	\begin{aligned}
		\mbox{l.h.s.~of \eqref{e.multifr}} & = \mE\left[\mathbf{A}\cdot H\big(\mathbf X_{t_1}\big) \right] \\ & = \mE\left[ e^{N^{\frac14}\mathcal{G}_N^1(t_{1})-N^{\frac12} \langle\mathcal{G}_N^1\rangle(t_1)}\exp\bigg(N^{\frac12} \sum_{1\le i<j\le 4}\int_{t_{1}}^{t_2} 
		\hspace{-0.1cm}\ind_{\{X_u^i=X_u^j\}}du\bigg)\right. \\ & \hspace{3cm}\left. \cdot\prod_{i=1}^2 \phi_i(X_{s_i}^{i}) \cdot H\big(({X}_{t_1}^i+N^{\frac14}t_1)_{i=1}^k\big)\right].
	\end{aligned}
\end{equation}
Observe that by the Markov property of the sticky Brownian motion, we have
\begin{align}\label{e.r}
H\hspace{-0.05cm}\big(({X}_{t_1}^i+N^{\frac14}t_1)_{i=1}^k\big) \hspace{-0.1cm} =\hspace{-0.1cm} \mE \hspace{-0.1cm} \left[e^{-(t_2-t_1)\sqrt{N} + N^{\frac14}(X_{t_2}^3+X_{t_2}^4-X_{t_1}^3-X_{t_1}^4)} \prod_{i=3}^4 \phi_i(X_{s_i}^{i} - N^{\frac14}(t_2-t_1)) \hspace{-0.05cm} \mid \hspace{-0.05cm}\mathcal{F}_{t_1}\right]\!.\!
\end{align}
We use \eqref{e:condeq2} with $\lambda=N^{\frac14}$, $s=t_1$, $t=t_2$, $\nu\mapsto N^{\frac12}\nu$, and $k=2$ to get that
\begin{align*}
	\mbox{r.h.s.~of \eqref{e.r}}   & = \mE\left[e^{N^{\frac14}\mathcal{G}_N^2(t_{1},t_2)-N^{\frac12} \langle\mathcal{G}_N^2\rangle(t_1,t_2)} \cdot \prod_{i=3}^4 \phi_i(X_{s_i}^{i})  \exp\bigg(N^{\frac12} \int_{t_{1}}^{t_2} 
	\hspace{-0.1cm}\ind_{\{X_u^3=X_u^4\}}du\bigg) \mid \mathcal{F}_{t_1}\right], 
\end{align*}
where $g(s,t):=g(t)-g(s)$. Inserting the above expression back in \eqref{e.r2} and again using the tower property of the conditional expectation, we derive the identity in \eqref{e.multifr}.

\medskip

\noindent\textbf{Proof of \eqref{e:add2}.} Following the same argument as in the proof of \eqref{e:add1}, in the same spirit as \eqref{e.multifr}, one has that
	{\begin{equation}
 \label{e.multifr2}
	\begin{aligned}
		& \mathbf E_{SB^{(2k)}_{N^{1/2}\nu}}\bigg[E_1(\vec{t})E_2(\vec{t})\bigg] \\ &= \mathbf E_{SB^{(2k)}_{N^{1/2}\nu}} \Bigg[ e^{\bar{\mathcal G}_N(T) -\tfrac12\langle\bar{\mathcal G}_N \rangle(T)}\exp\bigg(N^{\frac12} \sum_{r=1}^k\sum_{2r-1\le i<j\le 2k}\hspace{-0.1cm} \int_{s_{r-1}}^{s_r} 
		\hspace{-0.1cm}\ind_{\{X_u^i=X_u^j\}}du\bigg)\\ & \hspace{3cm}\cdot\prod_{i\in A} \phi_i(X_{t_i}^{2i-1})\phi_i(X_{t_i}^{2i})\prod_{i\in B} \psi_i(X_{t_i}^{2i})\ind_{\{X_{t_i}^{2i-1}=X_{t_i}^{2i}\}} \Bigg].
	\end{aligned}
\end{equation}}
 Set
\begin{align}
    \label{e.ab}
    \mathbf{A}_N(\vec{t}):=e^{\bar{\mathcal G}_N(T) -\tfrac12\langle\bar{\mathcal G}_N \rangle(T)}, \quad \mathbf B_N(\vec{t}):=\exp\bigg(N^{\frac12} \sum_{r=1}^k\sum_{2r-1\le i<j\le 2k}\hspace{-0.1cm} \int_{t_{r-1}}^{t_r} 
		\hspace{-0.1cm}\ind_{\{X_u^i=X_u^j\}}du\bigg).
\end{align}
Recall $\gamma_N$ from \eqref{gamman}. By Theorem \ref{converge} \ref{convb} and \ref{convc},  any limit point of the sequence {$$\bigg(\bar{\mathcal{G}}_N, \mathbf{X}, {4}N^{1/2}\nu([0,1])\big(V^{ij}\big)_{1\leq i<j\leq 2k}, N^{|B|/2}\prod_{i\in B}\ind_{\{X_{t_i}^{2i-1}=X_{t_i}^{2i}\}}dt_i\bigg)$$ is of the form
\begin{align*}
    \left(\bar{\mathcal{G}}, \mathbf{U}, \big(L_0^{U_i-U_j}\big)_{1\leq i<j\leq k}, \left({\frac{\sigma}{2}}\right)^{|B|}\prod_{i\in B} dL_0^{U^{2i-1}-U^{2i}}(u_i)\right) 
\end{align*}}
for some $\bar{\mathcal{G}}$ with $\mE[e^{p \bar{\mathcal{G}}(T)-\frac{p}2\langle \bar{\mathcal{G}}\rangle (T)}]<\infty$ for all $p>0$ and satisfying \eqref{e:fil2}. Here $U$ is standard $2k$-dimensional Brownian motion. By the continuous mapping theorem, it follows that along a subsequence we have
{\begin{equation}
    \label{e:weak2}
    \begin{aligned}
	& N^{\frac{|B|}2}\int_{\Delta_k(s,t)} \hspace{-0.2cm}D_N(\vec{t}) \cdot \prod_{i\in B} \ind_{\{X_{t_i}^{2i-1}=X_{t_i}^{2i}\}} \prod_{i=1}^k dt_i \\ & \hspace{0cm} \stackrel{d}{\to} \left({\frac{\sigma}{2}}\right)^{|B|}\int_{\Delta_k(s,t)} e^{\bar{\mathcal G}(T) -\tfrac12\langle\bar{\mathcal G} \rangle(T)+{\frac{\sigma}{2}} \V_k(\vec{t})} \prod_{i\in A}\phi_i(U_{t_i}^{2i-1})\phi_i(U_{t_i}^{2i})dt_i\prod_{i\in B} \psi_i(U_{t_{i}}^{2i}) dL_0^{U^{2i-1}-U^{2i}}(t_i),
\end{aligned}
\end{equation}
where
\begin{equation}
	\label{e:weak}
	\begin{aligned}
	D_N(\vec{t}):=	 \mathbf{A}_N(\vec{t}) \cdot \mathbf{B}_N(\vec{t})\prod_{i\in A} \phi_i(X_{t_i}^{2i-1})\phi_i(X_{t_i}^{2i})\prod_{i\in B} \psi_i(X_{t_i}^{2i}).
	\end{aligned}
\end{equation}}

\medskip

We now upgrade \eqref{e:weak2} to imply convergence of first moments. To do this, we shall show the sequence of random variables in the left-hand side of \eqref{e:weak2} is uniformly integrable. Repeatedly using Proposition \ref{exp} and Doob's martingale inequality to the stochastic exponentials of each of the martingales $\mathcal G_N^r$ appearing in the expression for $\bar{\mathcal G}_N$ we find that for all $p\ge 1$, we have 
\begin{equation}\label{supmnab}\sup_{N\ge 1}\mathbf E_{SB^{(2k)}_{N^{1/2}\nu}}\bigg[\sup_{\vec{t}\in \Delta_k(0,T)} \big(\mathbf{A}_N(\vec{t})\big)^p\bigg]<\infty,  \quad \sup_{N\ge 1}\mathbf E_{SB^{(2k)}_{N^{1/2}\nu}}\bigg[\sup_{\vec{t}\in \Delta_k(0,T)} \big(\mathbf{B}_N(\vec{t})\big)^p\bigg]<\infty,
\end{equation}
and thus via the definition of $D_N(\vec{t})$ from \eqref{e:weak} we get
\begin{equation*}\sup_{N\ge 1}\mathbf E_{SB^{(2k)}_{N^{1/2}\nu}}\bigg[\sup_{\vec{t}\in \Delta_k(0,T)} \big(D_N(\vec{t})\big)^p\bigg]<\infty. 
\end{equation*}
In view of the above bound and \eqref{gmass1}, we obtain uniform integrability for the sequence of random variables in \eqref{e:weak2}. This implies that along the same subsequence,
{\begin{align*}
	& \lim_{N\to \infty} N^{\frac{|B|}2}\mE \left[\int_{\Delta_k(s,t)} \hspace{-0.2cm}D_N(\vec{t}) \cdot \prod_{i\in B} \ind_{\{X_{t_i}^{2i-1}=X_{t_i}^{2i}\}} \prod_{i=1}^k dt_i\right] \\ & \hspace{1cm} = \left({\frac{\sigma}{2}}\right)^{|B|}\mE\left[ \int_{\Delta_k(s,t)} e^{\bar{\mathcal G}(T) -\tfrac12\langle\bar{\mathcal G} \rangle(T)+{\frac{\sigma}2} \V_k(\vec{t})} \prod_{i\in A} \phi_i(U_{t_i}^{2i-1})\phi_i(U_{t_i}^{2i}) dt_i\prod_{i\in B} \psi_i(U_{t_{i}}^{2i}) dL_0^{U^{2i-1}-U^{2i}}(t_i)\right] \\ & \hspace{1cm} = \left({\frac{\sigma}{2}}\right)^{|B|}\mE\left[ \int_{\Delta_k(s,t)} e^{{\frac{\sigma}2} \V_k(\vec{t})} \mE \left[ e^{\bar{\mathcal G}(T) -\tfrac12\langle\bar{\mathcal G} \rangle(T)}  \mid \mathcal{F}_T(\mathbf{U})\right]\right. \\ & \hspace{4cm} \cdot\left.\prod_{i\in A} \phi_i(U_{t_i}^{2i-1})\phi_i(U_{t_i}^{2i}) dt_i\prod_{i\in B} \psi_i(U_{t_{i}}^{2i}) dL_0^{U^{2i-1}-U^{2i}}(t_i)\right],
\end{align*}}
where the last equality follows from the tower property of the conditional expectation. By \eqref{e:fil2}, the above inner expectation is $1$. Since the limit is free of $\bar{\mathcal{G}}$, we have the same above limit along every subsequence. Thus we arrive at the \eqref{e:add2} formula. This completes the proof.
\end{proof}

We end this section by recording a couple of useful estimates for moments of the increments of the quadratic martingale field.

\begin{prop}[Estimates for moments of the increments of QMF]\label{tight1} Fix $k\in\mathbb N$ and $T>0$. Then there exists a constant $\Con=\Con(k,T)>0$ such that for all bounded measurable functions $\phi$ on $\mathbb R$ and all $0\leq s<t\leq T$ one has that 
		\begin{align}
			\label{e.tight1}
			\sup_{N\ge 1}\Ex\big[ (Q_t^N(\phi)-Q_s^N(\phi))^{k}\big] \leq \Con\|\phi\|^{k}_{L^\infty(\mathbb R)}(t-s)^{k/4}.
		\end{align}
		Furthermore fix $p>1$ and $\e>0$. Then there exists $\Con=\Con(p,\e,k,T)>0$ such that for all functions $\phi \in L^p(\R)$ and all $\e \leq s<t\leq T$ one has 
		\begin{align}
			\label{e.tight2}
			\lim_{N\to\infty}\Ex\big[ (Q_t^N(\phi)-Q_s^N(\phi))^{k}\big] \leq \Con\|\phi\|_{L^p(\mathbb R)}^k(t-s)^k.
		\end{align}
	\end{prop}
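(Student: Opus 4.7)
For \eqref{e.tight1} I would combine Lemma \ref{l:Qmom}\ref{l:Qincmom} with the Girsanov identity \eqref{e.multifr2} applied with $A = \emptyset$, $B = \{1,\ldots,k\}$ to rewrite the moment as
\begin{align*}
\Ex\big[(Q_t^N(\phi) - Q_s^N(\phi))^k\big] = k!\, \mE_{SB_{N^{1/2}\nu}^{(2k)}}\!\bigg[\int_{\Delta_k(s,t)} \mathbf{A}_N(\vec t)\, \mathbf{B}_N(\vec t)\, N^{k/2} \prod_{i=1}^k \phi(X^{2i}_{t_i})\, \ind_{\{X^{2i-1}_{t_i}=X^{2i}_{t_i}\}}\, dt_i\bigg],
\end{align*}
where $\mathbf{A}_N, \mathbf{B}_N$ are the tilt factors from \eqref{e.ab}. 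After pulling $\|\phi\|_{L^\infty}^k$ out and applying Cauchy--Schwarz in the outer expectation to separate the tilt from the intersection-time factor, I would use the uniform moment bounds \eqref{supmnab} to control $\mE[\sup_{\vec t} (\mathbf{A}_N \mathbf{B}_N)^2]^{1/2}$. Fubini's theorem together with the identity $L_0^{X^{2i-1}-X^{2i}}(t) = 4 N^{1/2} \nu([0,1])\, V^{(2i-1)(2i)}_t$ (from \eqref{eq:two-point-sbm}) converts the $N^{k/2}$-weighted product of indicators into a product of local time increments; H\"older with equal exponents combined with Lemma \ref{lte} then yields $\Con \|\phi\|_{L^\infty}^k |t-s|^{k/2}$, which is stronger than the claimed $|t-s|^{k/4}$.

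For \eqref{e.tight2} I would first pass to the limit via Proposition \ref{add}\ref{item:expectationLimits2} (applied with $A = \emptyset$), which combined with Lemma \ref{l:Qmom}\ref{l:Qincmom} gives
\begin{align*}
\lim_{N\to\infty} \Ex\big[(Q_t^N(\phi) - Q_s^N(\phi))^k\big] = k!\, (\sigma/2)^k\, \mE_{B^{\otimes 2k}}\!\bigg[\int_{\Delta_k(s,t)} e^{(\sigma/2) \V_k(\vec t)} \prod_{i=1}^k \phi(U^{2i}_{t_i})\, dL_0^{U^{2i-1}-U^{2i}}(t_i)\bigg].
\end{align*}
The exponential factor is bounded pointwise by $e^{(\sigma/2) \V_k(T)}$, whose $L^r$-moments are uniformly finite for all $r < \infty$ since $\V_k(T) \le \sum_{i<j} L_0^{U^i - U^j}(T)$ and sums of Brownian local times have finite exponential moments of all orders. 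A Cauchy--Schwarz step then decouples this tilt from the rest. For the remaining expectation, I would change variables to $V^\pm_i := U^{2i-1} \pm U^{2i}$, which are independent Brownian motions of diffusion coefficient $2$. On the support of $dL_0^{V^-_i}$ one has $U^{2i}_{t_i} = V^+_i(t_i)/2$, and the standard identity $\mE\!\left[\int g(u)\, dL_0^{V^-}(u)\right] = \int g(u)\, p_{2u}(0)\, du$ together with the independence of $V^+_i$ and $V^-_i$ factorizes the expectation across pairs. H\"older with conjugate exponents $p$ and $q$ applied to each resulting Gaussian integral gives $|\mE[\phi(V^+_i(t_i)/2)]| \le \Con \|\phi\|_{L^p}\, t_i^{1/(2q)-1/2}$; combining with $p_{2t_i}(0) \asymp t_i^{-1/2}$ produces an integrand $\prod_i t_i^{-1+1/(2q)}$ that is uniformly bounded on $[\epsilon, T]^k$, and integration over $\Delta_k(s,t)$ yields the desired factor $(t-s)^k$. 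Extension from smooth $\phi$ to $\phi \in L^p$ follows by a routine density argument using linearity of $Q_t^N(\cdot)$.

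The principal obstacle is the coupling introduced by the exponential tilt $e^{(\sigma/2)\V_k}$ in the limiting formula, which prevents a direct factorization across the $k$ independent pairs. The resolution is to crudely bound $e^{(\sigma/2)\V_k(\vec t)} \le e^{(\sigma/2) \V_k(T)}$ and apply Cauchy--Schwarz; this loses nothing essential since we only need an integrable upper bound rather than sharp asymptotics for the tightness application. A parallel technical point in \eqref{e.tight1} is the time-dependent nature of the tilt $\mathbf{A}_N(\vec t)$, but the $\sup$-moment control in \eqref{supmnab} is precisely what permits the Cauchy--Schwarz decoupling there.
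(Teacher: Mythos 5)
Your argument for \eqref{e.tight1} is sound and is only a minor rearrangement of the paper's: the paper applies Cauchy--Schwarz in the joint measure $\mE\otimes d\vec t$ so that the indicator product appears in both factors (one factor bounded, the other giving $|t-s|^{k/2}$ inside the square root, hence $|t-s|^{k/4}$), whereas you pull $\sup_{\vec t}(\mathbf A_N\mathbf B_N)$ out first and put the full indicator mass into one factor, which via \eqref{supmnab}, H\"older, and Lemma \ref{lte} indeed gives $|t-s|^{k/2}$. Both bounds suffice for the tightness application, and the paper itself notes its bound is crude.

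The proof of \eqref{e.tight2}, however, has a genuine gap in the treatment of the tilt $e^{(\sigma/2)\V_k(\vec t)}$. After you bound it pointwise by $e^{(\sigma/2)\V_k(T)}$ and apply Cauchy--Schwarz in the probability measure, the ``remaining expectation'' is
$\mE\big[\big(\int_{\Delta_k(s,t)}\prod_i|\phi(U^{2i}_{t_i})|\,dL_0^{U^{2i-1}-U^{2i}}(t_i)\big)^2\big]^{1/2}$, i.e.\ a \emph{second} moment of the local-time integral, to which the first-moment identity $\mE[\int g\,dL_0^{V^-}(u)]=\int g(u)p_{2u}(0)du$ that you then invoke does not apply; the computation you carry out afterwards is a first-moment computation. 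If you expand the square instead, the diagonal contribution forces you to control $\mE[\phi(G)^2]$ for a nondegenerate Gaussian $G$, which is infinite for generic $\phi\in L^p$ with $1<p<2$ (take $\phi(x)=|x|^{-1/2}\ind_{|x|\le 1}$). Since the proposition is stated for all $p>1$ and the regime $p\downarrow 1$ is precisely the one needed later, this is not a removable technicality. The paper's resolution reverses the order of operations: it first disintegrates over the local-time measures using the concatenated-bridge representation (Lemma \ref{6.1}), converting $dL_0$ into $p_{2t_j}(0)\,dt_j$ under the conditioned law $\mE_c$, and only then applies H\"older with exponents $(p,q)$ pointwise in $\vec t$, so that $\phi$ only ever appears to the $p$-th power in a clean Gaussian integral and the tilt is handled by $\mE_c[e^{q\sigma\V_k/2}]$. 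Note also that the required exponential-moment bound is under the \emph{conditioned} (bridge) measure, not the free Brownian measure your remark about ``sums of Brownian local times'' addresses; this is exactly the content of Lemma \ref{bcts} and is not automatic. A workable variant closer to your plan would be H\"older with exponents $(q,p)$ in the product measure $d\mathbf P\otimes\prod dL$, giving $(t-s)^{k/q}\cdot(t-s)^{k/p}=(t-s)^k$, but this too requires Lemma \ref{6.1} and Lemma \ref{bcts} for the tilt factor.
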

We remark that the first bound is crude and nowhere near optimality. However, the latter bound is more important, and it will be most powerful when $p$ is very close to $1,$ as this will allow us to obtain optimal tightness bounds for limit points in the next section.	

 \begin{proof} \textbf{Proof of \eqref{e.tight1}.} Using \eqref{e:Qincmom} together with the trivial bound $|\phi(N^{-1/2}X^j_{t_j}-N^{1/4}t_j)|\leq \|\phi\|_{L^\infty},$ we obtain that 
		\begin{align}
			 \notag &\Ex[\big(Q_t^N(\phi)-Q_s^N(\phi)\big)^k]  \\ & \notag \le  N^{k/2} k!\|\phi\|_{L^\infty}^k\int_{\Delta_k(s,t)} \mathbf E_{SB^{(2k)}_{N^{1/2}\nu}}\bigg[\prod_{j=1}^k e^{-t_j\sqrt{N} + N^{1/4}(X^{2j-1}_{t_j}+X^{2j}_{t_j})}  \ind_{\{X_{t_j}^{2j-1} =X_{t_j}^{2j}\}} \bigg]dt_1\cdots dt_k \\ & =  k!\|\phi\|_{L^\infty}^k \cdot N^{k/2}\int_{\Delta_k(s,t)}\mathbf E_{SB^{(2k)}_{N^{1/2}\nu}}\bigg[\mathbf{A}_N(\vec{t})\cdot \mathbf{B}_N(\vec{t}) \cdot \prod_{j=1}^k  \ind_{\{X_{t_j}^{2j-1} =X_{t_j}^{2j}\}} \bigg]dt_1\cdots dt_k, \label{eqg}
		\end{align}
  where in above we use the same notation from \eqref{e.ab}:
  \begin{align*}
     \mathbf{A}_N(\vec{t}):=e^{\bar{\mathcal G}_N -\tfrac12\langle\bar{\mathcal G}_N \rangle}, \quad \mathbf{B}_N(\vec{t}):=\exp\bigg(N^{\frac12} \sum_{r=1}^k\sum_{2r-1\le i<j\le 2k}\hspace{-0.1cm} \int_{t_{r-1}}^{t_r} \hspace{-0.1cm}\ind_{\{X_u^i=X_u^j\}}du\bigg),
 \end{align*}
and where  $\bar{\mathcal G}_N=\bar{\mathcal{G}}_N(t)$ defined in \eqref{anbargn}. The equality in \eqref{eqg} is due to \eqref{e.multifr2}. 
Note that by Cauchy-Schwarz inequality and using the fact that $\mathbf{A}^2\cdot \mathbf{B}^2 \le \mathbf{A}^4+\mathbf{B}^4$, we have 
		\begin{align}  \mbox{\eqref{eqg}} &\leq  k!\|\phi\|_{L^\infty}^k\sqrt{N^{\frac{k}2}\int_{\Delta_k(s,t)}\mathbf E_{SB^{(2k)}_{N^{1/2}\nu}}\bigg[(\mathbf{A}_N(\vec{t})^4+ \mathbf{B}_N(\vec{t})^4) \cdot\prod_{j=1}^k  \ind_{\{X_{t_j}^j =Y_{t_j}^j\}} \bigg]dt_1\cdots dt_k} \label{bbound} \\ & \hspace{5.5cm}\cdot\sqrt{N^{\frac{k}2}\int_{\Delta_k(s,t)} \mathbf E_{SB^{(2k)}_{N^{1/2}\nu}}\bigg[\prod_{j=1}^k  \ind_{\{X_{t_j}^j =Y_{t_j}^j\}} \bigg] dt_1\cdots dt_k}\label{cbound}
		\end{align}
  For the factor in the square root of \eqref{cbound}, note that
  \begin{equation}
      \label{ccbound}
      \begin{aligned}
      N^{\frac{k}2}\int_{\Delta_k(s,t)} \mathbf E_{SB^{(2k)}_{N^{1/2}\nu}}\bigg[\prod_{j=1}^k  \ind_{\{X_{t_j}^j =Y_{t_j}^j\}} \bigg] dt_1\cdots dt_k & \le  \mathbf E_{SB^{(2k)}_{N^{1/2}\nu}}\bigg[\prod_{j=1}^k N^{\frac12}\int_s^t \ind_{\{X_{t}^j =Y_{t}^j\}} dt \bigg] \\ & \le  \prod_{j=1}^k\mathbf E_{SB^{(2k)}_{N^{1/2}\nu}}\bigg[ \bigg(N^{\frac12}\int_s^t \ind_{\{X_{t}^j =Y_{t}^j\}} dt\bigg)^k \bigg]^{\frac1k} \\ & \le \Con |t-s|^{k/2}.
  \end{aligned}
  \end{equation}
 where the last inequality follows by applying Lemma \ref{lte} on each of the expectations. Here the constant $\Con>0$ depends only on $k$.   
We now focus on the factor in the square root of \eqref{bbound}. We shall show that this factor can be bounded uniformly in $N\ge 1$ and $s,t\in [0,T]$. We split it into two parts: one containing $\mathbf{A}(\vec{t})$ and $\mathbf{B}(\vec{t})$. Recall the $\gamma_N$ measure from \eqref{gamman}. Note that	
		\begin{align*}
  & \sup_{N\ge 1}N^{k/2} \int_{\Delta_k(s,t)}\mathbf E_{SB^{(2k)}_{N^{1/2}\nu}}\bigg[\mathbf{A}_N(\vec{t})^4 \prod_{j=1}^k  \ind_{\{X_{t_j}^j =Y_{t_j}^j\}} \bigg]dt_1\cdots dt_k \\ & \hspace{2cm}\le \sup_{N\ge 1}\mathbf E_{SB^{(2k)}_{N^{1/2}\nu}} \bigg[\sup_{\vec{t}\in \Delta_k(0,T)}\mathbf A_N(\vec{t})^4 \int_{\Delta_k(s,t)} d\gamma_N\bigg] \\ & \hspace{2cm}\leq \sup_{N\ge 1}\sqrt{\mathbf E_{SB^{(2k)}_{N^{1/2}\nu}}\bigg[\sup_{\vec{t}\in \Delta_k(0,T)}\mathbf A_N(\vec{t})^8\bigg]\cdot\mathbf E_{SB^{(2k)}_{N^{1/2}\nu}}\big[\gamma_N(\Delta_k(0,T))^2\big]}.
		\end{align*} where the last inequality is due to Cauchy-Schwarz inequality. By \eqref{gmass1} and by \eqref{supmnab}, we find that the above expression is finite. On the other hand, for the $\mathbf B(\vec{t})$ term it is clear from Theorem \ref{converge}\ref{conva} and \ref{convc} that $$N^{k/2} \int_{\Delta_k(0,T)}\mathbf E_{SB^{(2k)}_{N^{1/2}\nu}}\bigg[\mathbf{B}_N(\vec{t})^4 \prod_{j=1}^k  \ind_{\{X_{t_j}^j =Y_{t_j}^j\}} \bigg]dt_1\cdots dt_k$$ converges to some finite quantity as $N\to\infty$, which implies that it is bounded independently of $N$. Thus the last two math displays imply that the term in \eqref{bbound} is bounded uniformly in $N\ge 1$ and $s,t\in [0,T]$. From \eqref{ccbound}, we see that \eqref{cbound} $\le \Con|t-s|^{k/4}$. Inserting these two bounds back in \eqref{bbound} and \eqref{cbound}, we arrive at \eqref{e.tight1}.
		
\medskip

\noindent\textbf{Proof  of \eqref{e.tight2}.} Appealing to the moment formula for the increment of QMF from \eqref{e:Qincmom} and the convergence from \eqref{e:add2} we have
		\begin{align}
  \label{e.idef}
  \lim_{N\to \infty}\Ex\bigg[\big(Q_t^N(\phi)-Q_s^N(\phi)\big)^k\bigg] = k!\left({\frac\sigma2}\right)^k \mathbf E_{B^{\otimes 2k}} \bigg[\int_{\Delta_k(s,t)}e^{{\frac\sigma2} \V_k(\vec{t})}\prod_{j=1}^k \phi(U_{t_j}^{2j})dL_0^{U^{2j-1}-U^{2j}}(t_j) \bigg],
		\end{align}
  where $\V_k(\vec{t})$ is defined in \eqref{def:v}. 
Lemma \ref{6.1} formalizes the intuition that ${\frac12}dL_0^{U^{2j-1}-U^{2j}}(t_j) = \delta_0(U_{t_j}^{2j-1}-U^{2j}_{t_j})$. Thus by appealing to that lemma, we can write the above expectation in terms of a certain family of concatenated bridge processes whose law we will write as $\mathbf P_c$. Specifically, we have that 
		\begin{align}
  \label{e.idef2}
			\mbox{r.h.s.~of \eqref{e.idef}} = k!\sigma^k\int_{\Delta_k(s,t)}  \mathbf E_{c} \bigg[e^{{\frac\sigma2} \V_k(\vec{t})}\prod_{j=1}^k \phi(U^{2j}_{t_j})\bigg]\prod_{j=1}^k p_{2t_j}(0)dt_1\cdots dt_k,
		\end{align}
where the expectation is taken over a collection of paths $U^j$ such that
\begin{itemize}
\setlength\itemsep{0.5em}
    \item $(U^{2i-1}-U^{2i}, U^{2i-1}+U^{2i})_{i=1}^k$ are $2k$ many independent processes.
    \item $U^{2i-1}+U^{2i}$ is a Brownian motion of diffusion rate $2$ for $i=1,2,\ldots,k$.
    \item $U^{2i-1}-U^{2i}$ is a Brownian bridge (from $0$ to $0$) of diffusion rate $2$ from $[0,t_i]$ and an independent Brownian motion of diffusion rate $2$ from $[t_i,\infty)$ for $i=1,2,\ldots,k$.
\end{itemize}
 We have used a different notation for the expectation operator $\mE_c$ in \eqref{e.idef2} ($c$ for concatenation) just to stress that the law is different from the standard Brownian motion. We claim that for all $p>1$ we have
\begin{align}
    \label{e.idef3}
     \sup_{\vec{t}\in [0,T]} \mathbf E_{c} \bigg[e^{{\frac\sigma2} \V_k(\vec{t})}\prod_{j=1}^k |\phi(U^{2j}_{t_j})|\bigg] \le \Con\cdot \|\phi\|_{L^p}^k.
\end{align}
where the $\Con>0$ depends on $p,k,\e,T$.  Let us assume \eqref{e.idef3} for the moment. By hypothesis, $t_j\ge \e >0$. Thus, $p_{2t_j}(0) \le \Con$ for some constant $\Con>0$ depending on $\e$. Thus, in view of \eqref{e.idef3}, to get an upper bound for the right-hand side of \eqref{e.idef2}, we may take the supremum of the integrand in the right-hand side of \eqref{e.idef2} and pull it outside of the integration. As the Lebesgue measure of $\Delta_k(s,t)$ is $\frac{(t-s)^k}{k!}$, we thus have the desired estimate in \eqref{e.tight2}.

\medskip

Let us now establish \eqref{e.idef3}.  Fix $p>1$ and take $q>1$ so that $p^{-1}+q^{-1}=1$. Use H\"older's inequality to write
		\begin{align*}
			\mathbf E_{c} \bigg[e^{{\frac\sigma2} \V_k(\vec{t})}\prod_{j=1}^k |\phi(U^{2j}_{t_j})|\bigg]  &\leq \mathbf E_{c} \bigg[e^{q{\frac\sigma2} \V_k(\vec{t})}\bigg]^{1/q}\mathbf E_c \bigg[\prod_{j=1}^k |\phi(U^{2j}_{t_j})|^p\bigg]^{1/p}.
		\end{align*}
For the first expectation above, observe that by Lemma \ref{bcts}, $\mathbf E_{c} \bigg[e^{q{\frac\sigma2} \V_k(\vec{t})}\bigg]$ is uniformly bounded over $\vec{t}\in \Delta_k(0,T)$. For the second expectation above, note that under $\mE_c$, we have $$U_{t_j}^{2j}=\tfrac12(U_{t_j}^{2j-1}+U_{t_j}^{2j})-\tfrac12(U_{t_j}^{2j-1}-U_{t_j}^{2j})=\tfrac12(U_{t_j}^{2j-1}+U_{t_j}^{2j})-0.$$ 
Thus under $\mE_c$, $U_{t_j}^{2j}$ are independent Gaussian random variables with variance $t_j/2$. Hence,
  \begin{align*}
      \mathbf E_c \bigg[\prod_{j=1}^k |\phi(U^{2j}_{t_j})|^p\bigg]^{1/p} = \prod_{j=1}^k\mathbf E_c \bigg[ |\phi(U^{2j}_{t_j})|^p\bigg]^{1/p} = \prod_{j=1}^k \bigg(\int_{\R} p_{t_j/2}(y)|\phi(y)|^p\,dy\bigg)^{1/p} \le \Con\cdot \|\phi\|_{L^p}^k.
  \end{align*}
where the last inequality follows by again using the fact that $t_j\ge \e>0$, together with the uniform bound $\sup_y p_t(y)\leq (2\pi t)^{-1/2}$, and noting that the constant $\Con$ is allowed to depend on $\e$.	This establishes \eqref{e.idef3} completing the proof of \eqref{e.tight2}.
	\end{proof}
	\section{Solving the martingale problem for the SHE}\label{sec:smp}
	
	The goal of this section is to establish the tightness of the field $\mathscr{X}^N$ defined in \eqref{a} and identify its limit points as the solution of the stochastic heat equation \eqref{she}. As the field $\mathscr{X}^N$ does not exist in a functional sense, we first introduce in Section \ref{sec.space} appropriate spaces and topologies that work well with the bounds derived in the previous section. In Section \ref{sec:tight} and Section \ref{sec:iden} we deal with the tightness and identification of the limit points of $\mathscr{X}^N$ respectively.

 \subsection{Weighted H\"older spaces and Schauder estimates} \label{sec.space}

 In this subsection, we introduce various natural topologies for our field $\mathscr{X}^N$ and its limit points. We then discuss how the heat flow affects these topologies and record Kolmogorov-type lemmas that will be key in showing tightness under these topologies. We begin by recalling many familiar and useful spaces of continuous and differentiable functions that have natural metric structures. 
For $d\ge 1$, we denote by $C_c^{\infty}(\R^d)$ the space of all compactly supported smooth functions on $\R^d$. For a smooth function on $\R^d$, we define its $C^r$ norm as
\begin{align}\label{crnorm}
    \|f\|_{C^r}:=\sum_{\vec{\alpha}} \sup_{\mathbf x\in \R^d} |D^{\vec\alpha} f(\mathbf x)|
\end{align}
 where the sum is over all $\vec{\alpha}\in \mathbb{Z}_{\ge 0}^d$ with $\sum \alpha_i\le r$ and $D^{\vec{\alpha}}:=\partial_{x_1}^{\alpha_1}\cdots \partial_{x_d}^{\alpha_d}$ denotes the mixed partial derivative.

\smallskip

We now recall the definition of weighted H\"older spaces from \cite[Definitions 2.2 and 2.3]{HL16}.  For the remainder of this paper, we shall work with \textit{elliptic and parabolic} weighted H\"older spaces with polynomial weight function
 \begin{align}
 \label{defwt}
     w(x):=(1+x^2)^\sig
 \end{align}
  for some fixed $\sig>1$. We introduce these weights because weighted spaces will be more convenient to obtain tightness estimates. Since the solution of \eqref{she} started from Dirac initial condition is known to be globally bounded away from $(0,0)$, we expect that it is possible to remove the weights throughout this section, but this would require more precise moment estimates than the ones we derived in previous sections, which take into account spatial decay of the fields.

	\begin{defn}[Elliptic H\"older spaces]\label{ehs}
		For $\alpha\in(0,1)$ we define the space $C^{\alpha,\sig}(\mathbb R)$ to be the completion of $C_c^\infty(\mathbb R)$ with respect to the norm given by $$\|f\|_{C^{\alpha,\sig}(\mathbb R)}:= \sup_{x\in\mathbb R} \frac{|f(x)|}{w(x)} + \sup_{|x-y|\leq 1} \frac{|f(x)-f(y)|}{w(x)|x-y|^{\alpha}}.$$
		For $\alpha<0$ we let $r=-\lfloor \alpha\rfloor$ and we define $C^{\alpha,\sig}(\mathbb R)$ to be the closure of $C_c^\infty(\mathbb R)$ with respect to the norm given by $$\|f\|_{C^{\alpha,\sig}(\mathbb R)}:= \sup_{x\in\mathbb R} \sup_{\lambda\in (0,1]} \sup_{\phi \in B_r} \frac{(f,S^\lambda_{x}\phi)_{L^2(\mathbb R)}}{w(x)\lambda^\alpha}$$ where the scaling operators $S^\lambda_{x}$ are defined by 
  \begin{align}\label{escale}
  S^\lambda_{x}\phi (y) = \lambda^{-1}\phi(\lambda^{-1}(x-y)),\end{align} 
  and where $B_r$ is the set of all smooth functions of {$C^r$ norm} less than 1 with support contained in the unit ball of $\mathbb R$.
	\end{defn}

 \begin{defn}[Function spaces]  Let $C^{\alpha,\sig}(\mathbb R)$ be as in Definition \ref{ehs}. We define $C([0,T],C^{\alpha,\sig}(\mathbb R))$ to be the space of continuous maps $g:[0,T]\to C^{\alpha,\sig}(\mathbb R),$ equipped with the norm $$\|g\|_{C([0,T],C^{\alpha,\sig}(\mathbb R))} := \sup_{t\in[0,T]} \|g(t)\|_{C^{\alpha,\sig}(\mathbb R)}.$$ 
\end{defn}

	Here and henceforth we will define $\Lambda_{[a,b]}:=[a,b]\times\mathbb R$ and we will define $\Lambda_T:=\Lambda_{[0,T]}.$

\smallskip

 So far we have used $\phi,\psi$ for test functions on $\R$. To make the distinction clear between test functions on $\R$ and $\R^2$, we shall use variant Greek letters such as $\varphi, \vartheta, \varrho$ for test functions on $\R^2$. In some instances, we will explicitly write $(f,\varphi)_{\mathbb R^2}$ or $(g,\phi)_{\mathbb R}$ when we want to be clear about the space in which we are pairing. 
 
	\begin{defn}[Parabolic H\"older spaces]\label{phs}
		We define $C_c^\infty(\Lambda_T)$ to be the set of functions on $\Lambda_T$ that are restrictions to $\Lambda_T$ of some function in $C_c^\infty(\mathbb R^2)$ (in particular we do not impose that elements of $C_c^\infty(\Lambda_T)$ vanish at the boundaries of $\Lambda_T).$ 
  
    For $\alpha\in(0,1)$ we define the space $C^{\alpha,\sig}_\mathfrak s(\Lambda_T)$ to be the completion of $C_c^\infty(\Lambda_T)$ with respect to the norm given by $$\|f\|_{C^{\alpha,\sig}_\mathfrak s(\Lambda_T)}:= \sup_{(t,x)\in \Lambda_T} \frac{|f(t,x)|}{w(x)} + \sup_{|s-t|^{1/2}+|x-y|\leq 1} \frac{|f(t,x)-f(s,y)|}{w(x)(|t-s|^{1/2}+|x-y|)^{\alpha}}.$$
		For $\alpha<0$ we let $r=-\lfloor \alpha\rfloor$ and we define $C^{\alpha,\sig}_\mathfrak s(\Lambda_T)$ to be the closure of $C_c^\infty(\Lambda_T)$ with respect to the norm given by $$\|f\|_{C^{\alpha,\sig}_\mathfrak s(\Lambda_T)}:= \sup_{(t,x)\in\Lambda_T} \sup_{\lambda\in (0,1]} \sup_{\varphi \in B_r} \frac{(f,S^\lambda_{(t,x)}\varphi)_{L^2(\Lambda_T)}}{w(x)\lambda^\alpha}$$ where the scaling operators are defined by 
		\begin{align}
			\label{scale}
			S^\lambda_{(t,x)}\varphi (s,y) = \lambda^{-3}\varphi(\lambda^{-2}(t-s),\lambda^{-1}(x-y)),
		\end{align}
		and where $B_r$ is the set of all smooth functions of $C^r$ norm less than 1 with support contained in the unit ball of $\mathbb R^2$.
	\end{defn}

 \begin{rk}[Derivatives of distributions] \label{d/dx}  By definition, any element $f\in C_\mathfrak s^{\alpha,\sig}(\Lambda_T)$ admits an $L^2$-pairing with any smooth function $\varphi: \Lambda_T\to \mathbb R$ of rapid decay, which we can write as $(f,\varphi)_{\Lambda_T}.$ Consequently there is a natural embedding $C_\mathfrak s^{\alpha,\sig}(\Lambda_T)\hookrightarrow \mathcal S'(\mathbb R^2)$ which is defined by formally setting $f$ to be zero outside of $[0,T]\times \mathbb R$. More rigorously, this means that the $L^2(\mathbb R^2)$-pairing of $f$ with any $\varphi \in \mathcal S(\mathbb R^2)$ is defined to be equal to $(f,\varphi|_{\Lambda_T})_{\Lambda_T}$.

 \smallskip
 
 The image of this embedding consists of some specific collection of tempered distributions that are necessarily supported on $[0,T]\times \mathbb R.$ Consequently we can sensibly define $\partial_tf$ and $\partial_xf$ as elements of $\mathcal S'(\mathbb R^2)$ whenever $f\in C_\mathfrak s^{\alpha,\sig}(\Lambda_T)$. Explicitly these derivatives are defined by the formulas$$(\partial_tf,\varphi)_{\Lambda_T} := -(f,\partial_t\varphi)_{\Lambda_T},\;\;\;\;\;\;\;\;\;\;(\partial_xf,\varphi)_{\Lambda_T} := -(f,\partial_x\varphi)_{\Lambda_T}$$for all smooth $\varphi:\Lambda_T\to\mathbb R$ of rapid decay. This convention on derivatives will be useful for certain computations later. From the definitions, when $\alpha<0$ one can check that for such $f$ one necessarily has $\partial_tf \in C_\mathfrak s^{\alpha-2,\sig}(\Lambda_T)$ and $\partial_x f \in C_\mathfrak s^{\alpha-1,\sig}(\Lambda_T).$

\smallskip

We remark that this fails for $\alpha>0$. Indeed, by our convention of derivatives, $\partial_tf$ may no longer be a smooth function (or even a function) even if $f\in C_c^\infty(\Lambda_T)$. This is because such an $f$ gets extended to all of $\mathbb R^2$ by setting it to be zero outside $\Lambda_T$. In particular, if $f$ does not vanish on the boundary of $\Lambda_T$, then it may become a discontinuous function under our convention of extension to $\mathbb R^2$. Due to these discontinuities, the distributional derivative $\partial_tf$ can be a tempered distribution with singular parts along the boundaries (one may verify that $\partial_t f$ can be at best an element of $C^{-2,\sig}_\mathfrak s(\Lambda_T)$ for generic $f\in C_c^\infty(\Lambda_T)$). In our later computations, we never take derivatives of functions in $C_\mathfrak s^{\alpha,\sig}(\Lambda_T)$ with $\alpha>0$. 
\end{rk}

 We now discuss the smoothing effect of heat flow on these weighted H\"older spaces.
 
	\begin{prop}[Smoothing effect on elliptic spaces]\label{p65} For $f\in C_c^\infty(\mathbb R)$ and $t>0$ define $$P_tf(x):= \int_\mathbb R p_{t}(x-y)f(y)dy.$$ Then for all $\alpha \le \beta <1$, there exists $\Con=\Con(\alpha,\beta,T)>0$ such that $$\|P_tf\|_{C^{\beta,\sig}(\mathbb R)} \leq \Con t^{-(\beta-\alpha)/2}\|f\|_{C^{\alpha,\sig}(\mathbb R)} $$ uniformly over $f\in C_c^\infty$ and $t\in [0,T]$. In particular, $P_t$ extends to a globally defined linear operator on $C^{\alpha,\sig}(\mathbb R)$ which maps boundedly into $C^{\beta,\sig}(\mathbb R).$
	\end{prop}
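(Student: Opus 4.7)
The plan is to prove the estimate by separating the scaling analysis of the heat kernel from the effect of the weight $w(x) = (1+x^2)^\sigma$. The weight will turn out to be harmless: since $w(y)/w(x) \le C(1+|x-y|^2)^\sigma$, its growth is dominated by the Gaussian tails of $p_t$ uniformly for $t \in [0,T]$, so that all bounds can be localized to a $\sqrt{t}$-neighborhood of $x$ while only modifying constants. I would therefore first prove the unweighted Schauder estimate and then indicate how the weighted version follows by the same argument after inserting the pointwise comparison for $w$.

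For the unweighted bound I would split into three cases according to the signs of $\alpha,\beta$. When $0 < \alpha \le \beta < 1$ this is the classical Schauder estimate: use the representation $P_tf(x)-P_tf(y) = \int [p_t(x-z)-p_t(y-z)]f(z)\,dz$ together with the vanishing integral $\int [p_t(x-z)-p_t(y-z)]\,dz = 0$, rewrite $f(z) = f(z)-f(x)$, and exploit $|f(z)-f(x)| \le \|f\|_{C^\alpha}|x-z|^\alpha$ plus the standard gradient bound $\|\partial_x p_t\|_{L^1}\le Ct^{-1/2}$ to extract the factor $t^{(\alpha-\beta)/2}|x-y|^\beta$. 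When $\alpha \le \beta < 0$ I would use the dual definition and compute
\begin{equation*}
(P_tf, S_x^\lambda \phi)_{L^2(\mathbb R)} = (f, P_t S_x^\lambda \phi)_{L^2(\mathbb R)}.
\end{equation*}
The key technical lemma is that the (non compactly supported) function $P_t S_x^\lambda \phi$ admits, via a dyadic decomposition of $\mathbb{R}$ relative to $x$, a representation as a convergent sum $\sum_k c_k S_{x_k}^{\mu}\phi_k$ with scale $\mu:=\sqrt{\lambda^2+t}$, centers $x_k$, and smooth bumps $\phi_k \in B_r$ whose coefficients $c_k$ decay like the Gaussian $p_t(x-x_k)$ so that $\sum_k |c_k|\,w(x_k) \le C w(x)$. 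This yields $|(f,P_tS_x^\lambda\phi)| \le C\|f\|_{C^{\alpha,\sigma}} w(x) \mu^\alpha$. The elementary inequality $\mu^\alpha = (\lambda^2+t)^{\alpha/2} \le C t^{(\alpha-\beta)/2}\lambda^\beta$, which holds for $\alpha \le \beta \le 0$ by comparing the two regimes $\lambda^2 \le t$ and $\lambda^2 > t$, then closes the estimate.

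For the mixed case $\alpha < 0 \le \beta < 1$, I would exploit the semigroup property $P_t = P_{t/2}\circ P_{t/2}$: the previous case with target exponent $0$ (interpreted as the $L^\infty$ bound $\|P_{t/2} f\|_\infty \le Ct^{\alpha/2}\|f\|_{C^{\alpha,\sigma}}$, together with the analogous bound $\|\partial_x P_{t/2}f\|_\infty \le Ct^{(\alpha-1)/2}\|f\|_{C^{\alpha,\sigma}}$) reduces us to applying $P_{t/2}$ to a function with controlled sup and gradient norms. From these, the usual interpolation $\|g\|_{C^\beta} \lesssim \|g\|_\infty^{1-\beta}\|g'\|_\infty^\beta + \|g\|_\infty$ (applied pointwise, with the weight absorbed using the slow variation of $w$) yields precisely the exponent $t^{-(\beta-\alpha)/2}$. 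The extension from $f\in C_c^\infty$ to all of $C^{\alpha,\sigma}(\mathbb R)$, and hence the definition of $P_t$ as a bounded operator between the claimed spaces, then follows by density.

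The main obstacle I anticipate is the distributional case: $P_t S_x^\lambda \phi$ is a Gaussian-tailed object rather than a compactly supported bump, so one cannot simply substitute it back into the definition of $\|f\|_{C^{\alpha,\sigma}}$. The cleanest fix is the dyadic decomposition sketched above, but a careful bookkeeping is needed to verify that the weight factors $w(x_k)$ at the shifted centers telescope back to $w(x)$ with a constant independent of $t,\lambda$. Once this decomposition is in hand, the remainder of the proof is a standard exercise in heat-kernel scaling. I would also note that analogous Schauder-type estimates (without weights) are worked out explicitly in Hairer's lecture notes on regularity structures and in \cite{HL16}, so the novelty here lies only in tracking the polynomial weight $w$, for which the Gaussian decay of $p_t$ provides more than enough room.
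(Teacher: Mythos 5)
The paper itself does not prove this proposition; it defers to \cite[Lemma 2.8]{HL16}, so your task is really to reconstruct that argument, and your overall architecture (weights absorbed by Gaussian decay; classical Schauder for $0<\alpha\le\beta<1$; duality plus a decomposition of $P_tS_x^\lambda\phi$ for negative exponents; semigroup splitting and interpolation for the mixed case) is the standard and correct one. The weight bookkeeping, which you flag as the anticipated obstacle, is indeed harmless exactly as you say, and your cases $0<\alpha\le\beta<1$ and $\alpha<0\le\beta<1$ go through: in the latter, the "test functions" are $p_{t/2}(x-\cdot)$ and $\partial_xp_{t/2}(x-\cdot)$, which are genuinely smooth at scale $\sqrt{t}$, so the decomposition into $B_r$ bumps is unproblematic and the interpolation exponents come out right.

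The genuine gap is in the case $\alpha\le\beta<0$, and it is a derivative-counting issue, not a weight issue. Write $r=-\lfloor\alpha\rfloor$ and $r'=-\lfloor\beta\rfloor$. To control $\|P_tf\|_{C^{\beta,\sig}}$ you must test against $\phi\in B_{r'}$, i.e.\ $\phi$ is only normalized in $C^{r'}$; but to pair the result with $f\in C^{\alpha,\sig}$ your bumps $\phi_k$ must lie in $B_r$, and $r>r'$ whenever $\lfloor\alpha\rfloor<\lfloor\beta\rfloor$. In the regime $t\ll\lambda^2$ (so $\mu\approx\lambda$) one only has $\|\partial^j P_tS_x^\lambda\phi\|_{L^\infty}\lesssim t^{-(j-r')/2}\lambda^{-1-r'}$ for $r'<j\le r$, which is much larger than the $\lambda^{-1-j}$ needed for a scale-$\lambda$ bump of class $B_r$; taking $\phi\in B_{r'}$ with large higher derivatives shows the decomposition with $\sum_k|c_k|w(x_k)\lesssim w(x)$ simply does not exist there, so the step "$|(f,P_tS_x^\lambda\phi)|\le C\|f\|_{C^{\alpha,\sig}}w(x)\mu^\alpha$" fails as derived. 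The estimate itself is still true, but you need an extra ingredient in that regime: either invoke the standard equivalence of the $C^{\beta,\sig}$ norm when $B_{r'}$ is replaced by $B_{r''}$ for any $r''\ge r'$ (so that you may assume $\phi\in B_r$ from the start, after which your decomposition works in all regimes), or argue directly by distributing exactly $r'$ derivatives onto $\phi$ and the remaining ones onto $p_t$, using the fractional part of $\alpha$ to absorb the resulting $t^{-(j-r')/2}$ factors. With either fix the proof closes; without it, the case $\lfloor\alpha\rfloor<\lfloor\beta\rfloor<0$ is not established.
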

	
	A proof may be found in \cite[Lemma 2.8]{HL16} in the case of an exponential weight. {The proof for polynomial weights is identical}.

\smallskip

 For the parabolic H\"older spaces, the following lemma states that the heat flow improves the regularity by a factor of $2$ and provides a Schauder-type estimate.
	\begin{prop}[Schauder estimate]\label{sch}
		For $f\in C_c^\infty(\Lambda_T)$ let us define 
		\begin{align}
  \label{e:kf}
			Kf(t,x):= \int_{\Lambda_T} p_{t-s}(x-y)f(s,y)dsdy,
		\end{align} where $p_t$ is the standard heat kernel for $t>0,x\in\mathbb R$ and $p_t(x):=0$ for $t<0.$ Then for all $\alpha<-1$ with $\alpha\notin\mathbb Z$ there exists $\Con=\Con(\alpha)>0$ independent of $f$ such that $$\|Kf\|_{C^{\alpha+2,\sig}_\mathfrak s(\Lambda_T)}\leq \Con\cdot\|f\|_{C^{\alpha,\sig}_\mathfrak s(\Lambda_T)}.$$ In particular $K$ extends to a globally defined linear operator on $C^{\alpha,\sig}_\mathfrak s(\Lambda_T)$ that maps boundedly into $C^{\alpha+2,\sig}_\mathfrak s(\Lambda_T).$ Furthermore, if $f\in C^{\alpha,\sig}_\mathfrak s(\Lambda_{T})$, then $K(\partial_t-\frac12\partial_x^2)f= f$.
	\end{prop}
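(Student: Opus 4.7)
The plan is to follow the standard Schauder estimate strategy for parabolic convolution operators, adapted to the weighted H\"older setting. The key is to dyadically decompose the heat kernel. Write $p_t(x) = \sum_{n \ge 0} P_n(t,x)$ where $P_n$ is smooth, compactly supported in a parabolic ball of scale $2^{-n}$ around the origin (i.e., in $\{|t|\le 4^{-n},\,|x|\le 2^{-n}\}$), and $P_n$ has derivatives satisfying the natural scaling bounds $|\partial_t^a \partial_x^b P_n(t,x)| \lesssim 2^{n(3+2a+b)}$. This may be achieved using a smooth partition of unity in parabolic scale, with the large-scale piece absorbed into $n=0$. The corresponding convolution operators $K_n$ satisfy $Kf = \sum_{n\ge 0} K_n f$ on $\Lambda_T$, where we extend $f$ to $\mathbb R^2$ by zero outside $\Lambda_T$ (as in Remark \ref{d/dx}).

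To handle the negative regularity case $\alpha+2<0$, test $Kf$ against $S^\lambda_{(t,x)}\varphi$ with $\varphi\in B_r$, $\lambda\in(0,1]$, and decompose via the splitting above. By duality, $(K_n f, S^\lambda_{(t,x)}\varphi)_{L^2(\Lambda_T)} = (f, P_n^\ast \star S^\lambda_{(t,x)}\varphi)_{L^2(\mathbb R^2)}$, where $\star$ denotes parabolic convolution and $P_n^\ast(t,x) = P_n(-t,-x)$. The resulting test function $P_n^\ast \star S^\lambda_{(t,x)}\varphi$ is smooth, supported in a parabolic ball of radius $\sim \mu_n := \lambda\vee 2^{-n}$ centered near $(t,x)$, and after suitable normalization decomposes into a controlled sum of scaled test functions of the form $c\, S^{\mu_n}_{(s,y)}\vartheta$ with $\vartheta \in B_r$ and with the coefficients $c$ summable. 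Applying the defining norm of $C^{\alpha,\sigma}_\mathfrak{s}(\Lambda_T)$ to each such piece yields a bound $\lesssim w(y)\,\mu_n^{\alpha}\,\|f\|_{C^{\alpha,\sigma}_\mathfrak{s}}$. Summing in $n$ produces two regimes: $2^{-n}\le \lambda$ contributes $\mu_n=\lambda$ (finitely many terms), while $2^{-n}>\lambda$ gives a geometric tail with ratio $2^{-2}$ that is summable since $\alpha+2\ne 0$. After dividing by $\lambda^{\alpha+2}$, the bound is uniform, yielding the desired Schauder estimate.

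For the positive regularity range $\alpha+2\in (0,1)$, the argument modifies as follows. The pointwise bound $|Kf(t,x)|\lesssim w(x)\|f\|_{C^{\alpha,\sigma}_\mathfrak{s}}$ follows by viewing $K_n f(t,x)$ as a pairing of $f$ against $S^{2^{-n}}_{(t,x)}\vartheta$ (for a fixed $\vartheta\in B_r$, up to constants) and summing in $n$, using $\alpha<-1<0$ to get convergence. For the parabolic H\"older difference at parabolic separation $d\le 1$, split the sum at $n_0$ with $2^{-n_0}\sim d$: for $n\le n_0$ use the gradient bound $|K_n f(t,x)-K_n f(s,y)|\lesssim d\cdot 2^{n} \cdot 2^{-n\alpha}w(x)$ via differentiating $P_n$, and for $n>n_0$ use the direct pointwise bound. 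Summing and optimizing produces the $d^{\alpha+2}$ rate. The polynomial weight $w$ is handled throughout by using the rapid (Gaussian-type) decay of $P_n$ to control ratios $w(y)/w(x)$ at parabolic distance $\gtrsim 2^{-n}$; the weight $\sigma>1$ is more than enough since heat kernel decay easily absorbs any polynomial growth at distance.

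The main technical obstacle is (i) carefully verifying that $P_n^\ast \star S^\lambda_{(t,x)}\varphi$ decomposes into the asserted finite sum of scaled test functions with summable coefficients, and (ii) controlling the polynomial weights after such a decomposition. Both are resolved by explicit computation using the compact support and smoothness bounds on $P_n$ and $\varphi$, combined with the rapid decay of the heat kernel (which enters through the crude but uniform bound $P_n(t,x)\lesssim p_t(x)$). Finally, for the identity $K(\partial_t-\tfrac12\partial_x^2)f=f$: when $f\in C_c^\infty(\mathbb R^2)$ with support in $\{0<t<T\}$, it follows from the fundamental solution property $(\partial_t-\tfrac12\partial_x^2)p = \delta_{(0,0)}$ by direct computation. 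Our convention (Remark \ref{d/dx}) extends $f\in C^{\alpha,\sigma}_\mathfrak{s}(\Lambda_T)$ by zero, so the distributional derivatives may contain boundary contributions at $t=0$ and $t=T$, but since $p_{t-s}(x-y)=0$ for $t<s$, the boundary contribution at $t=T$ is annihilated by $K$, while the contribution at $t=0$ vanishes for appropriate approximating sequences. The general case follows by density of $C_c^\infty(\Lambda_T)$ in $C^{\alpha,\sigma}_\mathfrak{s}(\Lambda_T)$ (built into the definition) combined with continuity of $K$ established in the main estimate.
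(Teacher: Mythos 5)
Your argument for the main estimate is correct but takes a more self-contained route than the paper. The paper splits $p=P^++P^-$ with $P^+$ the compactly supported singular part and simply cites the weighted Schauder estimate of Hairer--Labb\'e for $K^+$, then handles the smooth long-range tail $P^-$ by a decomposition into pieces $\psi_n$ supported at distance $n$ with Gaussian decay dominating the polynomial weights. You instead reprove the whole estimate via the standard dyadic decomposition $p=\sum_n P_n$ at parabolic scales $2^{-n}$, the duality step $(K_nf,S^\lambda_{(t,x)}\varphi)=(f,P_n^\ast\star S^\lambda_{(t,x)}\varphi)$, and resummation over the two regimes $2^{-n}\lessgtr\lambda$ --- which is essentially the content of the cited result, made explicit. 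One small inconsistency: you cannot both absorb the large-scale piece into $P_0$ and claim $P_0$ is compactly supported in a parabolic ball of scale $1$; the non-compactly-supported tail must be treated separately (as the paper does), though your remark about using the Gaussian decay to absorb the weights $w(y)/w(x)$ is the right fix.

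For the identity $K(\partial_t-\tfrac12\partial_x^2)f=f$, your route (verify on smooth functions, extend by density and boundedness of $K\circ(\partial_t-\tfrac12\partial_x^2)$) is viable but your statement that ``the contribution at $t=0$ vanishes for appropriate approximating sequences'' is not right as written, and it obscures the point the paper is at pains to make. With the zero-extension convention, for $f\in C_c^\infty(\Lambda_T)$ one has $\partial_tf=\{\partial_tf\}+\delta_0\otimes f(0,\cdot)-\delta_T\otimes f(T,\cdot)$, and the classical Duhamel computation gives $K\{(\partial_t-\tfrac12\partial_x^2)f\}=f-P_\cdot f(0,\cdot)$; the boundary term at $t=0$ contributes exactly $K(\delta_0\otimes f(0,\cdot))=P_\cdot f(0,\cdot)$, which does not vanish but rather cancels the initial-data term and is what makes the identity true. (Alternatively, if you insist on approximants supported away from $t=0$, you must separately verify that these are dense in $C^{\alpha,\sig}_\mathfrak s(\Lambda_T)$ for $\alpha<0$, which is true but needs an argument.) The paper avoids all of this by defining $Kf$ for arbitrary tempered distributions supported on the strip via the adjoint kernel, $(Kf,\varphi):=(f,K^\dagger*\varphi)$, and reducing the claim to $(-\partial_t-\tfrac12\partial_x^2)K^\dagger*\varphi=\varphi$, with no density argument at all.
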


We remark that the last statement is only true because of our convention on distributional derivatives that we have explained in Remark \ref{d/dx}. Without that convention, that statement would be false even for a smooth function $f$ that does not vanish on the line $\{t=0\}$.

	\begin{proof} Let $\varphi$ be any smooth function that equals $1$ in a ball of radius $1$ about the origin in $\mathbb R^2$ and is supported on a ball of radius $2$ about the origin in $\mathbb R^2$.
  For $(t,x)\in\mathbb R^2$ let us define $P^+(t,x) := p_t(x)\varphi(t,x)$ and $P^-(t,x):=p_t(x)-P^+(t,x)$ so that $$p_t(x) = P^+(t,x)+P^-(t,x).$$ We have that $P^-$ is a globally smooth function on $\mathbb R^2$, and without any loss of generality, we may assume that it is supported on $\Lambda_{T+1}$ since values of the heat kernel outside $\Lambda_T$ do not matter when convolving with distributions supported on $\Lambda_T$. Then we get a corresponding decomposition $K=K^++K^-$ of the operator defined in the proposition statement.
		
		{Since $P^+$ is supported on a ball of radius 2, the proof of the weighted Schauder estimate for $K^+$ is sketched in \cite[Page 8, proof of Corollary 1.2]{HL16}, and we do not repeat it here. That proof is given in an elliptic setting, but the proof of the parabolic version is no different because the singularity of the 1+1 dimensional heat satisfies a bound of the form $$|\partial_t^{k_1}\partial_x^{k_2} p_t(x)| \leq \Con (|t|^{1/2}+|x|)^{-1-2k_1-k_2},$$} where $\Con$ may depend on $k_1,k_2 \geq 0$ but not on $t,x\in\mathbb R$.
		
		Thus it only remains to prove the weighted Schauder estimate for $K^-$. As $P^-$ is globally smooth on $\Lambda_T$ and has a Gaussian decay at infinity, we may write 
  $$P^-(t,x)=\sum_{n=1}^{\infty} \psi_n(t,x)$$
  where $\psi_n$ can be taken to be smooth functions supported on $[0,T+1]\times\big([n-1,n+1]\cup[-n-1,1-n]\big)$ and $\|\psi_n\|_{L^\infty}\leq \Con e^{-\frac1{\Con} n^2}$, for some constant $\Con>0$ independent of $n$. Consequently, if we write $K^-=\sum_n K_n^-$ where $K_n^-$ corresponds to convolution with $\psi_n$, then the infinite sum of the operator norms $\sum_n \|K_n\|_{C^{\alpha,\sig}_\mathfrak s\to C^{\alpha+2,\sig}_\mathfrak s}$ may be shown to be finite using the fact that Gaussian decay of the $\psi_n$ dominates the growth of the polynomial weights.

\smallskip

  Finally, it remains to prove $K(\partial_t-\frac12\partial_x^2)f= f$ for $f\in C^{\alpha,\sig}_\mathfrak s(\Lambda_{T})$ with $\alpha<0$. To prove this, we remark that $Kf$ may actually be defined for \textit{any} $f\in \mathcal S'(\mathbb R^2)$ with support contained in $[0,T]\times \mathbb R$, via the formula $$(Kf,\varphi) := (f,K^\dagger * \varphi),$$ where $K^\dagger(t,x) = \frac1{\sqrt{2\pi |t|}}{e^{-x^2/2|t|}}\ind_{\{t\le 0\}} $ and $*$ denotes convolution in both space and time. The right side is well-defined because $f$ is supported on $[0,T]\times \mathbb R$, and because one can ensure that if $\varphi\in\mathcal S(\mathbb R^2)$ then $K^\dagger * \varphi$ agrees on that strip with an element of $\mathcal S(\mathbb R^2)$ thanks to the Gaussian decay properties of $K^\dagger$ in the spatial variable (this is our reason for working on $\Lambda_T$ rather than all of $[0,\infty)\times\mathbb R$). 
  
  Then $Kf$ as defined above will be an element of $\mathcal S'(\mathbb R^2)$ that is supported on $[0,\infty)\times \mathbb R$, and this definition of $Kf$ agrees with the one in the proposition statement because it agrees on smooth test functions using integration by parts. Now we claim (more generally than stated in the proposition) that $K(\partial_t-\frac12\partial_x^2)f = f$ for \textit{any} $f\in \mathcal S'(\mathbb R^2)$ that is supported on $[0,T]\times \mathbb R$. But this just amounts to showing that $((\partial_t-\frac12\partial_x^2)f, K^\dagger * \varphi) = (f,\varphi)$ for all such $f$ and all $\varphi\in\mathcal S(\mathbb R^2)$. By definition of $(\partial_t-\frac12\partial_x^2)f$, this is equivalent to showing $(f,(-\partial_t-\frac12\partial_x^2)K^\dagger*\varphi) = (f,\varphi)$ for all such $f$ and all $\varphi\in\mathcal S(\mathbb R^2)$. As $f$ is supported on $[0,T]\times \mathbb R$, this reduces to showing that $(-\partial_t-\frac12\partial_x^2)K^\dagger*\varphi=\varphi$ for all $\varphi\in\mathcal S(\mathbb R^2)$ that are supported on $[-1,T+1]\times \mathbb R$ (for instance), which is clear. 
  \end{proof}

\begin{cor}\label{sme} Define $J: C^{\alpha,\sig}(\mathbb R) \to C_\mathfrak s^{\alpha,\sig}(\Lambda_T)$ by $$Jf(t,x):= \big(f,p_t(x-\cdot)\big)_\mathbb R,$$  $J$ is a bounded linear operator for any $\alpha <1$ and any $\sig>0.$  Consider the operator $\hat P_t:= JP_t$ with $P_t$ defined in Proposition \ref{p65}. By the semigroup property of the heat kernel, we have that 
 \begin{align}\label{e.hatjrel}
     \hat P_{s}f(t,x)=(Jf)(t+s,x).
 \end{align}
 Furthermore, the operators satisfy 
 \begin{align}
     \label{e.schhat}
     \|\hat P_tf\|_{C^{\beta,\sig}_\mathfrak s(\Lambda_T)} \leq \Con \cdot t^{-(\beta-\alpha)/2}\|f\|_{C^{\alpha,\sig}(\mathbb R)},
 \end{align}
where $\Con=\Con(\alpha,\beta,T)>0$ is independent of $t$ and $f$.
	\end{cor}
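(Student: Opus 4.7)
The corollary has three claims: (i) the semigroup identity for $\hat P_s$, (ii) boundedness of $J$ for every $\alpha<1$, and (iii) the estimate \eqref{e.schhat}. My plan is to verify (i) first as a direct computation, prove (ii) by handling $\alpha\in(0,1)$ and $\alpha<0$ separately, and finally deduce (iii) by composing (ii) with Proposition \ref{p65}. For (i), by density it suffices to treat $f\in C_c^\infty(\mathbb R)$; then a direct Fubini argument using the semigroup property of the heat kernel gives
\[
\hat P_s f(t,x)=J(P_s f)(t,x)=\int\!\!\int f(z)\,p_s(y-z)\,p_t(x-y)\,dy\,dz=\big(f,\,p_{t+s}(x-\cdot)\big)_{\mathbb R}=Jf(t+s,\,x).
\]

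For (ii), when $\alpha\in(0,1)$ the parabolic H\"older norm of $Jf$ is checked directly: the weighted pointwise bound $|Jf(t,x)|\leq \Con\,w(x)\,\|f\|_{C^{\alpha,\sig}(\mathbb R)}$ follows from $\int p_t(x-y)\,[w(y)/w(x)]\,dy\leq \Con$, which holds uniformly in $(t,x)\in\Lambda_T$ for the polynomial weight $w$ thanks to the Gaussian tails of $p_t$, while the parabolic H\"older seminorm is controlled using standard kernel derivative estimates $|\partial_x p_t(x)|\leq \Con\,t^{-1}e^{-x^2/(4t)}$ and $|\partial_t p_t(x)|\leq \Con\,t^{-3/2}e^{-x^2/(4t)}$ together with the spatial H\"older regularity of $f$. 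When $\alpha<0$, I test $Jf$ against a parabolic scaling operator $S^\lambda_{(t,x)}\varphi$ from \eqref{scale} with $\varphi\in B_r$ supported in the unit ball of $\mathbb R^2$. Changing variables $u=\lambda^{-2}(t-s)$ and $v=\lambda^{-1}(x-y)$, the pairing $(Jf,\,S^\lambda_{(t,x)}\varphi)_{L^2(\Lambda_T)}$ equals $(f,\,h)_{L^2(\mathbb R)}$ where
\[
h(z)=\int p_{t-\lambda^2 u}(x-\lambda v-z)\,\varphi(u,v)\,\ind_{\{t-\lambda^2 u\in[0,T]\}}\,du\,dv.
\]
Setting $\mu:=\max(\lambda,\sqrt{t})$, one checks that $h$ is essentially a spatial test function concentrated near $x$ at scale $\mu$: more precisely, $h$ admits a decomposition $h=\sum_k c_k\,S^{\mu}_{x_k}\phi_k$ with $\phi_k\in B_r$ and $\sum_k |c_k|\,w(x_k)/w(x)\leq \Con$, relying on the Gaussian decay of the heat kernel to control the non-compactly supported tails. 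Testing against $f$ and using $\alpha<0$ together with $\mu\geq \lambda$ yields
\[
|(f,h)|\leq \Con\,w(x)\,\mu^\alpha\,\|f\|_{C^{\alpha,\sig}(\mathbb R)}\leq \Con\,w(x)\,\lambda^\alpha\,\|f\|_{C^{\alpha,\sig}(\mathbb R)},
\]
which is the required bound.

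For (iii), applying (ii) at regularity $\beta$ shows that $J$ is bounded from $C^{\beta,\sig}(\mathbb R)$ to $C^{\beta,\sig}_\mathfrak s(\Lambda_T)$. Combining with the elliptic Schauder estimate $\|P_t f\|_{C^{\beta,\sig}(\mathbb R)}\leq \Con\,t^{-(\beta-\alpha)/2}\|f\|_{C^{\alpha,\sig}(\mathbb R)}$ from Proposition \ref{p65} and using $\hat P_t=J\circ P_t$ yields \eqref{e.schhat}. The main technical obstacle is the decomposition step in the proof of (ii) for $\alpha<0$: one must carefully express the non-compactly supported function $h$ as a sum over elliptic scaling operators $S^\mu_{x_k}\phi_k$ whose supports \emph{are} compactly contained in unit balls after rescaling, leveraging the polynomial weight $w$ and the Gaussian decay of $p_t$ to ensure the tail contributions remain absorbable into a uniform constant.
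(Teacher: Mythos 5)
Your proof is essentially correct but takes a genuinely different route from the paper's for the boundedness of $J$. The paper does not test $Jf$ against parabolic scaling operators directly; instead it factors $Jf = K(\delta_0\otimes f)$, checks from the definitions that $f\mapsto \delta_0\otimes f$ is bounded from $C^{\alpha,\sig}(\mathbb R)$ into $C^{\alpha-2,\sig}_{\mathfrak s}(\Lambda_T)$, and then invokes the already-proven parabolic Schauder estimate (Proposition \ref{sch}) to regain the two degrees of regularity. This is shorter and reuses existing machinery; your argument in effect re-derives a special case of that Schauder estimate by hand. Part (iii) is handled identically in both proofs (boundedness of $J$ at regularity $\beta$ composed with Proposition \ref{p65}), and your Fubini verification of \eqref{e.hatjrel} is fine.

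One step in your route deserves more care than you give it, and you have identified the wrong obstacle. In the decomposition $h=\sum_k c_k S^\mu_{x_k}\phi_k$ with $\phi_k\in B_r$, the Gaussian tails and the weights are the easy part (since $\mu\leq \sqrt{T}+1$, the factors $w(x_k)/w(x)$ are absorbed by $e^{-ck^2}$). The delicate part is that $\phi_k\in B_r$ requires $r=-\lfloor\alpha\rfloor\geq 1$ derivatives of $h$ to be controlled at scale $\mu$, i.e.\ $\|\partial_z^j h\|_{L^\infty}\lesssim \mu^{-1-j}$ for $j\leq r$. If you differentiate the heat kernel you get $\sup_w|\partial_w^j p_\tau(w)|\lesssim \tau^{-(1+j)/2}$, and the resulting integral $\lambda^{-2}\int_0^{t+\lambda^2}\tau^{-(1+j)/2}\,d\tau$ diverges for $j\geq 1$ when $t\lesssim\lambda^2$ (exactly the regime where the heat-kernel time degenerates to $0$ inside the support of $\varphi$). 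The fix is to exploit the convolution structure in the spatial variable of $\varphi$: writing $\partial_z p_\tau(x-\lambda v-z)=\lambda^{-1}\partial_v\bigl[p_\tau(x-\lambda v-z)\bigr]$ and integrating by parts in $v$ transfers each derivative onto $\varphi$ at cost $\lambda^{-1}\leq\mu^{-1}\cdot(\mu/\lambda)$, after which the bound $\|\partial_z^j h\|_{L^\infty}\lesssim\mu^{-1-j}$ does hold. With that adjustment your argument closes; also note that for $\mu>1$ you must reduce to scale $1$, which is harmless since $\alpha<0$. You should check whether avoiding all of this by adopting the paper's factorization through $K(\delta_0\otimes f)$ is not the more economical course.
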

 \begin{proof} The first part follows by using the Schauder estimate in Proposition \ref{sch}, noting that $Jf = K(\delta_0\otimes f)$ where for $f \in C^{\alpha,\sig}(\mathbb R)$ the latter distribution is defined by $(\delta_0\otimes f, \varphi)_{\mathbb R^2}:= (f,\varphi(0,\cdot))_{\mathbb R}.$ Directly from the definitions one can check that $f\mapsto \delta_0\otimes f$ is bounded from $C^{\alpha,\sig}(\mathbb R)\to C_\mathfrak s^{\alpha-2,\sig}(\Lambda_T).$ Finally, \eqref{e.schhat} follows from Proposition \ref{p65}.     
 \end{proof}
 
	\begin{rk} 
 Although we have only defined it for $0<\alpha<1$, one may actually define $C_\mathfrak s^{\alpha,\sig}$ for all $\alpha >0$, as the closure of $C_c^\infty(\Lambda_T)$ with respect to $$\|f\|_{C^{\alpha,\sig}_\mathfrak s(\Lambda_T)}:= \sup_{(t,x)\in \Lambda_T} \frac{|f(t,x)|}{w(x)}+\sup_{(t,x)\in\Lambda_T} \sup_{\lambda\in (0,1]} \sup_{\varphi \in B_0} \frac{(f,S^\lambda_{(t,x)}\varphi)}{w(x)\lambda^\alpha}$$ where the scaling operators are defined in \eqref{scale} and where $B_0$ is the set of all smooth functions of $C^0$ norm less than 1, supported on the unit ball of $\mathbb R^2$ that are \textit{orthogonal in $L^2(\mathbb R^2)$ to all polynomials of parabolic degree less than or equal to} $\lceil \alpha \rceil$, see for instance equation (3.8) and (3.9) in \cite{CW17}. One may verify that this norm is equivalent to the one in Definition \ref{phs} for $\alpha \in (0,1)$. Stated in this way, the Schauder estimate actually holds for all $\alpha \in \mathbb R\backslash \mathbb Z,$ though we will not need this.
	\end{rk}

We next define a space-time distribution that is supported on a single temporal cross-section:

\begin{defn}\label{otimes} Let $\alpha<0$. Given some $f\in C^{\alpha,\sig}(\mathbb R)$ and $b \in [0,T]$ we define $\delta_b \otimes f\in C_\mathfrak s^{\alpha-2,\sig}(\Lambda_T)$ by the formula
$(\delta_b\otimes f, \varphi)_{\mathbb R^2}:= (f,\varphi(b,\cdot))_{\mathbb R}.$
 \end{defn}

 Directly from the definitions of the scaling operators in \eqref{escale} and \eqref{scale}, it is clear that for fixed $b\in [0,T]$, the linear map $f\mapsto \delta_b\otimes f$ is bounded from $C^{\alpha,\sig}(\mathbb R)\to C_\mathfrak s^{\alpha-2,\sig}(\Lambda_T)$ as long as $\alpha<0.$ It is also clear that $\delta_b \otimes f$ is necessarily supported on the line $\{b\}\times \mathbb R$. The following lemma shows that under mild conditions, $\delta_T\otimes f$ vanishes upon the action of the heat flow $K$ defined in \eqref{e:kf}.
 
 \begin{lem}\label{irrel} Let $f\in C^{\alpha,\tau}(\mathbb R),$ where $\alpha>-2.$ Then viewing $K$ as an operator from $C_\mathfrak s^{\alpha-2,\tau}(\Lambda_T)\to C_\mathfrak s^{\alpha,\tau}(\Lambda_T)$, we have that
$K(\delta_T \otimes f) = 0.$
 \end{lem}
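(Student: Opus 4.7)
The plan is to leverage the dual representation of $K$ that is established at the very end of the proof of Proposition \ref{sch}, namely $(Kh,\varphi) := (h, K^\dagger * \varphi)$ for all $h \in \mathcal{S}'(\mathbb{R}^2)$ supported in $[0,T]\times\mathbb{R}$ and all $\varphi \in \mathcal{S}(\mathbb{R}^2)$, where $K^\dagger(t,x) = (2\pi|t|)^{-1/2} e^{-x^2/2|t|} \mathbf{1}_{\{t\le 0\}}$. Since $\delta_T \otimes f \in C_\mathfrak s^{\alpha-2,\tau}(\Lambda_T) \hookrightarrow \mathcal S'(\mathbb R^2)$, this gives a concrete handle on $K(\delta_T \otimes f)$ as an element of $\mathcal{S}'(\mathbb{R}^2)$, and by construction this agrees with the image of $\delta_T \otimes f$ under the operator $K: C_\mathfrak s^{\alpha-2,\tau}(\Lambda_T)\to C_\mathfrak s^{\alpha,\tau}(\Lambda_T)$ of Proposition \ref{sch}.

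First I would test against $\varphi \in C_c^\infty(\mathbb{R}^2)$ with support strictly inside $(-\infty, T)\times\mathbb R$ and compute
\[
(K(\delta_T\otimes f),\varphi) = (f,(K^\dagger*\varphi)(T,\cdot))_{\mathbb R}.
\]
Then $(K^\dagger * \varphi)(T,x) = \int_{\mathbb R^2} K^\dagger(T-s,x-y)\varphi(s,y)\,dy\,ds$ vanishes identically in $x$, because the factor $K^\dagger(T-s,x-y)$ is supported on $\{s\ge T\}$ while $\varphi$ is supported on $\{s<T\}$. Consequently $K(\delta_T\otimes f)$, viewed as a distribution on $\mathbb R^2$, has support contained in $\{t\ge T\}\times\mathbb R$; intersecting with $\Lambda_T$ leaves at most the single temporal slice $\{T\}\times\mathbb R$.

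It remains to show that no nonzero element of $C_\mathfrak s^{\alpha,\tau}(\Lambda_T)$ with $\alpha>-2$ can be supported on $\{T\}\times\mathbb R$. Any such distribution would be of the form $\delta_T\otimes h$ (or its distributional derivatives in time, which are only worse). Pairing with the parabolic scaling kernel $S^\lambda_{(t,x)}\varphi$ from Definition \ref{phs} at a point $t<T$, the time-localization of $\varphi$ forces the pairing to vanish unless $\lambda\ge \sqrt{T-t}$; on this regime, a direct computation shows the pairing is of order $\lambda^{-2}\,(h, S^\lambda_x \tilde\varphi)_{\mathbb R}$, which cannot be bounded by $C\,w(x)\lambda^\alpha$ as $t\to T^-$ when $\alpha>-2$ unless $h\equiv 0$. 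This gives $K(\delta_T\otimes f)=0$ in $C_\mathfrak s^{\alpha,\tau}(\Lambda_T)$.

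The main obstacle is the reconciliation between the $\mathcal S'(\mathbb R^2)$-viewpoint (where $K(\delta_T\otimes f)$ has genuine mass on $[T,\infty)\times\mathbb R$) and the H\"older-space viewpoint (where we restrict to $\Lambda_T$ and demand the parabolic regularity $\alpha>-2$). A cleaner alternative that avoids directly invoking the scaling/support argument is to approximate $\delta_T$ by a mollifier $\eta_\epsilon$ supported in $[T-2\epsilon,T-\epsilon]$ and set $g_\epsilon:=\eta_\epsilon\otimes f$. Then $g_\epsilon\to\delta_T\otimes f$ in $C_\mathfrak s^{\alpha-2,\tau}(\Lambda_T)$, so by continuity of $K$ (Proposition \ref{sch}) one has $Kg_\epsilon \to K(\delta_T\otimes f)$ in $C_\mathfrak s^{\alpha,\tau}(\Lambda_T)$. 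But the explicit formula makes $Kg_\epsilon$ supported in $[T-2\epsilon,T]\times\mathbb R$, and a Schauder-type estimate combined with the shrinking time-width yields $\|Kg_\epsilon\|_{C_\mathfrak s^{\alpha,\tau}(\Lambda_T)}\to 0$, again exploiting $\alpha>-2$ to absorb the blow-up of the heat kernel near $t=T$.
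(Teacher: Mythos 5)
Your main route follows the same two-step skeleton as the paper's proof: first show that $K(\delta_T\otimes f)$ is supported on the single slice $\{T\}\times\mathbb R$ (the paper asserts this directly from the forward-in-time support of the heat kernel; your computation via $K^\dagger$ makes the same point explicit), and then show that an element of $C_\mathfrak s^{\beta,\tau}(\Lambda_T)$ with $\beta>-2$ supported on a temporal slice must vanish. Where you diverge is in the second step. The paper argues directly: for any fixed test function $\varphi$, the support constraint forces $\lambda\ge|T-t|^{1/2}$ in any nonzero pairing, so $g^\lambda(t,x):=(g,S^\lambda_{(t,x)}\varphi)$ obeys $|g^\lambda(t,x)|\le Cw(x)|T-t|^{\beta/2}$, which is integrable in $t$ precisely because $\beta>-2$; dominated convergence then gives $(g,\psi)=0$ for every $\psi$. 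You instead first classify the distribution as $\delta_T\otimes h$ plus time-derivatives thereof and then derive a contradiction with the H\"older bound. The classification step is the gap: you are invoking the structure theorem for distributions supported on a hyperplane without justification, and while it is true for finite-order distributions, the paper's argument shows you do not need it at all. The contradiction you then run for the $\delta_T\otimes h$ term is fine (the pairing scales like $\lambda^{-2}(h,S^\lambda_x\tilde\varphi)$, so the bound $Cw(x)\lambda^\alpha$ with $\alpha>-2$ forces $h=0$), but I would replace the classification with the paper's direct dominated-convergence argument.

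The "cleaner alternative" at the end is actually the shakier of your two routes. Two issues: (i) mollifications $\eta_\epsilon\otimes f$ do not in general converge to $\delta_T\otimes f$ in the \emph{critical} norm $C_\mathfrak s^{\alpha-2,\tau}(\Lambda_T)$ — one typically only gets convergence in $C_\mathfrak s^{\alpha-2-\kappa,\tau}$ for $\kappa>0$ (this is repairable, since vanishing in a weaker space still identifies the limit in $\mathcal S'$); (ii) the claim $\|Kg_\epsilon\|_{C_\mathfrak s^{\alpha,\tau}(\Lambda_T)}\to 0$ is not a consequence of a Schauder estimate alone. At scales $\lambda^2\lesssim\epsilon$ the test function sits entirely inside the support of $Kg_\epsilon$, and there the pairing is of size $w(x)\lambda^{\alpha}\cdot\|P_{O(\epsilon)}f\|_{C^{\alpha,\tau}}$, which does not shrink with $\epsilon$. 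To make this work one needs a genuine two-scale argument: large scales are small because the temporal width of the support is $O(\epsilon)$, and small scales are small only because $f$ lies in the \emph{closure} of $C_c^\infty$ in $C^{\alpha,\tau}(\mathbb R)$, so its small-scale pairings are $o(\lambda^\alpha)$ uniformly. As written, this alternative does not go through; the first route, patched as above, is the one to keep.
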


 \begin{proof}Note that $\delta_T \otimes f \in C_\mathfrak s^{\alpha,\tau}(\Lambda_T)$ where $\alpha>-4.$ Thus by Proposition \ref{sch}, we know that $K(\delta_T \otimes f) \in C_\mathfrak s^{\beta,\tau}(\Lambda_T)$ for some $\beta>-2$. Now since $\delta_T \otimes f$ is supported on $\{T\}\times \mathbb R$, it immediately follows that $K(\delta_T \otimes f)$ is also supported on $\{T\}\times \mathbb R.$

Thus it suffices to show that if $g\in C_\mathfrak s^{\beta,\tau}(\Lambda_T)$ for some $\beta\in(-2,0),$ and if $g$ is supported on $\{T\}\times \mathbb R$ then $g=0.$ We can prove this directly: fix some smooth nonnegative compactly supported function $\varphi$ on $\mathbb R^2$, and define $g^\lambda(t,x):= (g,S^\lambda_{(t,x)}\varphi),$ where the scaling operators are given by \eqref{scale}. Using $g\in C_\mathfrak s^{\beta,\tau}(\Lambda_T)$ for some $\beta\in(-2,0),$ one has that $g^\lambda(t,x) \leq Cw(x) |T-t|^{\frac{\beta}2}$ where $C$ is independent of $\lambda,x,t,$ and where $w$ is the weight function given by \eqref{defwt}. Now if $\psi$ is any compactly supported function on $\mathbb R^2$, then by integrability of $t\mapsto t^{\beta/2}$ on $[0,1]$ and the dominated convergence theorem, we find that $$(g,\psi) = \lim_{\lambda\to 0} \int_{\mathbb R^2} g^\lambda(t,x) \psi(t,x) dtdx = \int_{\mathbb R^2}  \lim_{\lambda\to 0}  g^\lambda(t,x) \psi(t,x) dtdx=0,$$ where we used the fact that $\lim_{\lambda\to 0}g^\lambda(t,x) = 0$ at all points $(t,x)$ outside of the measure-zero subset given by $\{T\}\times\mathbb R.$
 \end{proof}

 We remark that the last paragraph of the above proof has an intuitive analogue in the elliptic setting as well: if $f\in C^{\alpha,\sig}(\mathbb R)$ where $\alpha>-1$, and if $f$ is supported on a single point, then $f=0$. The proof proceeds in a similar manner.

 \medskip

 We end this subsection by recording a Kolmogorov-type lemma for the three spaces introduced at the beginning of this subsection. It will be crucial in proving tightness in those respective spaces.
	
	\begin{lem}[Kolmogorov lemma] \label{l:KC} Let $L^2(\Omega,\mathcal{F},\Pr)$ be the space of all random variables defined on a probability space  $(\Omega,\mathcal{F},\Pr)$ with finite second moment. We have  the following:
 \begin{enumerate}[label=(\alph*),leftmargin=15pt]
 \setlength\itemsep{0.5em}

 \item \label{KC1} (Elliptic H\"older Space)  Let $\phi \mapsto V(\phi)$ be a bounded linear map from $\mathcal S(\mathbb R)$ into $L^2(\Omega,\mathcal F,\mathbb P)$. Recall $S^\lambda_{x}$ from \eqref{escale}. Assume there exists some $p>1$ and $\alpha<0$ and $\Con=\Con(\alpha,p)>0$ such that one has $$\mE[|V(S^\lambda_{x}\phi)|^p]\leq \Con\lambda^{\alpha p},$$ uniformly over all smooth functions $\phi$ on $\mathbb R$ supported on the unit ball of $\mathbb R$ with $\|\phi\|_{L^\infty}\leq 1$, and uniformly over $\lambda\in(0,1]$ and $x\in\mathbb R$. Then for any $\sig>1$ and any $\beta<\alpha-1/p$ there exists a random variable $\mathscr V$ taking values in $C^{\beta,\sig}(\mathbb R)$ such that $(\mathscr V,\phi)=V(\phi)$ almost surely for all $\phi.$ Furthermore one has that $$\mE[\|\mathscr V\|^p_{C^{\beta,\sig}(\mathbb R)}]\leq \Con',$$ where $\Con'$ depends on the choice of $\alpha,p,$ and the constant $\Con$ appearing in the moment bound above but not on $V,\Omega,\mathcal F,\mathbb P$.

 \item \label{KC2} (Function space) 
		Let $(t,\phi) \mapsto V(t,\phi)$ be a map from $[0,T]\times \mathcal S(\mathbb R)$ into $L^2(\Omega,\mathcal F,\mathbb P)$ which is linear and continuous in $\phi$. Fix a non-negative integer $r$. Assume there exists some $\kappa>0, p>1/\kappa$ and $\alpha<-r$ and $\Con=\Con(\kappa,\alpha,p,{T})>0$ such that one has 
		\begin{align*}\mE[|V(t,S^\lambda_{x}\phi)|^p]&\leq \Con\lambda^{\alpha p},\\  \mE[|V(t,S^\lambda_{x}\phi)-V(s,S^\lambda_{x}\phi)|^p]&\leq \Con\lambda^{(\alpha -\kappa) p} |t-s|^{\kappa p},
		\end{align*}uniformly over all smooth functions $\phi$ on $\mathbb R$ supported on the unit ball of $\mathbb R$ with {$\|\phi\|_{C^r}\leq 1$}, and uniformly over $\lambda\in(0,1]$ and $0\leq s,t\leq T$. Then for any $\sig>1$ and any $\beta<\alpha-\kappa$ there exists a random variable $\big(\mathscr V(t)\big)_{t\in[0,T]}$ taking values in $C([0,T],C^{\beta,\sig}(\mathbb R))$ such that $(\mathscr V(t),\phi)=V(t,\phi)$ almost surely for all $\phi$ and $t$. Furthermore, one has that $$\mE[\|\mathscr V\|^p_{C([0,T],C^{\beta,\sig}(\mathbb R))}]\leq \Con',$$ where $\Con'$ depends on the choice of $\alpha,\beta,p,\kappa,$ and the constant $\Con$ appearing in the moment bound above but not on $V,\Omega,\mathcal F,\mathbb P$.
 
     \item \label{KC}
		(Parabolic H\"older Space) Let $\varphi \mapsto V(\varphi)$ be a bounded linear map from  $\mathcal S(\mathbb R^2)$ to $L^2(\Omega,\mathcal F,\mathbb P)$. Assume $V(\varphi)=0$ for all $\varphi$ with support contained in the complement of $\Lambda_T$. Recall $S^\lambda_{(t,x)}$ from  \eqref{scale}. Fix a non-negative integer $r$. Assume there exists some $p>1$ and $\alpha <0$ and $\Con=\Con(\alpha,p)>0$ such that one has $$\mE[|V(S^\lambda_{(t,x)}\varphi)|^p]\leq \Con\lambda^{\alpha p},$$ uniformly over all smooth functions $\varphi$ on $\mathbb R^2$ supported on the unit ball of $\mathbb R^2$ with {$\|\varphi\|_{C^r}\leq 1$}, and uniformly over $\lambda\in(0,1]$ and $(t,x)\in\Lambda_T$. Then for any $\sig>1$ and any $\beta<\alpha-3/p$ there exists a random variable $\mathscr V$ taking values in $C^{\beta,\sig}_\mathfrak s(\Lambda_T)$ such that $(\mathscr V,\varphi)=V(\varphi)$ almost surely for all $\varphi.$ Furthermore one has that $$\mE[\|\mathscr V\|^p_{C^{\beta,\sig}_\mathfrak s(\Lambda_T)}]\leq \Con',$$ where $\Con'$ depends on the choice of $\alpha,p,$ and the constant $\Con$ appearing in the moment bound above but not on $V,\Omega,\mathcal F,\mathbb P$.

 \end{enumerate}

	\end{lem}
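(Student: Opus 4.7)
The plan is to adapt the standard Kolmogorov-type continuity theorem for random distributions, in the weighted form used for instance in Section 3 of \cite{hairer2018multiplicative}, to the present setting. I will describe part \ref{KC1} in detail; parts \ref{KC2} and \ref{KC} follow by essentially the same scheme with additional bookkeeping.

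The first step is to replace the continuous supremum defining $\|\cdot\|_{C^{\beta,\sig}(\mathbb{R})}$ by a discrete supremum over dyadic translates and dilates. Choose a smooth compactly supported $\eta$ generating a standard wavelet-type basis upon dyadic translation and dilation, and set $\eta^n_k := S^{2^{-n}}_{2^{-n}k}\eta$ for $n\ge 0$ and $k\in\mathbb{Z}$. Classical wavelet analysis shows that for any $\phi\in B_r$, $x\in\mathbb{R}$ and $\lambda\in(2^{-n-1},2^{-n}]$, one can decompose $S^\lambda_x\phi=\sum_{m\ge n}\sum_k c^m_k\,\eta^m_k$, where $c^m_k$ is supported on $|k-2^mx|\le C$ and satisfies $|c^m_k|\le C\cdot 2^{-(m-n)r}$; the decay in $m-n$ is exactly what the regularity $r=-\lfloor\alpha\rfloor$ provides. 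Controlling the continuous norm $\|\mathscr V\|_{C^{\beta,\sig}(\mathbb{R})}$ therefore reduces to controlling the discrete random quantity
\begin{equation*}
\mathcal N \;:=\; \sup_{n\ge 0}\,\sup_{k\in\mathbb{Z}} \,\frac{|V(\eta^n_k)|}{w(2^{-n}k)\,2^{-n\beta}},
\end{equation*}
since summing the wavelet decomposition and using the slow variation $w(2^{-m}k)\le C\, w(x)$ for $|k-2^mx|\le C$ yields $\|\mathscr V\|_{C^{\beta,\sig}(\mathbb{R})}\le C\,\mathcal N$.

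The second step is to apply the moment hypothesis. From $\mathbb{E}[|V(\eta^n_k)|^p]\le C\cdot 2^{-n\alpha p}$ and the crude bound $\mathcal N^p\le\sum_{n,k}(\,\cdot\,)^p$, we obtain
\begin{equation*}
\mathbb{E}[\mathcal N^p] \;\le\; C\sum_{n\ge 0} 2^{n(\beta-\alpha)p}\sum_{k\in\mathbb{Z}} w(2^{-n}k)^{-p}.
\end{equation*}
The inner sum is comparable to $2^n\!\int_\mathbb{R}(1+x^2)^{-\sig p}dx$, which is finite since $\sig p>1$. The outer geometric series therefore converges precisely when $(\beta-\alpha)p<-1$, i.e. when $\beta<\alpha-1/p$, matching the exponent in the statement. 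Almost-sure agreement $(\mathscr V,\phi)=V(\phi)$ then follows from linearity and density of the dyadic family in $\mathcal S(\mathbb{R})$.

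For part \ref{KC2} I would run the same spatial discretization at each fixed time $t\in[0,T]$ and then apply the classical Kolmogorov continuity theorem in $t$ using the second moment hypothesis, which gives H\"older-in-$t$ regularity of order $\kappa$ for every dyadic pairing; the condition $p>1/\kappa$ is precisely what is needed to produce a continuous modification $t\mapsto\mathscr V(t)$ taking values in $C^{\beta,\sig}(\mathbb{R})$. For part \ref{KC}, the scheme of part \ref{KC1} is repeated in two parabolic dimensions; the only substantive change is that the parabolic scaling operator $S^\lambda_{(t,x)}$ carries a prefactor $\lambda^{-3}$ rather than $\lambda^{-1}$, so the analogous inner sum at scale $n$ grows like $2^{3n}$ instead of $2^n$, producing the $3/p$ loss and the condition $\beta<\alpha-3/p$. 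The main technical obstacle in all three parts is the wavelet approximation step that justifies the continuous-to-dyadic reduction and the decay of the coefficients $c^m_k$; this is where the precise matching $r=-\lfloor\alpha\rfloor$ is used, but it is otherwise classical.
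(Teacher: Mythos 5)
Your argument is the standard wavelet/Littlewood--Paley discretization, which is exactly the route the paper intends: the paper gives no proof of this lemma, only the remark that it can be adapted from Lemma 9 of \cite{WM} (or the Daubechies-wavelet arguments of \cite{HL16}), and your counting of dyadic lattice points per scale ($2^{n}$ in the elliptic case, $2^{3n}$ in the parabolic case, against the weight $w^{-p}$) correctly reproduces the $1/p$ and $3/p$ losses in parts (a) and (c). Two points deserve attention. First, in the continuous-to-dyadic reduction the number of scale-$m$ wavelets meeting the support of $S^\lambda_x\phi$ (with $\lambda\approx 2^{-n}$) is of order $2^{m-n}$, not $O(1)$ as your support condition $|k-2^mx|\le C$ suggests; the cross-scale sum still converges because the coefficient decay $2^{-(m-n)r}$ furnished by $r=-\lfloor\alpha\rfloor$ dominates this count, but the bookkeeping should reflect it. Second, part (b) is the one place where your sketch does not visibly deliver the stated exponent: applying Kolmogorov in $t$ to each dyadic pairing $V(\cdot,\eta^n_k)$ and then summing over $(n,k)$ yields a continuous modification in $C^{\beta,\sig}(\mathbb R)$ only for $\beta<\alpha-\kappa-1/p$, whereas the lemma claims $\beta<\alpha-\kappa$. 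You can recover the stated range by interpolating the increment bound (valid in $C^{\beta_2,\sig}$ for $\beta_2<\alpha-\kappa-1/p$, with temporal H\"older exponent $\kappa-1/p>0$) against the fixed-time bound from part (a) (valid in $C^{\beta_1,\sig}$ for $\beta_1<\alpha-1/p$), which in fact gives continuity for all $\beta<\alpha-1/p$ and hence, since $1/p<\kappa$, for all $\beta<\alpha-\kappa$; alternatively, in every application of part (b) in the paper $\kappa$ and $p$ are free parameters, so the weaker exponent would also suffice.
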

	{A proof of the above results may be adapted from the proof of Lemma 9 in Section 5 of \cite{WM}.} We remark that we do not actually need uniformity over a large class of test functions as we have written above, just a single well-chosen test function would suffice (e.g. the Littlewood-Paley blocks as used in \cite{WM} or the Daubechies wavelets in \cite{HL16}). Part (a) of the above lemma can be stated with $C^r$ functions just like part (c), but we will not need this general version.
 
	\subsection{Tightness}\label{sec:tight} In this section, we prove tightness of the field $\mathscr{X}^N$ and establish regularity estimates for the limit points. {At this stage, readers may find it helpful to revisit Section \ref{sec1.4.2} for a proof sketch of the tightness argument.}

\begin{prop}\label{xtight}
		For each $t\ge 0$, the fields $\{\mathscr X_t^N\}_{N\ge 1}$ defined in \eqref{a} may be realized as random elements of the elliptic H\"older space $C^{\alpha,\sig}(\mathbb R)$ for any $\sig>1$ and any $\alpha<-1.$ Moreover they are tight with respect to that topology as $N\to\infty$, in fact, we have that $\sup_N \Ex[ \|\mathscr X_t^N\|_{C^{-\alpha,\sig}(\mathbb R)}^p]<\infty$ for all $p\ge 1$. Furthermore, the $\mathscr X^N$ defined in \eqref{b} themselves may be viewed as elements of the parabolic H\"older space $C_\mathfrak s^{\alpha,\sig}(\Lambda_T)$ for $\alpha<-1$ and $\tau>1.$ Moreover they are tight with respect to that topology as $N\to\infty$.
	\end{prop}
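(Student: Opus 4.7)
My plan is to implement the strategy sketched in Section \ref{sec1.4.2}. I view $\mathscr X^N$ as a tempered distribution on $\mathbb R^2$ (extended by zero outside $\Lambda_T$) satisfying, in the distributional sense,
\[(\partial_t-\tfrac12\partial_x^2)\mathscr X^N = \delta_0\otimes \delta_0 + dM^N,\]
where $\delta_0\otimes\delta_0$ arises from the Dirac initial condition through the convention of Remark \ref{d/dx}, and $dM^N$ is the space-time martingale forcing whose pairing with $\varphi$ equals $R^N_T(\varphi)$ from Lemma \ref{qvt}. Applying the heat operator $K$ of Proposition \ref{sch} gives $\mathscr X^N = J\delta_0 + K(dM^N)$; since $J\delta_0(t,x)=p_t(x)\mathbf 1_{t>0}$ already lies in $C^{\gamma,\sigma}_{\mathfrak s}(\Lambda_T)$ for every $\gamma<-1$, the task reduces to controlling $K(dM^N)$.

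To that end, I apply the Kolmogorov criterion Lemma \ref{l:KC}\ref{KC} to $\varphi \mapsto R^N_T(\varphi)$. For a scaled test function $\varphi=S^\lambda_{(t_0,x_0)}\tilde\varphi$, the Burkholder-Davis-Gundy inequality reduces the task to bounding moments of $\langle R^N(\varphi)\rangle_T$, which by \eqref{tqmart} together with the $L^\infty$ estimates $\|\varphi\|_{L^\infty}\le\lambda^{-3}$ and $\|\partial_x\varphi\|_{L^\infty}\le\lambda^{-4}$ is controlled by a constant multiple of
\[\bigl(\lambda^{-6}+N^{-1/2}\lambda^{-8}\bigr)\bigl(Q^N_{t_0+\lambda^2}(\mathbf 1_{I_\lambda})-Q^N_{(t_0-\lambda^2)_+}(\mathbf 1_{I_\lambda})\bigr),\]
where $I_\lambda=[x_0-\lambda,x_0+\lambda]$. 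Proposition \ref{tight1} then supplies moment bounds on this QMF increment, and Proposition \ref{sch} transports the resulting Kolmogorov conclusion for $dM^N$ into the desired bound for $\mathscr X^N$. For the elliptic tightness at fixed $t>0$, I will use the Duhamel representation $\mathscr X^N_t=\hat P_t\delta_0+\int_0^t \hat P_{t-s}\,dM^N_s$ together with the smoothing estimate \eqref{e.schhat} and Lemma \ref{l:KC}\ref{KC1}, noting that $\hat P_t\delta_0$ is smooth away from $t=0$.

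The main obstacle is matching the sharp exponent $\alpha<-1$ dictated by the heat-kernel initial data. The crude uniform bound \eqref{e.tight1} scales the QMF increment only as $\lambda^{k/2}$, and combined with the spatial prefactor $\lambda^{-6k}$ this propagates through BDG, Kolmogorov, and Schauder to yield only a parabolic regularity of $-7/4-\varepsilon$---strictly worse than required, since being in $C_{\mathfrak s}^{-7/4-\varepsilon,\sigma}$ does not imply being in $C_{\mathfrak s}^{\alpha,\sigma}$ for $\alpha$ close to $-1$. The sharp bound \eqref{e.tight2} instead produces the optimal $\lambda^{3k}$ scaling (taking $p$ close to $1$) and recovers $\alpha<-1$, but only asymptotically in $N$ and for times bounded away from $0$. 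I will therefore bootstrap: use \eqref{e.tight2} to obtain the required bounds for all sufficiently large $N$, invoke Prokhorov's theorem on the finite collection of small-$N$ fields, and control the non-uniform factor $N^{-1/2}\lambda^{-8}$ that comes from $N^{-1/4}\partial_x\varphi$ by splitting the scaling regimes at $\lambda\sim N^{-1/4}$.
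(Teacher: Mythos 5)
Your route through the forced heat equation, BDG, and Schauder is genuinely different from the paper's proof, and it has a gap that your own closing paragraph gestures at but does not resolve. The problem is the derivative term in the quadratic variation \eqref{tqmart}: for $\varphi=S^\lambda_{(t_0,x_0)}\tilde\varphi$ it contributes $N^{-1/2}\lambda^{-8}\big(Q^N_{t_0+\lambda^2}-Q^N_{t_0-\lambda^2}\big)(\ind_{I_\lambda})$, and with the only uniformly-in-$N$ available estimate \eqref{e.tight1} this is of order $N^{-1/2}\lambda^{-15/2}$ per unit power of $\langle R^N\rangle$. Splitting at $\lambda\sim N^{-1/4}$ does not help: for $\lambda\ll N^{-1/4}$ the prefactor $N^{-1/2}$ is just a constant $\le 1$ for fixed $N$, so as a function of $\lambda$ the bound still scales like $\lambda^{-15/2}$, i.e.\ moment exponent $-15/4$ in Lemma \ref{l:KC}\ref{KC}, hence regularity $-15/4-\varepsilon$ for $\partial_s M^N$ and only $-7/4-\varepsilon$ after Schauder. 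The sharp bound \eqref{e.tight2}, which would rescue the exponent, is unusable here for two reasons: it requires times bounded away from $0$ (and the proposition must hold on all of $\Lambda_T$, including the neighborhood of the Dirac initial datum), and it is only a statement about $\lim_{N\to\infty}$ of a fixed expectation, so it cannot supply the single bound, uniform over $\lambda\in(0,1]$ and $(t,x)\in\Lambda_T$, that the Kolmogorov criterion demands for each fixed large $N$; the threshold "sufficiently large $N$" depends on the scale, so the exceptional set of $N$ is not finite and the Prokhorov patch does not apply. (This is precisely why the paper runs the Schauder argument only in Proposition \ref{tightm2}, for the time-shifted field and only for \emph{limit points} via Fatou.) There is also a structural circularity you should address: the identity $\mathscr X^N=p+K(\partial_s M^N)$ and the mapping properties of $K\partial_s$ (Proposition \ref{ds}, Lemma \ref{irrel}) presuppose that $\mathscr X^N$ and $M^N$ already lie in suitable H\"older spaces, which is part of what you are trying to prove.

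The paper's own proof is far more direct and avoids all of this. Lemmas \ref{rnd} and \ref{dn} together with Remark \ref{rk:mn} and Lemma \ref{traps} give, by Cauchy--Schwarz, the uniform bound $\Ex[\mathscr X_t^N(\phi)^{2k}]\le \Con\|\phi\|_{L^\infty}^{2k}$ for all $N$ and $t\in[0,T]$; applied to $S^\lambda_x\phi$ this yields $\Con\lambda^{-2k}$, and Lemma \ref{l:KC}\ref{KC1} with $p=2k\to\infty$ gives the elliptic statement for every $\alpha<-1$. For the parabolic statement one simply applies Minkowski's inequality in the time variable, $\Ex[(\mathscr X^N,S^\lambda_{(t,x)}\varphi)^{2k}]^{1/2k}\le\int_{t-\lambda^2}^{t+\lambda^2}\Con\lambda^{-3}\,ds=\Con\lambda^{-1}$, and invokes Lemma \ref{l:KC}\ref{KC}. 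I would recommend abandoning the martingale detour for this proposition and using the direct moment bound; the martingale machinery is needed later, but only for the improved regularity of limit points away from $t=0$.
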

	
	\begin{proof} Using Lemma \ref{rnd} and Lemma \ref{dn} we obtain that 
		\begin{align*}
			\Ex[\mathscr X_t^N(\phi)^{2k}] &= \mathbf E_{D_{N^{1/4}}SB_{N^{1/2}\nu}^{(k)}}\bigg[\prod_{j=1}^{2k} \phi(X^j_t) e^{{\frac12}N^{1/2}\sum_{i<j} V_t^{ij}(\mathbf{X})}\bigg] \\ &=   \mathbf E_{SB_{N^{1/2}\nu}^{(k)}}\bigg[\mathcal M_N(t)\prod_{j=1}^{2k}\phi(X^j_t) e^{{\frac12}N^{1/2}\sum_{i<j} V_t^{ij}(\mathbf{X})}\bigg] \\ &\leq \|\phi\|^{2k}_{L^\infty(\mathbb R)} \mathbf E_{SB_{N^{1/2}\nu}^{(k)}}[\mathcal M_N(t)^2]^{1/2}\mathbf E_{SB_{N^{1/2}\nu}^{(k)}}\bigg[e^{N^{1/2}\sum_{i<j} V_t^{ij}(\mathbf{X})}\bigg]^{1/2},
		\end{align*}
	where $\mathcal{M}_N(t)$ is defined in \eqref{mnt} and the last inequality above is due to the Cauchy-Schwarz inequality. From Remark \ref{rk:mn} and Lemma \ref{traps} (with $\nu\mapsto N^{1/2}\nu$), we know that the above two expectations are uniformly bounded in $N$. Therefore the last expression is bounded by some universal constant times $\|\phi\|_{L^\infty}^{2k}.$ In particular we get that $$ \Ex[\mathscr X_t^N(S_x^\lambda\phi)^{2k}]\leq \Con \|S_x^\lambda\phi\|_{L^\infty}^{2k}\leq C\lambda^{-2k},$$ uniformly over all $\phi\in C_c^\infty(\mathbb R)$ with $\|\phi\|_{L^\infty}\leq 1,$ where $S_x^\lambda$ is defined in \eqref{escale}. Appealing to Lemma \ref{l:KC} \ref{KC1} we get the desired result for fixed $t$.

    To prove the second statement about tightness in the parabolic H\"older space, we will as in \eqref{b} denote space-time pairings as $(\mathscr X^N,\varphi)_{L^2(\mathbb R^2)}$. Note by Minkowski's inequality that 
    \begin{align*}\mathbb E[ (\mathscr X^N,\varphi)_{L^2(\mathbb R^2)}^{2k} ]^{1/2k} &= \mathbb E\bigg[ \int_\mathbb R \bigg(\mathscr X_s^N(\varphi(s,\cdot))ds\bigg)^{2k}\bigg]^{1/2k} \leq \int_\mathbb R \mathbb E[\mathscr X_s^N(\varphi(s,\cdot))^{2k}]^{1/2k} ds.
    \end{align*}
    Now with $S^\lambda_{(t,x)}$ defined in \eqref{scale}, we have 
    \begin{align*}\mathbb E[ (\mathscr X^N,S^\lambda_{(t,x)}\varphi)_{L^2(\mathbb R^2)}^{2k} ]^{1/2k} &\leq \int_\mathbb R \mathbb E\big[\mathscr X_s^N\big( (S^\lambda_{(t,x)}\varphi)(s,\cdot)\big)^{2k}\big]^{1/2k} ds  \leq \Con \int_{t-\lambda^2}^{t+\lambda^2} \lambda^{-3} ds \leq \Con \lambda^{-1},
    \end{align*}
    uniformly over all $\varphi\in C_c^\infty(\mathbb R^2)$ supported on the unit ball, with $\|\varphi\|_{L^\infty}\leq 1.$ Here the $\lambda^{-3}$ comes from the fact that the parabolic scaling operator differs from the elliptic one by exactly a factor of $\lambda^{-2}$. In the second bound, we used the same bounds appearing in the proof for the elliptic case above, noting that the constants there are independent of $t\in [0,T]$. Appealing to Lemma \ref{l:KC} \ref{KC} we get the desired result.
	\end{proof}

 \begin{prop}\label{mcts}
		The fields $M^N$ defined in \eqref{mart_ob} may be realized as an element of $C([0,T],C^{\alpha,\sig}(\mathbb R))$ for any $\alpha<-2$ and $\sig>1$. Moreover, they are tight with respect to that topology.  $Q^N$ defined in \eqref{QVfield} may be realized as an element of $C([0,T],C^{\gamma,\sig}(\mathbb R))$ for any $\gamma<-1$ and $\sig>1$. Moreover, they are tight with respect to that topology. Let $(M^\infty,Q^\infty)$ be a joint limit point of $(M^N,Q^N)$ in $C([0,T],C^{\alpha,\sig}(\mathbb R)\times C^{\gamma,\sig}(\mathbb R)).$ For all $\phi\in C_c^\infty(\mathbb R)$ the process $(M_t^\infty(\phi))_{t\in[0,T]}$ is a martingale with respect to the canonical filtration on that space, and moreover one has 
  \begin{equation}
      \label{e:mcts}
      \langle M^\infty(\phi)\rangle_t = Q_t^\infty(\phi^2).
  \end{equation}
	\end{prop}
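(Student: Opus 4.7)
The plan has two parts: tightness of $Q^N$ and $M^N$ in the claimed topologies, then identification of the joint limit using the prelimit martingale structure. The key inputs are the moment bound for $Q^N$ from Proposition~\ref{tight1}, the decomposition $\langle M^N(\phi)\rangle_t = Q_t^N(\phi^2) + \mathcal{E}_t^N(\phi)$ from Lemma~\ref{qv} and \eqref{etn}, and the $L^2$ vanishing of $\mathcal{E}_t^N(\phi)$ from Proposition~\ref{4.2}.

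First I would apply the function-space Kolmogorov criterion Lemma~\ref{l:KC}\ref{KC2} to the crude bound of Proposition~\ref{tight1}. Plugging the localized test function $\phi = S_x^\lambda \phi_0$ with $\phi_0$ supported on the unit ball and $\|\phi_0\|_{L^\infty}\leq 1$ yields
\begin{align*}
\sup_{N\geq 1}\Ex\big[|Q_t^N(S_x^\lambda \phi_0)-Q_s^N(S_x^\lambda \phi_0)|^k\big]\leq C\lambda^{-k}(t-s)^{k/4},
\end{align*}
and the identity $Q_0^N = 0$ produces the fixed-time spatial bound on $Q_t^N$ itself upon setting $s=0$. Given any $\gamma<-1$, the Kolmogorov parameters $\alpha=-1$, $\kappa=(-1-\gamma)/2$, and $k$ chosen so that $k>1/\kappa$ then verify all hypotheses with $\alpha-\kappa>\gamma$; the lemma produces tightness in $C([0,T], C^{\beta,\sig}(\mathbb R))$ for some $\beta\in(\gamma,-1)$, and the H\"older embedding yields tightness in the target $C([0,T], C^{\gamma,\sig}(\mathbb R))$. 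For tightness of $M^N$ I would combine Burkholder--Davis--Gundy with the decomposition to obtain
\begin{align*}
\Ex\big[|M_t^N(\phi)-M_s^N(\phi)|^{2k}\big]\leq C\,\Ex\big[(Q_t^N(\phi^2)-Q_s^N(\phi^2))^k\big] + C\,\Ex\big[(\mathcal{E}_t^N(\phi)-\mathcal{E}_s^N(\phi))^k\big],
\end{align*}
and bound each summand via Proposition~\ref{tight1}. With $\phi = S_x^\lambda \phi_0$ one has $\|\phi^2\|_{L^\infty}\le \lambda^{-2}$ and $\|2N^{-1/4}\phi\phi'+N^{-1/2}(\phi')^2\|_{L^\infty}\le C(\lambda^{-3}+\lambda^{-4})$ uniformly in $N$, producing a worst-case bound of $C\lambda^{-4k}(t-s)^{k/4}$. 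A second application of Lemma~\ref{l:KC}\ref{KC2} with $\alpha=-2$ and $\kappa>0$ chosen according to the target then delivers tightness in $C([0,T], C^{\alpha,\sig}(\mathbb R))$ for any $\alpha<-2$.

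With $(\mathscr X^N, Q^N, M^N)$ jointly tight, I would pass along a subsequence to a limit $(\mathscr X^\infty, Q^\infty, M^\infty)$ and identify $(M_t^\infty(\phi))_{t\in [0,T]}$ as a continuous martingale by testing the prelimit relation $\Ex[(M_t^N(\phi)-M_s^N(\phi))F]=0$ against bounded continuous functionals $F$ of the joint paths up to time $s$, passing to the limit using uniform $L^p$ bounds from BDG. The same limit passage applied to the prelimit martingale identity
\begin{align*}
\Ex\big[\big((M_t^N(\phi))^2 - Q_t^N(\phi^2) - \mathcal{E}_t^N(\phi)\big)F\big] = \Ex\big[\big((M_s^N(\phi))^2 - Q_s^N(\phi^2) - \mathcal{E}_s^N(\phi)\big)F\big],
\end{align*}
together with the $L^2$ vanishing of $\mathcal{E}^N$ from Proposition~\ref{4.2} and uniform integrability of $(M_t^N(\phi))^2$ from higher BDG moments, shows that $(M_t^\infty(\phi))^2 - Q_t^\infty(\phi^2)$ is a continuous martingale; the uniqueness part of the Doob--Meyer decomposition for $M^\infty(\phi)$ then yields \eqref{e:mcts}. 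The main obstacle is carefully tracking the worst-case error contribution in the BDG step to secure a uniform-in-$N$ moment bound reaching the threshold $\alpha<-2$; once tightness is in hand, Proposition~\ref{4.2} makes the martingale identification routine.
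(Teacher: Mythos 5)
Your proposal is correct and follows essentially the same route as the paper: the crude increment bound of Proposition~\ref{tight1} fed into the Kolmogorov criterion of Lemma~\ref{l:KC}\ref{KC2} for both $Q^N$ and (via Burkholder--Davis--Gundy and the decomposition $\langle M^N(\phi)\rangle_t = Q_t^N(\phi^2)+\mathcal E_t^N(\phi)$) for $M^N$, followed by uniform integrability, Proposition~\ref{4.2}, and Doob--Meyer uniqueness to identify $\langle M^\infty(\phi)\rangle_t = Q_t^\infty(\phi^2)$. One bookkeeping point: the available temporal exponent $(t-s)^{k/4}$ forces $\kappa\le 1/4$ in the Kolmogorov lemma for $Q^N$ (and $\kappa\le 1/8$ for $M^N$, where $p=2k$), so your choice $\kappa=(-1-\gamma)/2$ must be capped, e.g.\ $\kappa=\min(1/4,(-1-\gamma)/2)$, when $\gamma<-3/2$; this does not affect the conclusion since smaller $\kappa$ only relaxes the constraint $\gamma<\alpha-\kappa$.
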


\medskip
 
We remark that the canonical filtration $\mathscr F_t$ on $C([0,T],\Xi)$ for any Polish space $\Xi$ is the one generated by the random variables $X(s)$ for $s\leq t$ as $X$ varies through all elements of $C([0,T],\Xi)$.
	
	\begin{proof}Take any $\phi\in C_c^\infty(\mathbb R)$ with $\|\phi\|_{L^\infty}\leq 1.$ Recall $S_{(t,x)}^{\lambda}$ from \eqref{scale}. Using the first bound in Proposition \ref{tight1}, we have $$\Ex[ (Q_t^N(S^\lambda_{x}\phi)-Q_s^N(S^\lambda_{x}\phi))^{k}]\leq \Con |t-s|^{k/4}\lambda^{-k},$$ uniformly over $x\in\mathbb R, \lambda\in (0,1], 0\leq s,t\leq T$. Since $Q_0^N(\phi)=0$ by definition, therefore the assumptions of Lemma \ref{l:KC} \ref{KC2} are satisfied for any $\kappa\leq 1/4$, any $p>1/\kappa$, and any $\alpha\leq -1$, and we conclude the desired tightness for $Q^N$.

  \smallskip
  
		Now we address the tightness of the $M^N$. By Lemma \ref{qv} and the Burkholder-Davis-Gundy inequality, we have that $$\Ex[ (M_t^N(\phi)-M_s^N(\phi))^{2k}] \leq \Con \cdot \Ex\bigg[ \bigg((Q_t^N\big((\phi+N^{-1/4}\phi')^2\big)-Q_s^N\big((\phi+N^{-1/4}\phi')^2\big)\bigg)^{k}\bigg],$$ where $\Con=\Con(k)>0$ is free of $\phi,s,t,N.$ Using a crude bound $$(\phi+N^{-1/4}\phi')^2 \leq 2\phi^2 +2N^{-1/2} (\phi')^2 \leq 2\phi^2 + 2 (\phi')^2,$$ we find from the first bound in Proposition \ref{tight1} that 
		\begin{equation}\label{m1}\Ex[ (M_t^N(\phi)-M_s^N(\phi))^{2k}] \leq \Con(t-s)^{k/4} (\|\phi\|_{L^\infty}^2 + \|\phi'\|_{L^\infty}^2)^k \leq \Con(t-s)^{k/4}\|\phi\|_{C^1}^{2k}.
		\end{equation}
		This gives $$\Ex[ (M_t^N(S^\lambda_{x}\phi)-M_s^N(S^\lambda_{x}\phi))^{2k}] \leq \Con(t-s)^{k/4} \lambda^{-4k} $$
		uniformly over $x\in\mathbb R, \lambda\in (0,1], 0\leq s,t\leq T$, and $\phi\in C_c^\infty(\mathbb R)$ with $\|\phi\|_{C^1}\leq 1.$ Moreover $M_0(\phi)=0$ by definition, therefore the assumptions of Lemma \ref{l:KC} \ref{KC2} are satisfied for any $\kappa\leq 1/8$, any $p>1/\kappa$, and any $\alpha\leq -2$. Hence, we conclude the desired tightness for $M^N$.

  \smallskip
  
	We next show that the limit point $M^{\infty}(\phi)$ is a martingale.	Since $M_0^N(\phi)=0$, from \eqref{m1}, we see that $\sup_N \Ex[M_t^N(\phi)^{2k}]<\infty.$ Thus by uniform integrability, $M^{\infty}(\phi)$ is a martingale. In the prelimit we know that $$(M_t^N(\phi))^2 - Q_t^N\big((\phi+N^{-1/4}\phi')^2\big)=(M_t^N(\phi))^2-Q_t^N(\phi^2) - \mathcal E_t^N(\phi)$$ is a martingale, where the error term $\mathcal E_t^N$ is defined in \eqref{etn}. By Proposition \ref{4.2} we know that $\mathcal E_t^N$ vanishes in probability, so we conclude (again by uniform $L^p$ boundedness guaranteed by Proposition \ref{tight1}) that $(M_t^\infty(\phi))^2-Q_t^\infty(\phi^2)$
	is a martingale. This verifies \eqref{e:mcts} completing the proof.
	\end{proof}

 \begin{prop}\label{ds}
For $\alpha<0$, the derivative operator $\partial_s : C([0,T],C^{\alpha,\sig}(\mathbb R)) \to C_\mathfrak s^{\alpha-2,\sig}(\Lambda_T)$ which is defined by sending $(v_t)_{t\in[0,T]}$ to the distribution $$(\partial_s v,\varphi)_{L^2(\Lambda_T)}:=v_T(\varphi(T,\cdot)) -\int_0^T v_t(\partial_t\varphi(t,\cdot))dt -v_0(\varphi(0,\cdot)),$$ whenever $\varphi\in C_c^\infty(\Lambda_T),$ is a bounded linear map. Let $M^N$ be as in Proposition \ref{mcts}, viewed as elements of $C([0,T],C^{\alpha,\sig}(\mathbb R))$ for some $\alpha<-2.$ Also let $\mathscr X^N_t$ and $\mathscr X^N$ and be as in Proposition \ref{xtight}, viewed as elements of $ C^{\gamma,\sig}(\mathbb R)$ and $ C_\mathfrak s^{\gamma,\sig}(\Lambda_T)$ respectively for some $\gamma<-1$. Then one has that 
\begin{align}
    \label{e:delMrel}
    \partial_s M^N = -\delta_{(0,0)}+(\partial_t - \tfrac12\partial_x^2)\mathscr X^N+\delta_T\otimes \mathscr X^N_T
\end{align}
in the sense of distributions.
 \end{prop}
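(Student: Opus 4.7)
The plan is to prove the two assertions in sequence: boundedness of the operator $\partial_s$, then the distributional identity \eqref{e:delMrel}.

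For boundedness of $\partial_s$, I would test against the parabolic scaling $S^\lambda_{(t,x)}\varphi$ with $\varphi\in B_{r'}$ for $r' = -\lfloor\alpha-2\rfloor$, and show each of the three pieces in the defining formula is bounded by $C w(x)\lambda^{\alpha-2}$. The main piece $\int_0^T v_s(\partial_s (S^\lambda_{(t,x)}\varphi)(s,\cdot))\,ds$ is handled as follows: the parabolic chain rule contributes a factor $\lambda^{-2}$ from $\partial_s$, while the window of $s$ on which the scaled test function has nontrivial support has length $O(\lambda^2)$; at each such $s$, the spatial profile factors as $\lambda^{-2}$ times the elliptic scaling $S^\lambda_x$ applied to a function with controlled $C^r$ norm ($r = -\lfloor\alpha\rfloor$), which is bounded by $\|v\|_{C([0,T],C^{\alpha,\sig}(\mathbb R))}\, w(x)\lambda^\alpha$. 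Multiplying these factors gives $\lambda^{\alpha-2}w(x)$ as required. The boundary pieces $v_T((S^\lambda_{(t,x)}\varphi)(T,\cdot))$ and $v_0((S^\lambda_{(t,x)}\varphi)(0,\cdot))$ contribute only when $t$ lies within $\lambda^2$ of the respective endpoint, and a similar scaling argument without the integration yields the same bound.

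For the identity \eqref{e:delMrel}, I would pair both sides against an arbitrary $\varphi\in C_c^\infty(\Lambda_T)$ and verify equality. The definition of $\partial_s$ combined with $M_0^N = 0$ gives $(\partial_s M^N,\varphi) = M_T^N(\varphi(T,\cdot)) - \int_0^T M_t^N(\partial_t\varphi(t,\cdot))\,dt$. Substituting the formula $M_t^N(\phi) = \mathscr X_t^N(\phi) - \phi(0) - \tfrac12\int_0^t \mathscr X_s^N(\phi'')\,ds$ from Lemma \ref{qv} and applying Fubini to the resulting double integral
\begin{equation*}
\int_0^T\int_0^t \mathscr X_s^N(\partial_x^2\partial_t\varphi(t,\cdot))\,ds\,dt = \int_0^T \mathscr X_s^N(\partial_x^2\varphi(T,\cdot) - \partial_x^2\varphi(s,\cdot))\,ds
\end{equation*}
forces cancellation of the $\partial_x^2\varphi(T,\cdot)$ pieces; likewise, using $\int_0^T \partial_t\varphi(t,0)\,dt = \varphi(T,0) - \varphi(0,0)$ eliminates the $\varphi(T,0)$ arising from $M_T^N(\varphi(T,\cdot))$. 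What remains is exactly $\mathscr X_T^N(\varphi(T,\cdot)) - \varphi(0,0) - \int_0^T \mathscr X_t^N(\partial_t\varphi(t,\cdot))\,dt - \tfrac12\int_0^T \mathscr X_s^N(\partial_x^2\varphi(s,\cdot))\,ds$, which by Definition \ref{otimes} and the derivative convention of Remark \ref{d/dx} coincides with $(\delta_T\otimes\mathscr X_T^N - \delta_{(0,0)} + (\partial_t-\tfrac12\partial_x^2)\mathscr X^N,\varphi)$.

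The main subtlety lies in the bookkeeping around the derivative convention of Remark \ref{d/dx}: since $\mathscr X^N$ is extended by zero to $\mathbb R^2$ before distributional differentiation, the term $\delta_T\otimes\mathscr X_T^N$ precisely encodes the boundary contribution generated at $t=T$ by this extension, while $-\delta_{(0,0)}$ similarly reflects the Dirac initial condition of $\mathscr X^N$ at the origin (the $-\phi(0)$ appearing in $M_t^N$). Once these boundary terms are correctly attributed, the identity reduces to an algebraic rearrangement. The scaling computation in part one, while more delicate, is standard given the definitions of the weighted H\"older norms together with the observation that $\partial_s$ carries an extra factor of $\lambda^{-2}$ under parabolic scaling that is absorbed by the shrinking integration window.
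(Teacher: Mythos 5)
Your proposal is correct and follows essentially the same route as the paper: the boundedness of $\partial_s$ via the $\lambda^{-2}\cdot\lambda^{-2}\cdot\lambda^{\alpha}\cdot\lambda^{2}=\lambda^{\alpha-2}$ scaling count (the paper records this as $\int_{s-\lambda^2}^{s+\lambda^2}\lambda^{\alpha-4}\,dt\le\lambda^{\alpha-2}$), and the identity \eqref{e:delMrel} by substituting \eqref{mart_ob}, applying Fubini to the double integral, and tracking the cancellation of the $\varphi(T,0)$ and $\partial_x^2\varphi(T,\cdot)$ terms. The only cosmetic difference is that the paper reinserts \eqref{mart_ob} with $\phi=\varphi(T,\cdot)$ to absorb the $M_T^N(\varphi(T,\cdot))$ term rather than cancelling directly, and it does not spell out the boundary-term bounds for $\partial_s$ that you include.
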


 \begin{proof}Recall $S^\lambda_{(t,x)}$ from \eqref{scale}. From the definition of the space $C^{\alpha,\sig}(\mathbb R)$ one verifies directly that if $v=(v_t)_{t\in [0,T]}\in C([0,T],C^{\alpha,\sig}(\mathbb R))$ then $$\int_0^T |v_t\big(\partial_t (S^\lambda_{(s,y)} \varphi)(t,\cdot)\big)|dt \leq w(y) \int_{s-\lambda^2}^{s+\lambda^2} \lambda^{\alpha-4}dt \leq w(y)\lambda^{\alpha-2}$$ uniformly over $(s,y)\in \Lambda_T$ and $\varphi\in C_c^\infty(\mathbb R^2)$ supported on the unit ball of $\mathbb R^2$ with $\|\varphi\|_{C^r}\leq 1$ where $r=\lceil -\alpha \rceil.$ This proves the first part of the proposition that $\partial_s$ is bounded from $C([0,T],C^{\alpha,\sig}(\mathbb R)) \to C_\mathfrak s^{\alpha-2,\sig}(\Lambda_T).$ 

 \smallskip

For the second part, let us abbreviate $\mathscr M^N:=\partial_s M^N$. We first note by definition of $\partial_s$ that for any smooth $\varphi$ of compact support on $\mathbb R^2$, $\mathscr M^N(\varphi)$ is explicitly given by 
\begin{align*}
			\mathscr M^N(\varphi)=M_T^N(\varphi(T,\cdot))-\int_0^{T} \hspace{-0.1cm}M^N_s\big(\partial_s\varphi(s,\cdot)\big)ds,
\end{align*}
where we used the fact that $M_0^N\equiv 0$. Note that the above expression is the same as $R_T^N(\varphi)$ defined in \eqref{tmart}.  Now observe that using \eqref{mart_ob} we have
 \begin{equation*}
      \begin{aligned}
			 \int_0^{T} M^N_s\big(\partial_s\varphi(s,\cdot)\big)ds & = \int_0^{T} \hspace{-0.2cm}\mathscr X^N_s\big(\partial_s\varphi(s,\cdot)\big)ds -\int_0^{T} \big(\partial_s\varphi(s,0)\big)ds \\ & \hspace{3cm}-\int_0^{T} \frac12\int_0^s \hspace{-0.2cm} \mathscr{X}_u^N\big(\partial_{xx}\partial_s\varphi(s,\cdot)\big)du ds \\ & = \int_0^{T} \mathscr X^N_s\big(\partial_s\varphi(s,\cdot)\big)ds +\varphi(0,0)-\varphi(T,0) \\ & \hspace{3cm} -\int_0^{T} \frac12\int_0^s \mathscr{X}_u^N\big(\partial_{xx}\partial_s\varphi(s,\cdot)\big)du ds. 
		\end{aligned}
 \end{equation*}
 The last term on the right-hand side of the above equation can be simplified as follows.
	\begin{align*}
	\int_0^{T} \frac12\int_0^s \mathscr{X}_u^N\big(\partial_{xx}\partial_s\varphi(s,\cdot)\big)du ds&=\int_0^{T} \frac12\int_u^{T} \mathscr{X}_u^N\big(\partial_{xx}\partial_s\varphi(s,\cdot)\big)ds du\\&=\frac12\int_0^{T} \big[\mathscr{X}_u^N\big(\partial_{xx}\varphi(T,\cdot)\big)-\mathscr{X}_u^N\big(\partial_{xx}\varphi(u,\cdot)\big) \big]du\\ &= -M_T^N(\varphi(T,\cdot)) + \mathscr X_T^N(\varphi(T,\cdot)) -  \varphi(T,0) \\ & \qquad -\frac12\int_0^T \mathscr X_u^N(\partial_{xx}\varphi(u,\cdot)\big)du.
	\end{align*}	 
 We applied \eqref{mart_ob} with $\phi = \varphi(T,\cdot)$ in the last line. Combining these expressions, we derive that
 \begin{align*}
     \mathscr{M}^N(\varphi)=-\varphi(0,0)-\int_0^T \mc{X}_s^N(\partial_s\varphi(s,\cdot)ds-\frac12\int_0^T \mathscr X_s^N(\partial_{xx}\varphi(s,\cdot)\big)ds + \mathscr X_T^N(\varphi(T,\cdot)). 
 \end{align*}
 This verifies \eqref{e:delMrel} completing the proof.
 \end{proof}

Since we expect to obtain a Dirac initial data in the limiting SPDE, there is an additional singularity at the origin that we have not yet taken into account. To fix this issue, we now formulate a tightness result taking into account this singularity, by starting our martingale-forced heat equation from some positive time $\e$ (which should be thought of as being close to $0$).

	\begin{prop}[Tightness]\label{tightm2}Fix $\e>0,$ and consider the fields $\mathscr X^{N,\e}(t,\cdot):=\mathscr X_{t+\e}^N(\cdot),$ for $t\ge 0$. Set $\mathscr X^{N,\e}(t,\cdot):=0$ for $t<0$. The fields $\mathscr X^{N,\e}$ may be realized as random variables taking values in $C_\mathfrak s^{\alpha,\sig}(\Lambda_T)$ for any $\sig>1$ and $\alpha<-1$. Furthermore, they are tight with respect to that topology, and any limit point necessarily lies in $C_\mathfrak s^{-\kappa+1/2,\sig}(\Lambda_T)$ for any $\kappa>0$.
	\end{prop}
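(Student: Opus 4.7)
The first claim, that $\mathscr X^{N,\e}$ lies in $C_\mathfrak s^{\alpha,\sig}(\Lambda_T)$ for $\alpha<-1$ and that the sequence is tight in this topology, follows essentially immediately from Proposition \ref{xtight} applied to $\mathscr X^N$ on $\Lambda_{T+\e}$: the restriction to $[\e,T+\e]\times\mathbb R$, time-shifted to $\Lambda_T$ and zero-extended to negative times, gives an element of $C_\mathfrak s^{\alpha,\sig}(\Lambda_T)$, and tightness is inherited. The interesting content is the improved regularity of limit points, for which my plan is to derive a Duhamel-type representation and analyze the two resulting pieces separately.

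Setting $\tilde M^{N,\e}_t := M^N_{t+\e}-M^N_\e$ for $t\in[0,T]$, a time-shifted version of Proposition \ref{ds} gives
$$(\partial_t-\tfrac12\partial_x^2)\mathscr X^{N,\e} \;=\; \partial_s\tilde M^{N,\e} \;+\; \delta_0\otimes\mathscr X^N_\e \;-\; \delta_T\otimes\mathscr X^N_{T+\e}$$
as distributions on $\Lambda_T$. Applying the inverse heat operator $K$ of Proposition \ref{sch}, using the identity $K(\delta_0\otimes f)=Jf$ from (the proof of) Corollary \ref{sme}, and invoking Lemma \ref{irrel} to kill the terminal boundary term (which is applicable since $\mathscr X^N_{T+\e}\in C^{\alpha,\sig}(\mathbb R)$ with $\alpha>-2$), I obtain
$$\mathscr X^{N,\e} \;=\; J\mathscr X^N_\e \;+\; K\partial_s\tilde M^{N,\e}.$$
I then pass to a subsequential limit point $\mathscr X^{\infty,\e}$ in $C_\mathfrak s^{\alpha,\sig}(\Lambda_T)$ and bound each piece separately.

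For the martingale piece $K\partial_s\tilde M^{\infty,\e}$, the Schauder estimate (Proposition \ref{sch}) gains two units of regularity and reduces the problem to placing $\partial_s\tilde M^{\infty,\e}$ in $C_\mathfrak s^{-3/2-\kappa,\sig}$. Writing this distribution via the time-shifted analogue of $R_T^N(\varphi)$ from Lemma \ref{qvt}, the BDG inequality bounds its moments at a parabolic test function $S^\lambda_{(t_0,x_0)}\varphi$ by moments of the associated quadratic variation. The formula \eqref{tqmart} rewrites this, after taking $N\to\infty$, in terms of an increment of $Q^\infty$ over a time interval of length $2\lambda^2$ at times bounded away from $0$ by $\e$ (for $t_0\ge \lambda^2$), with spatial argument of $L^p$-norm of order $\lambda^{-6+1/p}$. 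The sharp bound \eqref{e.tight2} of Proposition \ref{tight1}, which is precisely what the time-shift by $\e$ makes available, then yields moment bounds of order $\lambda^{(-4+1/p)k}$ for the Kolmogorov input. Sending $p\downarrow 1$ and $k\to\infty$ in the parabolic Kolmogorov lemma (Lemma \ref{l:KC}\ref{KC}) gives $\partial_s\tilde M^{\infty,\e}\in C_\mathfrak s^{-3/2-\kappa,\sig}$, and Schauder finishes the job. For the initial-data piece $J\mathscr X^\infty_\e$, moment convergence (Proposition \ref{p:mcov}) together with the known spatial H\"older regularity $C^{1/2-\kappa}$ of $\mathcal Z_\e$ yields two-point spatial increment estimates that, by the elliptic Kolmogorov lemma (Lemma \ref{l:KC}\ref{KC1}), place $\mathscr X^\infty_\e\in C^{1/2-\kappa,\sig}(\mathbb R)$; standard heat-kernel estimates then put its heat extension in $C_\mathfrak s^{1/2-\kappa,\sig}(\Lambda_T)$.

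The main obstacle is the quantitative estimate on the martingale piece. The sharp moment bound \eqref{e.tight2} crucially requires temporal arguments $\ge\e$, and this is the precise reason for introducing the time-shift in the definition of $\mathscr X^{N,\e}$; without the shift, the Dirac initial condition of $\mathscr X^N$ would produce an obstruction at the origin that would preclude the improved regularity. A secondary technical point is that \eqref{e.tight2} is stated only as a $\limsup_N$ bound rather than a uniform-in-$N$ bound, so care must be taken to apply it at the level of the limit martingale (using the QV convergence established in Proposition \ref{mcts}) rather than directly in the prelimit.
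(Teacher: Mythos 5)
Your proposal is correct and, for the core estimate on the martingale piece, follows essentially the same route as the paper: the Duhamel decomposition $\mathscr X^{N,\e}=J\mathscr X^N_\e+K\mathscr M^{N,\e}$ (with $\mathscr M^{N,\e}(\varphi)=R^N_{T+\e}(\varphi)-R^N_\e(\varphi)$), killing the terminal boundary term via Lemma \ref{irrel}, then BDG plus the quadratic variation formula \eqref{tqmart}, the bound $(S^\lambda_{(t,x)}\varphi)^2\le\lambda^{-6}\ind_{[t-\lambda^2,t+\lambda^2]\times[x-\lambda,x+\lambda]}$, and the sharp estimate \eqref{e.tight2} with $L^p$-exponent close to $1$, landing $\mathscr M^{\infty,\e}$ in $C_\mathfrak s^{-3/2-\kappa,\sig}$ and invoking Schauder. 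Your exponent bookkeeping matches the paper's, and your closing remark about \eqref{e.tight2} being only a $\limsup_N$ bound is exactly the point the paper handles via Skorokhod representation and Fatou, passing to $\liminf_N$ of the prelimit moments.

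The one place you genuinely diverge is the initial-data piece $J\mathscr X^\infty_\e$. You propose to upgrade the \emph{spatial} regularity of the limit point $\mathscr X^\infty_\e$ to $C^{1/2-\kappa,\sig}(\mathbb R)$ via moment convergence and two-point increment estimates inherited from $\mathcal Z_\e$, and then push this through $J$. The paper avoids this entirely: it only uses the already-available negative regularity $\mathscr X^N_{\e/2}\in C^{-1-\kappa,\sig}(\mathbb R)$ from Proposition \ref{xtight}, replaces $\e$ by $\e/2$ (and $T$ by $T+1$) so that an extra heat flow of duration $\e/2$ is available, and applies the smoothing estimate \eqref{e.schhat} to get $\hat P_{\e/2}h\in C_\mathfrak s^{1/2-\kappa,\sig}$ before restricting back to $[\e/2,T+\e/2]$. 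Your route can be made to work, but it is more delicate than you acknowledge: extracting pointwise (or two-point) moment identities for $\mathscr X^\infty_\e$ from Proposition \ref{p:mcov} requires letting test functions converge to Dirac masses, which presupposes enough regularity of the limit point to make that passage — the very thing being proved — so one must instead run the Kolmogorov argument at the level of mollified increments $\mathscr X^N_\e(\phi^\lambda_x)-\mathscr X^N_\e(\phi^\lambda_y)$ with Fatou, and note that Lemma \ref{l:KC}\ref{KC1} as stated only covers negative exponents, so a classical Kolmogorov continuity theorem is needed for the positive-regularity conclusion. The paper's $\e\mapsto\e/2$ device buys you the same conclusion with none of these complications, which is worth adopting.
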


	\begin{proof}
		
  The proof that the fields $\mathscr X^{N,\e}$ are tight in $C_\mathfrak s^{\alpha,\sig}(\Lambda_T)$ for any $\sig>1$ and $\alpha<-1$ follows by using the exact same strategy followed in the proof of Proposition \ref{xtight}. Let us set $$\mathscr M^{N,\e}:= \big(\partial_t-\frac12\partial_x^2\big)\mathscr X^{N,\e}+(\delta_T \otimes \mathscr X^N_{T+\e} - \delta_0 \otimes \mathscr X^N_\e).$$ 
  We first claim that for fixed $\e>0$, the fields $\{\mathscr M^{N,\e}\}_{N>1}$ are tight in $C_\mathfrak s^{-3-\kappa,\sig}(\Lambda_T)$ for any $\sig>1$ and $\kappa>0$. As $\mathscr{X}^{N,\e}$ is tight in $C_\mathfrak s^{-1-\kappa,\sig}(\Lambda_T)$, the tightness of the $(\partial_t - \frac12\partial_x^2)\mathscr X^{N,\e}$ in $C_\mathfrak s^{-3-\kappa,\sig}(\Lambda_T)$ is immediate since $\partial_t - \frac12\partial_x^2$ is a bounded linear map from $C_\mathfrak s^{\alpha,\sig}(\Lambda_T)$ into $C_\mathfrak s^{\alpha-2,\sig}(\Lambda_T)$ as explained in Remark \ref{d/dx}. From Proposition \ref{xtight} we know that both $\mathscr X^N_{T+\e} $ and $\mathscr X^N_{\e} $ are tight in $C^{-1-\kappa,\sig}(\mathbb R)$. As we remarked after Definition \ref{otimes}, the map $f\mapsto \delta_a\otimes f$ is continuous from $C^{-1-\kappa,\sig}(\mathbb R) \to C_\mathfrak s^{-3-\kappa,\sig}(\Lambda_T),$ therefore we can immediately conclude that $(\delta_T \otimes \mathscr X^N_{T+\e} - \delta_0 \otimes \mathscr X^N_\e)$ are tight in $C_\mathfrak s^{-3-\kappa,\sig}(\Lambda_T)$. Hence, the sum given by $\mathscr M^{N,\e}$ is also tight in the latter space as $N\to \infty$ (for fixed $\e$).

  Next, we address the regularity of the limit points of $\mathscr X^{N,\e}.$ By the last statement in Proposition \ref{sch}, we can apply the linear operator $K$ to both sides to obtain $$\mathscr X^{N,\e} = K(\delta_0\otimes \mathscr X^N_\e) + K\mathscr M^{N,\e} - K(\delta_T \otimes \mathscr X^N_{T+\e}).$$ By Proposition \ref{xtight} we know that $\mathscr X^N_{T+\e}$ is tight in $C^{\gamma,\sig}(\mathbb R)$ for $\gamma<-1$, and from Lemma \ref{irrel} we can conclude that $K(\delta_T \otimes \mathscr X^N_{T+\e})=0$. On the other hand, $K(\delta_0\otimes \mathscr X^N_\e) = J\mathscr X^N_\e$, where $J$ is defined in Corollary \ref{sme}. We thus have the Duhamel equation $$\mathscr X^{N,\e} = J \mathscr X^N_\e + K\mathscr M^{N,\e}.$$ 
For technical reasons that will be made clear below, we now replace $\e$ by $\e/2$ and $T$ by $T+1$. If we take any joint limit point $(\mathscr X^{\infty,\e/2},\mathscr M^{\infty,\e/2},h)$ in $C_\mathfrak s^{\alpha,\sig}(\Lambda_{T+1})\times C_\mathfrak s^{\alpha+2,\sig}(\Lambda_{T+1})\times C^{\gamma,\sig}(\mathbb R)$ as $N\to\infty$ of $(\mathscr X^{N,\e/2},\mathscr M^{N,\e/2},\mathscr X^N_{\e/2})$, then (by continuity of $J$ and $K$ on the relevant spaces) the Duhamel relation still holds in the limit, i.e., \begin{align}
    \label{e.xie2}
    \mathscr X^{\infty,\e/2}= Jh + K\mathscr M^{\infty,\e/2}.
\end{align}
We now claim that 
\begin{align}
\label{e.tm2}
    \Ex[\|\mathscr M^{\infty,\e/2}\|_{C_\mathfrak s^{-\kappa-3/2,\sig}(\Lambda_{T+1})}^p]<\infty.
\end{align} 

Let us now complete the proof of regularity assuming \eqref{e.tm2}.
\begin{itemize}[leftmargin=20pt]
\itemsep\setlength{0.5em}
    \item Using \eqref{e.tm2} and Proposition \ref{sch}, it follows that 
$$\mathbb E\bigg[\big\|K\mathscr M^{\infty,\e/2}\big\|_{C_\mathfrak s^{-\kappa+1/2,\sig}(\Lambda_{T+1})}^p\bigg]<\infty.$$ This implies that the restriction of $K\mathscr M^{\infty,\e/2}$ to $[\e/2,T+\e/2]\times\mathbb R$ lies in $C_\mathfrak s^{-\kappa+1/2,\sig}(\Lambda_{[\e/2,T+\e/2]})$.

\item By Proposition \ref{xtight} we have $\Ex[\|h\|_{C^{-1-\kappa,\sig}}^p]<\infty$. So from \eqref{e.schhat} with $\alpha:=-1-\kappa$ and $\beta := \frac12-\kappa$. It follows that $$\Ex[\|\hat P_{\e/2}h\|_{C_\mathfrak s^{-\kappa+1/2,\sig}(\Lambda_{T+1})}^p]<\infty.$$ 
Since from \eqref{e.hatjrel}, we know $Jh(t+\e/2,x) = \hat P_{\e/2} h(t,x)$, we thus have that the restriction of $Jh$ to $[\e/2,T+\e/2]\times\mathbb R$ lies in $C_\mathfrak s^{-\kappa+1/2,\sig}(\Lambda_{[\e/2,T+\e/2]})$.
\end{itemize}
Thanks to the above two bullet points and the relation \eqref{e.xie2}, we have showed that the restriction of $\mathscr X^{\infty,\e/2}$ to $[\e/2,T+\e/2]\times\mathbb R$ lies in $C_\mathfrak s^{-\kappa+1/2,\sig}(\Lambda_{[\e/2,T+\e/2]})$. This is equivalent to the fact that $\mathscr X^{\infty,\e}$ lies in $C_\mathfrak s^{-\kappa+1/2,\sig}(\Lambda_T)$. This completes the proof modulo \eqref{e.tm2}. 

\medskip

\noindent\textbf{Proof of \eqref{e.tm2}}. We shall show \eqref{e.tm2} with $\e/2$ and $T+1$ replaced by $\e$ and $T$ respectively. Following the same proof as in Lemma \ref{ds}, one can check that $$\mathscr M^{N,\e}(\varphi) = R^N_{T+\e}(\varphi) - R^N_\e(\varphi),$$ where $R^N$ are the same martingales defined in \eqref{tmart}. We will denote the right side as $R^{N,\e}(\varphi)$ and the quadratic variation up to time $T$ as $\langle R^{N,\e}(\varphi)\rangle_T.$ Let us fix any $\alpha<-3$ and take any joint limit point $(\mathscr M^{\infty,\e},\mathscr X^{\infty,\e})$ in $C_\mathfrak s^{\alpha, \sig}(\Lambda_T) \times C_\mathfrak s^{\alpha+2,\sig}(\Lambda_T)$ of $(\mathscr M^{N,\e},\mathscr X^{N,\e})$ as $N\to\infty$. Let us consider any smooth function $\varphi$ supported on the unit ball of $\mathbb R^2$ such that $\|\varphi\|_{C^1}\leq 1.$ Recall $S_{(t,x)}^{\lambda}$ from \eqref{scale}. Using Skorohod's lemma, Fatou's lemma, and the Burkholder-Davis-Gundy inequality, one has that
		\begin{align*}
			\Ex[ |\mathscr M^{\infty,\e}(S_{(t,x)}^{\lambda}\varphi)|^p] & \leq \liminf_{N\to\infty} \Ex[ |\mathscr M^{N,\e}(\varphi)|^p] \leq \Con \cdot \liminf_{N\to\infty}\Ex \big[  \langle R^{N,\e}(S_{(t,x)}^{\lambda}\varphi)\rangle_{T}^{p/2}\big],
		\end{align*}
where $\Con=\Con(p)>0$. From Lemma \ref{qvt}, the quadratic variation is given by
\begin{align*}
    \langle R^{N,\e}(S_{(t,x)}^{\lambda}\varphi)\rangle_{T}= N^{1/2}\int_0^{T} \big( (\mathscr X_{s+\e}^N)^{Sq},((1+N^{-1/4}\partial_x)S_{(t,x)}^{\lambda}\varphi(s,\cdot))^2\big)ds.
\end{align*}
Using $(a+b)^2\leq 2(a^2+b^2),$ it is clear that $\big((1+N^{-1/4}\partial_x)\varphi\big)^2 \leq 2\varphi^2 + 2N^{-1/2}(\partial_x\varphi)^2$. Furthermore, the term with a factor $N^{-1/2}$ will not contribute when taking the $\liminf$ as $N\to \infty$ (by e.g. Proposition \ref{4.2}). 
Therefore, we can obtain that $$\liminf_{N\to\infty} \mathbb E [ \langle R^{N,\e}(S_{(t,x)}^{\lambda}\varphi)\rangle_{T}] \leq \liminf_{N\to\infty} N^{1/2} \mathbb E\bigg[ \int_0^T \big( (\mathscr X_{s+\e}^N)^{Sq} , (S_{(t,x)}^{\lambda}\varphi)^2\big)ds\bigg].$$
Recall the fields $Q^N$ defined in \eqref{QVfield}. Notice that $\big(S_{(t,x)}^\lambda \varphi\big)^2 \leq \lambda^{-6} \ind_{[t-\lambda^2,t+\lambda^2]\times [x-\lambda,x+\lambda]}$. Therefore, taking any $\delta>0$, by the above bound and by the second bound in Proposition \ref{tight1} we find that 
		\begin{align*}\mathbb E[ |\mathscr M^{\infty,\e}(S_{(t,x)}^\lambda\varphi)|^p ]&\leq 2\Con  \lambda^{-3p}\liminf_{N\to\infty}  \mathbb E \big[ \big(Q^N_{t+\e+\lambda^2}(\ind_{[x-\lambda,x+\lambda]}) - Q^N_{t+\e-\lambda^2}(\ind_{[x-\lambda,x+\lambda]})\big)^{p/2}\big] \\& \leq 2\Con \lambda^{-3p}  (\lambda^2)^{p/2} \| \ind_{[x-\lambda,x+\lambda]}\|_{L^{1+\delta}(\mathbb R)}^{p/2}= \Con \lambda^{-\frac32 p - \big[\frac{\delta}{1+\delta}\big]p }.
		\end{align*}
		Given any $\kappa>0$, by making $\delta=\delta(\kappa)$ close to $0$ and making $p=p(\kappa)$ large enough, we may then apply Lemma \ref{l:KC}\ref{KC} to conclude \eqref{e.tm2}. 
	\end{proof}

	\subsection{Identification of the limit points}\label{sec:iden}

	In this subsection, we identify the limit points as the solution of the stochastic heat equation \eqref{she} and thereby prove our weak convergence theorem: Theorem \ref{t.weakConv}.

\begin{thm}[Solving the martingale problem]\label{solving_mp}
		Consider the triplet of processes $(\mathscr X^N,M^N,Q^N)_{t\ge 0}$, where $\mathscr X^N$, $M^N$, and $Q^N$ are defined in \eqref{a}, \eqref{mart_ob}, and \eqref{QVfield} respectively. Fix $\alpha<-1, \beta<-2, \gamma<-1,$ and $\sig>1.$ These triples are jointly tight in the space $$C^{\alpha,\sig}_\mathfrak s(\Lambda_T)\times C([0,T],C^{\beta,\sig}(\mathbb R))\times C([0,T],C^{\gamma,\sig}(\mathbb R)).$$
		Consider any joint limit point $(X^\infty, M^\infty, Q^\infty)$. Then for any $s>0$, the process $(t,x)\mapsto X_{s+t}^\infty(x)$ is necessarily supported on the space $C_\mathfrak s^{-\kappa+1/2,\sig}( \Lambda_T)$. Furthermore, $(M_t^\infty(\phi))_{t\ge 0}$ is a continuous martingale for all $\phi\in C_c^\infty(\mathbb R)$, and moreover for all $0< s\leq t< T$ one has the almost sure identities 
		\begin{align}\label{mp1}
			M_t^\infty(\phi)-M_s^\infty(\phi) &= \int_\mathbb R (X_t^\infty(x)-X_s^\infty(x))\phi(x)dx -\frac12 \int_s^t \int_\mathbb R X_u^\infty(x)\phi''(x)dx du\\\label{mp2}
			\langle M^\infty (\phi) \rangle_t &= Q_t^\infty(\phi^2) \\\label{mp3}
			Q_t^\infty(\phi)-Q_s^\infty(\phi) &= \sigma \int_\mathbb R\int_s^t (X_u^\infty(x))^2 \phi(x)\,du\,dx.
		\end{align}
	\end{thm}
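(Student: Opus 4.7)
Joint tightness of $(\mathscr X^N, M^N, Q^N)$ in the product topology is a consequence of the marginal tightness already proved: for $\mathscr X^N$ in $C_\mathfrak s^{\alpha,\sig}(\Lambda_T)$ by Proposition \ref{xtight}, and for $M^N, Q^N$ in their respective function spaces by Proposition \ref{mcts}. The improved regularity statement that $(t,x)\mapsto X^\infty_{s+t}(x)$ is supported on $C_\mathfrak s^{-\kappa+1/2,\sig}(\Lambda_T)$ for every $s>0$ is the content of Proposition \ref{tightm2}. The fact that $M^\infty_t(\phi)$ is a continuous martingale and the identity \eqref{mp2} are both contained in Proposition \ref{mcts}. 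Identity \eqref{mp1} is proved by subtracting the defining relation \eqref{mart_ob} at times $s$ and $t$ to get $M_t^N(\phi) - M_s^N(\phi) = \mathscr X_t^N(\phi) - \mathscr X_s^N(\phi) - \tfrac12 \int_s^t \mathscr X_u^N(\phi'') du$, and then passing to the limit using the convergence of the triple along the chosen subsequence and the uniform moment bounds from Propositions \ref{xtight} and \ref{mcts}; evaluating $\mathscr X^\infty$ at fixed interior times $0<s\le t<T$ is legitimate because at such times $\mathscr X^\infty$ is a genuine Hölder function by Proposition \ref{tightm2}.

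\textbf{The identity \eqref{mp3}: vanishing of the mollified relation.} The crux of the theorem is \eqref{mp3}, and the input is Proposition \ref{4.1}. Fix $\phi\in C_c^\infty(\mathbb R)$ and $0<t<T$. Integrating the difference from \eqref{Qllim} against $\phi(a)\,da$ and applying Fubini, the prelimiting expression of interest is
\[
R_N^\epsilon(\phi):= Q_t^N(\phi*\xi_\epsilon) - \sigma \int_0^t \int_\mathbb R \mathscr X_u^N(\xi_{\epsilon\sqrt{2}}^a)^2 \,\phi(a)\,da\,du.
\]
By Cauchy--Schwarz on a second-moment expansion, one has
\[
\mathbb E[R_N^\epsilon(\phi)^2] \le \bigg(\int_\mathbb R \sqrt{\mathbb E\big[(Q_t^N(\xi_\epsilon^a)-\sigma\int_0^t \mathscr X_u^N(\xi_{\epsilon\sqrt 2}^a)^2du)^2\big]}\,|\phi(a)|\,da\bigg)^{\!2},
\]
and the polylogarithmic bound \eqref{e:QXpolylog} dominates the inner second moment by $\Con (1+|\log a|^2)$, uniformly in $\epsilon$ and in $N$ sufficiently large. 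Since $(1+|\log a|)|\phi(a)|$ is integrable on the compact support of $\phi$, dominated convergence combined with the pointwise $L^2$ vanishing \eqref{Qllim} yields $\lim_{\epsilon\to 0}\limsup_{N\to\infty}\mathbb E[R_N^\epsilon(\phi)^2] = 0$.

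\textbf{Identification of the two limits and the main obstacle.} To conclude \eqref{mp3}, one identifies the $N\to\infty$, $\epsilon\to 0$ limits of the two terms of $R_N^\epsilon(\phi)$ and subtracts at times $s$ and $t$. For the first term, tightness of $Q^N$ in $C([0,T],C^{\gamma,\sig}(\mathbb R))$ yields $Q_t^N(\phi*\xi_\epsilon)\to Q_t^\infty(\phi*\xi_\epsilon)$, and then the distributional continuity of $Q_t^\infty$ on $C^{\gamma,\sig}(\mathbb R)$ gives $Q_t^\infty(\phi*\xi_\epsilon)\to Q_t^\infty(\phi)$ as $\epsilon \to 0$. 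For the second term, using that $\xi$ is even one writes $\mathscr X_u^N(\xi_{\epsilon\sqrt 2}^a) = (\xi_{\epsilon\sqrt 2}*\mathscr X_u^N)(a)$, so the integral equals $\int_0^t\int_\mathbb R(\xi_{\epsilon\sqrt 2}*\mathscr X_u^N)(a)^2\,\phi(a)\,da\,du$. By Proposition \ref{tightm2}, on any interval $[s_0,t]$ with $s_0>0$ the limit $X_u^\infty$ is a bona fide Hölder function of spatial regularity close to $1/2$, so the mollified square converges to $(X_u^\infty)^2$ uniformly on the support of $\phi$ as $\epsilon\to 0$, and the double integral converges to $\int_{s_0}^t\int_\mathbb R (X_u^\infty(a))^2\phi(a)\,da\,du$. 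The main obstacle is to handle the region $u\in(0,s_0)$ near the Dirac initial singularity: this is controlled by the uniform moment bound of Proposition \ref{tight1} (applied with $p$ close to $1$), which forces the contribution of $u\in (0,s_0)$ to vanish as $s_0\to 0$ uniformly in $N$. Subtracting the resulting identity at times $s$ and $t$ gives \eqref{mp3} for $0<s\le t<T$, completing the proof.
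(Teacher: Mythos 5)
Your overall architecture matches the paper's: tightness, the regularity statement, the martingale property, and \eqref{mp2} are all quoted from Propositions \ref{xtight}, \ref{tightm2} and \ref{mcts} exactly as in the paper, and the key input for \eqref{mp3} is the same (Proposition \ref{4.1} together with the polylogarithmic bound \eqref{e:QXpolylog} to justify a dominated-convergence argument in the spatial variable $a$). However, there is a genuine gap in how you pass to the limit, and it occurs in both \eqref{mp1} and \eqref{mp3}: you repeatedly use the convergence of $\mathscr X^N$ to $X^\infty$ in $C^{\alpha,\sig}_{\mathfrak s}(\Lambda_T)$ to conclude convergence of \emph{fixed-time spatial pairings} such as $\mathscr X_t^N(\phi)$ or $\int_0^t \mathscr X_u^N(\xi^a_{\e\sqrt2})^2\,du$. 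The parabolic H\"older topology only controls pairings against smooth space-time test functions; as Remark \ref{rk.main}(c) explicitly warns, it does \emph{not} imply convergence in law of $\mathscr X_t^N(\phi)$ for fixed $t$. So "subtracting the defining relation \eqref{mart_ob} at times $s$ and $t$ and passing to the limit" is not licensed by the topology in which the joint limit point is taken; the fact that the \emph{limit} is a H\"older function at interior times does not help you identify the limit of the prelimit fixed-time pairings with $X_t^\infty(\phi)$. The paper avoids this by first establishing the space-time distributional identity $(\partial_s M^N,\varphi)=((\partial_t-\tfrac12\partial_x^2)\mathscr X^N,\varphi)$ for smooth $\varphi$ supported in $\Lambda_{[\e,T-\e]}$ (Proposition \ref{ds}), passing to the limit there (where both sides are continuous functionals of the triple), and only afterwards sharpening the test function to $\ind_{[s,t]}\otimes\phi$ using the improved regularity of $X^\infty$ and the $C([0,T],\cdot)$ regularity of $M^\infty$. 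Some version of this detour is needed in your argument as well.

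A secondary issue is your treatment of the region $u\in(0,s_0)$ in \eqref{mp3}. Proposition \ref{tight1} does not directly deliver what you claim: the bound \eqref{e.tight1} is in terms of $\|\cdot\|_{L^\infty}$ and hence degenerates like $\e^{-1}$ when applied to $\xi^a_\e$, while \eqref{e.tight2} explicitly requires $s\ge\e'>0$, so neither gives a bound on the contribution of $(0,s_0)$ that is uniform in the mollification parameter. The paper sidesteps this entirely by defining the mollified discrepancy $\mu^\e(a)$ directly in terms of the limit objects $Q^\infty$ and $X^\infty$, transferring the moment bounds of Proposition \ref{4.1} to the limit, and then using that $(\mu^\e,\phi)\to(\mu,\phi)$ almost surely as $\e\to0$; no separate treatment of small times is then needed. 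Your prelimit-first route is workable in principle, but as written the identification of the $N\to\infty$ limit of the quadratic time-integrated functional and the uniform control near $u=0$ are both unjustified.
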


Before going into the proof, we remark that with some inspection it may be verified that all quantities make sense given the spaces they lie in, as long as we choose $\kappa<1/2$ to ensure that $X^\infty$ is a continuous function in space-time away from $t=0$. 

	\begin{proof} Most parts of the following theorem are already established in previous propositions and lemmas. Note that the tightness of $\mathscr X^N$ was proved in Proposition \ref{xtight}.  The tightness of $M^N$ and $Q^N$ was shown in Proposition \ref{mcts}. Thus the joint tightness follows from individual tightness. Consider any limit point $(\mathscr X^\infty,M^\infty,Q^\infty).$ The fact that for any $s>0$, the process $(t,x)\mapsto X^\infty(s+t,x)$ is necessarily supported on the space $C_\mathfrak s^{-\kappa+1/2,\sig}( \Lambda_T)$ was proved in Proposition \ref{tightm2}. From Proposition \ref{mcts} we have that for all $\phi\in C_c^\infty(\mathbb R)$ the process $(M_t^\infty(\phi))_{t\ge 0}$ is a martingale. \eqref{mp2} is already proven in Proposition \ref{mcts} as \eqref{e:mcts}. All we are left to show is \eqref{mp1} and \eqref{mp3}.

 \bigskip

\noindent\textbf{Proof of \eqref{mp1}.} 
  In Lemma \ref{ds} we obtained that $\partial_s M^N =-\delta_{(0,0)}+(\partial_t - \tfrac12\partial_x^2)\mathscr X^N+\delta_T \otimes \mathscr X^N_T$ in the sense of distributions, which by disregarding the boundary terms implies that $$(\partial_s M^N, \varphi) =((\partial_t - \tfrac12\partial_x^2)\mathscr X^N, \varphi)$$ for all $\varphi$ of compact support contained in $\Lambda_{[\e,T-\e]}$ for some $\e>0$. Note that this relation is still respected by any limit points and hence remains true when $N=\infty$. We thus have
  $$(\partial_s M^\infty, \varphi) =((\partial_t - \tfrac12\partial_x^2)\mathscr X^\infty, \varphi)$$
  Since both $\mathscr X^\infty$ and $M^\infty$ lie in spaces with strong enough topologies {($C_\mathfrak s^{-\kappa+1/2,\sig}( \Lambda_T)$ and $C([0,T],C^{\beta,\sig}(\mathbb R))$ respectively)}, taking $\varphi(u,x)$ to approach the function $(u,x) \mapsto \ind_{[s,t]}(u) \phi(x)$ in the above equation leads to \eqref{mp1}.
  
\bigskip

\noindent\textbf{Proof of \eqref{mp3}.} Fix any $0\le t\le T$ and let $\mu$ be a $\mathcal{S}'(\R)$ valued random variable defined as
\begin{align*}
    (\mu,\phi):= Q_t^{\infty}(\phi)-\sigma\int_{\R}\int_0^t (\mathscr{X}_s^{\infty}(x))^2\phi(x)\,ds\,dx.
\end{align*}
We claim that $ (\mu,\phi)=0$ a.s. for all $\phi\in \mathcal{S}(\R).$ This will validate \eqref{mp3}. To verify this, let us define a sequence of function-valued random variables:
\begin{align*}
    \mu^{\e}(a):= Q_t^{\infty}(\xi_{\e}^a)-\sigma\int_0^t (\mathscr{X}_s^{\infty}(\xi_{\e\sqrt{2}}^a))^2ds
\end{align*}
 where $\xi_{\e}^a(x):=\e^{-1}\xi(\e^{-1}(x-a))$ with $\xi(x):=\frac1{\sqrt\pi}e^{-x^2}$. Observe that for any $\phi\in C_c^{\infty}(\R)$ supported on $[-S,S]$,
by Cauchy-Schwarz and Jensen's inequality we have that $$\Ex\big[|(\mu^\e, \phi)|\big] \leq \|\phi\|_{L^2(\mathbb R)}\cdot \Ex\bigg[ \sqrt{\int_{[-S,S]} \mu^\e(a)^2 da}\bigg] \le \|\phi\|_{L^2} \bigg[\int_{[-S,S]} \Ex[\mu^\e(a)^2]da \bigg]^{1/2}.$$ 

We now make use of Proposition \ref{4.1}. From \eqref{e:QXpolylog}, we see that the integral is uniformly bounded in $\e>0$. On the other hand, \eqref{Qllim} tells us that the integrand goes to zero as $\e\to 0$. Thus, by the dominated convergence theorem $(\mu^\e , \phi) \to 0$ in probability. But $(\mu^\e , \phi) \to (\mu,\phi)$ almost surely as $\e \to 0$. Hence, $(\mu,\phi)=0$ almost surely. This proves the claim for compactly supported $\phi$. For general $\phi \in \mathcal S(\mathbb R)$, we may find a sequence of functions  $\phi_n\in C_{c}^{\infty}(\R)$ such that $\phi_n \to \phi$ in the topology of $\mathcal S(\mathbb R)$. Then $0=(\mu,\phi_n)\to (\mu,\phi)$ almost surely as $n\to\infty$. Hence,  $(\mu,\phi)=0$ almost surely for all $\phi\in \mathcal{S}(\R)$. This verifies the claim, completing the proof.
 \end{proof}			
	We now complete the proof of our main theorem, Theorem \ref{t.weakConv}.

	\begin{proof}[Proof of Theorem \ref{t.weakConv}\ref{t.main}] We continue with the notation and setup of Theorem \ref{solving_mp}. We have already established the tightness of $\mathscr X^N$ in the topology of $C_\mathfrak s^{\alpha,\sig}(\Lambda_T)$ in Proposition \ref{xtight}. Consider any limit point $ X^\infty$ of $\mathscr X^N$. From the previous theorem, we already know that $(t,x) \mapsto X_{t+\e}^\infty(x)$ is a continuous function in space and time. From the three equations \eqref{mp1}, \eqref{mp2}, and \eqref{mp3} in Theorem \ref{solving_mp} it follows that the martingale problem for \eqref{she} is satisfied by any limit point $X^\infty$. We refer the reader to \cite[Proposition 4.11]{BG97} for the characterization of the law of \eqref{she} as the solution to this martingale problem. 
		
		The result there is only stated for continuous initial conditions, so what this really shows is that for any $\e>0$ the law of the continuous field $(t,x) \mapsto X_{t+\e}^\infty(x)$ is that of the solution of \eqref{she} with initial condition $X_{\e}^\infty(\cdot).$ Thus we still need to pin down the initial data as $\delta_0$, by showing that we can let $\e\to 0$ and see that the limit of $X_{\e}^\infty(\cdot)$ is equal to $\delta_0$ in some sense. 
		
		In \cite[Section 6]{Par19} there is a general approach to do exactly this. Specifically, it suffices to show as in Lemma 6.6 of that paper the two bounds 
		\begin{align}\label{delta} & \Ex[|X_t^\infty(x)|^r]^{2/r}\leq \Con \cdot t^{-1/2}p_t(x),\\ \label{delta1}
			& \Ex[ |X_t^\infty(x)-p_t(x)|^r]^{2/r} \leq \Con \cdot p_t(x),
		\end{align}where $r>1$ is arbitrary, $p_t$ is the standard heat kernel, and $\Con$ is independent of $t>0$ and $x\in\mathbb R$. Clearly, it suffices to show this when $r$ is an even integer.
 {Using $L^2$ bounds for each term in the chaos expansion of \eqref{she} with $\delta_0$ initial condition (see Lemma 2.4 in \cite{corwin2018exactly}), it follows that the  solution of \eqref{she} with $\delta_0$ initial condition satisfies the above two bounds for $r=2$. The bounds for general $r$ then follow from the $r=2$ case by the hypercontractivity inequality \cite[Theorem 1.4.1]{nualart2006malliavin}.} By the moment convergence result (Proposition \ref{p:mcov}), we know any limit point $X^\infty$ must satisfy $\Ex[(\int_\mathbb R X_t^\infty(x)\phi(x)dx)^k] = \mE[(\int_\mathbb R \mathcal{Z}_t(x)\phi(x)dx)^k]$ for all $k\in\mathbb N$ and $\phi\in C_c^\infty(\mathbb R)$, where $(t,x) \mapsto \mathcal{Z}_t(x)$ solves \eqref{she} with $\delta_0$ initial data and $\sigma=\frac1{{2}\nu([0,1])}$. From here we can conclude by letting $\phi \to \delta_x$ that $\Ex[X_t^\infty(x)^k] = \mE[ \mathcal{Z}_t(x)^k]$ for all $k\in \mathbb N$ and all $x\in\mathbb R$. Consequently, we may immediately deduce \eqref{delta} and \eqref{delta1} by the corresponding bounds for $\mathcal{Z}_t$. This completes the proof. 
	\end{proof}

Finally, we conclude this section by proving Theorem \ref{t.weakConv}\ref{t.main2} which follows from the existence of a certain bounded linear operator.

 \begin{proof}[Proof of Theorem \ref{t.weakConv}\ref{t.main2}] Fix $\alpha<0$. We claim that there exists a bounded linear map $\overline{K\partial_s}: C([0,T],C^{\alpha,\sig}(\mathbb R))\to C([0,T],C^{\alpha,\sig}(\mathbb R))$ such that for all $v = (v_t)_{t\in[0,T]} \in C([0,T],C^{\alpha,\sig}(\mathbb R))$ one has that $\overline{K\partial_s} v  = K(\partial_s(v))$ where $K$ and $\partial_s$ are the same operators defined in Propositions \ref{sch} and \ref{ds} respectively (where the equality may be interpreted in the sense that both sides are viewed as elements of $\mathcal S'(\mathbb R^2)$).
 
 Note that the existence of $\overline{K\partial_s}$ would immediately imply the desired result. Indeed by applying $K$ to both sides of \eqref{e:delMrel} and using Proposition \ref{sch} and Lemma \ref{irrel} we get that $$\mathscr X^N = p+ K(\partial_s(M^N)),$$ where $p$ is the standard heat kernel. We also know from Proposition \ref{mcts} that the $M^N$ are tight in $C([0,T],C^{\gamma,\sig}(\mathbb R))$ for $\gamma<-2$, therefore tightness of $\mathscr X^N$ in $C([0,T],C^{\gamma,\sig}(\mathbb R))$ for $\gamma<-2$ would be immediate by continuity of $\overline{K\partial_s}$. The fact that any limit point coincides with the law of \eqref{she} follows from Theorem \ref{solving_mp}, \eqref{delta} and \eqref{delta1} by taking a joint limit point with the triple appearing there, then using the fact that $K(\partial_s(M^N)) = \overline{K\partial_s} M^N$. 

\smallskip

We now turn toward the existence of $\bar{K\partial_s}$. To construct $\overline{K\partial_s}$ we first remark that $C_c^\infty(\Lambda_T)$ embeds into $C([0,T],C^{\alpha,\tau}(\mathbb R))$ in the natural way: a function $v \in C_c^\infty(\Lambda_T)$ may be identified with $(v(t))_{t\in [0,T]}$ given by $(v(t),\phi)_{L^2(\mathbb R)} := \int_\mathbb R v(t,x)\phi(x)dx.$ Then $\partial_s v=:v'$ is given by $(v'(t),\phi) := \int_\mathbb R (\partial_tv)(t,x)\phi(x)dx.$ In this case we may integrate by parts to obtain 
\begin{align*}
    (K(\partial_sv)(t),\phi) &= \bigg( \int_0^t P_{t-s}v'(s)ds\;,\; \phi \bigg) \\ &= \bigg( v(t) - P_tv(0) + \int_0^t \partial_s P_{t-s} v(s) ds\;, \;\phi\bigg) \\ &= (v(t),\phi) - (v(0), P_t\phi) - \int_0^t (v(s), P_{t-s}\partial_x^2\phi) ds,
\end{align*}
 where all pairings are in $L^2(\mathbb R)$ and we use that $\partial_s P_{t-s} = -\partial_x^2 P_{t-s}, $ and $\partial_x^2$ and $P_t$ are self-adjoint operators on $L^2(\mathbb R)$.

Based on this calculation, we consider a general path $v\in C([0,T],C^{\alpha,\sig}(\mathbb R))$ and we finally define $\overline{K\partial_s}v$ suggestively by the formula \begin{equation}\label{kddef}\big(\big[\;\overline{K\partial_s} v\;\big](t), \phi):= (v(t),\phi) - (v(0), P_t\phi) - \int_0^t (v(s), P_{t-s}\partial_x^2\phi) ds. 
\end{equation}
First note that the pairings on the right-hand side are indeed well-defined for all $v\in C([0,T],C^{\alpha,\sig}(\mathbb R))$. What remains to be seen is that the integral is convergent and that the right-hand side is indeed an element of $C([0,T],C^{\alpha,\sig}(\mathbb R))$ whose norm may be controlled by $\|v\|_{C([0,T],C^{\alpha,\sig}(\mathbb R))}.$ By the definition of these spaces, we must replace $\phi$ by $S^\lambda_x\phi$ (where the scaling operators are given in Definition \ref{ehs}) and then study the growth as $\lambda$ becomes close to 0. 

The first two terms on the right side of \eqref{kddef} are completely straightforward to deal with: the growth is at worst $\lambda^\alpha (1+x^2)^\tau$ uniformly over $\lambda, \phi, x, T$, since we assumed $v\in C([0,T],C^{\alpha,\sig}(\mathbb R)).$    To deal with the third term, we claim that one has 
\begin{equation}\label{bd5}|(v(s), P_{t-s}\partial_x^2S^\lambda_x\phi) |\lesssim \big(\lambda^{\alpha-2} \wedge (t-s)^{\frac{\alpha}2-1}\big)(1+x^2)^\sig\end{equation} uniformly over $s<t\in [0,T]$, as well as $\lambda,\phi,x$. Indeed the bound of the form $\lambda^{\alpha-2}$ follows by noting that when $t-s$ is much smaller than $\lambda$, $P_{t-s}$ is essentially the identity operator, so we can effectively disregard the heat kernel and note that $\partial_x^2 S^\lambda_x\phi$ paired with $v(s)$ satisfies a bound of order $\lambda^{\alpha-2}$. Likewise the bound of the form $(t-s)^{\frac{\alpha}2-1}$ is obtained by noting that when $\lambda$ is very small compared to $t-s$, $P_{t-s}S_x^\lambda\phi$ behaves like $S_x^{\sqrt{t-s}}\phi$, giving a bound of order $(\sqrt{t-s})^{\alpha-2}$ after applying $\partial_x^2$ and pairing with $v(s)$. This proves the bound \eqref{bd5}.

Now the fact that $\|\overline{K\partial_s}v\|$ can be controlled by $\|v\|$ follows simply by noting that uniformly over $0<\lambda^2 \leq t\leq T$ one has $$\int_0^t \big(\lambda^{\alpha-2} \wedge (t-s)^{\frac{\alpha}2-1}\big) ds = \int_0^{t-\lambda^2}(t-s)^{\frac{\alpha}2-1}ds + \int_{t-\lambda^2}^t \lambda^{\alpha-2}ds \leq \Con(\alpha)\cdot  \lambda^\alpha. $$
It remains to verify that $\overline{K\partial_s} v$ is indeed a \textit{continuous} path. So far, what we have effectively shown is that $\overline{K\partial_s}$ is a bounded operator from $L^\infty([0,T], C^{\alpha,\sig}(\mathbb R))$ to itself. Note that $C([0,T], C^{\alpha,\sig}(\mathbb R))$ is precisely the closure of $\mathcal S(\Lambda_T)$ inside of $L^\infty([0,T], C^{\alpha,\sig}(\mathbb R)),$ where $\mathcal S(\Lambda_T)$ denotes restrictions to $\Lambda_T$ of those functions in $\mathcal S(\mathbb R^2)$. From here, path continuity of $\overline{K\partial_s}v$ for all $v\in C([0,T],C^{\alpha,\sig}(\mathbb R))$ is immediate because the bounded operator $\overline{K\partial_s}: L^\infty([0,T], C^{\alpha,\sig}(\mathbb R))\to L^\infty([0,T], C^{\alpha,\sig}(\mathbb R))$ maps the linear subspace $\mathcal S(\Lambda_T)$ into itself.

Since $\overline{K\partial_s}$ is indeed a bounded operator, and since $\overline{K\partial_s} v  = K(\partial_s(v))$ for smooth paths $v\in C_c^\infty(\Lambda_T)$ as shown above, this equality remains true for general $v$ by the denseness of the linear subspace $C_c^\infty(\Lambda_T)$ in $C([0,T],C^{\alpha,\sig}(\mathbb R)).$
 \end{proof}

 We remark that the above proof is essentially sharp, in the sense that $K$ improves parabolic regularity by $2$, while $\partial_s$ destroys parabolic regularity by $2$. Hence, we expect their composition to preserve regularity, which is what we showed. Nonetheless,  the H\"older exponent appearing in Theorem \ref{t.weakConv}\ref{t.main2} may not be sharp, because we do not know if we obtained tightness of $M^N$ in the best possible space in Proposition \ref{mcts}.

\section{Quenched tail field} \label{sec:qtailfield} The goal of this section is to prove the results from Section \ref{sec:qtfresults}, namely Theorem \ref{t:qtfconv} and Theorem \ref{t:max}. Recall the quenched tail field $F_N(t,x)$ from Definition \ref{qtf}. We begin with a preliminary estimate for the first moment of $F_N(t,x)$.

\begin{lem}\label{A1} Let $B$ be a standard Brownian motion. For all $t>0, x\in \R$, we have
    $$\mathbb E[F_N(t,x)] = N^{1/4}\mathbf E[ e^{-N^{1/4}(B_t-x)}\ind_{\{B_t\geq x\}}] \leq (2\pi t)^{-1/2}.$$ 
\end{lem}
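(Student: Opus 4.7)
The plan is to exploit the two defining features of the Howitt--Warren flow that were emphasized in Section~\ref{sec1.2}: the annealed one-point motion of $K_{0,s}$ is simply standard Brownian motion, and the flow enjoys the diffusive scaling invariance $K_{0,Ns}(N^{1/2}\cdot)\stackrel{d}{=} K_{0,s}(\cdot)$ (this fact was already used in the proof of Lemma~\ref{dn}). Taking annealed expectation of $F_N(t,x)$ therefore collapses the quenched tail probability into a probability for a single Brownian motion, namely
\[
\mathbb E[F_N(t,x)] \;=\; N^{1/4} e^{\frac{t}{2}\sqrt{N}+N^{1/4}x}\,\mathbf P\big(B_{Nt}\ge N^{3/4}t+N^{1/2}x\big),
\]
and Brownian scaling $B_{Nt}\stackrel{d}{=} N^{1/2}B_t$ reduces the event to $\{B_t\ge N^{1/4}t+x\}$.

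The next step is to recognize that the remaining prefactor $e^{\frac{t}{2}\sqrt N+N^{1/4}x}$ is exactly the normalizing constant of a Girsanov-type drift shift by $N^{1/4}$. Concretely, I would write the probability as an integral of the Gaussian density, perform the substitution $y=z+N^{1/4}t$, and expand the quadratic $(z+N^{1/4}t)^2/(2t)$; the cross term $zN^{1/4}$ together with the constant $N^{1/2}t/2$ will cancel precisely against the explicit exponential prefactor. This manipulation is short and yields
\[
\mathbb E[F_N(t,x)] \;=\; N^{1/4}\int_{x}^{\infty}\frac{1}{\sqrt{2\pi t}}\,e^{-z^{2}/(2t)}\,e^{-N^{1/4}(z-x)}\,dz \;=\; N^{1/4}\,\mathbf E\!\left[e^{-N^{1/4}(B_t-x)}\ind_{\{B_t\ge x\}}\right],
\]
which is the claimed identity.

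For the upper bound, after the substitution $u=z-x$ the integrand carries the harmless Gaussian factor $e^{-(u+x)^2/(2t)}\le 1$ and the explicitly integrable weight $e^{-N^{1/4}u}$. Bounding the Gaussian by $1$ and integrating the exponential on $[0,\infty)$ produces exactly the factor $N^{-1/4}$ that cancels the prelimit prefactor, leaving the uniform bound $(2\pi t)^{-1/2}$. There is no real obstacle here: both the identity and the bound are essentially one-line computations once the annealed reduction to Brownian motion is made, and the only thing to check carefully is the bookkeeping of the exponential prefactors during the Girsanov-style completion of the square.
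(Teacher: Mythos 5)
Your proposal is correct and follows essentially the same route as the paper: reduce to the annealed one-point motion, use Brownian scaling to rewrite the tail probability at time $t$, and then absorb the prefactor $e^{\frac{t}{2}\sqrt{N}+N^{1/4}x}$ via a drift shift by $N^{1/4}$, finishing with the same bound $p_t(z)\le(2\pi t)^{-1/2}$. The only cosmetic difference is that you carry out the Girsanov step as an explicit completion of the square in the Gaussian integral, whereas the paper invokes Girsanov with the stochastic exponential of $-N^{1/4}B$; these are the same computation.
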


\begin{proof}
    The proof proceeds by writing $F_N$ as a quenched expectation, exactly as we did in Section \ref{pfidea1} or Section \ref{sec:girs}. Then one takes the annealed expectation over the quenched expectation and uses the scale invariance of Brownian motion to obtain $$\mathbb E[F_N(t,x)] = N^{1/4}e^{\frac{t}{2}\sqrt{N}+N^{1/4}x}\mathbf P(B_t-N^{1/4}t\geq x).$$ Now applying Girsanov with the stochastic exponential of $-N^{1/4}B$ will give the claimed equality.  To prove the inequality,  we note that \begin{align*}N^{1/4}\mathbf E[ e^{-N^{1/4}(B_t-x)}\ind_{\{B_t\geq x\}}] &= N^{1/4} \int_x^\infty e^{-N^{1/4}(u-x)}p_t(u)du \\ &\leq (2\pi t)^{-1/2}\int_x^\infty N^{1/4}e^{-N^{1/4}(u-x)}du =(2\pi t)^{-1/2}.
    \end{align*}
\end{proof}
We first prove a variant of Theorem \ref{t:qtfconv} where the quenched tail field is integrated against test functions.

 \begin{prop}\label{hi} Fix $m\in \mathbb{N}$. For all $\phi_1,\ldots,\phi_m\in C_c^{\infty}(\R)$ and $t_1,\ldots,t_m>0$ we have 
\begin{align}\label{qqt}
	\bigg( \int_\mathbb R \phi_i(x) F_N(t_i,x) dx\bigg)_{i=1}^m \stackrel{d}{\to} \bigg( \int_\mathbb R \phi_i(x) \mathcal Z_{t_i}(x) dx\bigg)_{i=1}^m.
\end{align}
  \end{prop}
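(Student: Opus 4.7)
The plan is to express each integral $\int_{\mathbb{R}} \phi_i(x) F_N(t_i,x)\,dx$ in terms of $\mathscr{X}_{t_i}^N(\phi_i)$ up to a correction that vanishes as $N \to \infty$, then invoke Theorem \ref{t.weakConv}\ref{t.main2} together with Slutsky's theorem to conclude.

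The key algebraic identity I would establish first is
\begin{equation*}
\int_{\mathbb{R}} \phi(x) F_N(t,x)\,dx = \mathscr{X}_t^N(\phi) - N^{-1/4} \int_{\mathbb{R}} \phi'(x) F_N(t,x)\,dx
\end{equation*}
for every $\phi \in C_c^\infty(\mathbb{R})$ and $t > 0$. This follows from the change of variables $u = N^{3/4} t + N^{1/2} x$, Fubini's theorem to rewrite $K_{0,Nt}[u,\infty) = \int \mathbf{1}_{v\geq u}\, K_{0,Nt}(dv)$, and a single integration by parts in the inner $u$-integral. Since the exponential factor $e^{-\frac{t}{2}\sqrt{N}+N^{-1/4}u}$ has antiderivative equal to $N^{1/4}$ times itself, the boundary evaluation at the upper limit $v$ reproduces $\mathscr{X}_t^N(\phi)$ after undoing Fubini, while the $\phi'$-term (with the surviving $N^{-1/4}$ prefactor) is recognized as $-N^{-1/4}\int \phi'(x)F_N(t,x)dx$ by applying the same change of variables in reverse. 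The compact support of $\phi$ ensures the boundary term at $-\infty$ vanishes.

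Next, I would show that the correction term tends to zero. By Lemma \ref{A1}, $\mathbb{E}[F_N(t,x)] \leq (2\pi t)^{-1/2}$ uniformly in $x$, and since $\phi'$ is compactly supported we obtain
\begin{equation*}
\mathbb{E}\bigg|N^{-1/4} \int_{\mathbb{R}} \phi'(x) F_N(t,x)\,dx\bigg| \leq N^{-1/4} (2\pi t)^{-1/2} \|\phi'\|_{L^1(\mathbb{R})} \longrightarrow 0.
\end{equation*}
In particular, the correction vanishes in probability, jointly across any finite collection of $(\phi_i, t_i)$.

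Finally, I would apply Theorem \ref{t.weakConv}\ref{t.main2}. The tightness of $\mathscr X^N$ in $C([0,T], C^{\gamma,\sig}(\mathbb{R}))$ together with the identification of limit points as the SHE gives joint convergence
\begin{equation*}
\big(\mathscr{X}_{t_i}^N(\phi_i)\big)_{i=1}^m \stackrel{d}{\longrightarrow} \bigg(\int_{\mathbb{R}} \mathcal{Z}_{t_i}(x) \phi_i(x)\,dx\bigg)_{i=1}^m,
\end{equation*}
since pairing with a finite set of test functions at finitely many fixed times is a continuous operation on $C([0,T], C^{\gamma,\sig}(\mathbb{R}))$ (cf.\ Remark \ref{rk.main}\ref{finiteconv2}). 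Combining this with the $L^1$ vanishing of the correction terms via Slutsky's theorem yields \eqref{qqt}. The step that requires the most care is the integration by parts identity, but it is essentially a bookkeeping exercise; no new probabilistic input beyond Lemma \ref{A1} and Theorem \ref{t.weakConv}\ref{t.main2} is needed.
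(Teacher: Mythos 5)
Your proposal is correct and follows essentially the same route as the paper: the same integration-by-parts identity $\int \phi F_N(t,\cdot)\,dx = \mathscr X_t^N(\phi) - N^{-1/4}\int \phi' F_N(t,\cdot)\,dx$ (the paper phrases it via the pushforward measure $\mu_t^N$ and distributional derivatives, but the computation is the same), the same $L^1$ bound on the correction via Lemma \ref{A1}, and the same appeal to Theorem \ref{t.weakConv}\ref{t.main2} plus Slutsky. The signs and $N$-powers in your identity check out, so no changes are needed.
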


\begin{proof}Let us formally define $$\mu_t^N(dx):= N^{1/2}e^{\frac{t}2 \sqrt{N} } K_{0,Nt}(N^{3/4}t+N^{1/2}x)dx.$$
The above definition can be interpreted rigorously as a random Borel measure by pushing forward $K_{0,Nt}$ under a linear change of coordinates,  just as we did in \eqref{a}. Then we notice that Definition \ref{qtf} is equivalent to $$F_N(t,x):= N^{1/4} e^{N^{1/4}x} \mu_t^N[x,\infty).$$
{Integrating by parts we have} $$\int_{-\infty}^x N^{1/4}e^{N^{1/4}u} \mu_t^N[u,\infty)du = \int_{-\infty}^x e^{N^{1/4}u}\mu_t^N(du) + e^{N^{1/4}x}\mu_t^N[x,\infty).$$  
In the sense of distributions, it is clear that 
        \begin{align*}
           & F_N(t,x) = \partial_x \bigg[\int_{-\infty}^x N^{1/4}e^{N^{1/4}u} \mu_t^N[u,\infty)du\bigg], \quad e^{N^{1/4}x}\mu_t^N(dx)=\partial_x\bigg[\int_{-\infty}^x e^{N^{1/4}u}\mu_t^N(du)\bigg].
        \end{align*}
        Consequently, for each $1\le i\le m$, we can repeatedly integrate by parts to obtain that 
        \begin{align*}
            \int_{\mathbb R}\phi_i(x) F_N(t_i,x)dx &= -\int_{\mathbb R} \phi_i'(x) \bigg[\int_{-\infty}^x N^{1/4}e^{N^{1/4}u} \mu_{t_i}^N[u,\infty)du\bigg]dx \\ &= -\int_{\mathbb R} \phi_i'(x)\bigg[\int_{-\infty}^x e^{N^{1/4}u}\mu_{t_i}^N(du)+ e^{N^{1/4}x}\mu_{t_i}^N[x,\infty)\bigg]dx \\ &= \int_{\mathbb R}\phi_i(x) e^{N^{1/4}x}\mu_{t_i}^N(dx) - \int_{\mathbb R} \phi_i'(x)e^{N^{1/4}x}\mu_{t_i}^N[x,\infty)dx.
        \end{align*}
Thanks to Theorem \ref{t.weakConv}\ref{t.main2} (see Remark \ref{rk.main}\ref{finiteconv2}) we know that as $N\to \infty$ $$\bigg(\int_{\mathbb R}\phi_i(x) e^{N^{1/4}x}\mu_{t_i}^N(dx)\bigg)_{i=1}^m \stackrel{d}{\to} \bigg(\int_{ \R}\phi_i(x) \mathcal Z_{t_i}(x)dx\bigg)_{i=1}^m.$$ 
Thus, to show \eqref{qqt} it suffices to show that as $N\to \infty$, $\int_{\mathbb R} \phi_i'(x)e^{N^{1/4}x}\mu_{t_i}^N[x,\infty)dx$ goes to zero in $L^1(\mathbb P)$  for each $1\le i\le m$. Indeed, observe that
    \begin{align*}
       \mathbb E\bigg|\int_{\mathbb R} \phi_i'(x)e^{N^{1/4}x}\mu_{t_i}^N[x,\infty)dx\bigg| & \leq \int_{\mathbb R} |\phi_i'(x)|e^{N^{1/4}x}\mathbb E[\mu_{t_i}^N[x,\infty)]dx. 
    \end{align*}
  By Lemma \ref{A1} we get $$e^{N^{1/4}x}\mathbb E[\mu_{t_i}^N[x,\infty)]=\mathbf E[ e^{-N^{1/4}(B_{t_i}-x)}\ind_{\{B_{t_i}\geq x\}}]\leq N^{-1/4}(2\pi t_i)^{-1/2}$$ for all $x\in\mathbb R$, which implies the desired $L^1$ convergence. This completes the proof.
  \end{proof}

Note that composition with the logarithm function is not a continuous operation in the topology of integration against test functions, hence the convergence result stated above cannot be directly translated to any convergence result for the KPZ equation.
 Our next lemma {demonstrates uniform convergence for the two-point correlation function} of the quenched tail field. This is the crucial result that allows us to improve Proposition \ref{hi} to obtain multipoint convergence in law of $F_N(t,x)$ to $\mathcal Z_t(x)$ for \textit{individual} values of $(t,x)\in (0,\infty)\times \mathbb R$, after which taking a logarithm becomes sensible.
  
  \begin{lem}\label{7.5}
  As $N\to \infty$, we have that
  $$\mathbb E[F_N(t,x)F_N(t,y)] \to \mE[\mathcal Z_t(x)\mathcal Z_t(y)]$$
 where $\mathcal{Z}$ as in Proposition \ref{p:mcov}. Furthermore, the above convergence is uniform over compact sets of $(t,x,y) \in (0,\infty)\times \R^2$.
  \end{lem}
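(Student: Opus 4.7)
My approach parallels the derivations of the moment formulas in Section \ref{sec:momconv}. First, writing $F_N(t,x)F_N(t,y)$ as a quenched expectation over a two-point motion with indicators of the events $\{X_{Nt}\ge N^{3/4}t+N^{1/2}x\}$ and $\{Y_{Nt}\ge N^{3/4}t+N^{1/2}y\}$, taking the annealed expectation (producing an $SB^{(2)}_\nu$ measure), applying the diffusive rescaling $(X,Y)\mapsto N^{-1/2}(X_{N\cdot},Y_{N\cdot})$ to convert to $SB^{(2)}_{N^{1/2}\nu}$, and invoking the Girsanov tilt of Lemma \ref{rnd} with $\lambda=N^{1/4}$ to absorb the divergent $N^{1/4}t$ drifts, one arrives at the exact identity
\begin{equation*}
m_N(t,x,y) := \mathbb{E}[F_N(t,x)F_N(t,y)] = \mathbf{E}_{D_{N^{1/4}}SB_{N^{1/2}\nu}^{(2)}}\!\left[e^{N^{1/2}V_t^{12}(\mathbf{X})}\,\psi_N^x(X_t^1)\,\psi_N^y(X_t^2)\right],
\end{equation*}
where $\psi_N^a(u):=N^{1/4}e^{-N^{1/4}(u-a)}\ind_{u\ge a}$ is the density of $a+\mathrm{Exp}(N^{1/4})$, which approximates $\delta_a$ as $N\to\infty$.

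I would then identify the pointwise limit $m_N(t,x,y)\to\mathbf E[\mathcal{Z}_t(x)\mathcal{Z}_t(y)]$ by combining Theorem \ref{converge}\ref{convb} with Feynman-Kac. Taking $\mathcal{G}_N=N^{1/4}G$ with $G$ as in \eqref{def:gm}, so that $e^{\mathcal{G}_N-\tfrac12\langle\mathcal{G}_N\rangle}$ is the Radon-Nikodym derivative $\mathcal{M}_N$ of Remark \ref{rk:mn}, Theorem \ref{converge}\ref{convb} yields weak joint convergence of $(\mathbf{X}^N,\,N^{1/2}V_t^{12})$ under the $D$-measure to $(\mathbf{U},\,\tfrac{\sigma}{2}L_0^{U^1-U^2}(t))$, where $\mathbf{U}$ is a standard 2D Brownian motion. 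Since $\psi_N^a$ is not uniformly bounded, I would first mollify it against a smooth bump $\eta_\epsilon$ and apply Theorem \ref{converge}\ref{convb} to the now bounded-continuous integrand, yielding
\begin{equation*}
\lim_{N\to\infty} \mathbf{E}_D\!\left[e^{N^{1/2}V_t^{12}}(\psi_N^x*\eta_\epsilon)(X_t^1)(\psi_N^y*\eta_\epsilon)(X_t^2)\right] = \mathbf{E}\!\left[e^{\tfrac{\sigma}{2}L_0^{U^1-U^2}(t)}\eta_\epsilon(U_t^1-x)\eta_\epsilon(U_t^2-y)\right].
\end{equation*}
Letting $\epsilon\to 0$, the right-hand side tends to $p_t(x)p_t(y)\mathbf{E}[e^{\tfrac{\sigma}{2}L_0^{U^1-U^2}(t)}\mid U_t^1=x,\,U_t^2=y]$, which equals $\mathbf{E}[\mathcal{Z}_t(x)\mathcal{Z}_t(y)]$ by \eqref{eq:FKMoments}. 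Closing the diagonal requires a uniform-in-$N$ bound on the mollification error $|m_N-m_N^\epsilon|$, obtained by combining the uniform moment bound on $e^{N^{1/2}V_t^{12}}$ (Proposition \ref{exp}) with the fact that each marginal $X_t^i$ under $D_{N^{1/4}}SB^{(2)}_{N^{1/2}\nu}$ is a standard Brownian motion, and hence has Gaussian density bounded by $(2\pi t)^{-1/2}$.

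Finally, to upgrade pointwise convergence to uniform convergence on compact subsets of $(0,\infty)\times\mathbb{R}^2$, I would prove equicontinuity of $\{m_N\}_N$ by estimating differences $|m_N(t,x,y)-m_N(t',x',y')|$ directly from the integral formula, using the explicit form of $\psi_N^a$ together with the same moment bounds (Lemma \ref{traps}, Proposition \ref{exp}) to control these differences uniformly in $N$ by a Hölder modulus in $(t,x,y)$. Since the limit $m_\infty(t,x,y)=\mathbf{E}[\mathcal{Z}_t(x)\mathcal{Z}_t(y)]$ is continuous on $(0,\infty)\times\mathbb{R}^2$ (by Feynman-Kac regularity), Arzelà-Ascoli then promotes pointwise convergence to uniform convergence on compacts.

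The main technical obstacle will be controlling the mollification error in the pointwise convergence step. Neither the exponential weight $e^{N^{1/2}V_t^{12}}$ nor the concentrating factors $\psi_N^a$ is bounded uniformly in $N$, so Theorem \ref{converge}\ref{convb} cannot be applied directly to the unmollified integrand. Moreover, the joint law of $(X_t^1,X_t^2)$ under $D_{N^{1/4}}SB^{(2)}_{N^{1/2}\nu}$ carries, at finite $N$, mass on the diagonal $X_t^1=X_t^2$ inherited from the stickiness of the underlying sticky Brownian motion. This diagonal contribution must be shown to be negligible when $x\neq y$ (where $\psi_N^x(u)\psi_N^y(u)$ decays exponentially off-diagonal) and to contribute exactly the correct amount when $x=y$, so that the clean Feynman-Kac expression $\mathbf{E}[\mathcal{Z}_t(x)\mathcal{Z}_t(y)]$ emerges uniformly in the $N\to\infty$ limit.
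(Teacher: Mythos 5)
Your reduction of $\mathbb E[F_N(t,x)F_N(t,y)]$ to the tilted sticky-Brownian expectation $\mathbf E_{D_{N^{1/4}}SB^{(2)}_{N^{1/2}\nu}}[e^{N^{1/2}V_t^{12}}\psi_N^x(X_t^1)\psi_N^y(X_t^2)]$ is correct and matches the paper's first step. The gap is in the limit identification. Theorem \ref{converge}\ref{convb} only handles a \emph{fixed} continuous functional of at most exponential growth, whereas your integrand contains the $N$-dependent concentrating factors $\psi_N^a$, so what you actually need is a local limit theorem (density-level convergence) for the exponentially weighted two-point law, not weak convergence. Your proposed control of the mollification error does not close this: the marginal Gaussian bound controls only one-dimensional marginals and cannot be decoupled from the weight $e^{N^{1/2}V_t^{12}}$ and the second concentrating factor without a H\"older/Cauchy--Schwarz step, and any such step applied to $\psi_N^a$ blows up, since $\mathbf E[\psi_N^a(X_t)^p]^{1/p}\asymp N^{(p-1)/(4p)}\to\infty$ for every $p>1$. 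What is really required is a modulus of continuity, uniform in $N$, for $x\mapsto \mathbf E_D[e^{N^{1/2}V_t^{12}}\psi_N^x(X_t^1)\psi_N^y(X_t^2)]$ --- i.e.\ precisely the two-point regularity $\limsup_N\mathbb E[(F_N(t,x)-F_N(t,x'))^2]\lesssim|x-x'|$ that the paper \emph{extracts from} Lemma \ref{7.5}, so the argument is circular. The same objection applies to your equicontinuity/Arzel\`a--Ascoli step for uniformity on compacts: Lemma \ref{traps} and Proposition \ref{exp} are far too crude to see the H\"older-$1/2$ spatial regularity of $m_N$ uniformly in $N$. (A minor additional error: the marginal of $X^1$ under $D_{N^{1/4}}SB^{(2)}_{N^{1/2}\nu}$ is not a standard Brownian motion; by Girsanov it acquires the drift $N^{1/4}V_t^{12}$.)

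The paper avoids all of this by going integrable. It first observes that the two-point function depends on $\nu$ only through $\nu([0,1])$, so one may assume the characteristic measure is a multiple of the uniform measure; it then invokes the exact contour-integral formula for the second moment of the dual field $\hat F_N$ from \cite{mark} (transported to $F_N$ via the Brownian-web duality $F_N^\nu\stackrel{d}{=}\hat F_N^\mu$) and passes to the limit by dominated convergence inside the double contour integral, identifying the limit with the known integral representation of $\mathbf E[\mathcal Z_t(x)\mathcal Z_t(y)]$. This also delivers the locally uniform convergence for free, since the dominated-convergence argument allows $(t^N,x^N,y^N)\to(t,x,y)$. If you want to keep a soft approach, you would need an independent local limit theorem for the weighted two-point motion (e.g.\ via explicit transition densities of two-point sticky Brownian motion), which is a substantial additional input.
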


  \begin{proof} By adapting the methods in Section \ref{sec:girs}, the two-point correlation function given by $(t,x,y)\mapsto \mathbb E[F_N(t,x)F_N(t,y)]$ can be written as an expression involving only the 2-point motion associated to the kernels $K_{0,t}$. More specifically, in the notation of Section \ref{sec:girs}, it equals $$Ne^{t\sqrt{N}}\mathbf P_{SB^{(2)}_{N^{1/2}\nu}} (X_t-N^{1/4}t \geq x, Y_t-N^{1/4}t \geq y).$$ Consequently the two-point correlation function only depends on the total mass of the characteristic measure. We shall thus assume $\nu$ is a multiple of uniform measure. Without loss of generality, we will assume $\sigma=1$ (i.e., $\nu$ is precisely half times the uniform measure).
  
  We shall now invoke results from \cite{mark} which are for the {case where the characteristic measure is a multiple of the uniform measure on $[0,1]$}. In \cite{mark}, the authors consider a slightly different field $\hat F_N(t,x)$ which is ``dual" to ours in a certain sense. More specifically, they vary the starting point $x$ and fix the tail probability as $[0,\infty)$, whereas we fix the starting point $0$ and vary the tail probability as $[x,\infty)$. One may show that the distributions of both fields are the same as space-time processes, modulo a reflection of the characteristic measure. More precisely, we have $F_N^\nu \stackrel{d}{=} \hat F_N^{\mu}$, where $\mu$ is the pushforward of $\nu$ by $x\mapsto 1-x$, and the superscript highlights the dependence of each field on the underlying characteristic measure.\footnote{In particular, our results imply multipoint convergence of their field $\hat F_N$ to \eqref{she} as well.} This is proved using an explicit coupling of the two fields via the Brownian web. See \cite[Equation (1.8)]{yu} and the subsequent discussion for a short proof, which in turn is based on a construction of \cite[Section 3]{sss}. 
  
  Note that for the case where {the characteristic measure is a multiple of the uniform measure}, we have $\mu = \nu$. Proposition 1.22 in \cite{mark} provides exact moment formulas for the unscaled version of $F_N(t,x)$ in this case. Taking the scalings into account we have that
\begin{align*}
    \mathbb E[ F_N(t,x)F_N(t,y)] & =\oint_{r_1+i\mathbb R} \oint_{r_2 + i \mathbb R} \frac{N^{-1/2}(z_2-z_1)}{N^{-1/2}(z_2-z_1) - N^{-1/2} - N^{-3/4}(z_1+z_2) 
 - N^{-1} z_1z_2} \\ & \hspace{1cm}\cdot e^{\frac{t}{2} (z_1^2+z_2^2)+(xz_1+yz_2)}\frac{N^{-1/2}}{(N^{-1/4}+N^{-1/2}z_1)(N^{-1/4}+N^{-1/2}z_2)} \frac{dz_1}{2\pi i}\frac{dz_2}{2\pi i},
\end{align*}
$$ $$
 where $r_1,r_2$ are fixed numbers not depending on $t,x,y$ and $N$ (large enough), and such that $r_2>r_1 + 1.$ 

Now assume that we have a sequence $(t^N,x^N,y^N)$ converging to $(t,x,y)\in (0,\infty)\times \mathbb R^2$. Note that for $z_1 \in r_1+i\mathbb R$ and $z_2 \in r_2+i\mathbb R$ we have that $|e^{\frac{t^N}{2} (z_1^2+z_2^2)+(x^Nz_1+y^Nz_2)}| \leq \Con e^{-\frac1\Con(|z_1|^2+|z_2|^2)}$ for some constant $\Con>0$ which is independent of $N,z_1,z_2$. Also, note that the ratios appearing in the integral may be bounded independently of $N$ thanks to the fact that $z_1 \in r_1+i\mathbb R$ and $z_2 \in r_2+i\mathbb R$ with $r_2>r_1+1$. Therefore, by the dominated convergence theorem (with dominating function given by $e^{-\frac1\Con(|z_1|^2+|z_2|^2)}$) we may conclude that the integral expression for $\mathbb E[ F_N(t^N,x^N)F_N(t^N,y^N)]$ converges as $N\to \infty$ to $$\oint_{r_1+i\mathbb R} \oint_{r_2 + i \mathbb R} \frac{z_2-z_1}{z_2-z_1 - 1} e^{\frac{t}{2} (z_1^2+z_2^2)+(xz_1+yz_2)}\frac{dz_1}{2\pi i}\frac{dz_2}{2\pi i}.$$
 This is known to agree with $\mathbf E[\mathcal Z_t(x)\mathcal Z_t(y)]$, see \cite[Section 6.2]{bigmac}, thus proving the uniform convergence.
  \end{proof}
With the above lemma in place, we now prove a stronger version of Theorem \ref{t:qtfconv}.

\begin{prop}\label{unifconv} Suppose that the deterministic sequence of vectors $(x^N_1,\ldots,x^N_m) \in \mathbb R^m$ converges as $N\to \infty$ to $(x_1,\ldots,x_m)\in \R^m$. Fix $t_1,\ldots,t_m>0$. Then we have the joint convergence $$\big( F_N(t_i,x^N_i))_{i=1}^m \stackrel{d}{\to} \big( \mathcal Z_{t_i}(x_i)\big)_{i=1}^m.$$
\end{prop}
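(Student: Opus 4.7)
The plan is to upgrade the integrated convergence in Proposition \ref{hi} to pointwise convergence using the uniform two-point correlation bound from Lemma \ref{7.5}. Fix a standard mollifier $\rho_\epsilon \in C_c^\infty(\mathbb R)$ with $\int \rho_\epsilon = 1$ and support in $[-\epsilon, \epsilon]$, and define $\phi_i^\epsilon(x) := \rho_\epsilon(x - x_i)$. For each fixed $\epsilon > 0$, Proposition \ref{hi} gives the joint convergence
\[
\vec Y_N^\epsilon := \bigg(\int_\mathbb R \phi_i^\epsilon(x) F_N(t_i, x)\, dx\bigg)_{i=1}^m \stackrel{d}{\to} \vec Y^\epsilon := \bigg(\int_\mathbb R \phi_i^\epsilon(x) \mathcal Z_{t_i}(x)\, dx\bigg)_{i=1}^m
\]
as $N \to \infty$. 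By the $(t,x)$-continuity of $\mathcal Z$ on $(0,\infty)\times \mathbb R$ and uniform $L^2$ bounds on $\mathcal Z_t(x)$ from \eqref{delta}, $\vec Y^\epsilon \to \vec Y := (\mathcal Z_{t_i}(x_i))_{i=1}^m$ in $L^2$ as $\epsilon\to 0$.

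The central step is to control, for each $i$, the quantity $\mathbb E\big[ (F_N(t_i, x_i^N) - Y_{N,i}^\epsilon)^2\big]$ uniformly in $N$ as $\epsilon \to 0$. Expanding the square yields
\[
\begin{aligned}
\mathbb E\big[ (F_N(t_i, x_i^N) &- Y_{N,i}^\epsilon)^2\big]= \mathbb E[F_N(t_i, x_i^N)^2] - 2\int \phi_i^\epsilon(x)\, \mathbb E[F_N(t_i, x_i^N) F_N(t_i, x)]\, dx\\
&\qquad + \int\int \phi_i^\epsilon(x)\phi_i^\epsilon(y)\, \mathbb E[F_N(t_i, x) F_N(t_i, y)]\, dx\, dy.
\end{aligned}
\]
By Lemma \ref{7.5} the two-point correlation $(x,y) \mapsto \mathbb E[F_N(t_i, x) F_N(t_i, y)]$ converges uniformly on compact subsets of $\mathbb R^2$ as $N \to \infty$. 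Since $x_i^N \to x_i$ lies in a compact set and $\phi_i^\epsilon$ is compactly supported, dominated convergence gives that the displayed expression converges as $N\to \infty$ to $\mathbb E\big[ (\mathcal Z_{t_i}(x_i) - \int \phi_i^\epsilon \mathcal Z_{t_i})^2 \big]$, which in turn vanishes as $\epsilon \to 0$ by continuity of $\mathcal Z_{t_i}$.

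Putting these pieces together, the result follows by a standard triangle-inequality argument: given any bounded uniformly continuous $F:\mathbb R^m \to \mathbb R$ and any $\delta>0$, first choose $\epsilon$ small enough that $\mathbb E|F(\vec Y^\epsilon) - F(\vec Y)| < \delta$ and such that $\limsup_{N \to \infty}\mathbb E\big[ (F_N(t_i, x_i^N) - Y_{N,i}^\epsilon)^2\big] < \delta^2$ for each $i$; then for this $\epsilon$ use the joint distributional convergence $\vec Y_N^\epsilon \to \vec Y^\epsilon$ and Chebyshev's inequality applied to $\vec Y_N - \vec Y_N^\epsilon$ to conclude $|\mathbb E F(\vec Y_N) - \mathbb E F(\vec Y)| \lesssim \delta$ for $N$ sufficiently large. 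The main obstacle (really the crux of the whole argument) is the uniform convergence of the two-point correlation function, already established in Lemma \ref{7.5} via the explicit contour-integral formula of \cite{mark} for the uniform-measure case, combined with the fact that the two-point function depends on the characteristic measure only through its total mass.
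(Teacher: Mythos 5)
Your argument is correct, and it rests on exactly the same two pillars as the paper's proof: the integrated convergence of Proposition \ref{hi} and the locally uniform convergence of the two-point correlation function in Lemma \ref{7.5} (note that the latter does cover the diagonal $x=y$, which your expansion needs for the term $\mathbb E[F_N(t_i,x_i^N)^2]$). Where you differ is in the mechanics of upgrading integrated convergence to pointwise convergence. The paper first establishes tightness of $F_N(t,x)$ via the first-moment bound of Lemma \ref{A1}, builds a projective family of joint limit points of $F_N(t,x)$ together with a countable sequence of its mollifications, invokes the Kolmogorov extension theorem, and then identifies the limit through the $L^1$ estimate \eqref{e:fbds}, which is obtained by applying Fatou's lemma (with $\limsup$) to $\mathbb E|F_N(t,x)-F_N(t,y)|$ and using Lemma \ref{7.5} together with the spatial H\"older estimate $\mathbb E[(\mathcal Z_t(x)-\mathcal Z_t(y))^2]\le \Con|x-y|$. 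You instead compute $\limsup_N \mathbb E[(F_N(t_i,x_i^N)-Y^{\e}_{N,i})^2]$ directly by expanding the square into three two-point correlations, passing to the limit term by term via the uniform convergence, and then closing with the standard converging-together lemma. This is a genuine simplification: it bypasses tightness, the projective-limit construction, and the Fatou argument entirely, at the cost of nothing, since the $L^2$ expansion needs only the same Lemma \ref{7.5} input (the paper itself uses the identical expansion implicitly in its final step, when it sends $\mathbb E[(F_N(t_i,x_i^N)-F_N(t_i,x_i))^2]\to 0$). The only points worth making explicit in a polished write-up are the Fubini interchange identifying the limit of your three terms with $\mathbb E[(\mathcal Z_{t_i}(x_i)-\int\phi_i^\e\mathcal Z_{t_i})^2]$, and the quantitative justification that this last quantity is $O(\e)$, for which the H\"older bound on $\mathbb E[(\mathcal Z_t(x)-\mathcal Z_t(y))^2]$ cited in the paper suffices.
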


  \begin{proof} We first prove the case when $x_i^N=x_i$ for $1\le i\le m$.   We give a proof for $m=1$ to simplify the notation, but the generalization to larger $m$ is straightforward. We will simply write $(t_1,x_1)$ as $(t,x)$, which will be fixed throughout the proof. By Lemma \ref{A1} we get that $\mathbb E[F_N(t,x)]\leq (2\pi t)^{-1/2}$ which is independent of $N$, thus it follows that the $\{F_N\}_{N\ge 1}$ are tight.  Consider any limit point $\mu_0$ of the laws of $\{F_N(t,x)\}_{N\ge 1}$. Fix a smooth compactly supported nonnegative function $\phi:\mathbb R\to \mathbb R$ which integrates to $1$, and define $\phi^\lambda_x(y) = \lambda^{-1}\phi(\lambda^{-1}(y-x)).$ By Proposition \ref{hi} we know that for each $\lambda,x\in \R$, $\big\{\int_\mathbb R \phi^{\lambda}_x(y) F_N(t,y) dy\big\}_{N\ge 1}$ is a tight sequence. For $r\ge 1$, consider any measure $\mu_r$ on $\mathbb R^{r+1}$ which is a joint limit point as $N\to \infty$ of $$\bigg(F_N(t,x) \ , \  \big(\int_\mathbb R \phi^{2^{-k}}_x(y) F_N(t,y) dy\big)_{k=1}^r\bigg),$$ such that the first marginal of $\mu_r$ is $\mu_0$. Using a diagonalization argument, these measures $\mu_r$ may be chosen so that they form a projective family, and therefore by the Kolmogorov extension theorem, we may consider any projective limit $\mu$ which will be a measure on the space of sequences $(a_k)_{k\ge 0}\in \mathbb R^{\mathbb N_0}$, equipped with the $\sigma$-algebra generated by the projection maps. By Proposition \ref{hi}, we find that for such a measure $\mu$ the marginal distribution of $(a_k)_{k\ge 1}$ is simply equal to the law of $\big( \int_\mathbb R \mathcal Z_t(y)\phi^{2^{-k}}_x(y)dy\big)_{k\ge 1}$.  We now claim that  
  \begin{align}\label{e:fbds}
          \limsup_{\lambda\to 0}\limsup_{N\to\infty} \mathbb E \bigg| F_N(t,x) - \int_\mathbb R F_N(t,y) \phi^\lambda_x(y)dy\bigg|  = 0.
      \end{align}
  Assuming this fact, one finds that such a measure $\mu$ is necessarily supported on those sequences $(a_k)_{k\ge 0}$ which satisfy $a_0 = \lim_{k\to \infty} a_k$ in $L^1(\mu)$, which means that $a_0$ must have the law of $\mathcal Z_t(x)$ under $\mu$. We are thus left to check \eqref{e:fbds}. 
   By Lemma \ref{7.5}, we have that
      $$\limsup_{N\to\infty} \mathbb E[ (F_N(t,x)-F_N(t,y))^2] = \mE[(\mathcal Z_t(x)-\mathcal Z_t(y))^2] \le \Con|x-y|$$
uniformly over compacts sets of $(t,x,y)\in (0,\infty)\times\R^2$ where the above inequality is a known estimate for \eqref{she} (see \cite[Proposition 2.4-nw]{das} for example). Thus, by Fatou's lemma, we note that 
      \begin{align*}
          \limsup_{N\to\infty} \mathbb E \bigg| F_N(t,x) - \int_\mathbb R F_N(t,y) \phi^\lambda_x(y)dy\bigg|  &\leq \limsup_{N\to\infty} \int_\mathbb R\mathbb E| F_N(t,x)-F_N(t,y)| \phi^\lambda_x(y)dy \\ &\leq  \int_\mathbb R \limsup_{N\to\infty} \mathbb E| F_N(t,x)-F_N(t,y)| \phi^\lambda_x(y)dy\\&\leq\int_\mathbb R \limsup_{N\to\infty}\mathbb E[ (F_N(t,x)-F_N(t,y))^2]^{1/2} \phi^\lambda_x(y)dy \\ &\leq \Con\int_\mathbb R |x-y|^{1/2} \phi^\lambda_x(y)dy =\Con\lambda^{1/2} \int_\mathbb R |u|^{1/2} \phi(u)du.
      \end{align*}
Taking $\lambda\to 0$ leads to \eqref{e:fbds}. To justify the above application of Fatou's lemma with $\limsup$, one needs a bound on the integrand which is independent of $N$ and is in $L^1(\phi^\lambda_x(y)dy)$, for which we may use Lemma \ref{A1} to get $$\mathbb E| F_N(t,x)-F_N(t,y)| \leq \mathbb E[ F_N(t,x)+F_N(t,y)] \leq 2(2\pi t)^{-1/2}.$$
Finally, the case when $x_i^N$ varies with $N$ follows from the fact that as $N\to \infty$ we have $$\mathbb E\big[\big(F_N(t_i,x_i^N) - F_N(t_i,x_i)\big)^2 \big]\to 0$$ due to the uniform convergence in Lemma \ref{7.5}. This completes the proof.
      \end{proof}

\begin{rk}\label{7.7}  {We remark that one can allow $t_1,\ldots,t_m$ to depend on $N$ as well, and the multi-point convergence result analogous to Proposition \ref{unifconv} would still be true as long as the $t_i^N$ converge to some positive real numbers $t_i$ as $N\to \infty$. These are the precise modifications needed: first one modifies the proof of Proposition \ref{hi} to allow the $t_i$ to depend on $N$. This is fairly immediate since Theorem \ref{t.weakConv}\ref{t.main2} gives convergence in a topology that is uniform in time. Then notice that in Lemma \ref{7.5} we have already allowed the $t_i$ to depend on $N$. Thus we can replace $t_i$ by $t_i^N$ in Proposition \ref{unifconv} as well (the estimate using Fatou's Lemma will still work), which is enough to establish the claim. After the claim is established, some interesting effects can be observed by setting $t^N_i:= t_i+ \alpha N^{-1/4}$ for $\alpha\in \mathbb R$. Using Definition \ref{qtf} one sees in this case that $\alpha$ gets reinterpreted as a shift of the $x$ variable, but with different scaling values.}
\end{rk}

\begin{proof}[Proof of Theorem \ref{t:max}] We continue with the same notations as in the statement of the Theorem \ref{t:max}. Fix any $a\in \R$.
 Set $v:=\sqrt{ct}$ and $x:=d\sqrt{\frac{t}{c}}$ so that $c=\frac{v^2}{t}$ and $d=vx/t$.  It suffices to show that 
 \begin{equation}
 \label{gbel}
     \begin{aligned}
     \lim_{N\to\infty}\mathbf P_{SB^{(k)}_\nu} \bigg(\max_{1\le i \le k(N)} \big\{X^i(Nt)\big\} - vN^{3/4} - x\sqrt{N}& -\tfrac{tN^{1/4}}{v} \big(r_N-\tfrac14 \log N\big) \leq a N^{1/4}\bigg) \\ &= {\mathbb P(t/v[G+\log \mathcal Z_{v^2/t}(vx/t)] \leq a)},
 \end{aligned}
 \end{equation}
  where $\mathcal Z$ solves \eqref{she} with $\sigma = \frac{v}{{2}t \nu([0,1])}$ and $G$ is  an independent Gumbel random variable.  

     \smallskip
     
     For now, fix any $k\in \mathbb N$ which may be arbitrary. Fix a realization of the kernels $\omega = \{K_{s,t}:-\infty<s<t<\infty\}.$ As before, we let $\mathbb P_\omega^{(k)}$ be the quenched probability of $k$ random motions $(X^i(t))_{i=1}^k$ sampled from these kernels. By the scale invariance of SBM kernels, the random variable $$\mathbb P_\omega^{(k)} \bigg(\max_{1\le i \le k} \big\{X^i(Nt)\big\} \leq v N^{3/4}+x\sqrt{N}+\frac{t}{v} N^{1/4}(r_N-\frac14 \log N) + aN^{1/4}\bigg) $$ has the same law for any $y>0$ as the random variable $$\mathbb P_\omega^{(k)} \bigg(\max_{1\le i \le k} \big\{X^i(Ny^2t)\big\} \leq y\big[v N^{3/4}+x\sqrt{N}+\frac{t}{v} N^{1/4}(r_N-\frac14 \log N) +aN^{1/4}\big]\bigg)$$ except that the characteristic measure in the latter expression has changed to $y\cdot \nu$. Now we make a specific choice of $y:=v/t$ and we get by Definition \ref{qtf} that it is equal (pathwise, not just in distribution) to the quantity 
     \begin{align}\label{disp}
        \bigg( 1-\exp\bigg[-\frac{v^2}{2t}\sqrt{N} - N^{\frac14}\frac{vx}{t} - \frac{av}{t} -r_N\bigg] \cdot F_N\big(v^2/t, \;\tfrac{vx}{t} + N^{-\frac14} (\tfrac{av}{t} +r_N -\tfrac14\log N)\big)\bigg)^k.
     \end{align}
     where it is implicit that the characteristic measure associated to $F_N$ is $(v/t)\cdot \nu$. Then we may choose $$k=k(N)= \bigg\lfloor \exp\bigg[\frac{v^2}{2t}\sqrt{N} + \frac{vx}{t} N^{1/4} + r_N\bigg]\bigg\rfloor$$ as stated in the theorem to obtain that \eqref{disp} is a quantity of the form $(1-\frac{u(N)}{k(N)+O_N(1)})^{k(N)}$, where the $O_N(1)$ term is deterministic and bounded between $0$ and $1$, and $u(N)$ is a sequence of random variables which by Proposition \ref{unifconv} converges in law to the strictly positive random variable $e^{-\frac{av}t}\mathcal Z_{v^2/t}(vx/t)$. Since the functions $u\mapsto (1-\frac{u}{k(N)+O_N(1)})^{k(N)}$ converge uniformly to $u\mapsto e^{-u}$ on compact subsets of $[0,\infty)$, we can conclude that the expression in \eqref{disp}, therefore converges in law to the random variable $$\exp \big[ -e^{-\frac{av}t}\mathcal Z_{v^2/t}(vx/t)\big],$$ where $\mathcal Z$ solves \eqref{she} with $\sigma=\frac{v}{{2}t\nu([0,1])}$. Now this convergence in law also implies convergence of the associated annealed expectations thanks to the boundedness of all quantities involved, which implies that the limit of the left side of \eqref{gbel} equals $$\mathbb E\bigg[\exp \big[ -e^{-\frac{av}t}\mathcal Z_{v^2/t}(vx/t)\big]\bigg],$$ which agrees with the right-hand side of \eqref{gbel} by the explicit form of the Gumbel distribution.
 \end{proof}

Using Remark \ref{7.7}, one could also consider replacing time $Nt$ by time $Nt$ plus a correction of order $N^\alpha$ for some $\alpha<1$, and this would change the constants appearing in the proof in an explicit manner and introduce even more terms in the recentering if $\alpha \in [1/2,1)$. For brevity, we will not explore this.
\appendix

\section{Technical lemmas on Brownian bridges} \label{app}

\begin{defn}[Brownian concatenated process] \label{d.bcp} Fix any $x\in \R$, and $s\in [0,1]$. Let $B_{\operatorname{br}}$ be a Brownian bridge on $[0,s]$ from $0$ to $x$. Let $W$ be a standard Brownian motion independent of $B_{\operatorname{br}}$. We define the Brownian concatenated process $B_{s,x}$ with anchor at $(s,x)$ as
\begin{align*}
    B_{s,x}(y):=\begin{cases}
    B_{\operatorname{br}}(y) & y\in [0,s] \\  
    W(y-s)+x & y\in [s,1].
    \end{cases}
\end{align*}
    
\end{defn}

	\begin{lem}\label{6.1}
		 Let $W=(W_s)_{s\in [0,1]}$ be a Brownian motion on $[0,1]$. Let $F=(F_s)_{s\in[0,1]}:C[0,1]\to C[0,1]$ be a Borel-measurable functional. Suppose that $\mE[\sup_{s\leq 1}F_s(W)^2]<\infty$. Suppose further that the function $g(s,x):=\mE\left[F_s(B_{s,x})\right]$ is bounded on $[0,1]\times\mathbb R$ and continuous in $x$ for each $s>0$. Then for all compactly supported smooth functions $\phi$ we have the identity 
		$$\mE\bigg[ \int_0^1 F_s(W)\phi(W_s)ds \bigg] = \int_0^1 \int_{\mathbb R} g(s,x)p_s(x) \phi(x)\,dx\,ds,$$ and moreover for all $x\in\mathbb R$ we have
		$$\mE\bigg[ \int_0^1 F_s(W) dL_x^W(s)\bigg]= \int_0^1 g(s,x) p_s(x) ds,$$ 
        where $dL^W_x(s)$ is the Lebesgue-Stiltjes measure associated to the nondecreasing function $s\mapsto L_x^W(s).$
	\end{lem}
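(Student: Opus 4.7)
The first identity I would establish by the standard Brownian bridge decomposition conditional on $W_s$. For each fixed $s \in (0,1]$ and $x \in \mathbb{R}$, the regular conditional law of the entire path $(W_u)_{u\in[0,1]}$ given $W_s = x$ is precisely the law of $B_{s,x}$: on $[0,s]$ it is a Brownian bridge from $0$ to $x$, and on $[s,1]$ an independent Brownian motion started at $x$. Hence $\mathbf{E}[F_s(W)\mid W_s = x] = g(s,x)$, and pairing against $\phi(W_s)$ gives $\mathbf{E}[F_s(W)\phi(W_s)] = \int_{\mathbb{R}} g(s,x)\phi(x)p_s(x)\,dx$. The hypothesis $\mathbf{E}[\sup_{s\leq 1} F_s(W)^2] < \infty$ together with the boundedness of $\phi$ makes $(s,\omega)\mapsto F_s(W)\phi(W_s)$ jointly integrable on $[0,1]\times \Omega$, so Fubini's theorem then yields the first claim.

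For the second identity I would apply the first identity with $\phi$ replaced by a family of smooth compactly supported mollifiers $\phi^\varepsilon_x$ concentrating at $x$, and then let $\varepsilon \to 0$. The right-hand side $\int_0^1 \int_{\mathbb{R}} g(s,y)p_s(y)\phi^\varepsilon_x(y)\,dy\,ds$ converges to $\int_0^1 g(s,x)p_s(x)\,ds$ by dominated convergence: for each $s > 0$ the inner integrand converges pointwise by continuity of $g(s,\cdot)$ and $p_s$ at $x$, and the integrands are uniformly controlled by $\|g\|_\infty \sup_y p_s(y) = \|g\|_\infty (2\pi s)^{-1/2}$, which is integrable on $[0,1]$.

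On the left-hand side, the occupation times formula gives, almost surely, the convergence $\int_0^1 F_s(W)\phi^\varepsilon_x(W_s)\,ds \to \int_0^1 F_s(W)\,dL_x^W(s)$, since $s \mapsto F_s(W)$ is continuous and $\phi^\varepsilon_x(W_s)\,ds$ converges weakly to $dL_x^W(s)$ as measures on $[0,1]$ (using the joint continuity of Brownian local time $(s,y)\mapsto L_y^W(s)$). The principal technical step is moving this limit through the outer expectation. I would dominate using the envelope $\int_0^1 \phi^\varepsilon_x(W_s)\,ds = \int_{\mathbb{R}} \phi^\varepsilon_x(y)L^W_y(1)\,dy \leq \sup_{|y-x|\leq 1} L^W_y(1) =: \Lambda$, valid for $\varepsilon$ small enough that $\phi^\varepsilon_x$ is supported in $[x-1,x+1]$. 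Then Cauchy-Schwarz gives $\mathbf{E}[\sup_s|F_s(W)|\cdot \Lambda] \leq \mathbf{E}[\sup_s F_s(W)^2]^{1/2}\mathbf{E}[\Lambda^2]^{1/2}$, and the classical $L^2$ bound on Brownian local time makes the envelope integrable. The dominated convergence theorem then completes the argument.
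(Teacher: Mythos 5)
Your proposal is correct and follows essentially the same route as the paper: the first identity via the bridge/concatenation description of the conditional law of $W$ given $W_s$ plus Fubini, and the second via mollification, the occupation-time formula to identify the a.s.\ limit, and a Cauchy--Schwarz domination using the $L^2$ bound on the supremum of Brownian local time. The only cosmetic difference is that you dominate by $\sup_{|y-x|\le 1} L^W_y(1)$ where the paper uses $\sup_{y\in\mathbb R} L^W_y(1)$; both are handled by the same classical estimate.
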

We remark that all expectations appearing above are finite. Let $Z:=\sup_{s\leq 1}|F_s(W)|$. Note that $\big|\int_0^1 F_s(W)\phi(W_s)ds\big| \leq Z\cdot \int_0^1 |\phi(W_s)|ds$ and $\big|\int_0^1 F_s(W) dL_x^W(s)\big| \leq Z \cdot L^W_x(1).$ Therefore a simple application of Cauchy-Schwarz (noting that $\mE[Z^2]<\infty$ by assumption) shows that all expectations above are finite.
 
	\begin{proof}  Note that for all $\phi$ continuous and compactly supported, by the tower property of conditional expectation we have $$\mE\bigg[ \int_0^1 F_s(W)\phi(W_s)ds \bigg] = \int_0^1 \mE[F_s(W)\phi(W_s)]ds= \int_0^1 \mE\big[\mE[F_s(W)|W_s]\phi(W_s)\big]ds.$$ 
 Now the law of $W$ conditioned on $W_s$ is precisely given by the Brownian concatenated process with anchor at $(s, W_s).$ Thus, $\mE[F_s(W)|W_s] = g(s, W_s)$. Therefore, the above expectation equals $$\int_0^1 \mE[g(s,W_s)\phi(W_s)]ds = \int_0^1 \int_\mathbb R g(s,x)p_s(x)\phi(x)dxds,$$ proving the first identity. To prove the second identity, we will let $\phi$ converge weakly to a Dirac mass and use the continuity in $x$ of $g$. Let $\phi$ be smooth, nonnegative, even, supported in $[-1,1]$, and suppose that it integrates to 1. Let $\phi^\e(u):=\e^{-1}\phi(\e^{-1}u)$. It is standard that the random measure on $[0,1]$ given by $\phi^\e(x-W_s)ds$ converges almost surely as $\e\to 0$ to $L^W_x(ds),$ with respect to the topology of weak convergence on the space of finite measures on $[0,1].$ Consequently, since we assume continuity of $s\mapsto F_s(W)$ we have the almost sure convergence 
		\begin{equation}\label{app1}\int_0^1 F_s(W)\phi^\e(x-W_s)ds \stackrel{\e\to 0}{\longrightarrow} \int_0^1 F_s(W) L^W_x(ds).
		\end{equation}
		Now by the first identity, the expectation of the left-hand side equals $$\int_0^1 H_x^\e(s) ds, \ \ \ \mbox{ where }H_x^\e(s):=\int_\mathbb R g(s,u)p_s(u)\phi^\e(x-u)du.$$ It is clear from the continuity of $g$ in $x$ that $H_x^\e(s) \to g(s,x)p_s(x)$ as $\e\to 0$, and moreover since $g$ is assumed to be globally bounded and $\phi$ is supported in $[-1,1]$, we have that $$|H_x^\e(s)| \leq \max_{u\in[x-1,x+1]}|g(s,u)| p_s(u) \leq \|g\|_{L^\infty} s^{-1/2}.$$ Hence by the dominated convergence theorem we find that $$\mE\bigg[\int_0^1 F_s(W)\phi^\e(x-W_s)ds\bigg]=\int_0^1 H^\e(s)ds \to \int_0^1 g(s,x)p_s(x)ds.$$ Thus it suffices to prove that \eqref{app1} also holds in $L^1$. To prove that, we note that $$\bigg|\int_0^1 F_s(W)\phi^\e(x-W_s)ds\bigg|\leq \int_0^1 |F_s(W)| \phi^\e (x-W_s)ds \leq Z \int_0^1 \phi^\e(x-W_s)ds,$$ where $Z:=\sup_{s\in [0,1]} |F_s(W)|$. By the occupation time formula for the local time, we have 
		$$\sup_{\e\in (0,1]} \int_0^1 \phi^\e(x-W_s)ds = \sup_{\e\in(0,1]} \int_\mathbb R \phi^\e(x-u) L^W_u(1)du \leq \sup_{u\in\mathbb R} L^W_u(1).$$ Summarizing the last two expressions and applying the Cauchy-Schwarz inequality, we have that $$\mE_{BM} \bigg[ \sup_{\e\in (0,1]} \bigg|\int_0^1 F_s(W)\phi^\e(x-W_s)ds\bigg| \bigg] \leq \mE[Z^2]^{1/2} \mE\big[\sup_{u\in\mathbb R} L^W_u(1)^2\big]^{1/2}.$$ The expected square of the sup over local times is finite, e.g., by \cite[Chapter XI, Theorem (2.4)]{RY99}. Therefore, we have uniform integrability and thus $L^1$ convergence in \eqref{app1} as desired.
	\end{proof}

	\begin{lem}\label{coupled}
		Let $B^{s,x}(t)$ be a Brownian bridge on $[0,s]$ from $0$ to $x$. Denote by $L^{B^{s,x}}_0(t)$ the local time at 0 of $B^{s,x}$ at time $t$. Then the map $(s,x) \mapsto L^{B^{s,x}}_0(s)$ has a modification which is almost surely continuous in both variables on $(0,1]\times \mathbb R$.
	\end{lem}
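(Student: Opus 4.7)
The plan is to realize the whole family $\{B^{s,x}\}$ on a single probability space via a Brownian scaling, thereby reducing the two-parameter continuity problem to a one-parameter one on which Kolmogorov's continuity criterion can be applied. Starting from a standard Brownian bridge $B^{1,0}$ on $[0,1]$ from $0$ to $0$ (built, say, from a Brownian motion $W$ via $B^{1,0}(t) := W(t) - tW(1)$), define pathwise
\[
 B^{s,x}(t) := \sqrt{s}\,B^{1,0}(t/s) + (t/s)\,x, \qquad t\in [0,s].
\]
A direct covariance computation confirms that $B^{s,x}$ is a Brownian bridge from $0$ to $x$ on $[0,s]$, and a change of variable $v = u/s$ inside the $\varepsilon$-approximation of \eqref{eq:localTimeDef} together with the rescaling $\varepsilon \mapsto \varepsilon/\sqrt{s}$ produces the pathwise identity
\[
 L^{B^{s,x}}_0(s) = \sqrt{s}\,L^{Z^{y}}_0(1), \qquad y := x/\sqrt{s},
\]
where $Z^{y}(v) := B^{1,0}(v) + vy$ is itself a Brownian bridge from $0$ to $y$ on $[0,1]$. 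Since $(s,x)\mapsto (s,\,x/\sqrt s)$ is a homeomorphism of $(0,1]\times\mathbb{R}$ and $s\mapsto \sqrt s$ is continuous, it suffices to produce a modification of $y\mapsto L^{Z^{y}}_0(1)$ which is a.s.\ continuous on $\mathbb{R}$.

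For the one-parameter problem I would apply Kolmogorov's continuity criterion via Tanaka's formula. Writing $L^{Z^{y}}_0(1) = |y| - \int_0^1 \mathrm{sgn}(Z^{y}(u))\,dZ^{y}(u)$ and using the semimartingale decomposition $dZ^{y}(u) = d\beta(u) + \bigl(y - B^{1,0}(u)/(1-u)\bigr)du$ of $Z^{y}$ in the filtration of $B^{1,0}$ (with $\beta$ the driving Brownian motion), the difference $L^{Z^{y}}_0(1) - L^{Z^{y'}}_0(1)$ splits as a martingale part plus a drift part, both supported on the set
\[
 A^{y,y'} := \{u\in[0,1] : Z^{y}(u)Z^{y'}(u) < 0\} = \{u\in(0,1] : B^{1,0}(u)/u \in I\},
\]
where $I$ is an interval of length $|y-y'|$. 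The Gaussian marginals $B^{1,0}(u)/u \sim \mathcal{N}(0, (1-u)/u)$ and joint density of $\bigl(B^{1,0}(u_1),\ldots,B^{1,0}(u_k)\bigr)$ yield the factorial moment bounds $\mathbb{E}[(\mathrm{Leb}\,A^{y,y'})^{k}] \leq C_k\,|y-y'|^{k}$, and the Burkholder-Davis-Gundy inequality then gives $\mathbb{E}[|\text{martingale part}|^{p}] \leq C_p\,|y-y'|^{p/2}$. For the drift part, the pathwise sandwich $|Z^{y}(u)| \leq u\,|y-y'|$ on $A^{y,y'}$, combined with the substitution $B^{1,0}(u) = Z^{y}(u) - uy$, transfers the $1/(1-u)$ singularity onto integrands carrying an extra factor of $|y-y'|$, and the remaining expectations can be bounded using the explicit identity $\mathbb{E}\bigl[|B^{1,0}(u)|\,\mathbf{1}_{\{B^{1,0}(u)\in [-uy,0]\}}\bigr] = \sqrt{u(1-u)/(2\pi)}\bigl(1 - e^{-uy^2/(2(1-u))}\bigr)$, which upon division by $1-u$ and integration over $[0,1]$ is uniformly bounded on compact $y$-ranges.

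Combining both estimates yields $\mathbb{E}\bigl[|L^{Z^{y}}_0(1) - L^{Z^{y'}}_0(1)|^{p}\bigr] \leq C_{K,p}\,|y-y'|^{p/2}$ for every compact $K\subset\mathbb{R}$ and every $p\geq 2$. Choosing $p > 2$ (say $p=6$, giving exponent $3$), Kolmogorov's continuity criterion produces an a.s.\ locally H\"older continuous modification of $y\mapsto L^{Z^{y}}_0(1)$ on $\mathbb{R}$; pulling back along the homeomorphism $(s,x)\mapsto(s,x/\sqrt s)$ and multiplying by the continuous function $\sqrt s$ yields the claimed jointly continuous modification of $(s,x)\mapsto L^{B^{s,x}}_0(s)$ on $(0,1]\times\mathbb{R}$. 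The main obstacle is the careful treatment of the drift: the naive bound $|B^{1,0}(u)|/(1-u) \leq u\max(|y|,|y'|)/(1-u)$ on $A^{y,y'}$, combined with the Gaussian bound $\mathbb{P}(A^{y,y'}(u))\lesssim |y-y'|\sqrt{u/(1-u)}$, leads to the divergent integral $\int_0^1 u^{3/2}/(1-u)^{3/2}\,du$, and the algebraic substitution $B^{1,0} = Z^{y}-uy$ together with the pathwise bound $|Z^{y}|\leq u|y-y'|$ on $A^{y,y'}$ is what resolves this apparent obstruction by transferring the singularity onto terms of strictly higher order in $|y-y'|$.
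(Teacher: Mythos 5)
Your first reduction -- realizing the whole family on one probability space via Brownian scaling and the pathwise identity $L_0^{B^{s,x}}(s)=\sqrt{s}\,L_0^{Z^{x/\sqrt{s}}}(1)$ -- is exactly the paper's first step (the paper's displayed identity $G(s,xs^{1/2})=G(1,x)$ omits the harmless $\sqrt{s}$ factor that you correctly track). From there the two arguments diverge. The paper time-reverses the bridge, splits $L_0^{B^x}(1)=L_0^{B^x}(1/2)+L_0^{W^x}(1/2)$, argues that each half is absolutely continuous with respect to a drifted Brownian motion, and then cites joint continuity of Brownian local times in the spatial variable and the drift as a standard fact. You instead attack $y\mapsto L_0^{Z^y}(1)$ head-on with Tanaka's formula, the bridge SDE, and Kolmogorov's criterion. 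Your route is more self-contained and quantitative (it produces an explicit H\"older modulus rather than invoking an external continuity result), at the cost of having to fight the $(1-u)^{-1}$ singularity of the bridge drift by hand; the paper's route outsources precisely that difficulty to the cited standard result.

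There is one genuine gap in your drift estimate. First, the conclusion you state -- that the integral obtained from the explicit Gaussian identity is ``uniformly bounded on compact $y$-ranges'' -- is not what Kolmogorov's criterion needs: you must extract a factor $|y-y'|^{\alpha}$ with $\alpha>0$. This is in fact available from the very identity you quote: on $A^{y,y'}$ the variable $B^{1,0}(u)$ is confined to $u\cdot I$ with $|I|=|y-y'|$, and $\int_0^1 (1-u)^{-1}\sqrt{u(1-u)/(2\pi)}\,\bigl(1-e^{-u|y-y'|^2/(2(1-u))}\bigr)\,du = O(|y-y'|)$ after splitting the integral at $1-u=|y-y'|^2$ (and when $0\notin I$, i.e.\ $y,y'$ of the same sign, the Gaussian tail factor $e^{-u\,\mathrm{dist}(0,I)^2/(2(1-u))}$ does the same job). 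So the first moment of the drift remainder is genuinely $O_K(|y-y'|)$, not merely bounded -- but you should say so, since boundedness alone proves nothing. Second, and more substantively, Kolmogorov in one spatial variable requires a $p$-th moment bound with $p$ such that the exponent exceeds $1$, and your sketch only controls the \emph{first} moment of the drift remainder $\int_{A^{y,y'}}\frac{|B^{1,0}(u)|}{1-u}\,du$. Passing to $\mathbb{E}\bigl[\bigl(\int_{A^{y,y'}}\frac{|B^{1,0}(u)|}{1-u}du\bigr)^{p}\bigr]$ requires the multi-time estimate $\mathbb{E}\bigl[\prod_{i}\frac{|B^{1,0}(u_i)|}{1-u_i}\ind_{A^{y,y'}(u_i)}\bigr]$ over the simplex, via the same sequential conditioning you invoke for $\mathbb{E}[(\mathrm{Leb}\,A^{y,y'})^{k}]$; this is doable but is asserted rather than carried out, and it is where most of the real work in your approach lives. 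With those two points repaired the argument closes.
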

	
	\begin{proof}
		Let $G(s,x):= L^{B^{s,x}}_0(s)$. Note that the zero set of $B^{s,xs^{1/2}}$ is just a reparametrization of that of $B^{x,1}$, in fact $B^{x,xs^{1/2}}$ is just $s^{1/2}$ multiplied by a reparametrization of $B^{x,1}$, and so we have that $G(s,xs^{1/2})=G(1,x)$. Thus $G(s,x)=G(1,xs^{-1/2})$ and therefore it suffices to show a.s. H\"older continuity of $x\mapsto G(1,x)$. 

  \medskip
  
		To prove continuity of $x\mapsto G(1,x)$, let us simplify notation and henceforth write $B^{1,x} = B^x$. Now write $L_0^{B^x}(1) = L_0^{B^x}(1/2) + L_0^{W^x}(1/2)$, where $W^x(t) = B^x(1-t).$ Note that on the time interval $[0,1/2]$, $B^x$ is absolutely continuous with respect to a Brownian motion of drift $x$ started from 0, and moreover the Radon-Nikodym derivative is \textit{independent} of $x$. Likewise, $W^x$ is absolutely continuous with respect to a Brownian motion of drift $-x$ started from $x$, and similarly, the Radon-Nikodym derivative is \textit{independent} of $x$. Consequently, for a standard Brownian motion $Z$ if we define $Z^{a,\lambda}(t):=a+\lambda t+Z(t),$ then it suffices to show that $(a,\lambda) \mapsto L_0^{Z^{a,\lambda}}(1/2)$ is a.s. jointly H\"older continuous as this would imply the desired result for both $B^x$ and $W^x$. Note that $L_0^{Z^{a,\lambda}}(1/2) = L_{-a}^{Z^{0,\lambda}}(1/2).$ For Brownian motion, joint continuity of local times in the drift and the spatial variable is now a standard result.
	\end{proof}

 \begin{lem}\label{tl} Fix $a,b,c,d,x\in \R$ with $c^2+d^2=1$. Let $B$ be a Brownian bridge from $0$ to $x$ on $[0,1]$ and let $W$ be an independent standard Brownian motion. Consider the process $X:=a+cB+dW$. For all $\theta>0$, we have
 \begin{equation}
     \label{tlo}
     \mE[e^{\theta L_0^X(1)}]\leq \theta\sqrt{2 \pi}e^{\theta^2/2}.
 \end{equation}
 \end{lem}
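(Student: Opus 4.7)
The plan is to exploit a Gaussian decomposition of $X$ to reduce the problem to an explicit local-time calculation for a Brownian bridge. Setting $\beta := cx + dW_1$ and $V_t := X_t - a - t\beta = c(B_t - tx) + d(W_t - tW_1)$, a direct covariance computation shows that $V$ is a standard Brownian bridge from $0$ to $0$ on $[0,1]$ (its covariance is $s(1-t)$ for $s\leq t$), that $\beta \sim \mathcal{N}(cx, d^2)$, and that $V$ is independent of $\beta$ (they are jointly Gaussian with zero covariance). Hence $X_t = a + t\beta + V_t$, and conditional on $\beta$ the process $X$ is a Brownian bridge from $a$ at time $0$ to $a+\beta$ at time $1$. (Note that the parameter $b$ from the statement plays no role.)

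Next I would compute $\mathbb{E}[e^{\theta L_0^X(1)} \mid \beta]$ using the explicit density of the local time of a Brownian bridge with fixed endpoints. For a Brownian motion $W$ starting from $a$, the strong Markov property at the first hitting time of zero combined with the reflection principle gives the joint density of $(W_1, L_0^W(1))$ at $(w,\ell)$ with $\ell>0$ as $(|a|+\ell+|w|)(2\pi)^{-1/2}\exp\bigl(-(|a|+\ell+|w|)^2/2\bigr)$. Dividing by the Gaussian density of $W_1$ and specializing to $w = a+\beta$ yields, for $\ell>0$, the conditional density $(|a|+\ell+|a+\beta|)\exp\bigl(-(|a|+\ell+|a+\beta|)^2/2 + \beta^2/2\bigr)$, supplemented by a Dirac mass of weight $\max(1-e^{-2a(a+\beta)},0)$ at $\ell=0$. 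A Gaussian integral together with the algebraic identity
\[
\beta^2 - A^2 \;=\; -2a(a+\beta) - 2|a||a+\beta| \;\in\; \{0,\,-4a(a+\beta)\},
\]
split into cases according to the sign of $a(a+\beta)$, produces a clean cancellation between the Dirac mass and one of the exponential pieces, and leads to the closed form
\[
\mathbb{E}[e^{\theta L_0^X(1)} \mid \beta] \;=\; 1 + \theta\sqrt{2\pi}\,\bar{\Phi}(A-\theta)\,\exp\!\bigl(\tfrac{\theta^2}{2} + \tfrac{\beta^2}{2} - \theta A\bigr),
\]
where $A := |a| + |a+\beta|$ and $\bar{\Phi} := 1-\Phi$ is the standard normal upper tail.

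To finish, I would combine the elementary inequality $\bar{\Phi}(u) \leq \tfrac12 e^{-u^2/2}$ for $u \geq 0$ (which follows from monotonicity of $u\mapsto 2e^{u^2/2}\bar{\Phi}(u)$ on $[0,\infty)$, starting from the value $1$ at $u=0$) with $\bar{\Phi}(u)\leq 1$ for $u\leq 0$, together with the observation $\beta^2\leq A^2$ from step two (since $|\beta|=|(a+\beta)-a|\leq|a|+|a+\beta|=A$). Splitting on whether $A\leq 2\theta$ or $A>2\theta$, a short case analysis gives the pointwise estimate $\bar{\Phi}(A-\theta)\exp(\beta^2/2 - \theta A) \leq 1$ for every value of $\beta$, and taking expectation over $\beta$ then yields \eqref{tlo} (in the form with the trivial $+1$ absorbed into the multiplicative bound, which is harmless in the regime of applications). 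The main obstacle will be the conditional Laplace transform computation in step two: while the final formula is elegant, deriving it requires patient bookkeeping of the Dirac mass against the exponential terms, and recognizing the algebraic cancellation between $\beta^2$ and $A^2$ that produces the leading $1$.
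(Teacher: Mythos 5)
Your proof is correct and follows essentially the same route as the paper's: condition on the terminal Gaussian variable (your $\beta$, the paper's $W(1)$) to reduce to a Brownian bridge from $a$ to $a+\beta$, then integrate the explicit law of the bridge's local time at zero. The only real difference is that the paper simply cites Pitman's tail formula $\P(L_0^Y(1)>v)=e^{b^2/2}e^{-(|a|+|a+b|+v)^2/2}$ and bounds $\int_0^\infty \theta e^{\theta y}\P(L>y)\,dy$ directly, whereas you rederive the density via reflection and compute the exact conditional Laplace transform including the atom at $\ell=0$. Your extra care does surface one genuine (if harmless) imprecision: the identity $\mE[e^{\theta L}]=1+\int_0^\infty\theta e^{\theta y}\P(L>y)\,dy$ carries a leading $1$ coming from that atom, which the paper's computation silently drops; as a result the stated bound $\theta\sqrt{2\pi}e^{\theta^2/2}$ actually fails for small $\theta$ (take $|a|$ huge, so $L_0^X(1)=0$ a.s.\ and the left side equals $1$), and the correct uniform bound is $1+\theta\sqrt{2\pi}e^{\theta^2/2}$, exactly as you obtain. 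Since the lemma is only invoked in Lemma \ref{bcts} with $\theta\ge 1$, where the additive $1$ is absorbed into a constant, this changes nothing downstream, but your version of the estimate is the one that is literally true.
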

 \begin{rk}\label{rklast}
     Since the above bound is uniform in $a,b,c,d,x$, the same bound continues to hold when $a,b,c,d,x$ are random variables (with $c^2+d^2=1$) independent of $B$ and $W$.
 \end{rk}
 \begin{proof} Note that conditioned on $W(1)$, $X$ is  a standard Brownian bridge from $a$ to $a+cx+dW(1)$. In \cite[(3)]{pitman} it is shown that for a standard Brownian bridge $Y$ from $a$ to $a+b$ one has for all $v>0$ that $$\mathbb P(L_0^Y(1)>v) = e^{b^2/2}e^{-(|a|+|a+b|+v)^2/2}$$ and therefore if $\theta>0$ then 
		$$\mE[e^{\theta L_0^Y(1)}] = \int_0^\infty \theta e^{\theta y} \mathbb P(L_0^Y(1)>y)dy\leq \theta e^{b^2/2}\int_{\mathbb R} e^{\theta y} e^{-(|a|+|a+b|+y)^2/2}dy= \theta \sqrt{2\pi} e^{\theta^2/2}e^{-(|a|+|a+b|)\theta}.$$
	Thus returning to the process $X$ we simply take $b=cx+dW(1)$, and we find that $$\mE[e^{\theta L_0^X(1)}|W(1)] \leq \theta\sqrt{2\pi}e^{\theta^2/2} e^{-(|a|+|a+cx+dW(1)|)\theta}.$$ Taking the expectation again and noting that $e^{-(|a|+|a+cx+dW(1)|)\theta}\leq 1$ we arrive at \eqref{tlo}, proving the claim.  
 \end{proof}
	\begin{lem}\label{bcts} Fix $r,m\in \mathbb{Z}_{>0}$. Fix $\mathbf x=(x_1,x_2,\ldots,x_r)\in \R^r$, $\mathbf t=(t_1,t_2,\ldots,t_m) \in [0,1]^m$, and $\mathbf s=(s_1,s_2,\ldots,s_r) \in [0,1]^r$. Let $\mathbf B^{\mathbf s, \mathbf x}=(B^i)_{i=1}^r$ be $r$-many independent processes with each $B^i$ being distributed as a Brownian concatenated process with the anchor at $(s_i,x_i)$ (recall Definition \ref{d.bcp}). Let $\ell_k: \mathbb R^n \to \mathbb R$ be linear functions for $1\leq k \leq m$. Define $$f(\mathbf{x},\mathbf{s},\mathbf{t}):= \mathbf E \bigg[ \exp\bigg(\sum_{k=1}^mL_0^{\ell_k(\mathbf B^{\mathbf s, \mathbf x})}(t_k)\bigg)\bigg].$$ Then $f$ is globally bounded and continuous on $\mathbb R^n\times (0,1]^r\times (0,1]^m.$
	\end{lem}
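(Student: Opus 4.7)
The plan is to reduce the claim to uniform boundedness and continuous parameter dependence of the single-index quantity
\[
g_k(\mathbf{x},\mathbf{s},t) := \mathbf{E}\big[\exp\big(\theta\, L_0^{Y_k^{\mathbf{s},\mathbf{x}}}(t)\big)\big],
\]
where $Y_k^{\mathbf{s},\mathbf{x}} := \ell_k(\mathbf{B}^{\mathbf{s},\mathbf{x}}) = \sum_i c_k^i B^i$ upon writing $\ell_k(\mathbf{y}) = \sum_i c_k^i y_i$, and $\sigma_k^2 := \sum_i (c_k^i)^2$. H\"older's inequality applied to the product of exponentials inside $f$ accomplishes the reduction, and if some $\sigma_k = 0$ then the corresponding factor is identically $1$, so I may assume $\sigma_k > 0$.

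For boundedness, after relabelling I assume $s_1 \le \cdots \le s_r$ and set $u_0 := 0$, $u_j := s_j$ for $1 \le j \le r$, $u_{r+1} := 1$. I use $L_0^{Y_k}(t_k) \le L_0^{Y_k}(1) = \sum_{j=0}^r \Delta_j$ with $\Delta_j := L_0^{Y_k}(u_{j+1}) - L_0^{Y_k}(u_j)$ and another application of H\"older; it then suffices to uniformly bound $\mathbf{E}[e^{\theta' \Delta_j}]$ for any $\theta'>0$. Conditional on $\mathcal G_j := \sigma\{B^i(u) : u \notin (u_j, u_{j+1})\}$, the Markov property applied to each $B^i$ (which is either in bridge or motion phase throughout $(u_j,u_{j+1})$) makes $B^i|_{[u_j,u_{j+1}]}$ an independent Brownian bridge of unit diffusion from $B^i(u_j)$ to $B^i(u_{j+1})$; hence $Y_k|_{[u_j,u_{j+1}]}$ is a Brownian bridge of diffusion $\sigma_k$ between the $\mathcal G_j$-measurable endpoints $p_j := \sum_i c_k^i B^i(u_j)$ and $q_j := \sum_i c_k^i B^i(u_{j+1})$. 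Rescaling $[u_j,u_{j+1}]$ to $[0,1]$ inside the occupation-time formula yields $\Delta_j = \sigma_k \sqrt{T_j}\, L_0^{Z_j}(1)$, where $T_j := u_{j+1} - u_j \le 1$ and $Z_j$ is, conditional on $\mathcal G_j$, a standard Brownian bridge on $[0,1]$ between the $\mathcal G_j$-measurable values $p_j/(\sigma_k\sqrt{T_j})$ and $q_j/(\sigma_k\sqrt{T_j})$. Applying Lemma \ref{tl} with $c=1$, $d=0$ conditionally on $\mathcal G_j$ (allowed by Remark \ref{rklast}) then gives
\[
\mathbf{E}\big[e^{\theta'\Delta_j} \mid \mathcal G_j\big] \le \theta'\sigma_k\sqrt{2\pi}\exp\big((\theta'\sigma_k)^2/2\big),
\]
uniformly in $\mathcal G_j, \mathbf{s}, \mathbf{x}, \mathbf{t}$ (using $T_j \le 1$), which by unconditioning and H\"older bounds $g_k$.

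For continuity I would realize all the $B^i$ on a common probability space from independent standard Brownian motions $\beta^i$ on $[0,1]$ via
\[
B^i_{s_i,x_i}(t) := \beta^i(t) - \frac{t\wedge s_i}{s_i}\big(\beta^i(s_i) - x_i\big), \qquad t \in [0,1].
\]
A direct check confirms this has the joint law specified by Definition \ref{d.bcp}, and almost surely $(s_i,x_i,t) \mapsto B^i_{s_i,x_i}(t)$ is jointly continuous on $(0,1]\times \mathbb{R}\times [0,1]$; consequently so is $(\mathbf{s},\mathbf{x},t)\mapsto Y_k^{\mathbf{s},\mathbf{x}}(t)$. The decisive feature of this coupling is that $Y_k^{\mathbf{s},\mathbf{x}} = M_k + A_k^{\mathbf{s},\mathbf{x}}$ where $M_k := \sum_i c_k^i\beta^i$ is a fixed continuous martingale with deterministic quadratic variation $\sigma_k^2 t$ (independent of parameters), and $A_k^{\mathbf{s},\mathbf{x}}$ is piecewise linear in $t$ and depends continuously, in total variation, on $(\mathbf{s},\mathbf{x})$. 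Tanaka's formula gives
\[
L_0^{Y_k^{\mathbf{s},\mathbf{x}}}(t) = |Y_k^{\mathbf{s},\mathbf{x}}(t)| - \int_0^t \operatorname{sgn}\!\big(Y_k^{\mathbf{s},\mathbf{x}}(u)\big)\, dM_k(u) - \int_0^t \operatorname{sgn}\!\big(Y_k^{\mathbf{s},\mathbf{x}}(u)\big)\, dA_k^{\mathbf{s},\mathbf{x}}(u),
\]
and I argue joint continuity of $(\mathbf{s},\mathbf{x},t)\mapsto L_0^{Y_k}(t)$ term by term: the first term by uniform path convergence; the stochastic integral against the fixed $M_k$ by It\^o's isometry after verifying $\operatorname{sgn}(Y_k^{\mathbf{s}^n,\mathbf{x}^n}(u)) \to \operatorname{sgn}(Y_k^{\mathbf{s},\mathbf{x}}(u))$ at every $u$ with $Y_k^{\mathbf{s},\mathbf{x}}(u)\neq 0$ and that the zero set is Lebesgue-null almost surely (as $Y_k$ is a non-degenerate Gaussian semimartingale with absolutely continuous one-dimensional marginals); the Stieltjes integral against $A_k^{\mathbf{s}^n,\mathbf{x}^n}$ by combining the same a.e.\ sign convergence with convergence of $A^n\to A$ in total variation. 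Combined with the uniform $L^p$-bound from the previous paragraph (applied with $\theta$ slightly enlarged to furnish uniform integrability), the dominated convergence theorem then delivers continuity of $g_k$, and hence of $f$.

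The main technical obstacle is the last step: although the coupling makes the paths jointly continuous in parameters and the martingale part of $Y_k$ is literally constant across parameters, the discontinuity of $\operatorname{sgn}$ at $0$ in Tanaka's formula forces one to invoke the Gaussian structure to ensure $\{u : Y_k^{\mathbf{s},\mathbf{x}}(u) = 0\}$ is Lebesgue-null almost surely, without which the stochastic and Stieltjes integrals would not pass to the limit cleanly.
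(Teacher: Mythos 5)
Your boundedness argument is correct and is in fact a clean variant of the paper's: the paper computes the unconditional law of $\ell(\mathbf B^{\mathbf s,\mathbf x})$ on each subinterval $[s_j,s_{j+1}]$ and identifies it as an independent sum of a bridge and a Brownian motion (hence the full strength of Lemma \ref{tl} with $c^2+d^2=1$), whereas you condition on the path outside $(u_j,u_{j+1})$ and reduce directly to a pure bridge, invoking Lemma \ref{tl} with $d=0$ via Remark \ref{rklast}. Both routes work, and yours avoids the explicit covariance computation with $D_{1,j},D_{2,j},\gamma$.

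The continuity argument, however, has a genuine gap, and it is not the one you flag (the null zero set is fine: $\mathrm{Var}(Y_k^{\mathbf s,\mathbf x}(u))>0$ for all $u\notin\{0,s_1,\dots,s_r\}$, so Fubini gives a Lebesgue-null zero set a.s.). The problem is that $Y_k=M_k+A_k^{\mathbf s,\mathbf x}$ is \emph{not} a semimartingale decomposition in any filtration: $A_k^{\mathbf s,\mathbf x}(t)$ involves $\beta^i(s_i)$ for $t<s_i$, so it anticipates the future of $M_k=\sum_i c_k^i\beta^i$. Consequently the integrand $\operatorname{sgn}(Y_k^{\mathbf s,\mathbf x}(u))$ is not adapted to any filtration in which $M_k$ is a martingale, the displayed Tanaka formula is not justified as written, and the It\^o isometry step --- which is the crux of your limit argument --- does not apply; an anticipating (Skorokhod) interpretation would carry Malliavin-derivative correction terms and a modified isometry. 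If you instead pass to a filtration making the decomposition legitimate (e.g.\ the initial enlargement by $\sigma(\beta^1(s_1),\dots,\beta^r(s_r))$, or the natural filtration of the $B^i$ with the bridge drift $\frac{x_i-B^i(u)}{s_i-u}\,du$), then the martingale part acquires dependence on $(\mathbf s,\mathbf x)$ and the finite-variation part has an integrable singularity at $u=s_i$, so the ``decisive feature'' that the martingale part is fixed across parameters --- the whole point of your decomposition --- is lost. The paper sidesteps this by proving continuity through the coupling plus Lemma \ref{coupled}, which reduces the question to joint continuity of Brownian local times in the spatial level and the drift after an absolute-continuity (Girsanov) step on half-intervals; some such reduction, or a direct occupation-density argument, is needed to repair your continuity step.
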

	
	\begin{proof}
		We use a coupling argument based on Lemma \ref{coupled}. Take $W^1,\ldots,W^n$ to be i.i.d.~Brownian motions on $[0,1]$ and then define $$B^{j,s,x}(t):= W^j(t) -(\tfrac{t}{s} \wedge 1) (W^j(s)-x).$$ 
		Note that for all $r$-tuples of pairs $(s_j,x_j)$ the processes $(B^{j,s_j,x_j})_{j=1}^r$ are distributed as $(B^i)_{i=1}^r.$ Applying Lemma \ref{coupled} we see that if $(\mathbf x_n,\mathbf s_n,\mathbf t_n)_{n\in\mathbb N}$ converges to $(\mathbf x,\mathbf s, \mathbf t) \in \mathbb R^n\times (0,1]^r\times (0,1]^m,$ we have $$L_0^{\ell_k(\mathbf B^{\mathbf s_n, \mathbf x_n})}(t_{n,k})\to L_0^{\ell_k(\mathbf B^{\mathbf s, \mathbf x})}(t_k)$$ for each $1\le k \le n$ almost surely. Consequently, the exponential of the sum over $k$ also converges almost surely. To show convergence of the expectations of those exponentials we need uniform integrability of the exponentials. We will prove this in a way that also gives global boundedness of the function $f.$ Since local time is an increasing process we have the automatic bound $f(\mathbf{x},\mathbf{s},\mathbf{t}) \leq f(\mathbf{x},\mathbf{s},\ind).$ To prove uniform integrability we will upper-bound $f(\mathbf{x},\mathbf{s},\ind)$ in $L^2$ independently of $s_j,x_j$. First, apply H\"older's inequality to obtain 
		$$\mathbf E \bigg[ \exp\bigg(\sum_{k=1}^mL_0^{\ell_k(\mathbf B^{\mathbf s, \mathbf x})}(1)\bigg)\bigg] \leq \prod_{k=1}^m \mathbf E \bigg[\exp\bigg( mL_0^{\ell_k(\mathbf B^{\mathbf s, \mathbf x})}(1)\bigg)\bigg]^{1/m}.$$ 
	Thus, it suffices to show each term in the product above is finite. Let us take any $\ell(\mathbf{v}):=a_1v_1+\cdots+a_rv_r$.	Assume without loss of generality that $s_1\leq \cdots \leq s_r.$ Then on each subinterval, $[s_j,s_{j+1}]$, we have $$\ell(\mathbf B^{\mathbf s, \mathbf x})(t)= \sum_{i=1}^j a_i[W^i(t)-(W^i(s_i)-x_i)]+\sum_{i=j+1}^r a_i[W^i(t)-\tfrac{t}{s_i}(W^i(s_i)-x_i)].$$ 
{Let us set
\begin{align}\label{ddefg}
    D_{1,j}:=\sum_{i=1}^j a_i^2, \qquad D_{2,j}:=\sum_{i=j+1}^r a_i^2, \qquad \gamma:=D_{2,j} \cdot \big[\sum_{i=j+1}^r a_i^2/s_i\big]^{-1}.
\end{align}
Note that $\ell(\mathbf B^{\mathbf s, \mathbf x})(t+s_j)-\ell(\mathbf B^{\mathbf s, \mathbf x})(s_j)$ has mean $t\sum_{i=j+1}^r \frac{a_ix_i}{s_i}$, and for $s,t\in [0,s_{j+1}-s_j]$ its covariance function is given by
\begin{align*}
    C(s,t) & :=(s\wedge t) \cdot D_{1,j}+\sum_{i=j+1}^r a_i^2\big[(s\wedge t)-\tfrac{ts}{s_i}\big]  =(s\wedge t) \cdot D_{1,j}+\big[(s\wedge t)-\tfrac{ts}{\gamma}\big] \cdot D_{2,j}.
\end{align*}
As $s_1\le \cdots \le s_r$, one can check that $\gamma \ge (s_{j+1}-s_j)$. Thus, $$\ell(\mathbf B^{\mathbf s, \mathbf x})(t+s_j)-\ell(\mathbf B^{\mathbf s, \mathbf x})(s_j) \stackrel{d}{=} U_1(t)+U_2(t)$$ as processes on $[0,s_{j+1}-s_j]$ where $U_1$ is a Brownian bridge with diffusion rate $D_{2,j}$  from $0$ to $\gamma\sum_{i=j+1}^r a_ix_i/s_i$ on $[0,\gamma]$ and $U_2$ is an independent Brownian motion with diffusion rate $D_{1,j}$.}

\smallskip
 
 On $[0,s_{j+1}-s_j],$ we have shown that $\ell(\mathbf B^{\mathbf s, \mathbf x})-\ell(\mathbf B^{\mathbf s, \mathbf x})(s_j)$ is distributed as the independent sum of a Brownian bridge (not necessarily ending at zero) and a Brownian motion whose diffusivities sum to $\|\ell\|_2^2=a_1^2+\cdots+a_r^2.$ Consequently, by breaking up the expectation on each of these subintervals (by again applying H\"older's inequality) and using the fact that each of these subintervals has size at most 1, it suffices to bound $\mE[e^{\theta L_0^X(1)}]$ for all $\theta>0$ where $X:=a+cB+dW$, where $B$ is a standard Brownian bridge from $0$ to $x$ on $[0,1]$ and $W$ is an independent standard Brownian motion on $[0,1]$. More precisely, we need a bound which is uniform over all $a\in\mathbb R$ and $c^2+d^2=1$. This is precisely done in Lemma \ref{tl}. This completes the proof.
		\end{proof}

We remark that Definition \ref{d.bcp} can be naturally extended to Brownian concatenated processes defined on $[0, T]$ for any fixed $T>0$. Both Lemma \ref{6.1} and Lemma \ref{bcts} continue to hold under this setting.
 
		\bibliographystyle{alpha}
		\bibliography{refs.bib}

\begin{thebibliography}{HCGCC23}

\bibitem[AC22]{ac22}
Arka Adhikari and Sourav Chatterjee.
\newblock {An invariance principle for the 1D KPZ equation}.
\newblock {\em arXiv preprint arXiv:2208.02492}, 2022.

\bibitem[ACQ11]{ACQ}
Gideon Amir, Ivan Corwin, and Jeremy Quastel.
\newblock Probability distribution of the free energy of the continuum directed
  random polymer in {$1+1$} dimensions.
\newblock {\em Comm. Pure Appl. Math.}, 64(4):466--537, 2011.

\bibitem[AKQ14]{akq}
Tom Alberts, Konstantin Khanin, and Jeremy Quastel.
\newblock {The intermediate disorder regime for directed polymers in dimension
  $1+1$}.
\newblock {\em The Annals of Probability}, 42(3):1212 -- 1256, 2014.

\bibitem[Ami91]{ami91}
Madjid Amir.
\newblock Sticky {B}rownian motion as the strong limit of a sequence of random
  walks.
\newblock {\em Stochastic processes and their applications}, 39(2):221--237,
  1991.

\bibitem[BC95]{BC95}
Lorenzo Bertini and Nicoletta Cancrini.
\newblock The stochastic heat equation: {F}eynman-{K}ac formula and
  intermittence.
\newblock {\em J. Statist. Phys.}, 78(5-6):1377--1401, 1995.

\bibitem[BC14]{bigmac}
Alexei Borodin and Ivan Corwin.
\newblock Macdonald processes.
\newblock {\em Probability Theory and Related Fields}, 158(1-2):225--400, 2014.

\bibitem[BC17]{bc}
Guillaume Barraquand and Ivan Corwin.
\newblock Random-walk in beta-distributed random environment.
\newblock {\em Probability Theory and Related Fields}, 167(3-4):1057--1116,
  2017.

\bibitem[BG97]{BG97}
Lorenzo Bertini and Giambattista Giacomin.
\newblock Stochastic {B}urgers and {KPZ} equations from particle systems.
\newblock {\em Comm. Math. Phys.}, 183(3):571--607, 1997.

\bibitem[BGK98]{BGK98}
Denis Bernard, Krzysztof Gawedzki, and Antti Kupiainen.
\newblock Slow modes in passive advection.
\newblock {\em Journal of Statistical Physics}, 90:519--569, 1998.

\bibitem[BLD20]{bld}
Guillaume Barraquand and Pierre Le~Doussal.
\newblock Moderate deviations for diffusion in time dependent random media.
\newblock {\em Journal of Physics A: Mathematical and Theoretical},
  53(21):215002, 2020.

\bibitem[BMP04]{ew4}
Carlo Boldrighini, Robert~A Minlos, and Alessandro Pellegrinotti.
\newblock Random walks in quenched iid space-time random environment are always
  as diffusive.
\newblock {\em Probability theory and related fields}, 129(1):133, 2004.

\bibitem[BR20]{mark}
Guillaume Barraquand and Mark Rychnovsky.
\newblock Large deviations for sticky {B}rownian motions.
\newblock {\em Electronic Journal of Probability}, 25, 2020.

\bibitem[BRAS06]{balazs2006random}
M{\'a}rton Bal{\'a}zs, Firas Rassoul-Agha, and Timo Sepp{\"a}l{\"a}inen.
\newblock The random average process and random walk in a space-time random
  environment in one dimension.
\newblock {\em Communications in mathematical physics}, 266(2):499--545, 2006.

\bibitem[BW21]{dom2}
Dom Brockington and Jon Warren.
\newblock The {B}ethe ansatz for sticky {B}rownian motions.
\newblock {\em arXiv preprint arXiv:2104.06482}, 2021.

\bibitem[BW22]{dom}
Dom Brockington and Jon Warren.
\newblock At the edge of a cloud of {B}rownian particles.
\newblock {\em arXiv preprint arXiv:2208.11952}, 2022.

\bibitem[CFKL95]{CFKL95}
Misha Chertkov, Gregory Falkovich, Igor Kolokolov, and Vladmir Lebedev.
\newblock Normal and anomalous scaling of the fourth-order correlation function
  of a randomly advected passive scalar.
\newblock {\em Physical Review E}, 52(5):4924, 1995.

\bibitem[CG17]{gu}
Ivan Corwin and Yu~Gu.
\newblock {Kardar--Parisi--Zhang equation and large deviations for random walks
  in weak random environments}.
\newblock {\em Journal of Statistical Physics}, 166:150--168, 2017.

\bibitem[CGST20]{cgst20}
Ivan Corwin, Promit Ghosal, Hao Shen, and Li-Cheng Tsai.
\newblock {Stochastic PDE limit of the six vertex model}.
\newblock {\em Communications in Mathematical Physics}, 375(3):1945--2038,
  2020.

\bibitem[Cha22]{cha22}
Sourav Chatterjee.
\newblock Local {KPZ} behavior under arbitrary scaling limits.
\newblock {\em Communications in Mathematical Physics}, pages 1--28, 2022.

\bibitem[CLDR10]{cldr}
Pasquale Calabrese, Pierre Le~Doussal, and Alberto Rosso.
\newblock Free-energy distribution of the directed polymer at high temperature.
\newblock {\em Europhysics Letters}, 90(2):20002, 2010.

\bibitem[Cor12]{Cor12}
Ivan Corwin.
\newblock The {K}ardar{--P}arisi{--Z}hang equation and universality class.
\newblock {\em Random Matrices: Theory Appl.}, 1(01):1130001, 2012.

\bibitem[Cor18]{corwin2018exactly}
Ivan Corwin.
\newblock Exactly solving the kpz equation.
\newblock {\em arXiv preprint arXiv:1804.05721}, 2018.

\bibitem[CS20]{CS20}
Ivan Corwin and Hao Shen.
\newblock Some recent progress in singular stochastic partial differential
  equations.
\newblock {\em Bulletin of the American Mathematical Society}, 57(3):409--454,
  2020.

\bibitem[CST18]{cst18}
Ivan Corwin, Hao Shen, and Li-Cheng Tsai.
\newblock {$\operatorname{ASEP}(q,j)$ converges to the KPZ equation}.
\newblock {\em Annales de l'Institut Henri Poincaré, Probabilités et
  Statistiques}, 54(2):995 -- 1012, 2018.

\bibitem[CSZ16]{poly}
Francesco Caravenna, Rongfeng Sun, and Nikos Zygouras.
\newblock Polynomial chaos and scaling limits of disordered systems.
\newblock {\em Journal of the European Mathematical Society}, 19(1):1--65,
  2016.

\bibitem[CT17]{ct17}
Ivan Corwin and Li-Cheng Tsai.
\newblock {KPZ equation limit of higher-spin exclusion processes}.
\newblock {\em The Annals of Probability}, 45(3):1771 -- 1798, 2017.

\bibitem[CW17]{CW17}
Ajay Chandra and Hendrik Weber.
\newblock Stochastic {PDE}s, regularity structures, and interacting particle
  systems.
\newblock In {\em Annales de la facult{\'e} des sciences de Toulouse
  Math{\'e}matiques}, volume~26, pages 847--909, 2017.

\bibitem[Das24]{das}
Sayan Das.
\newblock {Temporal increments of the KPZ equation with general initial data}.
\newblock {\em Electronic Journal of Probability}, 29:1--28, 2024.

\bibitem[DDP24]{DDP24}
Sayan Das, Hindy Drillick, and Shalin Parekh.
\newblock {Multiplicative SHE limit of random walks in space--time random
  environments}.
\newblock {\em Probability Theory and Related Fields}, pages 1--83, 2024.

\bibitem[DG22]{ew6}
Alexander Dunlap and Yu~Gu.
\newblock A quenched local limit theorem for stochastic flows.
\newblock {\em Journal of Functional Analysis}, 282(6):109372, 2022.

\bibitem[Dot10]{dot}
Victor Dotsenko.
\newblock Bethe ansatz derivation of the {T}racy-{W}idom distribution for
  one-dimensional directed polymers.
\newblock {\em Europhysics Letters}, 90(2):20003, 2010.

\bibitem[DT16]{dembo}
Amir Dembo and Li-Cheng Tsai.
\newblock {Weakly asymmetric non-simple exclusion process and the
  Kardar--Parisi--Zhang equation}.
\newblock {\em Communications in Mathematical Physics}, 341:219--261, 2016.

\bibitem[EF16]{ellis2016brownian}
Tom Ellis and Ohad~N Feldheim.
\newblock {The Brownian web is a two-dimensional black noise}.
\newblock In {\em Annales de l’Institut Henri Poincar{\'e}-Probabilit{\'e}s
  et Statistiques}, volume~52, pages 162--172, 2016.

\bibitem[Fel52]{feller}
William Feller.
\newblock The parabolic differential equations and the associated semi-groups
  of transformations.
\newblock {\em Annals of Mathematics}, pages 468--519, 1952.

\bibitem[FK99]{ew1}
Albert Fannjiang and Tomasz Komorowski.
\newblock {Turbulent diffusion in Markovian flows}.
\newblock {\em Annals of Applied Probability}, pages 591--610, 1999.

\bibitem[Flo14]{flo}
Gregorio R~Moreno Flores.
\newblock On the (strict) positivity of solutions of the stochastic heat
  equation.
\newblock {\em The Annals of Probability}, pages 1635--1643, 2014.

\bibitem[FS10]{FS10}
Patrik~L Ferrari and Herbert Spohn.
\newblock Random growth models.
\newblock {\em arXiv:1003.0881}, 2010.

\bibitem[GH04]{gaw}
Krzysztof Gawedzki and P{\'e}ter Horvai.
\newblock Sticky behavior of fluid particles in the compressible {K}raichnan
  model.
\newblock {\em Journal of statistical physics}, 116:1247--1300, 2004.

\bibitem[GIP15]{GIP15}
Massimiliano Gubinelli, Peter Imkeller, and Nicolas Perkowski.
\newblock Paracontrolled distributions and singular {PDE}s.
\newblock In {\em Forum of Mathematics, Pi}, volume~3. Cambridge University
  Press, 2015.

\bibitem[GJ14]{GJ14}
Patr{\'\i}cia Gon{\c{c}}alves and Milton Jara.
\newblock Nonlinear fluctuations of weakly asymmetric interacting particle
  systems.
\newblock {\em Arch. Ration. Mech. Anal.}, 212(2):597--644, 2014.

\bibitem[GJ17]{gj17}
Patr{\'\i}cia Gon{\c{c}}alves and Milton Jara.
\newblock Stochastic {B}urgers equation from long range exclusion interactions.
\newblock {\em Stochastic Processes and their Applications},
  127(12):4029--4052, 2017.

\bibitem[GK95]{GK95}
Krzysztof Gawedzki and Antti Kupiainen.
\newblock Anomalous scaling of the passive scalar.
\newblock {\em Physical review letters}, 75(21):3834, 1995.

\bibitem[GK06]{GK96}
Krzysztof Gawedzki and Antti Kupiainen.
\newblock {University in turbulence: An exactly solvable model}.
\newblock In {\em Low-Dimensional Models in Statistical Physics and Quantum
  Field Theory: Proceedings of the 34. Internationale Universit{\"a}tswochen
  f{\"u}r Kern-und Teilchenphysik Schladming, Austria, March 4--11, 1995},
  pages 71--105. Springer, 2006.

\bibitem[GP17]{GP17}
Massimiliano Gubinelli and Nicolas Perkowski.
\newblock {KPZ} reloaded.
\newblock {\em Commun. Math. Phys.}, 349(1):165--269, 2017.

\bibitem[GP18]{GP18}
Massimiliano Gubinelli and Nicolas Perkowski.
\newblock Energy solutions of {KPZ} are unique.
\newblock {\em J. Amer. Math. Soc.}, 31(2):427--471, 2018.

\bibitem[GV00]{GV00}
Krzysztof Gawedzki and Massimo Vergassola.
\newblock Phase transition in the passive scalar advection.
\newblock {\em Physica D: Nonlinear Phenomena}, 138(1-2):63--90, 2000.

\bibitem[Hai13]{Hai13}
Martin Hairer.
\newblock Solving the {KPZ} equation.
\newblock {\em Annals of Mathematics}, pages 559--664, 2013.

\bibitem[Hai14]{Hai14}
Martin Hairer.
\newblock A theory of regularity structures.
\newblock {\em Invent Math.}, 198(2):269--504, 2014.

\bibitem[HCGCC23]{hass23}
Jacob~B Hass, Aileen~N Carroll-Godfrey, Ivan Corwin, and Eric~I Corwin.
\newblock Anomalous fluctuations of extremes in many-particle diffusion.
\newblock {\em Physical Review E}, 107(2):L022101, 2023.

\bibitem[HL81]{hl81}
John~Michael Harrison and Austin~J Lemoine.
\newblock Sticky {B}rownian motion as the limit of storage processes.
\newblock {\em Journal of Applied Probability}, 18(1):216--226, 1981.

\bibitem[HL15]{HL16}
Martin Hairer and Cyril Labb{\'e}.
\newblock A simple construction of the continuum parabolic {A}nderson model on
  $\mathbb{R}^2$.
\newblock {\em Electronic Communications in Probability}, 20:1--11, 2015.

\bibitem[HL18]{hairer2018multiplicative}
Martin Hairer and Cyril Labb{\'e}.
\newblock Multiplicative stochastic heat equations on the whole space.
\newblock {\em Journal of the European Mathematical Society}, 20(4):1005--1054,
  2018.

\bibitem[HQ18]{hq18}
Martin Hairer and Jeremy Quastel.
\newblock {A class of growth models rescaling to KPZ}.
\newblock In {\em Forum of Mathematics, Pi}, volume~6, page~e3. Cambridge
  University Press, 2018.

\bibitem[HW09a]{HW09}
Chris Howitt and Jon Warren.
\newblock Consistent families of {B}rownian motions and stochastic flows of
  kernels.
\newblock {\em The Annals of Probability}, 37(4), jul 2009.

\bibitem[HW09b]{hw09b}
Chris Howitt and Jon Warren.
\newblock Dynamics for the {B}rownian web and the erosion flow.
\newblock {\em Stochastic Processes and their Applications}, 119(6):2028--2051,
  2009.

\bibitem[IM63]{ito1963brownian}
Kiyoshi It{\^o} and H.P. McKean.
\newblock Brownian motions on a half line.
\newblock {\em Illinois journal of mathematics}, 7(2):181--231, 1963.

\bibitem[JRAS19]{joseph2019independent}
Mathew Joseph, Firas Rassoul-Agha, and Timo Sepp{\"a}l{\"a}inen.
\newblock Independent particles in a dynamical random environment.
\newblock In {\em Probability and Analysis in Interacting Physical Systems: In
  Honor of SRS Varadhan, Berlin, August, 2016}, pages 75--121. Springer, 2019.

\bibitem[KLD23]{alex}
Alexandre Krajenbrink and Pierre Le~Doussal.
\newblock {Crossover from the macroscopic fluctuation theory to the
  Kardar-Parisi-Zhang equation~controls the large deviations beyond
  Einstein{\textquotesingle}s diffusion}.
\newblock {\em Physical Review E}, 107(1), jan 2023.

\bibitem[KLO12]{ew5}
Tomasz Komorowski, Claudio Landim, and Stefano Olla.
\newblock {\em {Fluctuations in Markov processes: time symmetry and martingale
  approximation}}, volume 345.
\newblock Springer Science \& Business Media, 2012.

\bibitem[KO01]{ew2}
Tomasz Komorowski and Stefano Olla.
\newblock On homogenization of time-dependent random flows.
\newblock {\em Probability theory and related fields}, 121:98--116, 2001.

\bibitem[KPZ86]{kpz}
Mehran Kardar, Giorgio Parisi, and Yi-Cheng Zhang.
\newblock Dynamic scaling of growing interfaces.
\newblock {\em Physical Review Letters}, 56(9):889, 1986.

\bibitem[Kra68]{kar68}
Robert~H Kraichnan.
\newblock Small-scale structure of a scalar field convected by turbulence.
\newblock {\em The Physics of Fluids}, 11(5):945--953, 1968.

\bibitem[KS88]{konno}
Nuri Konno and Tokuzo Shiga.
\newblock Stochastic partial differential equations for some measure-valued
  diffusions.
\newblock {\em Probability theory and related fields}, 79(2):201--225, 1988.

\bibitem[KS14]{karatzas}
Ioannis Karatzas and Steven Shreve.
\newblock {\em Brownian motion and stochastic calculus}, volume 113.
\newblock springer, 2014.

\bibitem[Kup10]{kup10}
Antti Kupiainen.
\newblock {\em Lessons for turbulence}.
\newblock Springer, 2010.

\bibitem[LDT17]{ldt}
Pierre Le~Doussal and Thimoth{\'e}e Thiery.
\newblock {Diffusion in time-dependent random media and the Kardar-Parisi-Zhang
  equation}.
\newblock {\em Physical Review E}, 96(1):010102, 2017.

\bibitem[LJR04a]{lejan}
Yves Le~Jan and Olivier Raimond.
\newblock Flows, coalescence and noise.
\newblock {\em Ann. Probab.}, 32(2):1247--1315, 2004.

\bibitem[LJR04b]{jan2004sticky}
Yves Le~Jan and Olivier Raimond.
\newblock Sticky flows on the circle and their noises.
\newblock {\em Probability Theory and Related Fields}, 129(1):63--82, 2004.

\bibitem[Mue91]{mue91}
Carl Mueller.
\newblock On the support of solutions to the heat equation with noise.
\newblock {\em Stochastics: An International Journal of Probability and
  Stochastic Processes}, 37(4):225--245, 1991.

\bibitem[MW17]{WM}
Jean-Christophe Mourrat and Hendrik Weber.
\newblock Global well-posedness of the dynamic $\phi^{4}$ model in the plane.
\newblock {\em The Annals of Probability}, 45(4):2398--2476, 2017.

\bibitem[Nua06]{nualart2006malliavin}
David Nualart.
\newblock {\em The Malliavin calculus and related topics}, volume 1995.
\newblock Springer, 2006.

\bibitem[Par18]{Par19}
Shalin Parekh.
\newblock The {KPZ} limit of {ASEP} with boundary.
\newblock {\em Communications in Mathematical Physics}, 365(2):569--649, sep
  2018.

\bibitem[Pit99]{pitman}
Jim Pitman.
\newblock The distribution of local times of a {B}rownian bridge.
\newblock In {\em S\'{e}minaire de {P}robabilit\'{e}s, {XXXIII}}, volume 1709
  of {\em Lecture Notes in Math.}, pages 388--394. Springer, Berlin, 1999.

\bibitem[QS15]{QS15}
Jeremy Quastel and Herbert Spohn.
\newblock The one-dimensional {KPZ} equation and its universality class.
\newblock {\em J. Stat. Phys.}, 160(4):965--984, 2015.

\bibitem[Qua11]{Qua11}
Jeremy Quastel.
\newblock Introduction to {KPZ}.
\newblock {\em Current developments in mathematics}, 2011(1), 2011.

\bibitem[RAS05]{ew3}
Firas Rassoul-Agha and Timo Sepp{\"a}l{\"a}inen.
\newblock An almost sure invariance principle for random walks in a space-time
  random environment.
\newblock {\em Probability theory and related fields}, 133(3):299--314, 2005.

\bibitem[RS15]{rs15}
Mikl{\'o}s~Z R{\'a}cz and Mykhaylo Shkolnikov.
\newblock Multidimensional sticky {B}rownian motions as limits of exclusion
  processes.
\newblock {\em Annals of Applied Probability}, 25(3):1155--1188, 2015.

\bibitem[RY99]{RY99}
Daniel Revuz and Marc Yor.
\newblock {\em Continuous martingales and {B}rownian motion}, volume 293 of
  {\em Grundlehren der mathematischen Wissenschaften [Fundamental Principles of
  Mathematical Sciences]}.
\newblock Springer-Verlag, Berlin, third edition, 1999.

\bibitem[SS00]{ss00}
Boris~I Shraiman and Eric~D Siggia.
\newblock Scalar turbulence.
\newblock {\em Nature}, 405(6787):639--646, 2000.

\bibitem[SS10]{ss}
Tomohiro Sasamoto and Herbert Spohn.
\newblock Exact height distributions for the {KPZ} equation with narrow wedge
  initial condition.
\newblock {\em Nuclear Physics B}, 834(3):523--542, 2010.

\bibitem[SSG11]{scsm}
Oded Schramm, Stanislav Smirnov, and Christophe Garban.
\newblock {On the scaling limits of planar percolation}.
\newblock {\em The Annals of Probability}, 39(5):1768 -- 1814, 2011.

\bibitem[SSS09]{sss0}
Emmanuel Schertzer, Rongfeng Sun, and Jan Swart.
\newblock {Special points of the Brownian net}.
\newblock {\em Electronic Journal of Probability}, 14(none):805 -- 864, 2009.

\bibitem[SSS14]{sss}
Emmanuel Schertzer, Rongfeng Sun, and Jan Swart.
\newblock Stochastic flows in the {B}rownian web and net.
\newblock {\em Mem. Amer. Math. Soc.}, 227(1065):vi+160, 2014.

\bibitem[SSS17]{sss2}
Emmanuel Schertzer, Rongfeng Sun, and Jan Swart.
\newblock {The Brownian web, the Brownian net, and their universality}.
\newblock {\em Advances in disordered systems, random processes and some
  applications}, pages 270--368, 2017.

\bibitem[Tsi04a]{tsirelson2004boris}
Boris Tsirelson.
\newblock Boris tsirelson: Scaling limit, noise, stability.
\newblock {\em {Lectures on Probability Theory and Statistics: Ecole d'Et{\'e}
  de Probabilit{\'e}s de Saint-Flour XXXII-2002}}, pages 1--106, 2004.

\bibitem[Tsi04b]{tsir}
Boris Tsirelson.
\newblock Nonclassical stochastic flows and continuous products.
\newblock {\em Probability Surveys}, 1:173--298, 2004.

\bibitem[Wal86]{Wal86}
John~B Walsh.
\newblock An introduction to stochastic partial differential equations.
\newblock In {\em {\'E}cole d'{\'E}t{\'e} de Probabilit{\'e}s de Saint Flour
  XIV-1984}, pages 265--439. Springer, 1986.

\bibitem[War15]{war}
Jon Warren.
\newblock Sticky particles and stochastic flows.
\newblock {\em In Memoriam Marc Yor-S{\'e}minaire de Probabilit{\'e}s XLVII},
  pages 17--35, 2015.

\bibitem[Yan22]{yang22}
Kevin Yang.
\newblock {KPZ equation from non-simple variations on open ASEP}.
\newblock {\em Probability Theory and Related Fields}, 183(1-2):415--545, 2022.

\bibitem[Yan23a]{yang23b}
Kevin Yang.
\newblock Hairer-{Q}uastel universality in non-stationarity via energy solution
  theory.
\newblock {\em Electronic Journal of Probability}, 28:1--26, 2023.

\bibitem[Yan23b]{yang23}
Kevin Yang.
\newblock {Kardar--Parisi--Zhang equation from long-range exclusion processes}.
\newblock {\em Communications in Mathematical Physics}, pages 1--129, 2023.

\bibitem[Yu16]{yu}
Jinjiong Yu.
\newblock {Edwards-Wilkinson fluctuations in the Howitt-Warren flows}.
\newblock {\em Stochastic Processes and their Applications}, 126(3):948--982,
  2016.

\end{thebibliography}
		
	\end{document}